\documentclass[11pt,reqno]{amsart}

\usepackage{lmodern} 
\usepackage[T1]{fontenc}
\usepackage[british]{babel}
\usepackage{microtype}
\microtypesetup{protrusion=true, expansion=true}

\usepackage{enumitem}
    \setlist[enumerate]{label=(\roman*),leftmargin=*}

\usepackage{mathtools}
    \theoremstyle{plain}
        \newtheorem{thm}{Theorem}[section]
        \newtheorem{lem}[thm]{Lemma}
        \newtheorem{prp}[thm]{Proposition}
        \newtheorem{cor}[thm]{Corollary}
        \newtheorem{qst}[thm]{Question}

    \theoremstyle{definition}
        \newtheorem{dft}[thm]{Definition}
        \newtheorem{exm}[thm]{Example}
        \newtheorem{rmk}[thm]{Remark}
    
    \newcommand{\dw}{\mathbf{w}_2}
    \newcommand{\T}{\mathbb{T}}
    \newcommand{\Z}{\mathbb{Z}}
    \newcommand{\Q}{\mathbb{Q}}
    \newcommand{\R}{\mathbb{R}}
    \newcommand{\C}{\mathbb{C}}
    \newcommand{\N}{\mathbb{N}}
    \newcommand{\Leb}{\mathfrak{m}}
    \newcommand{\D}{\mathfrak{D}}
    \newcommand{\lto}{\longrightarrow}
    \newcommand{\Prob}{\mathcal{P}}
    
    \newcommand{\Dist}{\mathcal{D}'}                  
    \newcommand{\Koop}{\mathcal{K}}                   
    \newcommand{\Transf}{\mathcal{L}}   
    \newcommand{\Hm}{\dot H^{-1}}
    \newcommand{\PF}{\mathcal{P}}
    
    \DeclareMathOperator{\tr}{tr}    
    \DeclareMathOperator{\Id}{Id}
    \DeclareMathOperator{\Lip}{Lip}
    \DeclareMathOperator{\spec}{spec}
    \DeclareMathOperator{\adj}{adj}
    \DeclareMathOperator*{\esssup}{ess\,sup}
    \DeclareMathOperator{\vol}{vol}
    \DeclareMathOperator{\Hess}{Hess}
    \DeclareMathOperator{\Div}{div}

\usepackage[
  unicode=true,
  bookmarks=true,
  bookmarksnumbered=false,
  bookmarksopen=false,
  breaklinks=true,
  colorlinks=true,
  pdfborder={0 0 0}
]{hyperref}

\begin{document}	
%%%%%%%% PRE-TEXTUAL
\title[Pushforward dynamics on Wasserstein spaces]
{Pushforward dynamics on Wasserstein spaces and measure rigidity} 
	
\author[D. Finamore]{Douglas Finamore}
    \address{Douglas Finamore \\
        Centro de Matemática, Computação e Cognição - UFABC\\
    	Alameda da Universidade | 09606-405, São Bernardo do Campo, SP, Brazil.}
    \email{douglas.finamore@pm.me}		
	
\author[A. Gomes]{Andr\'e Magalh\~aes de S\'a Gomes}
	\address{Andr\'e Magalh\~aes de S\'a Gomes\\
		Instituto de Matemática e Estatística - USP\\
    	Rua do Matão, 1010 | 05508-090, São Paulo, SP, Brazil.}
	\email{andremsg@usp.br}   
		
\author[C. Rodrigues]{Christian S. Rodrigues}
	\address{Christian S.~Rodrigues\\
		Instituto de Matemática, Estatística e Computação Científica - UNICAMP\\
    	Rua Sérgio Buarque de Holanda, 651 | 13083-859, Campinas, SP, Brazil,
		and Max-Planck-Institute for Mathematics in the Sciences\\
		Inselstr. 22\\
		04103 Leipzig\\
		Germany
	}
	\email{rodrigues@ime.unicamp.br}	
\date{\today}	
	
\begin{abstract}
    We study the pushforward action \( \phi_\ast \) on the Wasserstein space \( \Prob(M) \) of a closed Riemannian manifold, at a measure \( \mu_0 \) preserved by \( \phi \).
    We show that if \( \phi \) is a \( C^2 \) covering map and \( \mu_0 \) has positive \( C^1 \) density, then \( \phi_\ast \) is G\^ateaux differentiable at \( \mu_0 \) along exponential deformations, with derivative the transfer operator of \( \phi \) acting on vector fields followed by the orthogonal projection onto the tangent space.
    The derivative is the adjoint of the Koopman operator restricted to the tangent space, and its fixed space consists of the directions in which \( \mu_0 \) can be deformed without losing invariance to first order.
    Then, for appropriate pairs of endomorphisms of \( \T^d \), we compute the intersection of their fixed spaces, show it contains an infinite family of linearly independent continuous vector fields, and produce for every \( n \) an embedded \( n \)-dimensional family of measures nearly invariant under both endomorphisms.
    First-order rigidity therefore fails in every dimension.
    In particular, rigidity phenomena such as higher-dimensional analogues of the Furstenberg conjecture are, if true, genuinely non-linear.
\end{abstract}	
	
\keywords{Wasserstein space, transfer operator, expanding map, measure rigidity}	
\subjclass[2020]{49Q22, 37C40 (primary), 37A05, 58D20, 53C21 (secondary)}	

\raggedbottom
\maketitle	
\tableofcontents

\pagebreak

%%%%%%%% MAIN BODY
\section{Introduction}
    \label{sec:introduction}
    A continuous map \( \phi \) of a compact metric space \( X \) induces a natural map on the space \( \Prob(X) \) of Borel probability measures, the pushforward \( \phi_\ast\mu\coloneqq\mu\circ\phi^{-1} \), which is again continuous for the weak\( ^\ast \) topology.
In this manner a dynamical system, that is, a semigroup acting on \( X \), induces a dynamical system on \( \Prob(X) \).
These two systems are generally dynamically distinct.
The fixed points of \( \phi_\ast \) are the invariant measures of \( \phi \), an object of which the orbit structure of \( \phi \) carries no direct trace, and \( \Prob(X) \) is infinite dimensional, so much so that \( \phi_\ast \) has infinite topological entropy as soon as \( \phi \) has positive entropy \cite{bauer_topological_1975}, while zero entropy is inherited \cite{glasner_quasi-factors_1995}.

The topological dynamics of the pushforward, set out in \cite{bauer_topological_1975,glasner_quasi-factors_1995}, has received renewed attention in the last decade, both for the properties it inherits from \( \phi \), such as transitivity, mixing and recurrence \cite{baraviera_dynamics_2013,li_recurrence_2015,bernardes_dynamics_2016}, and for the invariants appropriate to systems of infinite entropy, such as uniform positive entropy and mean dimension \cite{bernardes_uniformly_2022,liu_relative_2023,lopes_level-2_2024,vermersch_measure-theoretic_2024,burguet_topological_2025}.

When the space being acted upon is a closed Riemannian manifold \( M \), further structure becomes available.
Besides metrising the weak\( ^\ast \) topology of \( \Prob(M) \), the \( 2 \)-Wasserstein distance \( \dw \) supports a weak differentiable structure, introduced formally by Otto \cite{otto_geometry_2001}, and developed rigorously in \cite{ambrosio_gradient_2008,lott_geometric_2008,gigli_second_2012,gomes_differential_2024}.
The tangent directions at \( \mu\in\Prob(M) \) are the vector fields of
\[
    L^2_0(\mu)=\overline{\{\nabla\psi:\ \psi\in C^\infty(M)\}}^{\,L^2(\mu;TM)},
\]
each of which deforms \( \mu \) along the curve \( t\mapsto(\exp tv)_\ast\mu \), that is, the pushforward of \( \mu \) under \( x\mapsto\exp_x(tv(x)) \).
One may then ask what can be said about the differential aspects of the induced dynamics. 
We ask this with a rigidity question in view: Furstenberg's conjecture \cite{furstenberg_disjointness_1967} and its higher-rank relatives have a first-order form that becomes visible only once the derivative of \( \phi_\ast \) is known.
Theorem~\ref{thm:main.application} shows, however, that this form fails on \( \T^d \) in every dimension.

For \( \phi \)-invariant measures, that is, for the fixed points of \( \phi_\ast \), there is a natural guess for the derivative.
If \( \mu_0 \) is such a fixed point, then the measures absolutely continuous with respect to \( \mu_0 \) are acted upon linearly, through their densities, by the Perron--Frobenius transfer operator \( \PF_{\phi} \) of \( \phi \), and a first-order deformation of \( \mu_0 \) is precisely such a density.
One could naturally expect the derivative of \( \phi_\ast \) at \( \mu_0 \), that is, its \emph{best linear approximation}, to be a transfer operator in disguise.
Kloeckner confirmed this intuition by computing the derivative for expanding maps of the circle \cite[Theorem 5.1]{kloeckner_optimal_2013}, and the answer is a transfer operator composed with a correction which he calls the centring operator.

On general \(M\) the tangent directions in \(\Prob(M)\) are vector fields rather than densities.
Thus, the operator acting on them is not \( \PF_\phi \) but its counterpart on sections of \( TM \): the adjoint \( \Transf_{\mu_0}\coloneqq(\Koop_{\mu_0})^\ast \) in \( L^2(\mu_0;TM) \) of the Koopman operator \( \Koop_{\mu_0}u=(D\phi)^\ast(u\circ\phi) \).
The issue is that \( \Transf_{\mu_0} \) does not carry \( L^2_0(\mu_0) \) into itself, so an orthogonal projection \( P_{\mu_0} \) onto the tangent space must be inserted, analogue to Kloeckner's centring operator.
On the circle the space it discards is the line spanned by \( \rho^{-1} \) and the projection is the subtraction of a constant, whereas in higher dimension that space is generally infinite dimensional.
This same projection already appears, in different forms, in two available approaches.
Lessel and Schick \cite{lessel_differentiable_2020} apply a projector because the fibrewise average need not be tangent, and Gomes, Rodrigues and San~Martin \cite[\S3]{gomes_differential_2024} discard the same directions on the ground that a solenoidal \( z \) satisfies \( \frac{d}{dt}\big|_{0}(\exp tz)_\ast\mu=0 \) in the sense of distributions.
Here we justify both conventions by turning them into a metric fact.
In Theorem~\ref{thm:solenoidal} we show that, for \( \mu=\rho\,\vol \) with \( \rho\in C^1(M) \) positive, for every \( v \) and every solenoidal \( z \) in \( L^2(\mu;TM) \),
\[
    \dw\big((\exp t(v+z))_\ast\mu,\ (\exp tv)_\ast\mu\big)=o(t),\qquad t\searrow0 .
\]
A divergence-free field stirs the mass of \( \mu \) inside itself rather than displacing it, so that it cannot possibly influence \( \dw \) at first order.

The projection \(P_\mu\) appears three times, in three different guises.
It appears \emph{metrically} in Lemma~\ref{lem:duality.lower}, where the \(c\)-transform of the quadratic cost completes a square and what survives is the norm of the projected field. 
It appears \emph{geometrically} in Theorem~\ref{thm:solenoidal}, where the discarded component generates a measure-preserving flow whose first-order deformation is invisible to \(\dw\). 
It appears \emph{dynamically} in Theorem~\ref{thm:derivative.acim}, where \(\Transf_{\mu_0}\) fails to preserve the tangent space and the projection is what remains once the solenoidal part has been shown to cost nothing. 
The projection is therefore forced into the theory, rather than chosen as a convenience.

On this rests the first-order calculus of Section~\ref{sec:derivatives}, whose main statement, Theorem~\ref{thm:derivative.acim}, is that for a \( C^2 \) covering map \( \phi \) preserving \( \mu_0=\rho\,\vol \), with \( \rho\in C^1(M) \) positive, and for every \( v\in L^2_0(\mu_0) \),
\[
    \dw\Big(\phi_\ast\big((\exp tv)_\ast\mu_0\big),\ \big(\exp t\,\D_\phi v\big)_\ast\mu_0\Big)=o(t),
    \qquad \D_\phi\coloneqq P_{\mu_0}\Transf_{\mu_0},
\]
with a remainder uniform on bounded subsets of finite-dimensional subspaces.
Our statement is metric and pointwise: it concerns a single measure and a single tangent direction, without passing through absolutely continuous curves. The notion of derivative from \cite{lessel_differentiable_2020}, on the other hand, is kinematic, being formulated along curves, and admits no pointwise form, as its authors observe.
Moreover, our statement covers every \( C^2 \) expanding map of every closed manifold, such maps being covering maps \cite{shub_endomorphisms_1969}, whereas the arguments of \cite{kloeckner_optimal_2013}, resting on monotone rearrangements, apply only to the circle.

Proposition~\ref{prp:derivative.functoriality} identifies \( \D_\phi \) with the adjoint of the Koopman operator restricted to \( L^2_0(\mu_0) \), so that it is indeed the transfer operator of \( \phi \) acting on vector fields, and identifies its fixed space \( E_\phi \) with the set of directions in which \( \mu_0 \) can be deformed without losing invariance at first order.
Whether \( E_\phi\neq\{0\} \) in general is therefore a spectral question about a transfer operator acting on sections of \( TM \) rather than on functions, a setting for which the modern theory of transfer operators \cite{baladi_positive_2000} has no ready counterpart, and which we leave open as Question~\ref{qst:general.eigen}.

Once the general theory is in place we turn to a model in which that question is more tractable.
Since \( \T^d \) is a compact Lie group, \( \Prob(\T^d) \) is parallelisable in the sense of \cite[\S4]{gomes_differential_2024}: the Fourier basis provides a global frame for the tangent directions.
Taking advantage of this, we build on \cite{kloeckner_optimal_2013} and exhibit the derivative \( \D_A \) explicitly for the expanding endomorphism \( \varphi_A \) of \( \T^d \) determined by a matrix \( A\in\mathrm M_d(\Z) \), in coordinates obtained by renormalising the Fourier frame so as to make it dynamically equivariant.
In those coordinates a fixed vector of \( \D_A \) is a sequence constant along the orbits under the transpose \( A^\top \) on the frequency lattice, and the spectral theory of \( \D_A \) is replaced by arithmetic.
The description of those orbits is Lemma~\ref{lem:freeness}, a unique-factorisation statement for pairs of commuting expanding integer matrices with coprime determinants, which plays on \( \T^d \) the role played on the circle, in \cite[\S7]{kloeckner_optimal_2013_preprint}, by the factorisation of an integer into powers of the two multipliers and a factor prime to both.
On the torus, the flat geometry also makes transport plans explicit, so that several of the general results admit, on the torus, a second, geometric proof, whereas the general analytic arguments exhibit no transport plans at all.

The rigidity question is the following: Furstenberg's conjecture \cite{furstenberg_disjointness_1967} asserts that an atomless probability measure on \( S^1 \) invariant under \( x\mapsto2x \) and \( x\mapsto3x \) is the Lebesgue measure \(\Leb\).
It is currently open, with positive answers in particular cases such as the positive entropy regime \cite{rudolph_x2x3_1990}, whose multidimensional counterpart is Host's theorem \cite[Theorem 3]{host_uniform_2000}.
Rigidity for higher-rank actions is the subject of a large literature, of which \cite{berend_multi_1983,katok_invariant_1996,einsiedler_invariant_2006} is a small sample.
Following \cite[\S1.4]{kloeckner_optimal_2013_preprint}, one weakens such a statement until it becomes an assertion about tangent spaces.
The sets \( I_A \) and \( I_B \) of measures invariant under \( \varphi_A \) and under \( \varphi_B \) are the fixed-point sets of \( (\varphi_A)_\ast \) and \( (\varphi_B)_\ast \).
Were they submanifolds, no \( C^1 \) curve could leave \( \Leb \) inside \( I_A\cap I_B \) as soon as \( E_A\cap E_B=\{0\} \).
This last condition, unlike the submanifold structure, is perfectly well posed in our framework.
Theorem~\ref{thm:main.application} says that it fails in every dimension: the space \( E_A\cap E_B \) is infinite dimensional, and one can deform \(\Leb\) tangentially to arbitrary order while preserving invariance only at first order.
This is no counterexample to rigidity, but rather an obstruction to a proof strategy, and it says that whatever mechanism makes Host's theorem true is invisible in the tangent space at \( \Leb \).

\paragraph*{\textbf{Organisation of the paper}.}

Section~\ref{sec:setting} fixes the setting: it recalls the distributional model of the tangent space, and the identification of \( T_\mu\Prob(M) \) with \( L^2_0(\mu) \).

Section~\ref{sec:derivatives} is the general theory.
Its first part requires no regularity of \( \mu \) and establishes the transfer and Koopman operators together with the two one-sided estimates for \( \dw \).
Its second part replaces transport plans by densities, and records our main results for measures that are absolutely continuous with \( C^1 \) density with respect to the Riemannian volume.

Section~\ref{sec:torus} specialises everything to \( M=\T^d \) and \( \phi=\varphi_A \), computes the derivative at the Lebesgue measure \( \Leb \) and proves Theorem~\ref{thm:main.application}.
We conclude with Section~\ref{sec:conclusion}, which collects the questions left open.
    
\section{Tangent directions on Wasserstein spaces}
    \label{sec:setting}
    Throughout the paper \( M \) denotes a \emph{closed} (compact, boundaryless), connected, oriented Riemannian manifold.
We will denote by \(\langle \cdot, \cdot \rangle\) its metric tensor, by \(d\) its distance function, by \( \vol \) its normalised volume measure, and by \( \Prob(M) \) the space of Borel probability measures on \( M \), endowed with the weak\( ^\ast \) topology.
The \( 2 \)-Wasserstein metric is defined on \( \Prob(M) \) via
\[
    \dw(\mu,\nu) \coloneqq \left(\inf_{\pi\in\Pi(\mu,\nu)}\int_{M^2}d(x_1,x_2)^2 d\pi(x_1,x_2)\right)^{1/2};
\]
with \( \Pi(\mu,\nu) \) being the set of all transport plans (couplings) between \( \mu \) and \( \nu \).
A classical fact in Optimal Transport Theory is that this infimum is achieved by a transport plan that is called \textit{optimal transport} between \( \mu \) and \( \nu \).
For general \( M \) the function \( \dw \) may fail to be finite on \( \Prob(M) \), as the infimum might explode.
The \( 2 \)-Wasserstein space, denoted by \( \Prob_2(M) \), is defined to be the set of probability measures with finite moments of order \( 2 \) -- or, equivalently, with finite \( 2 \)-Wasserstein distance to any (and therefore, to all) Dirac measure.
Since our \( M \) is compact, every \( \mu\in\Prob(M) \) has finite second moment, so \( \Prob(M)=\Prob_2(M) \) and, by \cite[Theorem~6.9]{villani_optimal_2009}, \( \dw \) metrises the weak\( ^\ast \) topology.
We therefore write \( \Prob(M) \) throughout.
Likewise \( C^\infty_c(M)=C^\infty(M) \) and every diffeomorphism of \( M \) is compactly supported, so we drop the subscript \( c \) from \( C^\infty_c \) and from \( \mathfrak X_c \).

\subsection{Distributions and the action of the diffeomorphism group}
\label{sec:distributional}

Before introducing tangent spaces we set up the ambient linear space in which they will live, following what was done in \cite{gangbo_differential_2011} for Wasserstein spaces of Euclidean spaces.
The Riemannian formalism on \( \Prob(M) \) originates with \cite{otto_geometry_2001} and was made rigorous in \cite{lott_ricci_2009}; the description of the tangent space as a closure of gradients, and its identification with a space of distributions, are as in \cite[\S2--3]{gomes_differential_2024}, whose notation we follow where it does not conflict with ours.
We recall it here to fix notation and to isolate what is needed later on.

Let \( \Dist(M) \) denote the space of distributions on \( M \), that is, the topological dual of \( C^\infty(M) \), and let \( \mathcal M(M)\subset\Dist(M) \) be the subspace of signed Radon measures.
We write \( \mu(g) \) for the duality pairing of \( \mu\in\Dist(M) \) with a test function \( g\in C^\infty(M) \).
Under the inclusion \( \Prob(M)\subset\mathcal M(M)\subset\Dist(M) \), the weak\( ^\ast \) topology of \( \Prob(M) \) is the one induced by \( \Dist(M) \).

The group \( \operatorname{Diff}(M) \) of diffeomorphisms of \( M \) acts continuously on \( \Dist(M) \) by pushforward.
The subset \( \Prob(M) \) is invariant under the action, and the restricted action is the usual pushforward of probabilities.

Given \( \mu\in\Prob(M) \), we write
\[
    \mathcal O_\mu=\{f_\ast\mu:\ f\in\operatorname{Diff}(M)\}
    \quad\textrm{and}\quad
    \operatorname{Diff}(M)_\mu=\{f\in\operatorname{Diff}(M):\ f_\ast\mu=\mu\}
\]
for the orbit and the isotropy subgroup of \( \mu \).
The orbits \( \mathcal O_\mu \) partition \( \Prob(M) \), and it is these orbits, rather than \( \Prob(M) \) itself, that carry a differentiable structure; this is made precise in Proposition~\ref{prp:orbit.tangent} below.

We will be particularly interested in the \emph{weighted divergence with respect to the probability measure \( \mu \)}, which is the distribution
\[
    \nabla\!\cdot(v\mu):\psi\longmapsto-\int_M\langle\nabla\psi,v\rangle\,d\mu,
\]
where \(v\in L^1(\mu;TM)\).
We will use \(\Div\) to denote the usual divergence of vector fields.
Note that if \(\mu = \rho\vol \) for \(\rho \in C^1(M)\), then
\begin{equation}\label{eq:divergence.equivariance}
    \nabla\!\cdot(v\mu)=\Div(\rho v)\,\vol.
\end{equation}

By \cite{leslie_differential_1967}, the group \( \operatorname{Diff}(M) \) is a Fréchet Lie group whose Lie algebra is the space \( \mathfrak X(M) \) of smooth vector fields on \( M \).
The nature of the isotropy subgroup \( \operatorname{Diff}(M)_\mu \) is a more delicate matter, though we shall need no structure on it.
When \( \mu=\rho\,\vol \) with \( \rho\in C^\infty(M) \) and \( \rho>0 \), it is a Lie subgroup with Lie algebra
\[
    \mathfrak X(M)_\mu=\big\{v\in\mathfrak X(M):\ \nabla\!\cdot(v\mu)=0\big\},
\]
by \cite{ebin_groups_1970,omori_infinite_2006}, and Moser's theorem \cite{moser_volume_1965} identifies its orbit \( \mathcal O_\mu \) with the set of probability measures having a smooth positive density.
For a general \( \mu\in\Prob(M) \) we make no such claim, and we write \( \mathfrak X(M)_\mu \) for the right-hand side above, which is the kernel of the map \eqref{eq:orbit.derivative} below and is all that we shall use.
Consider the orbit map
\begin{align*}
    j:\operatorname{Diff}(M)/\operatorname{Diff}(M)_\mu&\lto\mathcal O_\mu \\
    [f]&\longmapsto f_\ast\mu .
\end{align*}
It is a bijection by construction.
It is continuous, because the weak\( ^\ast \) topology on \( \mathcal O_\mu \) is coarser than the quotient Fréchet topology on the source.
For \( \mu \) with smooth density, it is smooth as a map into the topological vector space \( \mathcal M(M) \), with injective derivative
\begin{equation}\label{eq:orbit.derivative}
    Dj:\ \mathfrak X(M)/\mathfrak X(M)_\mu\lto T_\mu\mathcal O_\mu,
    \qquad Dj(v)=-\nabla\!\cdot(v\mu).
\end{equation}
It is not, however, an open map, precisely because the weak\( ^\ast \) topology is strictly coarser than the induced Fréchet one; so \( j \) is not a homeomorphism onto its image.
For general \( \mu \), one should read Equation~\eqref{eq:orbit.derivative} as the \emph{definition} of the linear operator
\[
    \iota_\mu:L^2(\mu;TM)\lto\Dist(M),\qquad \iota_\mu(v)=-\nabla\!\cdot(v\mu),
\]
which is the only use we make of it in what follows.
Its kernel is described in Section~\ref{sec:tangent}, where \( \iota_\mu \) is restricted to \( L^2_0(\mu) \).

\subsection{The Eulerian and distributional models}
\label{sec:tangent}

Take \( \mu\in\Prob(M) \).
Let \( L^2(\mu;TM) \) denote the Hilbert space of Borel vector fields whose squared norm is \( \mu \)-integrable, and set
\begin{equation}\label{eq:L20}
    L^2_0(\mu)\coloneqq\overline{\{\nabla\psi:\ \psi\in C^\infty(M)\}}^{\,L^2(\mu;TM)} ,
\end{equation}
a closed subspace of \( L^2(\mu;TM) \); we write \( P_\mu:L^2(\mu;TM)\to L^2_0(\mu) \) for the orthogonal projection.
The \emph{tangent space} of \( \Prob(M) \) at \( \mu \) is
\begin{equation}\label{eq:tangent}
    T_\mu\Prob(M)\coloneqq\big\{-\nabla\!\cdot(v\mu):\ v\in L^2_0(\mu)\big\}\ \subset\ \Dist(M).
\end{equation}

The motivation for \eqref{eq:tangent} is the following.
A curve \( (\mu_t)_{t\in I} \) in \( (\Prob(M),\dw) \) is \emph{absolutely continuous} if there is \( k\in L^1(I) \) with \( \dw(\mu_s,\mu_t)\le\int_s^tk(r)\,dr \) for \( s<t \) in \( I \); it is \( AC^2 \) if moreover \( k\in L^2(I) \).
For such a curve there is, for a.e.\ \( t\in I \), a \emph{unique} \( v_t\in L^2_0(\mu_t) \) satisfying the \emph{continuity equation}
\begin{equation}\label{eq:continuity}
    \partial_t\mu_t+\nabla\!\cdot(v_t\mu_t)=0\qquad\text{in }\Dist(I\times M),
\end{equation}
and this \( v_t \) realises the metric derivative:
\begin{equation}\label{eq:metric.derivative}
    \lVert v_t\rVert_{L^2(\mu_t)}=\lvert\dot\mu\rvert(t)
    \coloneqq\lim_{h\to0}\frac{\dw(\mu_{t+h},\mu_t)}{\lvert h\rvert}.
\end{equation}
Uniqueness fails in the larger space \( L^2(\mu_t) \): any \emph{solenoidal} (i.e., divergence-free) perturbation of \( v_t \) also solves \eqref{eq:continuity}.
We call \( v_t \) the \emph{velocity field} of \( (\mu_t) \) and write \( \lvert\dot\mu\rvert(t) \) for its metric derivative throughout.
\begin{rmk}
\label{rmk:C1.test}
    The continuity equation \eqref{eq:continuity} is stated against test functions of class \( C^\infty \), but it holds against test functions of class \( C^1 \) as well, and we shall use it in that form.
    Indeed, for \( v\in L^2(\mu;TM) \) the linear functional
    \[
        \psi\longmapsto\int_M\langle\nabla\psi,v\rangle\,d\mu
    \]
    satisfies \( \lvert\int_M\langle\nabla\psi,v\rangle\,d\mu\rvert\le\lVert\nabla\psi\rVert_\infty\lVert v\rVert_{L^2(\mu)} \), so it is continuous for the norm of \( C^1(M) \), and \( C^\infty(M) \) is dense in \( C^1(M) \) because \( M \) is compact.
    The same observation applies to any identity in \( \Dist(M) \) whose two sides are continuous for that norm, and we shall not repeat it.
    See \cite[Chapter~8]{ambrosio_gradient_2008} for the corresponding discussion in the Euclidean setting.
\end{rmk}

These facts admit the following converse (cf. {\cite[Theorem 1.28]{gigli_second_2012}}).

\begin{lem}
\label{lem:gigli.converse}
    If \( (\mu_t)_{t\in I} \) and a Borel family \( (v_t) \) satisfy the continuity equation \eqref{eq:continuity} and \( t\mapsto\lVert v_t\rVert_{L^2(\mu_t)}\in L^1(I) \), then \( (\mu_t) \) is absolutely continuous and \( \lvert\dot\mu_t\rvert\le\lVert v_t\rVert_{L^2(\mu_t)} \) for a.e.\ \( t\in I \).
\end{lem}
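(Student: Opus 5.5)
Since the statement is the converse half of the Ambrosio--Gigli--Savaré characterisation \cite[Theorem~8.3.1]{ambrosio_gradient_2008}, transplanted from \( \R^n \) to the closed manifold \( M \), I will follow the standard route: regularise, solve the characteristics of the smoothed field, and pass to the limit using lower semicontinuity of \( \dw \).

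\textbf{Step 1 (smoothing).}
Fix \( \varepsilon>0 \) and a heat-kernel mollifier \( h_\varepsilon \) on \( M \). Set
\[
    \mu^\varepsilon_t\coloneqq h_\varepsilon\ast\mu_t,
    \qquad
    E^\varepsilon_t\coloneqq h_\varepsilon\ast(v_t\mu_t),
    \qquad
    v^\varepsilon_t\coloneqq E^\varepsilon_t/\mu^\varepsilon_t .
\]
Here the convolution of the vector measure is taken via parallel transport, or by embedding \( M\subset\R^N \) isometrically and projecting onto \( TM \). Then \( \mu^\varepsilon_t \) has a smooth positive density, and \( v^\varepsilon_t \) is smooth in \( x \) with Lipschitz constant controlled by \( \varepsilon \). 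By Jensen's inequality (joint convexity of \( (E,\rho)\mapsto|E|^2/\rho \)),
\[
    \lVert v^\varepsilon_t\rVert_{L^2(\mu^\varepsilon_t)}\le\lVert v_t\rVert_{L^2(\mu_t)}.
\]
The pair \( (\mu^\varepsilon_t,v^\varepsilon_t) \) still satisfies \eqref{eq:continuity}, up to a commutator error.

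\textbf{Step 2 (characteristics).}
For the smooth field \( v^\varepsilon \), the flow \( X^\varepsilon_{s,t} \) of the ODE \( \dot x=v^\varepsilon_t(x) \) exists, because \( M \) is compact and \( v^\varepsilon \) is Lipschitz in \( x \) and integrable in \( t \). By uniqueness for the linear continuity equation with Lipschitz field, tested against \( C^1 \) functions as in Remark~\ref{rmk:C1.test},
\[
    \mu^\varepsilon_t=(X^\varepsilon_{s,t})_\ast\mu^\varepsilon_s .
\]
Using the coupling \( (\Id,X^\varepsilon_{s,t})_\ast\mu^\varepsilon_s \), together with Cauchy--Schwarz and Fubini,
\[
    \dw(\mu^\varepsilon_s,\mu^\varepsilon_t)
    \le\Big(\int_M d\big(x,X^\varepsilon_{s,t}x\big)^2\,d\mu^\varepsilon_s\Big)^{1/2}
    \le\int_s^t\lVert v^\varepsilon_r\rVert_{L^2(\mu^\varepsilon_r)}\,dr
    \le\int_s^t\lVert v_r\rVert_{L^2(\mu_r)}\,dr .
\]

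\textbf{Step 3 (limit).}
As \( \varepsilon\to0 \) we have \( \mu^\varepsilon_t\to\mu_t \) weakly\(^\ast\), hence in \( \dw \). Passing to the limit gives
\[
    \dw(\mu_s,\mu_t)\le\int_s^t\lVert v_r\rVert_{L^2(\mu_r)}\,dr ,
\]
which is absolute continuity with \( k=\lVert v_\cdot\rVert \). Lebesgue differentiation then yields \( |\dot\mu|(t)\le\lVert v_t\rVert_{L^2(\mu_t)} \) for a.e.\ \( t \).

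\textbf{Main obstacle.}
The delicate point is that on a curved \( M \) convolution does not commute with \( \nabla\!\cdot \). The pair \( (\mu^\varepsilon,E^\varepsilon) \) therefore solves the continuity equation only up to a remainder of order \( \varepsilon \), involving curvature and \( \lVert v_t\rVert_{L^1(\mu_t)} \).

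I would first avoid this by a different route. Apply the superposition principle of Ambrosio--Gigli--Savaré, valid on \( \R^N \) for an isometric embedding of \( M \), where \( v_t \) is viewed as an \( \R^N \)-valued field tangent to \( M \). This gives a probability measure \( \eta \) on \( AC(I;\R^N) \) concentrated on solutions of \( \dot\gamma=v_t(\gamma) \), with
\[
    (e_t)_\ast\eta=\mu_t .
\]
These curves stay on \( M \), since \( \mu_t \) is supported there. The superposition theorem requires \( \int_I\int_M|v_t|\,d\mu_t\,dt<\infty \), which follows from the hypothesis via Cauchy--Schwarz. Coupling \( \mu_s \) and \( \mu_t \) through \( (e_s,e_t)_\ast\eta \) and using
\[
    d(\gamma_s,\gamma_t)\le\int_s^t|\dot\gamma_r|\,dr
\]
then reproduces the estimate of Step 2 directly, with no commutator issue. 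This is the version I would actually write.
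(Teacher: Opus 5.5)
The paper does not prove this lemma; it quotes it from \cite[Theorem~1.28]{gigli_second_2012}. The Euclidean prototype of that result, \cite[Theorem~8.3.1]{ambrosio_gradient_2008}, is proved by the mollification-and-characteristics scheme of your Steps~1--3. The argument you say you would actually write is a genuinely different route, and in substance it is correct.

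Because \( v_t(x)\in T_xM \), one has \( \langle\nabla_{\R^N}\psi,v_t\rangle=\langle\nabla(\psi|_M),v_t\rangle \) on \( M \), so the continuity equation transfers verbatim to \( \R^N \). The superposition curves lie in \( M \): apply \( (e_t)_\ast\eta=\mu_t \) at a countable dense set of times and use that \( M \) is closed. Their Riemannian length is \( \int\lvert\dot\gamma\rvert \), so the estimate comes out directly in the geodesic distance \( d \). This avoids the curvature commutator of the mollification route. It also avoids comparing chordal and geodesic distances, which mollifying in \( \R^N \) would force on you, since the chordal cost is the smaller one. The price is a heavier black box than the paper's one-line citation.

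Three points need repair.

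\emph{Minkowski, not Cauchy--Schwarz.} The bound \( \big(\int(\int_s^t\lvert v_r(\gamma_r)\rvert\,dr)^2\,d\eta(\gamma)\big)^{1/2}\le\int_s^t\lVert v_r\rVert_{L^2(\mu_r)}\,dr \) is Minkowski's integral inequality together with \( (e_r)_\ast\eta=\mu_r \). Cauchy--Schwarz in time plus Fubini, as you write in Step~2, gives only \( (t-s)^{1/2}\big(\int_s^t\lVert v_r\rVert^2_{L^2(\mu_r)}\,dr\big)^{1/2} \), which may be infinite under the mere \( L^1 \) hypothesis.

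\emph{Version of the superposition principle.} The principle in \cite[Theorem~8.2.1]{ambrosio_gradient_2008} assumes \( \int\!\!\int\lvert v_t\rvert^p\,d\mu_t\,dt<\infty \) for some \( p>1 \). With only \( \lVert v_\cdot\rVert_{L^2(\mu_\cdot)}\in L^1(I) \), you have two options:
\begin{itemize}
\item invoke the \( p=1 \) version of the superposition principle (Ambrosio, Bernard); or
\item first reparametrise time by \( s(t)=\int_{t_0}^t\big(1+\lVert v_r\rVert_{L^2(\mu_r)}\big)\,dr \). This makes the speed bounded by \( 1 \) and leaves \( \int\lVert v_r\rVert_{L^2(\mu_r)}\,dr \) unchanged.
\end{itemize}

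\emph{Narrow continuity.} Superposition needs \( t\mapsto\mu_t \) to be narrowly continuous. The continuity equation provides this only after modifying \( \mu_t \) on a null set of times \cite[Lemma~8.1.2]{ambrosio_gradient_2008}. Your conclusion therefore holds for that representative, and the same is true of the lemma itself, which leaves this implicit.
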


The tangent space \eqref{eq:tangent} is, in fact, tangent to the orbit through \( \mu \) rather than to \( \Prob(M) \) as a whole. 
It can be obtained by discarding the solenoidal components inside \( L^2(\mu;TM) \), i.e. the fields \( z \) with \( \nabla\!\cdot(z\mu)=0 \).
These are exactly the orthogonal complement of \( L^2_0(\mu) \): by \eqref{eq:L20}, a field \( z \) is orthogonal to \( L^2_0(\mu) \) if and only if \( \int_M\langle\nabla\psi,z\rangle\,d\mu=0 \) for every \( \psi\in C^\infty(M) \), which is precisely the vanishing of \( \nabla\!\cdot(z\mu) \).
For smooth \( z \) these are the elements of \( \mathfrak X(M)_\mu \), and when \( \mu=\rho\,\vol \) with \( \rho\in C^1(M) \) and \( \rho>0 \) the solenoidal fields of \( L^2(\mu;TM) \) form the closure of \( \mathfrak X(M)_\mu \), by Lemma~\ref{lem:regular.solenoidal} below.

That the discarded directions are, at least formally, invisible is recorded in \cite[\S3]{gomes_differential_2024}: if \( \nabla\!\cdot(z\mu)=0 \), then \( \frac{d}{dt}\big|_{t=0}(\exp tz)_\ast\mu=0 \) in \( \Dist(M) \), so that no first-order test against a smooth function distinguishes \( \exp t(v+z) \) from \( \exp tv \).
This says nothing about \( \dw \), and the two statements are not equivalent: a curve may have vanishing distributional derivative at \( t=0 \) and still move at unit metric speed.
We show later, in Section~\ref{sec:solenoidal}, that the discarding costs nothing metrically either, as solenoidal fields are metrically invisible at first order.

\begin{prp}\label{prp:orbit.tangent}
    Let \( \mu\in\Prob(M) \).
    Then \( -\nabla\!\cdot(v\mu)\in T_\mu\Prob(M) \) for every \( v\in\mathfrak X(M) \), the image of the derivative \eqref{eq:orbit.derivative} of the orbit map \( j \) corresponds under \eqref{eq:tangent} to \( P_\mu\left(\mathfrak X(M)\right)\subset L^2_0(\mu) \), and
    \[
        \overline{P_\mu\left(\mathfrak X(M)\right)}^{\,L^2(\mu;TM)}=L^2_0(\mu).
    \]
    The completion is taken in the Eulerian model, not in \( \Dist(M) \).
\end{prp}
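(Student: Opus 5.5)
The plan is to treat the three assertions in turn; each follows almost immediately from the orthogonal decomposition \( L^2(\mu;TM)=L^2_0(\mu)\oplus L^2_0(\mu)^\perp \) and the identification, recorded just before the statement, of \( L^2_0(\mu)^\perp \) with the solenoidal fields.

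\emph{First assertion.} Take \( v\in\mathfrak X(M) \). Since \( M \) is compact, \( v \) is bounded, so \( v\in L^2(\mu;TM) \). Write \( v=P_\mu v+(v-P_\mu v) \). The second summand lies in \( L^2_0(\mu)^\perp \). By the characterisation preceding the statement, this means \( \int_M\langle\nabla\psi,v-P_\mu v\rangle\,d\mu=0 \) for every \( \psi\in C^\infty(M) \), that is, \( \nabla\!\cdot((v-P_\mu v)\mu)=0 \). By linearity of \( \iota_\mu \),
\[
    -\nabla\!\cdot(v\mu)=-\nabla\!\cdot\big((P_\mu v)\mu\big),
\]
and the right-hand side belongs to \( T_\mu\Prob(M) \) by \eqref{eq:tangent}.

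\emph{Second assertion.} By \eqref{eq:orbit.derivative}, the image of \( Dj \), read as the definition of \( \iota_\mu \) for general \( \mu \), is \( \{\iota_\mu(v):v\in\mathfrak X(M)\} \). The computation above shows that this set equals \( \iota_\mu(P_\mu(\mathfrak X(M))) \). The correspondence \eqref{eq:tangent} is \( \iota_\mu \) restricted to \( L^2_0(\mu) \), and this restriction is injective: if \( w\in L^2_0(\mu) \) and \( \iota_\mu(w)=0 \), then \( w \) is solenoidal, so \( w\in L^2_0(\mu)\cap L^2_0(\mu)^\perp=\{0\} \). Hence the image of \( Dj \) corresponds exactly to \( P_\mu(\mathfrak X(M)) \).

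\emph{Density.} The gradients \( \nabla\psi \) with \( \psi\in C^\infty(M) \) are smooth vector fields, so they lie in \( \mathfrak X(M) \). They already belong to \( L^2_0(\mu) \), so \( P_\mu\nabla\psi=\nabla\psi \). This gives
\[
    \{\nabla\psi:\ \psi\in C^\infty(M)\}\subset P_\mu(\mathfrak X(M))\subset L^2_0(\mu).
\]
Taking closures in \( L^2(\mu;TM) \), the left-hand set is dense in \( L^2_0(\mu) \) by definition \eqref{eq:L20}, and \( L^2_0(\mu) \) is closed. The closure of \( P_\mu(\mathfrak X(M)) \) is therefore \( L^2_0(\mu) \).

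There is no genuine obstacle here. The one point needing care is that all closures and the injectivity are taken in the Hilbert space \( L^2(\mu;TM) \), not in \( \Dist(M) \). In particular, density is not inferred from any topology on distributions, which is the content of the final sentence of the statement. One must also note that \( \iota_\mu \) is well defined on all of \( L^2(\mu;TM) \), since \( L^2(\mu)\subset L^1(\mu) \) for a probability measure.
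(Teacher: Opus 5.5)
Your proposal is correct and follows the paper's proof essentially step by step. You decompose \(v=P_\mu v+z\) with \(z\) solenoidal to get \(-\nabla\!\cdot(v\mu)=-\nabla\!\cdot((P_\mu v)\mu)\), and then sandwich \(P_\mu(\mathfrak X(M))\) between the smooth gradients and \(L^2_0(\mu)\). The only addition is that you check the injectivity of \(\iota_\mu\) on \(L^2_0(\mu)\) inside the proof, which the paper records separately in Remark~\ref{rmk:two.models}.
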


\begin{proof}
    Let \( v\in L^2(\mu;TM) \) and write \( v=P_\mu v+z \) with \( z\in L^2_0(\mu)^\perp \).
    By the characterisation of \( L^2_0(\mu)^\perp \) recorded above we have \( \nabla\!\cdot(z\mu)=0 \), and therefore
    \[
        -\nabla\!\cdot(v\mu)=-\nabla\!\cdot\left((P_\mu v)\,\mu\right).
    \]
    Applying this to \( v\in\mathfrak X(M) \) and using \eqref{eq:orbit.derivative}, the image of the derivative of \( j \) is the set of \( -\nabla\!\cdot(u\mu) \) with \( u\in P_\mu(\mathfrak X(M)) \), which is contained in \( T_\mu\Prob(M) \) because \( P_\mu(\mathfrak X(M))\subset L^2_0(\mu) \).
    Conversely, every \( \psi\in C^\infty(M) \) has \( \nabla\psi\in\mathfrak X(M) \) and \( P_\mu\nabla\psi=\nabla\psi \), so \( P_\mu(\mathfrak X(M)) \) contains the gradients of smooth functions, whose closure in \( L^2(\mu;TM) \) is \( L^2_0(\mu) \) by \eqref{eq:L20}.
    The two inclusions give the displayed identity, and \eqref{eq:tangent} is its image under \( v\mapsto-\nabla\!\cdot(v\mu) \).
\end{proof}

\begin{rmk}[Two models of the tangent space]\label{rmk:two.models}
    The assignment
    \[
        \begin{split}
            \iota_\mu:\ L^2_0(\mu)&\lto\Dist(M) \\
            v &\mapsto -\nabla\!\cdot(v\mu)
        \end{split}
    \]
    is injective, its kernel in \( L^2(\mu;TM) \) being \( L^2_0(\mu)^\perp \) by the characterisation recorded above, and \eqref{eq:tangent} says exactly that \( T_\mu\Prob(M)=\iota_\mu\big(L^2_0(\mu)\big) \).
    Two models of the tangent space are therefore in play: the \emph{distributional} model \( T_\mu\Prob(M)\subset\Dist(M) \), which is the one relevant to Section~\ref{sec:distributional} and to all smoothness statements, and the \emph{Eulerian} model \( L^2_0(\mu)\subset L^2(\mu;TM) \), which carries the Hilbert structure and in which \eqref{eq:metric.derivative} is stated.
    We use \( \iota_\mu \) to identify the two and, unless the distinction matters, we speak of tangent vectors as vector fields, writing \( v\in T_\mu\Prob(M) \) for \( v\in L^2_0(\mu) \).
    The sign in \( \iota_\mu \) is fixed so that \( \iota_\mu(v_t) \) is the distributional time-derivative of a curve with velocity field \( v_t \), consistently with \eqref{eq:continuity} and \eqref{eq:orbit.derivative}.

    The object defined in \eqref{eq:tangent} is a closed subspace of \(L^2(\mu;TM)\) for every \(\mu\).
    It coincides with the tangent cone of \cite{gigli_second_2012} precisely for the regular measures of \cite{gigli_inverse_2011}, and we shall not need that identification.
\end{rmk}
    
\section{First-order theory of \texorpdfstring{\(\phi_\ast\)}{phi*}}
    \label{sec:derivatives}
    \subsection{Couplings and duality}
\label{sec:couplings}
Throughout this section \( \phi\in C^1(M,M) \) is fixed, and \( I\ni0 \) denotes an interval.
Given \( \mu\in\Prob(M) \) we write \( \nu\coloneqq\phi_\ast\mu \) and \( (\mu^y)_{y\in M} \) for the disintegration of \( \mu \) over \( \nu \) along \( \phi \), so that \( \mu=\int_M\mu^y\,d\nu(y) \) with \( \mu^y\left(\phi^{-1}(y)\right)=1 \) for \( \nu \)-almost every \( y \).
To the pair \( (\mu,\nu) \) we attach two bounded operators.

\begin{lem}
\label{lem:transfer.operator}
    Let \( \mu\in\Prob(M) \) and \( \nu=\phi_\ast\mu \).
    The \emph{Koopman operator} of \( \phi \) at \( \mu \),
    \[
        \Koop_\mu:L^2(\nu;TM)\lto L^2(\mu;TM),
        \qquad
        \left(\Koop_\mu u\right)(x)\coloneqq(D_x\phi)^{\!\ast}\,u\left(\phi(x)\right),
    \]
    is bounded, with \( \lVert\Koop_\mu\rVert_{\mathrm{op}}\le\Lip(\phi) \).
    Its adjoint \( \Transf_\mu\coloneqq(\Koop_\mu)^{\!\ast}:L^2(\mu;TM)\to L^2(\nu;TM) \), the \emph{transfer operator} of \( \phi \) at \( \mu \), is the fibrewise conditional expectation
    \begin{equation}\label{eq:transfer.disintegration}
        \left(\Transf_\mu v\right)(y)=\int_{\phi^{-1}(y)}D_x\phi\,v(x)\,d\mu^y(x),
        \qquad \nu\text{-a.e. }y \in M .
    \end{equation}
\end{lem}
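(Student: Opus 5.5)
The plan is to handle boundedness by a pointwise bound and the change of variables formula for the pushforward, and then to identify the adjoint through the disintegration.

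\emph{Boundedness.} For \( u\in L^2(\nu;TM) \) and every \( x \) we have \( \lvert(D_x\phi)^\ast u(\phi(x))\rvert\le\lVert D_x\phi\rVert\,\lvert u(\phi(x))\rvert \). On a Riemannian manifold \( \lVert D_x\phi\rVert\le\Lip(\phi) \): one differentiates \( \phi \) along geodesics through \( x \) and compares lengths. Squaring, integrating against \( \mu \), and using \( \int_M g\circ\phi\,d\mu=\int_M g\,d\nu \) gives
\[
    \lVert\Koop_\mu u\rVert_{L^2(\mu)}^2\le\Lip(\phi)^2\lVert u\rVert_{L^2(\nu)}^2 .
\]
Measurability of \( \Koop_\mu u \) is clear, since \( D\phi \) is continuous and \( u\circ\phi \) is Borel. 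Well-definedness on equivalence classes follows because a \( \nu \)-null set pulls back to a \( \mu \)-null set.

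\emph{Adjoint.} Call the right-hand side of \eqref{eq:transfer.disintegration} \( w(y) \). First I check that \( w\in L^2(\nu;TM) \).
\begin{enumerate}
    \item For \( \nu \)-a.e.\ \( y \), the measure \( \mu^y \) is concentrated on \( \phi^{-1}(y) \). For \( x \) in that fibre, \( D_x\phi\,v(x)\in T_yM \). So the integral is a Bochner integral in the single finite-dimensional space \( T_yM \), and \( w(y) \) makes sense.
    \item Jensen's inequality in \( T_yM \) gives
    \[
        \lvert w(y)\rvert^2\le\int\lVert D_x\phi\rVert^2\lvert v(x)\rvert^2\,d\mu^y(x).
    \]
    Integrating in \( \nu \) and using \( \mu=\int\mu^y\,d\nu \) yields \( \lVert w\rVert_{L^2(\nu)}\le\Lip(\phi)\lVert v\rVert_{L^2(\mu)} \).
    \item Borel measurability of \( y\mapsto w(y) \) comes from the measurability of the disintegration. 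To justify it, one can pair with a countable family of continuous vector fields spanning each tangent space, or work in local trivialisations of \( TM \).
\end{enumerate}

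Then, for \( u\in L^2(\nu;TM) \),
\[
    \langle\Koop_\mu u,v\rangle_{L^2(\mu)}=\int_M\int_{\phi^{-1}(y)}\langle u(\phi(x)),D_x\phi\,v(x)\rangle\,d\mu^y(x)\,d\nu(y).
\]
On each fibre \( u(\phi(x))=u(y) \) is constant, so it comes out of the inner integral and leaves \( \int\langle u(y),w(y)\rangle\,d\nu(y) \). Fubini is justified by the Cauchy--Schwarz bound above. This shows \( \Koop_\mu^\ast v=w \), and uniqueness of adjoints concludes the proof.

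\emph{Main obstacle.} The only delicate point is the measure-theoretic bookkeeping: that \( w \) is a well-defined Borel section of \( TM \) independent of representatives. I would handle this as in item 3. The computations themselves are routine.
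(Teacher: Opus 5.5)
Your proposal is correct and follows essentially the same route as the paper. Both arguments bound \( \Koop_\mu \) by the pointwise estimate on \( (D_x\phi)^\ast \) together with \( \int\lvert u\circ\phi\rvert^2\,d\mu=\int\lvert u\rvert^2\,d\nu \), use Jensen fibrewise to place the candidate \( w \) in \( L^2(\nu;TM) \), and identify the adjoint by disintegrating \( \langle v,\Koop_\mu u\rangle_{L^2(\mu)} \) over the fibres. Your extra attention to the measurability of \( w \) goes slightly beyond the paper, which settles that point by appeal to the disintegration theorem.
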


\begin{proof}
    Since \( \nu=\phi_\ast\mu \) we have \( \int_M\lvert u\circ\phi\rvert^2\,d\mu=\int_M\lvert u\rvert^2\,d\nu \) for every \( u\in L^2(\nu;TM) \), and \( \lvert(D_x\phi)^{\!\ast}\xi\rvert\le\Lip(\phi)\lvert\xi\rvert \) for every \( \xi\in T_{\phi(x)}M \), the quantity \( \Lip(\phi)=\lVert D\phi\rVert_\infty \) being finite because \( M \) is compact and \( \phi \) is of class \( C^1 \).
    Combining the two gives \( \lVert\Koop_\mu u\rVert_{L^2(\mu)}\le\Lip(\phi)\lVert u\rVert_{L^2(\nu)} \), so \( \Koop_\mu \) is bounded and so is its adjoint, with the same bound.

    Write \( Tv \) for the right-hand side of \eqref{eq:transfer.disintegration}.
    For \( x\in\phi^{-1}(y) \) the vector \( D_x\phi\,v(x) \) lies in the fixed vector space \( T_yM \), so the integral is meaningful, and \( y\mapsto Tv(y) \) is \( \nu \)-measurable by the disintegration theorem.
    By Jensen's inequality a conditional expectation does not increase the \( L^2 \) norm, so
    \[
        \lVert Tv\rVert^2_{L^2(\nu)}\le\int_M\lvert D_x\phi\,v(x)\rvert^2\,d\mu(x)\le\Lip(\phi)^2\lVert v\rVert^2_{L^2(\mu)},
    \]
    and in particular \( Tv\in L^2(\nu;TM) \).
    Finally, for \( u\in L^2(\nu;TM) \),
    \[
    \begin{split}
        \big\langle v,\Koop_\mu u\big\rangle_{L^2(\mu)}
        &=\int_M\big\langle D_x\phi\,v(x),\,u(\phi(x))\big\rangle\,d\mu(x)\\
        &=\int_M\Big\langle\int_{\phi^{-1}(y)}D_x\phi\,v(x)\,d\mu^y(x),\ u(y)\Big\rangle\,d\nu(y)
        =\big\langle Tv,u\big\rangle_{L^2(\nu)} ,
    \end{split}
    \]
    and \( u \) being arbitrary, \( \Transf_\mu v=Tv \).
\end{proof}

\begin{lem}[The transfer operator intertwines the pushforward]
\label{lem:transfer.intertwines}
    Let \( \mu\in\Prob(M) \), let \( \nu=\phi_\ast\mu \) and let \( v\in L^2(\mu;TM) \).
    Then
    \begin{equation}\label{eq:transfer.commutes}
        \nabla\!\cdot\big(\Transf_\mu v\ \nu\big)=\phi_\ast\Big(\nabla\!\cdot\big(v\,\mu\big)\Big)
    \end{equation}
    in \( \Dist(M) \), that is, \( \iota_\nu(\Transf_\mu v)=\phi_\ast\iota_\mu(v) \).
\end{lem}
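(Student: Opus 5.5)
The identity \eqref{eq:transfer.commutes} is an equality in \( \Dist(M) \), so the plan is to test both sides against an arbitrary \( \psi\in C^\infty(M) \) and compare. The left-hand side gives, by the definition of the weighted divergence,
\[
    \nabla\!\cdot\big(\Transf_\mu v\,\nu\big)(\psi)=-\int_M\langle\nabla\psi,\Transf_\mu v\rangle\,d\nu=-\big\langle\Transf_\mu v,\nabla\psi\big\rangle_{L^2(\nu)} .
\]
Since \( \nabla\psi \) is bounded, it lies in \( L^2(\nu;TM) \). The duality of Lemma~\ref{lem:transfer.operator} therefore rewrites this as \( -\langle v,\Koop_\mu\nabla\psi\rangle_{L^2(\mu)} \).

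The key pointwise observation is that the Koopman operator carries gradients to gradients of pullbacks:
\[
    (\Koop_\mu\nabla\psi)(x)=(D_x\phi)^{\!\ast}\nabla\psi(\phi(x))=\nabla(\psi\circ\phi)(x).
\]
This follows from the chain rule: for \( w\in T_xM \) we have \( \langle(D_x\phi)^\ast\nabla\psi(\phi(x)),w\rangle=d\psi_{\phi(x)}(D_x\phi\,w)=d(\psi\circ\phi)_x(w) \). Hence the left-hand side equals \( -\int_M\langle\nabla(\psi\circ\phi),v\rangle\,d\mu \).

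For the right-hand side, the pushforward of a distribution is defined by duality, \( \phi_\ast T(\psi)=T(\psi\circ\phi) \). Applied to \( T=\nabla\!\cdot(v\mu) \), this gives exactly \( -\int_M\langle\nabla(\psi\circ\phi),v\rangle\,d\mu \), so the two sides agree for every \( \psi \).

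The only point needing care, and the one I expect to be the main obstacle, is regularity. Because \( \phi \) is merely \( C^1 \), the function \( \psi\circ\phi \) is \( C^1 \) rather than \( C^\infty \), so \( \nabla\!\cdot(v\mu) \) must be evaluated on a \( C^1 \) test function. The pushforward \( \phi_\ast \) on distributions should therefore be read as \( T\mapsto T(\cdot\circ\phi) \) for distributions of order at most one. Remark~\ref{rmk:C1.test} covers precisely this: the functional \( \psi\mapsto\int\langle\nabla\psi,v\rangle\,d\mu \) is continuous in the \( C^1 \) norm, so it extends uniquely to \( C^1(M) \). With this extension, \( \phi_\ast(\nabla\!\cdot(v\mu)) \) is a well-defined distribution.

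The reformulation \( \iota_\nu(\Transf_\mu v)=\phi_\ast\iota_\mu(v) \) then follows by multiplying both sides by \( -1 \).
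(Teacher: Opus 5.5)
Your proof is correct and follows the paper's own argument essentially verbatim. You test against \( \psi\in C^\infty(M) \), use the adjunction of Lemma~\ref{lem:transfer.operator} together with \( \Koop_\mu\nabla\psi=\nabla(\psi\circ\phi) \), and invoke Remark~\ref{rmk:C1.test} to justify evaluating \( \nabla\!\cdot(v\mu) \) on the \( C^1 \) function \( \psi\circ\phi \).
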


\begin{proof}
    Fix \( \psi\in C^\infty(M) \).
    Then \( \psi\circ\phi \) is of class \( C^1 \), so the right-hand side of \eqref{eq:transfer.commutes} is meaningful by Remark~\ref{rmk:C1.test}.
    Since \( \Koop_\mu\nabla\psi=\nabla(\psi\circ\phi) \), the adjunction of Lemma~\ref{lem:transfer.operator} gives
    \[
        \int_M\big\langle\nabla\psi,\Transf_\mu v\big\rangle\,d\nu
        =\int_M\big\langle\Koop_\mu\nabla\psi,\,v\big\rangle\,d\mu
        =\int_M\big\langle\nabla(\psi\circ\phi),v\big\rangle\,d\mu .
    \]
    The two extremes are \( -\nabla\!\cdot(\Transf_\mu v\ \nu)(\psi) \) and \( -\nabla\!\cdot(v\mu)(\psi\circ\phi)=-\phi_\ast\big(\nabla\!\cdot(v\mu)\big)(\psi) \).
\end{proof}

The transfer operator on vector fields and the pushforward of distributions are thus intertwined by the map \( \iota_\mu \) of Remark~\ref{rmk:two.models}.

\begin{prp}
    \label{prp:functoriality.pushforward}
    Let \( (\mu_t)_{t\in I} \) be an absolutely continuous curve in \( \Prob(M) \) with velocity field \( v_t\in T_{\mu_t}\Prob(M) \) and \( t\mapsto\lVert v_t\rVert_{L^2(\mu_t)}\in L^1(I) \).
    Put \( \nu_t\coloneqq\phi_\ast\mu_t \) and \( w_t\coloneqq\Transf_{\mu_t}v_t \).
    Then, for a.e.\ \( t\in I \),
    \begin{enumerate}
        \item\label{item:w.bound} \( \lVert w_t\rVert_{L^2(\nu_t)}\le\Lip(\phi)\,\lVert v_t\rVert_{L^2(\mu_t)} \);
        \item\label{item:w.cont} the pair \( (\nu_t,w_t) \) solves the continuity equation in \( \Dist(I\times M) \), and consequently \( \nu_t \) is absolutely continuous with \( \lvert\dot\nu_t\rvert\le\Lip(\phi)\,\lvert\dot\mu_t\rvert) \).
    \end{enumerate}
\end{prp}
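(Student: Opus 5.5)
Item (i) follows pointwise in \(t\) from Lemma~\ref{lem:transfer.operator}. That lemma gives \( \lVert\Transf_{\mu_t}\rVert_{\mathrm{op}}=\lVert\Koop_{\mu_t}\rVert_{\mathrm{op}}\le\Lip(\phi) \) for every \( t \), since the bound does not depend on the measure. Applying it to \( v_t \) yields the inequality wherever \( v_t \) is defined, hence for a.e.\ \( t \). This also shows \( t\mapsto\lVert w_t\rVert_{L^2(\nu_t)}\in L^1(I) \), which is needed later.

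For (ii), I would test the continuity equation for \( (\mu_t,v_t) \) against functions of the form \( \eta(t)\psi(\phi(x)) \), with \( \eta\in C^\infty_c(I) \) and \( \psi\in C^\infty(M) \). The composite \( \psi\circ\phi \) is only \( C^1 \), so the first step is to extend \eqref{eq:continuity} in \( \Dist(I\times M) \) to test functions \( \eta\otimes g \) with \( g\in C^1(M) \). This is the time-dependent version of Remark~\ref{rmk:C1.test}. Approximate \( g \) in \( C^1 \) by smooth \( g_k \). Then \( \int_I\eta'(t)\mu_t(g_k)\,dt\to\int_I\eta'(t)\mu_t(g)\,dt \) by uniform convergence, and the term \( \int_I\eta\int\langle\nabla g_k,v_t\rangle d\mu_t\,dt \) converges by dominated convergence. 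The latter uses the bound \( \lVert\nabla g_k-\nabla g\rVert_\infty\lVert v_t\rVert_{L^2(\mu_t)} \) together with the \( L^1 \) hypothesis. I would also check that \( t\mapsto w_t \) is a Borel family. This reduces to measurability of \( t\mapsto\int\langle\nabla\psi,w_t\rangle d\nu_t=\int\langle\nabla(\psi\circ\phi),v_t\rangle d\mu_t \), which is inherited from \( (v_t) \).

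With this in hand, \( \int_I\eta'(t)\,\nu_t(\psi)\,dt=\int_I\eta'(t)\,\mu_t(\psi\circ\phi)\,dt \). By the extended continuity equation, this equals \( -\int_I\eta(t)\int_M\langle\nabla(\psi\circ\phi),v_t\rangle\,d\mu_t\,dt \). Lemma~\ref{lem:transfer.intertwines}, or directly the adjunction \( \Koop_{\mu_t}\nabla\psi=\nabla(\psi\circ\phi) \), turns the inner integral into \( \int_M\langle\nabla\psi,w_t\rangle\,d\nu_t \). That is exactly \( \partial_t\nu_t+\nabla\!\cdot(w_t\nu_t)=0 \) tested on \( \eta\otimes\psi \). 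Linear combinations of such products are dense in \( C^\infty_c(I\times M) \), and both sides are continuous in the \( C^1 \) norm by the same \( L^1 \) bound, so the equation holds in \( \Dist(I\times M) \).

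Since \( \lVert w_t\rVert_{L^2(\nu_t)}\in L^1(I) \), Lemma~\ref{lem:gigli.converse} gives that \( \nu_t \) is absolutely continuous with \( \lvert\dot\nu_t\rvert\le\lVert w_t\rVert_{L^2(\nu_t)} \) for a.e.\ \( t \). Combining this with (i) and with \( \lVert v_t\rVert_{L^2(\mu_t)}=\lvert\dot\mu_t\rvert \) from \eqref{eq:metric.derivative}, which holds because \( v_t\in L^2_0(\mu_t) \) is the velocity field, yields \( \lvert\dot\nu_t\rvert\le\Lip(\phi)\lvert\dot\mu_t\rvert \). Note that \( w_t \) need not lie in \( L^2_0(\nu_t) \), but Lemma~\ref{lem:gigli.converse} does not require this. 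The main technical point I expect is the extension to \( C^1 \) test functions in space-time and the measurability of \( (w_t) \); the rest is bookkeeping.
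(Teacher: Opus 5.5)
Your proposal is correct and follows essentially the same route as the paper: item (i) comes from the operator bound of Lemma~\ref{lem:transfer.operator}, and item (ii) comes from testing the continuity equation of \( (\mu_t,v_t) \) against \( \psi\circ\phi\in C^1(M) \), using \( \nabla(\psi\circ\phi)=\Koop_{\mu_t}\nabla\psi \) and the adjunction, then concluding with Lemma~\ref{lem:gigli.converse}, item (i) and \eqref{eq:metric.derivative}. The paper works directly in \( \Dist(I) \) for each fixed \( \psi \) and cites Remark~\ref{rmk:C1.test}, so your space-time extension to \( C^1 \) test functions, the tensor-product density argument and the measurability check only make explicit details the paper leaves implicit.
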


\begin{proof}
    Item~\ref{item:w.bound} is the bound on \( \lVert\Transf_{\mu_t}\rVert_{\mathrm{op}} \) of Lemma~\ref{lem:transfer.operator}.

    For item~\ref{item:w.cont}, fix \( \psi\in C^\infty(M) \).
    Then \( \psi\circ\phi\in C^1(M) \), and by Remark~\ref{rmk:C1.test} we may test the continuity equation of \( (\mu_t,v_t) \) against it.
    Since \( \nabla(\psi\circ\phi)(x)=(D_x\phi)^{\!\ast}\nabla\psi(\phi(x))=\left(\Koop_{\mu_t}\nabla\psi\right)(x) \), we obtain, in \( \Dist(I) \),
    \[
        \frac{d}{dt}\int_M\psi\,d\nu_t
        =\frac{d}{dt}\int_M\psi\circ\phi\,d\mu_t
        =\int_M\big\langle\Koop_{\mu_t}\nabla\psi,\,v_t\big\rangle\,d\mu_t
        =\int_M\big\langle\nabla\psi,\,w_t\big\rangle\,d\nu_t ,
    \]
    the last equality being the definition of \( w_t \) as the adjoint image \( \Transf_{\mu_t}v_t \).
    This is the continuity equation for \( (\nu_t,w_t) \), and \( t\mapsto\lVert w_t\rVert_{L^2(\nu_t)} \) lies in \( L^1(I) \) by item~\ref{item:w.bound}, so Lemma~\ref{lem:gigli.converse} gives the absolute continuity of \( (\nu_t) \) and the bound on its metric derivative.
\end{proof}

\begin{rmk}
    For \( F=f_\ast \), with \( f \) smooth and proper between complete Riemannian manifolds, the operator \eqref{eq:transfer.disintegration} and the bound of Lemma~\ref{lem:transfer.operator} appear in \cite[Theorem 3.3 and Proposition 3.5]{lessel_differentiable_2020}, together with the conclusion of Proposition~\ref{prp:functoriality.pushforward}.
    The hypotheses in force here are strictly weaker, \( M \) being closed and \( \phi \) only of class \( C^1 \), and the proofs above are self-contained.
\end{rmk}

Although Proposition~\ref{prp:functoriality.pushforward} produces a solution of the continuity equation for \(\nu_t\), the transfer operators \( \Transf_{\mu_t} \) do not restrict to operators \( T_{\mu_t}\Prob(M)\to T_{\nu_t}\Prob(M) \). 
Still, at each measure \( \nu\in \Prob(M) \), we have the orthogonal decomposition
\[
\begin{split}
    L^2(\nu,TM) &=\overline{\nabla C^\infty(M)}^{L^2(\nu; TM)}\oplus \{z\in L^2(\nu;TM): \nabla\!\cdot (z\nu)=0\} \\ 
                &= T_\nu \Prob(M) \oplus T_\nu \Prob(M)^\perp
\end{split}
\]
with orthogonal projection \( P_\nu:L^2(\nu, TM)\to T_\nu \Prob(M) \). 
By \cite[Proposition 1.30]{gigli_second_2012}, we have that if \( \tau_t \) is a curve in \( \Prob(M) \) and \( (\tau_t,z_t) \) satisfies the continuity equation, then \( P_{\tau_t}z_t\in T_{\tau_t}\Prob(M) \) is the vector field whose norm realizes the metric derivative of \( \tau_t \).

Composing the transfer operator with this projection therefore produces a genuine tangent field.
For \( f_\ast \) between Wasserstein spaces of complete manifolds this is the route taken in \cite[Corollary 3.13]{lessel_differentiable_2020}.

\begin{prp}[Velocity field of the image curve]\label{prp:image.velocity}
    In the setting of Proposition~\ref{prp:functoriality.pushforward}, the velocity field of \( \nu_t \) is
    \[
        v_t^\nu\coloneqq P_{\nu_t}\Transf_{\mu_t}v_t\ \in\ L^2_0(\nu_t),
        \qquad\text{and}\qquad
        \lvert\dot\nu_t\rvert=\big\lVert v_t^\nu\big\rVert_{L^2(\nu_t)}
        \quad\text{for a.e.\ }t\in I .
    \]
\end{prp}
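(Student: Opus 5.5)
Proposition~\ref{prp:functoriality.pushforward} already does most of the work. I will check that the projected field still solves the continuity equation, then invoke uniqueness of the velocity field in \( L^2_0(\nu_t) \), and finally read off the metric derivative.

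First, by Proposition~\ref{prp:functoriality.pushforward}\ref{item:w.cont}, \( (\nu_t) \) is absolutely continuous and the pair \( (\nu_t,w_t) \) with \( w_t=\Transf_{\mu_t}v_t \) solves the continuity equation \eqref{eq:continuity}. For a.e.\ \( t \), split \( w_t=P_{\nu_t}w_t+z_t \) with \( z_t\in L^2_0(\nu_t)^\perp \). By the characterisation of the orthogonal complement recorded before Proposition~\ref{prp:orbit.tangent}, we have \( \nabla\!\cdot(z_t\nu_t)=0 \) in \( \Dist(M) \). Hence, for each \( \psi\in C^\infty(M) \), the spatial pairings agree: \( \int\langle\nabla\psi,w_t\rangle\,d\nu_t=\int\langle\nabla\psi,P_{\nu_t}w_t\rangle\,d\nu_t \) for a.e.\ \( t \). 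Testing against \( \eta(t)\psi(x) \) then shows that \( (\nu_t,v^\nu_t) \) also solves \eqref{eq:continuity} in \( \Dist(I\times M) \), since finite sums of such products are dense in the relevant test space.

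There is a measurability point to settle: \( t\mapsto v^\nu_t \) should be a Borel family, and \( t\mapsto\lVert v^\nu_t\rVert_{L^2(\nu_t)} \) should be integrable. The bound is immediate, because the projection is a contraction: \( \lVert v^\nu_t\rVert\le\lVert w_t\rVert\le\Lip(\phi)\lVert v_t\rVert\in L^1(I) \) by item~\ref{item:w.bound}. For measurability, I would avoid constructing \( P_{\nu_t} \) measurably in \( t \). Instead, since \( (\nu_t) \) is absolutely continuous, there is a velocity field \( \tilde v_t\in L^2_0(\nu_t) \), unique for a.e.\ \( t \), solving \eqref{eq:continuity}. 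Then \( \tilde v_t-w_t \) is solenoidal for a.e.\ \( t \). To see this, compare the two equations against \( \eta(t)\psi(x) \), with \( \psi \) ranging over a countable \( C^1 \)-dense family as permitted by Remark~\ref{rmk:C1.test}. This yields \( \int\langle\nabla\psi,\tilde v_t-w_t\rangle\,d\nu_t=0 \) for all \( \psi \) outside a common null set of times. Therefore \( \tilde v_t-w_t\in L^2_0(\nu_t)^\perp \). Since \( \tilde v_t\in L^2_0(\nu_t) \), it follows that \( \tilde v_t=P_{\nu_t}w_t=v^\nu_t \) for a.e.\ \( t \), which settles both the identification and the measurability at once.

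Finally, the metric-derivative identity \eqref{eq:metric.derivative} for the velocity field gives \( \lvert\dot\nu_t\rvert=\lVert v^\nu_t\rVert_{L^2(\nu_t)} \) a.e. Equivalently, this is \cite[Proposition 1.30]{gigli_second_2012} applied to the pair \( (\nu_t,w_t) \). The only delicate step is the null-set bookkeeping in the previous paragraph: passing from a distributional identity in \( (t,x) \) to a pointwise-in-\( t \) orthogonality statement. The countable dense family of test functions handles it.
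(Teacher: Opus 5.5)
Your proposal is correct and follows the paper's route: Proposition~\ref{prp:functoriality.pushforward} gives that $(\nu_t,\Transf_{\mu_t}v_t)$ solves the continuity equation with integrable norm, and the velocity field is then identified as the projection of that solution onto $L^2_0(\nu_t)$. The only difference is that the paper cites \cite[Proposition~1.30]{gigli_second_2012} for this last step, whereas you derive it yourself from two ingredients: the existence of the velocity field $\tilde v_t$ of an absolutely continuous curve, and the countable-test-function argument showing that $\tilde v_t-\Transf_{\mu_t}v_t$ is solenoidal for a.e.\ $t$.
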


\begin{proof}
    By Proposition~\ref{prp:functoriality.pushforward}\ref{item:w.cont} the pair \( (\nu_t,\Transf_{\mu_t}v_t) \) solves the continuity equation, and \( t\mapsto\lVert\Transf_{\mu_t}v_t\rVert_{L^2(\nu_t)}\in L^1(I) \) by Proposition~\ref{prp:functoriality.pushforward}\ref{item:w.bound}.
    The claim is \cite[Proposition~1.30]{gigli_second_2012} applied to that pair: the projection onto \( L^2_0(\nu_t) \) of any solution of the continuity equation is the velocity field, and its norm realises the metric derivative.
\end{proof}

\begin{prp}\label{prp:pushforward.rate}
    Let \( \phi:M\to M \) be Lipschitz and let \( \mu_0\in\Prob(M) \) satisfy \( \phi_\ast\mu_0=\mu_0 \).
    Then, for every \( \mu\in\Prob(M) \),
    \begin{equation}\label{eq:pushforward.rate}
        \dw\left(\mu,\phi_\ast\mu\right)\ \le\ \left(1+\Lip(\phi)\right)\,\dw(\mu_0,\mu) .
    \end{equation}
    In particular, if \( (\mu_t)_{t\in I} \) is any curve through \( \mu_0 \) and \( \nu_t\coloneqq\phi_\ast\mu_t \), then
    \[
        \dw(\mu_t,\nu_t)=O\left(\dw(\mu_0,\mu_t)\right);
    \]
    if moreover \( (\mu_t) \) is absolutely continuous with velocity field \( (v_t) \) satisfying \( V \coloneqq\esssup_{s\in[0,t_0]}\lVert v_s\rVert_{L^2(\mu_s)}<\infty \), then \( \dw(\mu_t,\nu_t)\le(1+\Lip(\phi))V\,t \) for \( t\in[0,t_0] \).
\end{prp}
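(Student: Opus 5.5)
The plan is to combine the triangle inequality through the fixed point \( \mu_0 \) with the Lipschitz contraction property of the pushforward on \( \dw \).
The only ingredient beyond the triangle inequality is the estimate
\[
    \dw(\phi_\ast\mu,\phi_\ast\mu')\le\Lip(\phi)\,\dw(\mu,\mu'),\qquad \mu,\mu'\in\Prob(M).
\]
To prove it, take an optimal plan \( \pi\in\Pi(\mu,\mu') \) and push it forward by \( \phi\times\phi \).
The result is a coupling of \( \phi_\ast\mu \) and \( \phi_\ast\mu' \), and its cost satisfies
\[
    \int d(\phi(x_1),\phi(x_2))^2\,d\pi\le\Lip(\phi)^2\int d(x_1,x_2)^2\,d\pi .
\]

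With this in hand, I write \( \dw(\mu,\phi_\ast\mu)\le\dw(\mu,\mu_0)+\dw(\mu_0,\phi_\ast\mu) \).
Since \( \phi_\ast\mu_0=\mu_0 \), the second term equals \( \dw(\phi_\ast\mu_0,\phi_\ast\mu)\le\Lip(\phi)\dw(\mu_0,\mu) \), and this gives \eqref{eq:pushforward.rate}.
The first consequence follows by taking \( \mu=\mu_t \), with constant \( 1+\Lip(\phi) \) uniform in \( t \).

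For the absolutely continuous statement, I bound \( \dw(\mu_0,\mu_t) \) by integrating the metric derivative.
By absolute continuity and \eqref{eq:metric.derivative}, or alternatively by Lemma~\ref{lem:gigli.converse} applied to the pair \( (\mu_s,v_s) \), we have \( \dw(\mu_0,\mu_t)\le\int_0^t\lvert\dot\mu\rvert(s)\,ds\le\int_0^t\lVert v_s\rVert_{L^2(\mu_s)}\,ds\le Vt \) for \( t\in[0,t_0] \).
Inserting this into \eqref{eq:pushforward.rate} concludes.

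There is no real obstacle here.
The only point needing care is that the metric-derivative inequality \( \dw(\mu_s,\mu_t)\le\int_s^t\lvert\dot\mu\rvert \) holds for absolutely continuous curves, which is the standard characterisation.
Note also that the bound \( \lvert\dot\mu\rvert(s)\le\lVert v_s\rVert \) is needed only almost everywhere, and the essential supremum handles this.
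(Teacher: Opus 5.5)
Your proposal is correct and follows the paper's proof essentially verbatim. The paper likewise shows that \( \phi_\ast \) is \( \Lip(\phi) \)-Lipschitz on \( (\Prob(M),\dw) \) by pushing a transport plan forward under \( \phi\times\phi \), applies the triangle inequality through the fixed point \( \mu_0=\phi_\ast\mu_0 \), and obtains the linear rate by integrating the metric derivative, \( \dw(\mu_0,\mu_t)\le\int_0^t\lVert v_s\rVert_{L^2(\mu_s)}\,ds\le Vt \).
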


\begin{proof}
    For a transport plan \( \Pi \) between \( \mu_0 \) and \( \mu \), the pushforward \( (\phi\times\phi)_\ast\Pi \) is a transport plan between \( \phi_\ast\mu_0 \) and \( \phi_\ast\mu \).
    Its cost is at most \( \Lip(\phi)^2 \) times that of \( \Pi \), and the map \( \phi_\ast \) is \( \Lip(\phi) \)-Lipschitz on \( (\Prob(M),\dw) \).
    Using \( \phi_\ast\mu_0=\mu_0 \),
    \[
        \dw(\mu,\phi_\ast\mu)\le\dw(\mu,\mu_0)+\dw(\phi_\ast\mu_0,\phi_\ast\mu)\le\left(1+\Lip(\phi)\right)\dw(\mu_0,\mu),
    \]
    which is \eqref{eq:pushforward.rate}.

    For the last assertion, we integrate Equation \eqref{eq:metric.derivative}, obtaining \( \dw(\mu_0,\mu_t)\le\int_0^t\lVert v_s\rVert_{L^2(\mu_s)}\,ds\le V t \), as desired.
\end{proof}

Estimate \eqref{eq:pushforward.rate} is parametrisation free, and the rate \( O(t) \) is a statement about the parametrisation of \( (\mu_t) \) rather than about \( \phi \).
Some hypothesis beyond integrability of the speed is genuinely needed: by H\"older, \( \lVert v_\cdot\rVert_{L^2(\mu_\cdot)}\in L^p(I) \) yields only
\[
    \dw(\mu_0,\mu_t)\le\int_0^t\lVert v_s\rVert_{L^2(\mu_s)}\,ds\le t^{1-1/p}\,\lVert v_\cdot\rVert_{L^p(0,t)}=o\left(t^{1-1/p}\right),
\]
and this is optimal, in the sense that for every \( \alpha\in(0,1) \) the conclusion \( \dw(\mu_t,\nu_t)=O(t) \) fails for a curve whose speed lies in \( L^p \) for all \( p<(1-\alpha)^{-1} \); Example~\ref{exm:rate.sharpness} exhibits such a curve on the circle.

Proposition~\ref{prp:pushforward.rate} bounds \( \dw(\mu_t,\nu_t) \) from above and nothing more, so it cannot separate a genuinely linear rate from the degenerate case \( \dw(\mu_t,\nu_t)=o(t) \).
That case is the one of interest: it says exactly that \( \mu_0 \) can be deformed along \( (\mu_t) \) without losing invariance to first order.
Deciding whether or not this holds needs the exact rate, and upper and lower bounds are not equally easy to obtain.

Any coupling gives an upper bound, and the coupling by the two exponential maps already gives \( \dw(\mu_t,\nu_t)\le t\lVert v_0-v_0^\nu\rVert_{L^2(\mu_0)}+o(t) \), once both curves are known to be exponential to first order.
A lower bound has to come from the dual side, and the dual object chosen decides the constant: going through \( \mathbf{w}_1 \) and Kantorovich--Rubinstein, which asks only for a Lipschitz test function, costs a definite factor, so we use the duality of the quadratic cost itself.

\begin{lem}\label{lem:duality.lower}
    Let \( \mu\in\Prob(M) \) and \( u\in L^2(\mu;TM) \).
    Let \( (\alpha_t)_{t>0} \) and \( (\beta_t)_{t>0} \) be families in \( \Prob(M) \) such that \( \alpha_t\rightharpoonup\mu \) weakly\( ^\ast \) as \( t\searrow0 \) and, for every \( \psi\in C^\infty(M) \),
        \[
            \int_M\psi\,d\beta_t-\int_M\psi\,d\alpha_t
            =t\,\big\langle\nabla\psi,\,u\big\rangle_{L^2(\mu)}+o(t),\qquad t\searrow0 .
        \]
    Then
    \[
        \liminf_{t\searrow0}\ \frac{\dw(\beta_t,\alpha_t)}{t}\ \ge\ \big\lVert P_\mu u\big\rVert_{L^2(\mu)} .
    \]
\end{lem}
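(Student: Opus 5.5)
Since \( P_\mu u \) is the \( L^2(\mu;TM) \)-limit of gradients of smooth functions, it suffices to show that for every \( \psi\in C^\infty(M) \)
\[
    \liminf_{t\searrow0}\frac{\dw(\beta_t,\alpha_t)}{t}\ \ge\ 2\langle\nabla\psi,u\rangle_{L^2(\mu)}-\lVert\nabla\psi\rVert^2_{L^2(\mu)} .
\]
Indeed, \( \langle\nabla\psi,u\rangle_{L^2(\mu)}=\langle\nabla\psi,P_\mu u\rangle_{L^2(\mu)} \) because \( u-P_\mu u \) is orthogonal to gradients. Choosing \( \nabla\psi_n\to P_\mu u \) in \( L^2(\mu) \), the right-hand side tends to \( 2\lVert P_\mu u\rVert^2-\lVert P_\mu u\rVert^2=\lVert P_\mu u\rVert^2 \). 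This gives \( \liminf\dw/t\ge\lVert P_\mu u\rVert^2 \) only in the normalisation of the quadratic form. To obtain the norm itself, I will apply the inequality to \( \lambda\psi \) and optimise over \( \lambda \); see below. This is the completing of the square mentioned in the introduction.

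For the per-\( \psi \) bound I use Kantorovich duality for the cost \( c=\tfrac12d^2 \). For any \( \varphi \) and its \( c \)-transform \( \varphi^c(y)=\inf_x\{\tfrac12d(x,y)^2-\varphi(x)\} \),
\[
    \tfrac12\dw(\beta_t,\alpha_t)^2\ \ge\ \int\varphi\,d\beta_t+\int\varphi^c\,d\alpha_t .
\]
Take \( \varphi=t\psi \). The key pointwise estimate is
\[
    (t\psi)^c(y)\ \ge\ -t\psi(y)-\tfrac{t^2}{2}\lvert\nabla\psi(y)\rvert^2-Ct^3
\]
uniformly in \( y \). To prove it, note that the infimum defining \( (t\psi)^c(y) \) is attained at some \( x \) with \( d(x,y)\le t\lVert\nabla\psi\rVert_\infty \), since otherwise \( x=y \) is better. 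For \( t \) small this lies inside the injectivity radius, so I can write \( x=\exp_y\xi \) and Taylor-expand \( \psi(\exp_y\xi)=\psi(y)+\langle\nabla\psi(y),\xi\rangle+O(\lvert\xi\rvert^2) \). Minimising \( \tfrac12\lvert\xi\rvert^2-t\langle\nabla\psi,\xi\rangle \) gives \( -\tfrac{t^2}{2}\lvert\nabla\psi\rvert^2 \), with error \( O(t\lvert\xi\rvert^2)=O(t^3) \). Since \( C \) depends on \( \lVert\psi\rVert_{C^2} \) and \( M \) is compact, the bound is uniform.

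Plugging in, I get
\[
    \tfrac12\dw(\beta_t,\alpha_t)^2\ \ge\ t\Big(\int\psi\,d\beta_t-\int\psi\,d\alpha_t\Big)-\tfrac{t^2}{2}\int\lvert\nabla\psi\rvert^2d\alpha_t-O(t^3).
\]
By hypothesis the first term equals \( t^2\langle\nabla\psi,u\rangle_{L^2(\mu)}+o(t^2) \). Since \( \lvert\nabla\psi\rvert^2 \) is continuous and \( \alpha_t\rightharpoonup\mu \), the second term is \( \tfrac{t^2}{2}\lVert\nabla\psi\rVert^2_{L^2(\mu)}+o(t^2) \). Dividing by \( t^2/2 \) and letting \( t\searrow0 \) yields
\[
    \liminf_{t\searrow0}\frac{\dw(\beta_t,\alpha_t)^2}{t^2}\ \ge\ 2\langle\nabla\psi,u\rangle_{L^2(\mu)}-\lVert\nabla\psi\rVert^2_{L^2(\mu)} .
\]
So the correct intermediate statement concerns \( \dw^2/t^2 \), not \( \dw/t \). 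With \( \nabla\psi_n\to P_\mu u \), its right-hand side tends to \( \lVert P_\mu u\rVert^2 \), and taking square roots gives the lemma.

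The main obstacle is the uniform second-order lower bound for the \( c \)-transform on a curved manifold. It needs the localisation of the minimiser, the Taylor expansion in normal coordinates with a uniform remainder, and the observation that \( \tfrac12d(\exp_y\xi,y)^2=\tfrac12\lvert\xi\rvert^2 \) exactly for small \( \xi \). Only the inequality direction is needed, which makes this manageable. The remaining steps are the weak\( ^\ast \) convergence, which is trivial for the continuous integrand, and the density argument, which is immediate from \eqref{eq:L20}.
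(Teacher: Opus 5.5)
Your argument is correct and is essentially the paper's proof: the $c$-transform of $t\psi$ for the cost $\tfrac12 d^2$, the uniform bound $(t\psi)^c\ge -t\psi-\tfrac{t^2}{2}\lvert\nabla\psi\rvert^2-O(t^3)$ obtained by localising the minimiser and Taylor-expanding along $\exp_y$, the inequality $\varphi(x)+\varphi^c(y)\le c(x,y)$ integrated against an optimal plan, weak$^\ast$ convergence for the $\lvert\nabla\psi\rvert^2$ term, and completion of the square via density of smooth gradients in $L^2_0(\mu)$. Two points to tidy up: your opening paragraph states the per-$\psi$ bound for $\dw/t$, where it is false, and the announced optimisation over $\lambda$ is unnecessary once you work with $\dw^2/t^2$ as you later do; and comparing with $x=y$ only localises the minimiser to $d(x,y)\le 2t\lVert\nabla\psi\rVert_\infty$, which is harmless.
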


\begin{proof}
    We consider the cost \( c(x,y)\coloneqq\tfrac12d(x,y)^2 \) and its \( c \)-transforms, in the sense of \cite[Chapter~5]{villani_optimal_2009}; of the duality theory we shall use only the elementary inequality \( \psi_t(x)+\psi_t^{\,c}(y)\le c(x,y) \), and not the duality theorem itself.

    Fix a test function \( \psi\in C^\infty(M) \), set \( L\coloneqq\lVert\nabla\psi\rVert_\infty \) and \( H\coloneqq\lVert\Hess\psi\rVert_\infty \), which are both finite since \( M \) is compact, and let \( r_0>0 \) be the injectivity radius of \( M \).
    For \( t>0 \) put \( \psi_t\coloneqq t\psi \) and let
    \[
        \psi_t^{\,c}(y)\coloneqq\inf_{x\in M}\big[c(x,y)-\psi_t(x)\big]
    \]
    be its \( c \)-transform, so that \( \psi_t(x)+\psi_t^{\,c}(y)\le c(x,y) \) for all \( x,y\in M \).

    We claim that
    \begin{equation}\label{eq:ctransform}
        \Big\lVert\,\psi_t^{\,c}+t\psi+\tfrac{t^2}{2}\lvert\nabla\psi\rvert^2\,\Big\rVert_\infty
        \ \le\ HL^2\,t^3
        \qquad\text{for }t<\min\{r_0/(2L),\,1/(2H)\} .
    \end{equation}
    Indeed, Taylor's formula along geodesics gives, for \( \xi\in T_yM \) with \( \lvert\xi\rvert<r_0 \),
    \begin{equation}\label{eq:taylor}
        \big\lvert\psi(\exp_y\xi)-\psi(y)-\langle\nabla\psi(y),\xi\rangle\big\rvert\le\tfrac{H}{2}\lvert\xi\rvert^2 .
    \end{equation}
    For the upper bound in \eqref{eq:ctransform}, take \( x=\exp_y\left(t\nabla\psi(y)\right) \), which is a valid choice because \( tL<r_0 \).
    Then \( d(x,y)=t\lvert\nabla\psi(y)\rvert \) and \eqref{eq:taylor} gives \( \psi(x)\ge\psi(y)+t\lvert\nabla\psi(y)\rvert^2-\tfrac{H}{2}t^2\lvert\nabla\psi(y)\rvert^2 \), whence
    \begin{align*}
        \psi_t^{\,c}(y)
        &\le\tfrac{t^2}{2}\lvert\nabla\psi(y)\rvert^2-t\psi(y)-t^2\lvert\nabla\psi(y)\rvert^2+\tfrac{H}{2}t^3\lvert\nabla\psi(y)\rvert^2\\
        &\le-t\psi(y)-\tfrac{t^2}{2}\lvert\nabla\psi(y)\rvert^2+\tfrac{HL^2}{2}t^3 .
    \end{align*}
    For the lower bound, any \( x \) at least as good as \( x=y \) satisfies \( \tfrac12d(x,y)^2\le t[\psi(x)-\psi(y)]\le tL\,d(x,y) \), so \( d(x,y)\le2tL<r_0 \).
    Letting \( x=\exp_y\xi \) with \( r\coloneqq\lvert\xi\rvert\le2tL \) in Equation \eqref{eq:taylor} gives
    \[
        c(x,y)-\psi_t(x)\ \ge\ -t\psi(y)+\tfrac12(1-Ht)r^2-tr\lvert\nabla\psi(y)\rvert
        \ \ge\ -t\psi(y)-\frac{t^2\lvert\nabla\psi(y)\rvert^2}{2(1-Ht)} ,
    \]
    the last step by minimising the quadratic in \( r \).
    Since \( Ht<\tfrac12 \) we have \( (1-Ht)^{-1}\le1+2Ht \), and \eqref{eq:ctransform} follows.

    Now, for any transport plan \( \pi \) between \( \beta_t \) and \( \alpha_t \), we have \( \int\psi_t\,d\beta_t+\int\psi_t^{\,c}\,d\alpha_t=\int[\psi_t(x)+\psi_t^{\,c}(y)]\,d\pi\le\int c\,d\pi \); taking \( \pi \) optimal, we obtain
    \begin{equation}\label{eq:duality}
        \tfrac12\,\dw(\beta_t,\alpha_t)^2\ \ge\ \int_M\psi_t\,d\beta_t+\int_M\psi_t^{\,c}\,d\alpha_t .
    \end{equation}
    Inserting \eqref{eq:ctransform} into \eqref{eq:duality},
    \[
        \tfrac12\,\dw(\beta_t,\alpha_t)^2
        \ \ge\ t\Big[\int\psi\,d\beta_t-\int\psi\,d\alpha_t\Big]
        -\frac{t^2}{2}\int\lvert\nabla\psi\rvert^2\,d\alpha_t-HL^2t^3 .
    \]
    By hypothesis the bracket equals \( t\langle\nabla\psi,u\rangle_{L^2(\mu)}+o(t) \).
    The integrand \( \lvert\nabla\psi\rvert^2 \) is continuous, so the weak convergence of \( \alpha_t \) gives \( \int\lvert\nabla\psi\rvert^2\,d\alpha_t\to\lVert\nabla\psi\rVert^2_{L^2(\mu)} \).
    Hence
    \[
        \liminf_{t\searrow0}\frac{\dw(\beta_t,\alpha_t)^2}{t^2}
        \ \ge\ 2\big\langle\nabla\psi,u\big\rangle_{L^2(\mu)}-\big\lVert\nabla\psi\big\rVert^2_{L^2(\mu)} .
    \]
    The left-hand side does not depend on \( \psi \), so we may pass to the supremum on the right.
    The functional \( g\mapsto2\langle g,u\rangle_{L^2(\mu)}-\lVert g\rVert^2_{L^2(\mu)} \) is continuous on \( L^2(\mu;TM) \), and smooth gradients are dense in \( L^2_0(\mu) \) by \eqref{eq:L20}, so the supremum over \( \psi\in C^\infty(M) \) equals the supremum over \( g\in L^2_0(\mu) \).
    For such \( g \) one has \( \langle g,u\rangle=\langle g,P_\mu u\rangle \), hence
    \[
        2\langle g,u\rangle-\lVert g\rVert^2=\lVert P_\mu u\rVert^2-\lVert g-P_\mu u\rVert^2\le\lVert P_\mu u\rVert^2 ,
    \]
    with equality at \( g=P_\mu u\in L^2_0(\mu) \).
\end{proof}

In this form the lemma is a partial converse to Lemma~\ref{lem:pointwise.continuity}: differentiability of the integrals of test functions, which is strictly weaker than \(\dw\)-differentiability\footnote{
		See Remark~\ref{rmk:weaker} at the end of Section~\ref{sec:torus}.
}, nevertheless controls the \(\dw\)-separation from below.

The choice of dual object is what fixes the constant, and the linear route cannot give the right one.
Bounding \( \dw\ge\mathbf w_1 \) and invoking Kantorovich--Rubinstein yields, in the setting of Lemma~\ref{lem:duality.lower}, only
\[
    \liminf_{t\searrow0}\frac{\dw(\beta_t,\alpha_t)}{t}\ \ge\ \sup\big\{\langle g,u\rangle_{L^2(\mu)}:\ g\in L^2_0(\mu),\ \lVert g\rVert_\infty\le1\big\},
\]
a supremum bounded above by \( \lVert u\rVert_{L^1(\mu)} \), and \( \lVert u\rVert_{L^1(\mu)}<\lVert u\rVert_{L^2(\mu)} \) unless \( \lvert u\rvert \) is \( \mu \)-almost everywhere constant.
No choice of \( g \) improves this: the linear cost does not see the quadratic term of \eqref{eq:ctransform}, which is precisely what produces the square completed in the last step of the proof.
On \( \T^1 \) with \( \mu=\Leb \) and \( u(x)=\cos2\pi x \), for instance, the linear route gives at best \( \lVert u\rVert_{L^1}=2/\pi \), against the correct value \( \lVert u\rVert_{L^2}=1/\sqrt2 \).

\begin{cor}[Metric derivative of exponential deformations]
    \label{cor:metric.derivative}
    Let \( \mu\in\Prob(M) \) and \( v\in L^2_0(\mu) \).  
    Then
    \[
        \dw\left(\mu,\ (\exp tv)_\ast\mu\right)=t\,\lVert v\rVert_{L^2(\mu)}+o(t),\qquad t\searrow0 .
    \]
\end{cor}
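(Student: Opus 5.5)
We prove the two one-sided estimates separately: the upper bound from an explicit coupling, and the lower bound from Lemma~\ref{lem:duality.lower} with \( \alpha_t=\mu \) and \( \beta_t=(\exp tv)_\ast\mu \).

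\emph{Upper bound.} The plan \( (\mathrm{id},\exp tv)_\ast\mu \) couples \( \mu \) with \( (\exp tv)_\ast\mu \). Since \( d(x,\exp_x(tv(x)))\le t\lvert v(x)\rvert \), it gives
\[
    \dw\big(\mu,(\exp tv)_\ast\mu\big)\le t\lVert v\rVert_{L^2(\mu)}
\]
for every \( t>0 \), with no error term. Hence \( \limsup_{t\searrow0}\dw/t\le\lVert v\rVert_{L^2(\mu)} \).

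\emph{Lower bound.} Take \( \alpha_t\coloneqq\mu \), which trivially converges weakly\( ^\ast \) to \( \mu \), and \( \beta_t\coloneqq(\exp tv)_\ast\mu \). The hypothesis of Lemma~\ref{lem:duality.lower} to check is
\[
    \int\psi\,d\beta_t-\int\psi\,d\mu=\int_M\big[\psi(\exp_x tv(x))-\psi(x)\big]\,d\mu(x)=t\langle\nabla\psi,v\rangle_{L^2(\mu)}+o(t).
\]
Set \( H=\lVert\Hess\psi\rVert_\infty \). The Taylor bound \eqref{eq:taylor} holds only when \( t\lvert v(x)\rvert<r_0 \). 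To cover all \( x \), we use the global estimate \( \lvert\psi(\exp_x\xi)-\psi(x)-\langle\nabla\psi(x),\xi\rangle\rvert\le\tfrac H2\lvert\xi\rvert^2 \), valid for all \( \xi \). It follows by integrating along the geodesic \( s\mapsto\exp_x(s\xi) \), which has constant speed \( \lvert\xi\rvert \), and needs no injectivity radius. The integrated error is then at most \( \tfrac H2t^2\lVert v\rVert^2_{L^2(\mu)}=O(t^2) \). So the hypothesis holds with \( u=v \), and indeed with an \( O(t^2) \) remainder.

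Lemma~\ref{lem:duality.lower} then yields \( \liminf_{t\searrow0}\dw/t\ge\lVert P_\mu v\rVert_{L^2(\mu)}=\lVert v\rVert_{L^2(\mu)} \), using \( v\in L^2_0(\mu) \). Combined with the upper bound, this gives the claim.

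The only delicate point is the one just addressed: \( v \) is merely \( L^2 \), so \( t\lvert v(x)\rvert \) need not be small uniformly in \( x \). This is handled by the geodesic Taylor estimate. Alternatively, one could split \( M \) into \( \{\lvert v\rvert\le t^{-1/2}\} \) and its complement. On the complement, both \( \psi \) terms are bounded by \( 2\lVert\psi\rVert_\infty \), and the measure is \( o(t) \) by Chebyshev combined with dominated convergence, since \( \int_{\lvert v\rvert>t^{-1/2}}\lvert v\rvert^2\,d\mu\to0 \). The linear term over the complement is handled the same way, since it is at most \( t\lVert\nabla\psi\rVert_\infty\int_{\lvert v\rvert>t^{-1/2}}\lvert v\rvert\,d\mu \le t\lVert\nabla\psi\rVert_\infty\, t^{1/2}\int_{\lvert v\rvert>t^{-1/2}}\lvert v\rvert^2\,d\mu=o(t) \).
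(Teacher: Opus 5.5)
Your proof is correct and is the paper's argument: the coupling \( (\mathrm{id},\exp tv)_\ast\mu \) gives the upper bound, and Lemma~\ref{lem:duality.lower} with \( \alpha_t\equiv\mu \), \( \beta_t=(\exp tv)_\ast\mu \) and \( u=v \), followed by \( \lVert P_\mu v\rVert=\lVert v\rVert \), gives the lower bound. Your remark that the geodesic Taylor bound holds for every \( \xi \), and not only for \( \lvert\xi\rvert<r_0 \) as \eqref{eq:taylor} is stated, usefully justifies a step that the paper's proof applies to an unbounded \( v\in L^2 \) without comment.
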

\begin{proof}
    The upper bound is the coupling \( (\Id,\exp tv)_\ast\mu \).
    For the lower bound apply Lemma~\ref{lem:duality.lower} with \( \alpha_t\equiv\mu \), \( \beta_t\coloneqq(\exp tv)_\ast\mu \) and \( u\coloneqq v \).
    The weak convergence is straightforward, and the hypothesis follows from \eqref{eq:taylor}, since
    \[
        \Big\lvert\int\psi\,d\beta_t-\int\psi\,d\mu-t\langle\nabla\psi,v\rangle_{L^2(\mu)}\Big\rvert
        \le\tfrac{H}{2}\,t^2\lVert v\rVert^2_{L^2(\mu)} .
    \]
    Finally \( \lVert P_\mu v\rVert=\lVert v\rVert \), as \( v\in L^2_0(\mu) \).
\end{proof}

\begin{lem}[Pointwise continuity equation]
    \label{lem:pointwise.continuity}
    Let \( (\mu_t) \) be a curve in \( \Prob(M) \), differentiable at \( 0 \) in the sense that \(\dw(\mu_t,(\exp tv)_\ast\mu_0)=o(t)\) for some \(v\in L^2(\mu_0;TM)\). 
    Then \(\left.\frac{d}{dt}\right\rvert_{0}\int\psi\,d\mu_t=\int\langle\nabla\psi,v\rangle\,d\mu_0\) for every \(\psi\in C^\infty(M)\).
\end{lem}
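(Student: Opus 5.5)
The plan is to compare \( \mu_t \) with the exponential curve \( \beta_t\coloneqq(\exp tv)_\ast\mu_0 \), and to split the difference of integrals accordingly:
\[
    \int\psi\,d\mu_t-\int\psi\,d\mu_0
    =\Big[\int\psi\,d\mu_t-\int\psi\,d\beta_t\Big]+\Big[\int\psi\,d\beta_t-\int\psi\,d\mu_0\Big].
\]
Dividing by \( t \) and letting \( t\to0 \), it suffices to show that the first bracket is \( o(t) \) and that the second equals \( t\int\langle\nabla\psi,v\rangle\,d\mu_0+o(t) \).

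\textbf{First bracket.} Fix \( \psi\in C^\infty(M) \) and set \( L\coloneqq\lVert\nabla\psi\rVert_\infty \), so that \( \psi \) is \( L \)-Lipschitz for \( d \). For any coupling \( \pi \) of \( \mu_t \) and \( \beta_t \), Cauchy--Schwarz gives
\[
    \Big\lvert\int\psi\,d\mu_t-\int\psi\,d\beta_t\Big\rvert\le L\int d\,d\pi\le L\Big(\int d^2\,d\pi\Big)^{1/2}.
\]
Taking \( \pi \) optimal, the bracket is at most \( L\,\dw(\mu_t,\beta_t) \), which is \( o(t) \) by hypothesis. This is just \( \mathbf w_1\le\dw \) combined with Kantorovich--Rubinstein in its trivial direction.

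\textbf{Second bracket.} Write it as \( \int[\psi(\exp_x tv(x))-\psi(x)]\,d\mu_0(x) \). Since \( \psi \) is smooth on a compact manifold, \( \lvert\psi(\exp_x\xi)-\psi(x)-\langle\nabla\psi(x),\xi\rangle\rvert\le\frac H2\lvert\xi\rvert^2 \) for every \( \xi\in T_xM \). This global form of \eqref{eq:taylor} holds without any injectivity radius restriction: integrate the second derivative of \( \psi \) along the geodesic \( s\mapsto\exp_x(s\xi) \), which has constant speed \( \lvert\xi\rvert \). Hence the bracket differs from \( t\int\langle\nabla\psi,v\rangle\,d\mu_0 \) by at most \( \frac H2 t^2\lVert v\rVert^2_{L^2(\mu_0)}=o(t) \), exactly as in the proof of Corollary~\ref{cor:metric.derivative}. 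Note that \( v \) need only lie in \( L^2(\mu_0;TM) \), not in \( L^2_0(\mu_0) \).

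Combining the two brackets yields the derivative at \( 0 \) from the right. For \( t<0 \) the same argument applies verbatim, reading \( \exp tv \) with negative \( t \) and using \( \lvert t\rvert \) in the estimates, so the two-sided derivative exists if the hypothesis is two-sided. If the hypothesis is only one-sided, the conclusion is read as a one-sided derivative. There is no real obstacle; the only point needing care is the global Taylor bound along geodesics, which removes any smallness requirement on \( \lvert tv(x)\rvert \) for unbounded \( v \).
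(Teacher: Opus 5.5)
Your proof is correct and follows the paper's own argument: you bound the gap to \((\exp tv)_\ast\mu_0\) by \(\lVert\nabla\psi\rVert_\infty\,\dw\) using an optimal plan and Cauchy--Schwarz, then expand \(\int\psi\,d(\exp tv)_\ast\mu_0\) with the second-order Taylor bound along geodesics. You also note that this Taylor bound holds for every \(\xi\in T_xM\), whereas \eqref{eq:taylor} is stated only for \(\lvert\xi\rvert<r_0\); this is a useful precision, because the paper applies it with \(\xi=tv(x)\) for a possibly unbounded \(v\).
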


On the circle this is \cite[Lemma 7.1]{kloeckner_optimal_2013_preprint}, where the extension to a general manifold, with \( \nabla\psi \) in place of \( \psi' \), is indicated without proof.

\begin{proof}
    Let \( \Pi_t \) be an optimal plan between \( \mu_t \) and \( (\exp tv)_\ast\mu_0 \).  
    Then, using that \( \psi \) is \( \lVert \nabla\psi \rVert_\infty \)-Lipschitz and Cauchy-Schwarz, 
    \begin{align*}
        \left\lvert\int\psi\,d\mu_t-\int\psi\,d(\exp tv)_\ast\mu_0)\right\rvert
        &\le\lVert \nabla\psi \rVert_\infty\int d(x,y)\,d\Pi_t(x,y)\\
        &\le\lVert \nabla\psi \rVert_\infty\left(\int d(x,y)^2d\Pi_t(x,y)\right)^{1/2} \\
        &=\lVert \nabla\psi \rVert_\infty\,\dw(\mu_t,(\exp tv)_\ast\mu_0)=o(t).
    \end{align*}
    So it suffices to differentiate \( t\mapsto\int\psi\,d((\exp tv)_\ast\mu_0) \) at \( 0 \). 
    By the definition of the pushforward and \eqref{eq:taylor} at \(x\) with \(\xi=tv(x)\), we obtain
    \[
    \begin{split}
        \int\psi\,d\bigl((\exp tv)_\ast\mu_0\bigr)-\int\psi\,d\mu_0
        & =\int\left(\psi(\exp_x(tv(x))))-\psi(x)\right)d\mu_0(x) \\
        &=t\!\int\langle\nabla\psi,v\rangle\,d\mu_0+O(t^2)\lVert\Hess\psi\rVert_\infty\lVert v \rVert^2_{L^2(\mu_0)},
    \end{split}
    \]
    the remainder being finite because \( \lVert v \rVert_{L^2(\mu_0)}<\infty \).
    Combining the two estimates gives the claim.
\end{proof}

\subsection{Densities and linear response}
\label{sec:exp.deformations}

Both estimates obtained so far compare measures of the form \( (\exp tu)_\ast\mu \) for one and the same \( \mu \).
Proposition~\ref{prp:pushforward.rate} bounds \( \dw \) from above by exhibiting a coupling, and Lemma~\ref{lem:duality.lower} bounds it from below by exhibiting a Kantorovich potential.
In both cases the coupling used is the identity, or a small perturbation of it, and no regularity of \( \mu \) is required.
Neither argument applies to the measure we actually want to describe, which is \( \phi_\ast(\exp tv)_\ast\mu_0 \).

That measure is \( \mu_0 \) pushed forward by \( \phi\circ\exp(tv) \), and \( \phi \) is at a definite distance from the identity.
The plan induced by this map, against a competitor \( (\exp tw)_\ast\mu_0 \), has cost
\[
    \int_Md\left(\phi(\exp_x tv(x)),\ \exp_x(tw(x))\right)^2\,d\mu_0(x),
\]
which converges to \( \int_Md(\phi(x),x)^2\,d\mu_0(x) \) as \( t\searrow0 \), a positive constant unless \( \phi=\Id \).
Nothing is gained by composing \( \phi \) with some other exponential map, since any such plan still displaces the mass of \( \mu_0 \) by \( d(\phi(x),x)+O(t) \).
A plan of cost \( o(t^2) \) must move \( \mu_0 \)-almost every point by \( o(t) \), and must therefore take account of the fact that the mass sitting at \( y \) after the pushforward has been collected from the whole fibre \( \phi^{-1}(y) \).
We know of no formula for such a plan, and we shall not need one, because the two measures can be compared through their densities instead.

This subsection assembles the tools that comparison requires:
Lemma~\ref{lem:density.comparison} bounds \( \dw \) by the \(L^2\) of the difference of the densities, and Proposition~\ref{prp:linear.response} expands the density of \( (\exp tu)_\ast\mu \) to first order, uniformly on \( M \).
Both ask the base measure to be absolutely continuous with a positive \( C^1 \) density, a hypothesis which none of the previous arguments needed.

Throughout, we denote by \( \exp \) the Riemannian exponential of \( M \).
For a field \( u\in L^2(\mu;TM) \), we write \( (\exp tu)_\ast\mu \) for the pushforward of \( \mu \) under \( x\mapsto\exp_x(tu(x)) \). 
This is defined for every \( t \), \( M \) being closed and hence geodesically complete, and no injectivity is required.
Since \( M \) is closed its injectivity radius \( r_0 \) is positive; we fix once and for all \( r_1=r_1(M)\in(0,r_0/4) \) such that any two points of a ball \( B(x,r_1) \) are joined by a unique minimising geodesic, contained in \( B(x,4r_1) \).
The following Lemma gives the estimates on the exponential map that we shall need.
The proof comprises standard computations in Riemannian Geometry and Comparison Theory, and uses no Optimal Transport theory at all.
The proof is in Appendix~\ref{sec:riemannian.estimates}.

\begin{lem}
\label{lem:exp.estimates}
    There are constants \( \Lambda\ge1 \) and \( C_\ast\ge0 \), both depending only on \( M \), such that for every \( x\in M \), the exponential \(\exp_x: B(0, r_1) \to M\) is \(\Lambda\)-Lipschitz.
    Moreover, 
    \[
        \big\lvert\,d(\exp_x\xi,\exp_x\eta)-\lvert\xi-\eta\rvert\,\big\rvert \le C_\ast\max(\lvert\xi\rvert,\lvert\eta\rvert)^2\lvert\xi-\eta\rvert 
    \] 
    for \(\lvert \xi \rvert, \lvert \eta\rvert \le r_1\). 
\end{lem}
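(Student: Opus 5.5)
The plan is to deduce both assertions from the comparison between Riemannian distance on \( M \) and Euclidean distance in normal coordinates. The model case is the flat one, where \( d(\exp_x\xi,\exp_x\eta)=\lvert\xi-\eta\rvert \) exactly; the curvature correction will be controlled through the Jacobi field equation.

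\emph{Lipschitz bound.} For \( \lvert\zeta\rvert\le r_1<r_0 \) the differential \( D_\zeta\exp_x \) is expressed through Jacobi fields along \( s\mapsto\exp_x(s\zeta) \). Since \( M \) is compact, its sectional curvature is bounded, say \( \lvert K\rvert\le\kappa \). Rauch comparison then gives \( \lVert D_\zeta\exp_x\rVert\le\sinh(\sqrt\kappa\, r)/(\sqrt\kappa\, r) \) with \( r=\lvert\zeta\rvert \). This quantity is bounded uniformly in \( x \) and in \( \lvert\zeta\rvert\le r_1 \); call the bound \( \Lambda \). The ball \( B(0,r_1)\subset T_xM \) is convex, so integrating \( D\exp_x \) along the segment from \( \eta \) to \( \xi \) gives \( d(\exp_x\xi,\exp_x\eta)\le\Lambda\lvert\xi-\eta\rvert \).

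\emph{Quadratic distortion estimate.} The key input is the Taylor expansion of Jacobi fields. A Jacobi field \( J \) with \( J(0)=0 \) and \( J'(0)=w \) satisfies \( \lvert J(s)-sP_sw\rvert\le C\kappa s^3\lvert w\rvert \), where \( P_s \) is parallel transport along the geodesic. From this, \( \lvert D_\zeta\exp_x\,w\rvert \) lies between \( (1-C\lvert\zeta\rvert^2)\lvert w\rvert \) and \( (1+C\lvert\zeta\rvert^2)\lvert w\rvert \), uniformly over \( M \) by compactness. Put \( \rho\coloneqq\max(\lvert\xi\rvert,\lvert\eta\rvert) \).

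The upper bound is immediate: the image under \( \exp_x \) of the segment \( [\eta,\xi] \) is a curve whose length is at most \( (1+C\rho^2)\lvert\xi-\eta\rvert \), because every point of the segment has norm at most \( \rho \).

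For the lower bound, let \( \gamma \) be the minimising geodesic from \( \exp_x\eta \) to \( \exp_x\xi \). By the choice of \( r_1 \), \( \gamma \) stays in \( B(x,4r_1)\subset B(x,r_0) \), so it lifts through \( \exp_x^{-1} \) to a curve \( \tilde\gamma \) in \( T_xM \) joining \( \eta \) to \( \xi \). Then \( \lvert\xi-\eta\rvert\le\operatorname{length}(\tilde\gamma)\le\sup\lVert(D\exp_x)^{-1}\rVert\cdot d(\exp_x\xi,\exp_x\eta) \), with the supremum taken over the points of \( \tilde\gamma \).

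\emph{Main obstacle.} The difficulty is that \( \tilde\gamma \) may exit the ball of radius \( \rho \), and then the supremum is only of order \( 1+C(4r_1)^2 \), which is not quadratic in \( \rho \). I would handle this by localisation. Since \( d(\exp_x\xi,\exp_x\eta)\le\Lambda\lvert\xi-\eta\rvert\le2\Lambda\rho \), the geodesic \( \gamma \) lies in \( B(x,(1+2\Lambda)\rho) \). Hence \( \tilde\gamma \) has norm at most \( (1+2\Lambda)\rho \), as long as \( (1+2\Lambda)\rho<r_0 \).

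It is this step that needs care in shrinking: I would reduce \( r_1 \) if necessary (the statement leaves \( r_1 \) free below \( r_0/4 \)) so that \( (1+2\Lambda)r_1<r_0 \). With that, the inverse bound becomes \( 1+C'\rho^2 \). Combining it with the upper bound gives
\[
\big\lvert d(\exp_x\xi,\exp_x\eta)-\lvert\xi-\eta\rvert\big\rvert\le C_\ast\rho^2\lvert\xi-\eta\rvert ,
\]
with \( C_\ast \) depending only on the curvature bound \( \kappa \) and on \( \Lambda \).
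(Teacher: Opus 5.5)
Your proposal is correct and is essentially the paper's proof. It uses a second-order comparison \( \big\lvert\lvert D(\exp_x)_\zeta w\rvert-\lvert w\rvert\big\rvert\le C\lvert\zeta\rvert^2\lvert w\rvert \), the image of the segment for the upper bound, and, for the lower bound, the lift of the minimising geodesic localised in a ball of radius \( O(\rho) \). The paper obtains the comparison by Taylor-expanding \( \lvert D(\exp_x)_pw\rvert^2 \) in \( p \), whose first derivative vanishes at \( p=0 \), where you use Jacobi fields; it also localises with \( 1+c_MR^2 \) in place of \( \Lambda \), and takes \( \Lambda=1+C_\ast r_1^2 \) rather than invoking Rauch.

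The only superfluous step is shrinking \( r_1 \). You already have \( \gamma\subset B(x,4r_1)\subset B(x,r_0) \), so \( \lvert\tilde\gamma(s)\rvert=d(x,\gamma(s))\le(1+2\Lambda)\rho \) holds automatically, and the estimate follows for the paper's fixed \( r_1 \).
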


We begin with the coupling bound, which is the only result of this subsection requiring no regularity at all.
The pointwise inequality one would like to use, namely \( d(\exp_x\xi,\exp_x\eta)\le\lvert\xi-\eta\rvert \), is false on a closed manifold.
By the Rauch comparison theorem \cite[Theorems~1.28 and 1.29]{cheeger_comparison_1975} it holds when the sectional curvature is non-negative, and it is reversed when the curvature is non-positive.
The lemma below says that, regardless, the curvature loss is absorbed in the limit \( t\searrow0 \), providing an \(L^2\)-bound on the \(2\)-Wasserstein distance.
\begin{lem}
\label{lem:exp.upper}
    Let \( \mu\in\Prob(M) \) and \( u,u'\in L^2(\mu;TM) \).  
    Then
    \[
        \lim_{t\searrow0}\frac1t\left(\int_Md\left(\exp_x(tu(x)),\exp_x(tu'(x))\right)^2\,d\mu(x)\right)^{1/2}
        =\lVert u-u'\rVert_{L^2(\mu)},
    \]
    and consequently
    \[
        \limsup_{t\searrow0}\frac{1}{t}\dw\left((\exp tu)_\ast\mu,(\exp tu')_\ast\mu\right) \ \le\ \lVert u-u'\rVert_{L^2(\mu)} .
    \]
    Moreover, let \( \mathcal U\subset L^2(\mu;TM) \) be a dominated family: there exists \( G\in L^2(\mu) \) with \( \lvert u\rvert\le G \) \( \mu \)-almost everywhere for every \( u\in\mathcal U \).
    Then
    \begin{equation}\label{eq:exp.upper.unif}
        \limsup_{t\searrow0}\ \sup_{u,u'\in\mathcal U}\left(\frac1t\,\dw\left((\exp tu)_\ast\mu,(\exp tu')_\ast\mu\right)-\lVert u-u'\rVert_{L^2(\mu)}\right)\ \le\ 0 .
    \end{equation}
\end{lem}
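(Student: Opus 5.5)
The plan is to split \( M \) into the set where both fields are small and its complement, apply Lemma~\ref{lem:exp.estimates} pointwise on the first part, and control the second part by dominated convergence. Fix \( t>0 \) and write \( D_t(x)\coloneqq d(\exp_x(tu(x)),\exp_x(tu'(x))) \). On the set \( A_t\coloneqq\{x:\ t\max(\lvert u(x)\rvert,\lvert u'(x)\rvert)\le r_1\} \), Lemma~\ref{lem:exp.estimates} with \( \xi=tu(x) \), \( \eta=tu'(x) \) gives
\[
    \big\lvert D_t(x)-t\lvert u(x)-u'(x)\rvert\big\rvert\le C_\ast t^3\max(\lvert u\rvert,\lvert u'\rvert)^2\lvert u-u'\rvert\le C_\ast r_1^2\,t\lvert u-u'\rvert .
\]
Dividing by \( t \), the quotient \( D_t/t \) converges pointwise to \( \lvert u-u'\rvert \) at every \( x \) (every \( x \) eventually lies in \( A_t \), as \( u,u' \) are finite a.e.), and on \( A_t \) it is dominated by \( (1+C_\ast r_1^2)\lvert u-u'\rvert\in L^2(\mu) \).

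On the complement \( A_t^c \) the geometry gives no such control, so I would use only the crude bound \( D_t\le\operatorname{diam}(M) \). Since \( A_t^c\subset\{\max(\lvert u\rvert,\lvert u'\rvert)>r_1/t\} \), there we have \( \operatorname{diam}(M)/t<\operatorname{diam}(M)\max(\lvert u\rvert,\lvert u'\rvert)/r_1 \). So \( (D_t/t)\mathbf 1_{A_t^c} \) is dominated by \( (\operatorname{diam}M/r_1)(\lvert u\rvert+\lvert u'\rvert)\in L^2(\mu) \), and it tends to \( 0 \) pointwise. Dominated convergence applied to \( (D_t/t)^2 \) then yields the first limit. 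The \( \limsup \) statement follows from the coupling \( (\exp tu,\exp tu')_\ast\mu \), which is a plan between the two pushforwards.

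For the uniform statement \eqref{eq:exp.upper.unif}, the same splitting works with \( \max(\lvert u\rvert,\lvert u'\rvert) \) replaced by \( G \). Put \( B_t\coloneqq\{tG\le r_1\} \). On \( B_t \), for all \( u,u'\in\mathcal U \) simultaneously,
\[
    D_t\le t\lvert u-u'\rvert\big(1+C_\ast t^2G^2\big).
\]
Off \( B_t \) we have \( D_t\le\operatorname{diam}(M)\le t\,\operatorname{diam}(M)G/r_1 \). By Minkowski's inequality in \( L^2(\mu) \), the quotient \( \frac1t\lVert D_t\rVert_{L^2(\mu)} \) is then at most the sum of three terms:
\begin{enumerate}
    \item the main term \( \lVert u-u'\rVert_{L^2(\mu)} \);
    \item the curvature term \( 2C_\ast t^2\lVert G^3\mathbf 1_{B_t}\rVert_{L^2} \);
    \item the tail term \( (\operatorname{diam}M/r_1)\lVert G\mathbf 1_{B_t^c}\rVert_{L^2} \).
\end{enumerate}
The curvature term uses \( \lvert u-u'\rvert\le2G \). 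Neither of the last two terms depends on \( u,u' \). The tail term tends to \( 0 \) because \( G\in L^2 \) and \( \mu(B_t^c)\to0 \).

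\textbf{The main obstacle} is the curvature term, because \( G^3 \) need not be in \( L^2 \). The way around it is that on \( B_t \) we have \( tG\le r_1 \), so the integrand \( t^2G^3\mathbf 1_{B_t} \) is bounded by \( r_1^2G \). It also tends to \( 0 \) pointwise, so dominated convergence again sends it to \( 0 \), uniformly in \( u,u' \). Taking the supremum over \( \mathcal U \) and then the \( \limsup \) as \( t\searrow0 \) gives \eqref{eq:exp.upper.unif}.
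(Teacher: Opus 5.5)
Your proof is correct and follows the paper's strategy: on the set where \(t|u|,t|u'|\le r_1\) you apply Lemma~\ref{lem:exp.estimates}, you dominate the rest by an \(L^2\) function, and you conclude by dominated convergence; the uniform version \eqref{eq:exp.upper.unif} is handled the same way by splitting according to the size of \(G\). The differences are minor. The paper dominates everywhere through the basepoint triangle inequality \(d(\exp_x\xi,\exp_x\eta)\le|\xi|+|\eta|\), not through \(\operatorname{diam}M\) combined with the cutoff. For \eqref{eq:exp.upper.unif} it uses a fixed cutoff \(\{G\le R\}\), choosing \(R\) first and then \(t\), instead of your \(t\)-dependent set \(\{tG\le r_1\}\), so it never meets your \(G^3\) term.
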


\begin{proof}
    Put \( h_t(x)\coloneqq t^{-1}d\left(\exp_x(tu(x)),\exp_x(tu'(x))\right) \).
    We can apply the triangle inequality at basepoint \(x\) (cf. Estimate \eqref{eq:triangle.inequality}), thus obtaining \( h_t\le\lvert u\rvert+\lvert u'\rvert\in L^2(\mu) \) for every \( t>0 \).
    For \( \mu \)-a.e.\ \( x \) both \( u(x) \) and \( u'(x) \) are finite, and then Lemma~\ref{lem:exp.estimates} applies as soon as \(t\) is sufficiently small so that 
    \[
        t\max(\lvert u(x)\rvert,\lvert u'(x)\rvert)\le r_1,
    \]
    which yields \( h_t(x)\to\lvert u(x)-u'(x)\rvert \).  
    Using dominated convergence to integrate \(h_t\) gives the first assertion.
    The second claim then follows because \( (\exp tu,\exp tu')_\ast\mu \) is a transport plan of cost \( t^2\lVert h_t\rVert_{L^2(\mu)}^2 \).
    It remains to prove \eqref{eq:exp.upper.unif}.
    For \( R>0 \) put \( A_R\coloneqq\{G>R\} \) and \( \delta_R\coloneqq\left(\int_{A_R}G^2\,d\mu\right)^{1/2} \), so that \( \delta_R\to0 \) as \( R\to\infty \) by dominated convergence.
    Let \( u,u'\in\mathcal U \) and let \( 0<t\le r_1/R \).
    On \( A_R \) the triangle inequality at basepoint \( x \) gives \( d(\exp_xtu,\exp_xtu')\le t(\lvert u\rvert+\lvert u'\rvert)\le2tG \).
    On \( M\setminus A_R \) we have \( \lvert tu\rvert,\lvert tu'\rvert\le tR\le r_1 \), so Lemma~\ref{lem:exp.estimates} gives \( d(\exp_xtu,\exp_xtu')\le t\lvert u-u'\rvert\left(1+C_\ast t^2R^2\right) \).
    Splitting the cost of the trivial coupling along \( A_R \) and its complement, and using \( \sqrt{a+b}\le\sqrt a+\sqrt b \),
    \begin{equation}\label{eq:exp.upper.split}
        \frac1t\,\dw\left((\exp tu)_\ast\mu,(\exp tu')_\ast\mu\right)\ \le\ 2\delta_R+\left(1+C_\ast t^2R^2\right)\lVert u-u'\rVert_{L^2(\mu)} .
    \end{equation}
    Every \( u,u'\in\mathcal U \) satisfies \( \lVert u-u'\rVert_{L^2(\mu)}\le2\lVert G\rVert_{L^2(\mu)} \).
    Given \( \eta>0 \), choose \( R \) with \( 2\delta_R\le\eta/2 \), and then \( t_0\le r_1/R \) with \( 2C_\ast t_0^2R^2\lVert G\rVert_{L^2(\mu)}\le\eta/2 \).
    Then, for \( t<t_0 \), the right-hand side of \eqref{eq:exp.upper.split} is at most \( \lVert u-u'\rVert_{L^2(\mu)}+\eta \), uniformly in \( u,u'\in\mathcal U \), as we wanted.

\end{proof}

We turn to the tool which replaces the coupling.
The distance \( \dw \) is not comparable to any \( L^p \) distance between densities: two densities may be uniformly close and still describe mass which has to travel a long way.
It is, however, comparable to a negative Sobolev norm whenever the reference density is bounded away from zero, and on a closed manifold that norm is in turn dominated by the \( L^2 \) norm through the spectral gap of the Laplacian.

\begin{lem}\label{lem:density.comparison}
    Let \( h_0,h_1\in L^2(\vol) \) be probability densities with respect to \( \vol \) and suppose \( h_0\ge c>0 \) \( \vol \)-a.e.  
    Then
    \[
        \dw\left(h_0\vol,\ h_1\vol\right)\ \le\ \frac{2}{\sqrt{c\,\lambda_1}}\,\lVert h_0-h_1\rVert_{L^2(\vol)} ,
    \]
    where \( \lambda_1>0 \) is the first non-zero eigenvalue of the Laplace--Beltrami operator \( \Delta\coloneqq -\Div\nabla \).
\end{lem}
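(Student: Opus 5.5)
The plan is to connect the two densities by the linear interpolation \( h_s\coloneqq(1-s)h_0+sh_1 \), \( s\in[0,1] \), and to exhibit a velocity field for the curve \( \mu_s\coloneqq h_s\vol \) solving the continuity equation. Lemma~\ref{lem:gigli.converse} then bounds \( \dw(\mu_0,\mu_1) \) by the integral of its \( L^2(\mu_s) \) norm. Since \( \partial_sh_s=h_1-h_0 \) has zero mean, the spectral gap lets us solve \( \Delta\varphi=h_1-h_0 \) on \( M \), with \( \varphi\in H^1(\vol) \) of mean zero. With \( \Delta=-\Div\nabla \) we have \( \Div\nabla\varphi=h_0-h_1 \). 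Put \( v_s\coloneqq\nabla\varphi/h_s \), defined where \( h_s>0 \). Then \( \nabla\!\cdot(v_s\mu_s) \) tested against \( \psi \) gives \( -\int\langle\nabla\psi,\nabla\varphi\rangle\,d\vol=-\int\psi\,\Delta\varphi\,d\vol=-\int\psi(h_1-h_0)\,d\vol \). This equals \( -\frac{d}{ds}\int\psi\,d\mu_s \), so \( \partial_s\mu_s+\nabla\!\cdot(v_s\mu_s)=0 \) holds in \( \Dist([0,1]\times M) \), the identity being linear and time-independent in \( s \).

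Next I would estimate the speed. We have \( \lVert v_s\rVert_{L^2(\mu_s)}^2=\int\lvert\nabla\varphi\rvert^2/h_s\,d\vol \). Because \( h_1\ge0 \) and \( h_0\ge c \), we get \( h_s\ge(1-s)c \), hence \( \lVert v_s\rVert_{L^2(\mu_s)}\le\lVert\nabla\varphi\rVert_{L^2(\vol)}\big/\sqrt{(1-s)c} \). This is integrable on \( [0,1] \), with \( \int_0^1(1-s)^{-1/2}\,ds=2 \), so Lemma~\ref{lem:gigli.converse} applies and gives \( \dw(\mu_0,\mu_1)\le 2\lVert\nabla\varphi\rVert_{L^2}/\sqrt c \). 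This is where the factor \( 2 \) in the statement comes from.

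Finally I would bound \( \nabla\varphi \) spectrally. Testing \( \Delta\varphi=h_1-h_0 \) against \( \varphi \) gives \( \lVert\nabla\varphi\rVert_{L^2}^2=\langle h_1-h_0,\varphi\rangle_{L^2}\le\lVert h_1-h_0\rVert_{L^2}\lVert\varphi\rVert_{L^2} \). Poincaré's inequality for mean-zero functions gives \( \lVert\varphi\rVert_{L^2}\le\lambda_1^{-1/2}\lVert\nabla\varphi\rVert_{L^2} \). Hence \( \lVert\nabla\varphi\rVert_{L^2}\le\lambda_1^{-1/2}\lVert h_0-h_1\rVert_{L^2} \), and combining with the previous step yields the claim.

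The main technical obstacle is justifying the continuity equation and the application of Lemma~\ref{lem:gigli.converse} at this low regularity. The field \( \varphi \) is only in \( H^2 \), and \( v_s \) may be large where \( h_s \) is small, which happens near \( s=1 \) if \( h_1 \) vanishes. To handle this, I would check that \( v_s\in L^2(\mu_s) \) for every \( s<1 \), that the weak formulation above uses only \( \nabla\varphi\in L^2(\vol) \) and \( \psi\in C^\infty \), and that \( s\mapsto\lVert v_s\rVert_{L^2(\mu_s)} \) is Borel and in \( L^1 \). As a fallback if the endpoint singularity causes trouble, one can apply the argument on \( [0,1-\epsilon] \) and let \( \epsilon\to0 \), using the weak\( ^\ast \) continuity of \( s\mapsto\mu_s \) and the lower semicontinuity of \( \dw \). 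Measurability of \( (s,x)\mapsto v_s(x) \) is clear from the explicit formula.
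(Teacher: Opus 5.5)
Your proof is correct, and it has the same two-step structure as the paper's. The first step is a bound $\dw(h_0\vol,h_1\vol)\le\frac{2}{\sqrt c}\lVert h_0-h_1\rVert_{\Hm}$; the second is the spectral-gap bound $\lVert h_0-h_1\rVert_{\Hm}\le\lambda_1^{-1/2}\lVert h_0-h_1\rVert_{L^2(\vol)}$. Your $\lVert\nabla\varphi\rVert_{L^2(\vol)}$ is exactly the paper's $\lVert\nabla\Delta^{-1}(h_0-h_1)\rVert_{L^2(\vol)}=\lVert h_0-h_1\rVert_{\Hm}$.

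The difference is in how the first step is obtained. The paper cites \cite[Corollary 3]{peyre_comparison_2018} for it. You prove it directly from the linear interpolation $h_s$, the field $\nabla\varphi/h_s$, the lower bound $h_s\ge(1-s)c$ and Lemma~\ref{lem:gigli.converse}, which is the standard argument behind that estimate. For the second step you use Cauchy--Schwarz plus Poincar\'e instead of the eigenfunction expansion, which yields the same inequality.

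Your route makes the lemma self-contained, using only a tool already recorded in Section~\ref{sec:setting}. It also shows why only $h_0$ needs a lower bound and where the factor $2=\int_0^1(1-s)^{-1/2}\,ds$ comes from. The paper's route is simply shorter.

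The technical worries in your last paragraph are milder than you fear. For every $s<1$ one has $h_s\ge(1-s)c>0$ everywhere, so $v_s$ is defined on all of $M$. The continuity equation then holds on $(0,1)\times M$ by the computation you give, and the single time $s=1$ plays no role. Weak$^\ast$ continuity of $s\mapsto h_s\vol$ on $[0,1]$ carries the bound to the endpoint, so the $\epsilon$-fallback is not needed.
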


\begin{proof}
    First we apply the Sobolev estimate from \cite[Corollary 3]{peyre_comparison_2018} to \( \mu=h_0\vol\ge c\vol \) and \( \nu=h_1\vol \), yielding
    \begin{equation}\label{eq:peyre.estimate}
        \dw\left(h_0\vol,\ h_1\vol\right)\ \le\ \frac{2}{\sqrt c}\,\lVert h_0-h_1\rVert_{\Hm}.
    \end{equation}
    \( \lVert\cdot\rVert_{\Hm} \) is a negative Sobolev norm defined by duality,
    \[
        \lVert\nu\rVert_{\Hm}
        =\sup\left\{\lvert\textstyle\int_Mg\,d\nu\rvert\ :\
            \textstyle\int_M\lvert\nabla g\rvert^2\,d\vol\le1\right\} ;
    \]
    
    Now, the Laplace--Beltrami operator has an inverse \(\Delta^{-1}\) on the space \(\{f: \int_M f d\vol = 0\}\) of mean-zero functions. 
    For such a mean-zero \( f\in L^2(\vol) \) we have \( \lVert f\rVert_{\Hm}=\lVert\nabla\Delta^{-1}f\rVert_{L^2(\vol)} \), since 
    \[
        \int_Mfg\,d\vol=\int_M\langle\nabla\Delta^{-1}f,\nabla g\rangle\,d\vol
    \] 
    and Cauchy--Schwarz is an equality at \( g=\Delta^{-1}f \).

    Finally, we consider \( f\coloneqq h_0-h_1 \), noticing that it has zero-mean by hypothesis.
    We then expand \( f = \sum_k\hat f_k\varphi_k \) in an orthonormal basis of eigenfunctions \(\varphi_1, \varphi_2, \cdots,\) of \( \Delta \), associated to eigenvalues \( 0=\lambda_0<\lambda_1\le\cdots \).
    Since \( f \) has zero mean, we have \( \lVert\nabla\Delta^{-1}f\rVert^2_{L^2(\vol)}=\sum_{k\ge1}\lambda_k^{-1}\lvert\hat f_k\rvert^2 \le\lambda_1^{-1}\lVert f\rVert^2_{L^2(\vol)} \).
    Together with the estimate \eqref{eq:peyre.estimate}, this yields the desired result.
\end{proof}

Now, in order to use Lemma~\ref{lem:density.comparison} effectively, we need a good grasp of the densities being compared. 
Our next result computes the first-order effect of an exponential deformation on the density: \( (\exp tu)_\ast\mu=\mu-t\,\nabla\!\cdot(u\mu)+o(t) \), with the remainder uniform on \( M \) rather than merely distributional.
The strength of the statement is entirely in that uniformity, since it is what Lemma~\ref{lem:density.comparison} uses.
Its limitation is the hypothesis \( \rho\in C^1 \), which is where the regularity announced above is needed.

\begin{prp}\label{prp:linear.response}
    Let \( \mu=\rho\,\vol \) with \( \rho\in C^1(M) \) and \( \rho>0 \), and let \( u\in C^1(M;TM) \).  
    Then there is \( t_0>0 \) such that, for \( \lvert t\rvert<t_0 \), the map \( T^u_t(x)\coloneqq \exp_x(tu(x)) \) is a \( C^1 \) diffeomorphism of \( M \), the measure \( (T^u_t)_\ast\mu \) is absolutely continuous with a continuous density \( \rho_t[u] \), and
    \begin{equation}\label{eq:linear.response}
        \big\lVert\,\rho_t[u]-\rho+t\,\Div(\rho u)\,\big\rVert_{C^0(M)}=o(t),
        \qquad t\to0 .
    \end{equation}
    The threshold \( t_0 \) may be chosen depending on \( u \) only through \( \lVert u\rVert_{C^1(M)} \), and is therefore uniform on bounded subsets of \( C^1(M;TM) \).
    The remainder in \eqref{eq:linear.response}, on the other hand, is uniform for \( u \) ranging in a compact subset of \( C^1(M;TM) \), and in particular for \( u \) in a bounded subset of a finite-dimensional subspace.
\end{prp}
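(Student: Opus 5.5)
We will prove Proposition~\ref{prp:linear.response} by the change-of-variables formula, after first showing that \( T^u_t \) is a \( C^1 \) diffeomorphism for small \( t \).

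\textbf{Diffeomorphism.} The map \( (t,x)\mapsto T^u_t(x) \) is \( C^1 \) on \( \R\times M \), since \( \exp \) is smooth on \( TM \) and \( u\in C^1 \). At \( t=0 \) it is the identity.

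In normal coordinates, or through the identity \( D_xT^u_t=D\exp|_{tu(x)}\circ(\text{horizontal}+t\nabla u) \), one gets
\[
    \lVert D_xT^u_t-\Id\rVert\le C\lvert t\rvert\,\lVert u\rVert_{C^1}
\]
with \( C \) depending only on \( M \), for \( \lvert t\rvert\,\lVert u\rVert_\infty\le r_1 \). Parallel transport along the short geodesic is used to compare the tangent spaces.

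\begin{itemize}
    \item For \( \lvert t\rvert \) below a threshold depending only on \( \lVert u\rVert_{C^1} \), \( T^u_t \) is a local diffeomorphism.
    \item It is homotopic to the identity through local diffeomorphisms, so it is a covering map of degree one, hence a diffeomorphism. Alternatively, injectivity follows from Lemma~\ref{lem:exp.estimates}, which gives a bi-Lipschitz bound at small scales, while large-scale injectivity holds because points move by at most \( \lvert t\rvert\lVert u\rVert_\infty \).
\end{itemize}

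\textbf{Density.} Write \( J_t(x)\coloneqq\lvert\det D_xT^u_t\rvert \), computed with respect to the Riemannian volume. Then
\[
    \rho_t[u]=\frac{\rho}{J_t}\circ(T^u_t)^{-1},
\]
which is continuous. Two expansions are needed.

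\begin{enumerate}
    \item \( J_t(x)=1+t\,\Div u(x)+o(t) \) uniformly in \( x \). Use \( \partial_t|_{0}D_xT^u_t=\nabla u(x) \) (covariant derivative, since \( \exp \) has derivative identity at the zero section and its second-order symmetric term vanishes). Then \( \partial_tJ_t|_{0}=\tr\nabla u=\Div u \). Uniformity follows because \( (t,x)\mapsto J_t(x) \) is \( C^1 \) in \( t \) with \( \partial_tJ \) continuous on \( [-t_0,t_0]\times M \), hence uniformly continuous. The mean value theorem then gives the \( o(t) \) term uniformly.
    \item \( \rho\circ(T^u_t)^{-1}(y)=\rho(y)-t\langle\nabla\rho,u\rangle(y)+o(t) \) uniformly. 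Since \( d\big((T^u_t)^{-1}(y),y\big)\le\lvert t\rvert\lVert u\rVert_\infty \), this follows from \( \rho\in C^1 \), using uniform continuity of \( \nabla\rho \) and the fact that \( (T^u_t)^{-1}(y)=\exp_y(-tu(y))+o(t) \) uniformly.
\end{enumerate}

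Combining the two, and noting that \( \Div u\circ(T^u_t)^{-1}=\Div u+o(1) \) uniformly, gives
\[
    \rho_t[u]=\rho-t\big(\langle\nabla\rho,u\rangle+\rho\Div u\big)+o(t)=\rho-t\Div(\rho u)+o(t).
\]

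\textbf{Uniformity in \( u \).} The threshold \( t_0 \) only involved \( \lVert u\rVert_{C^1} \).

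For the remainder we need a modulus of continuity for \( (t,x)\mapsto\partial_tJ_t(x) \) near \( t=0 \), and for \( \nabla u \). These are equicontinuous over a compact set \( K\subset C^1(M;TM) \) by Arzelà--Ascoli. Alternatively, the remainder map \( u\mapsto t^{-1}\lVert\rho_t[u]-\rho+t\Div(\rho u)\rVert_{C^0} \) is continuous in \( u \) for fixed \( t \), tends to \( 0 \) pointwise, and is monotone-free. We therefore favour the explicit equicontinuity route: all quantities depend continuously on \( (t,u)\in[-t_0,t_0]\times K \), which is compact, so the convergence is uniform.

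\textbf{Main obstacle.} We expect the main difficulty to be the uniform \( o(t) \) expansion of the Jacobian with only \( C^1 \) regularity of \( u \). The derivative of \( D\exp \) enters, and \( \partial_tDT^u_t \) must be shown continuous jointly in \( (t,x) \) and in \( u\in C^1 \). We would first try to handle this through the smooth dependence of \( D\exp \) on its base point together with the chain rule.
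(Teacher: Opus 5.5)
Your proposal is correct and follows essentially the same route as the paper: the chain-rule expansion $D_xT^u_t=\Id+t\nabla u+o(t)$, a local diffeomorphism plus degree-one covering argument via the homotopy $s\mapsto T^u_{st}$, the change of variables $\rho_t[u]=(\rho/J_t)\circ(T^u_t)^{-1}$ with $J_t=1+t\Div u+o(t)$ and $\rho\circ(T^u_t)^{-1}=\rho-t\langle\nabla\rho,u\rangle+o(t)$, and uniformity from the equicontinuity of $u$ and $\nabla u$ over a compact $K\subset C^1(M;TM)$. Two side remarks should be dropped, though your main argument relies on neither: Lemma~\ref{lem:exp.estimates} compares exponentials at a common base point and so does not by itself give injectivity of $T^u_t$, and pointwise convergence of continuous functions on $K$ gives nothing uniform without monotonicity or equicontinuity.
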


On the circle this is \cite[Proposition 3.2]{kloeckner_optimal_2013}, where it is also observed that the remainder depends on the moduli of continuity of \( \nabla u \) and \( \nabla\rho \) and is not \( O(t^2) \) unless both are of class \( C^2 \).
This is the obstruction discussed in Remark~\ref{rmk:frechet.derivative}.

\begin{proof}
    Consider the exponential evaluation \(\mathcal E(x,\xi)=\exp_x\xi\). 
    Since \(u\in C^1(M;TM)\), the map \(x\mapsto (x,tu(x))\) is \(C^1\). 
    By the chain rule,
    \[
        D_xT_t^u = D_1\mathcal E_{(x,tu(x))}+ t\,D_2\mathcal E_{(x,tu(x))}\circ \nabla u(x).
    \]
    At \(t=0\), we have
    \(D_1\mathcal E_{(x,0)}=\Id_{T_xM}\) and \(D_2\mathcal E_{(x,0)}=\Id_{T_xM}\).
    Hence, by compactness of \(M\),
    \[
        D_xT_t^u=\Id+t\,\nabla u(x)+o(t)
    \]
    uniformly in \(x\).
    In particular, \(D_xT_t^u\) is invertible for every \(x\) when \(\lvert t\rvert\) is sufficiently small, with the required bound depending only on \(\lVert u\rVert_{C^1}\).
    Thus \(T_t^u\) is a local diffeomorphism.
    As \(M\) is closed, \(T_t^u\) is proper, hence a covering map, and the number of its sheets equals the absolute value of its degree, \( M \) being connected and oriented.
    The homotopy \( s\mapsto T^u_{st} \) joins \( T^u_t \) to the identity, so that degree is one and \( T^u_t \) is a diffeomorphism.

    Consider the Jacobian \(J_t(x)\coloneqq \lvert\det D_xT_t^u\rvert\), where the determinant is computed with respect to \(\vol\). 
    The map \((t,x)\mapsto\partial_tJ_t(x)\) is continuous, and \(\partial_tJ_t|_{t=0}=\tr(\nabla u)=\Div u\), where \(\nabla u\) is understood as the \((1,1)\)-tensor \(w \mapsto \nabla_w u\). 
    Therefore
    \[
        J_t=1+t\,\Div u+o(t), \qquad J_t^{-1}=1-t\,\Div u+o(t),
    \]
    uniformly on \(M\). 
    By the change of variables formula, \(\rho_t[u]=\bigl(\rho/J_t\bigr)\circ(T_t^u)^{-1}\), which is continuous.
    In other words,
    \[
        \rho_t[u]
        =\bigl(\rho J_t^{-1}\bigr)\circ(T_t^u)^{-1} =\rho\circ(T_t^u)^{-1}-t\,(\rho\,\Div u)\circ(T_t^u)^{-1}+o(t).
    \]        
    Inverting \( T^u_t \) with the uniform bound on \( (D_xT^u_t)^{-1} \) obtained above gives \( d\big((T^u_t)^{-1}(y),\exp_y(-tu(y))\big)=o(t) \) uniformly in \( y \).
    Since \( \rho\in C^1(M) \) it is Lipschitz, and \( \nabla\rho \) is uniformly continuous because \( M \) is compact, so first-order Taylor expansion along the geodesics \( s\mapsto\exp_y(-stu(y)) \) gives \( \rho\circ(T^u_t)^{-1}=\rho-t\langle\nabla\rho,u\rangle+o(t) \) uniformly on \( M \).
    On the other hand, \(T_t^u\to\Id\) uniformly, so, by uniform continuity of \(\rho\,\Div u\), we have \((\rho\,\Div u)\circ(T_t^u)^{-1}=\rho\,\Div u+o(1)\), uniformly as well.
    Therefore
    \[
        \rho_t[u]=\rho-t\langle\nabla\rho,u\rangle-t\,\rho\,\Div u+o(t)=\rho-t\,\Div(\rho u)+o(t),
    \]
    uniformly on \(M\), which is \eqref{eq:linear.response}.

    Finally, let \( K\subset C^1(M;TM) \) be compact.
    Then \( \{u:u\in K\} \) and \( \{\nabla u:u\in K\} \) are uniformly bounded and uniformly equicontinuous families, by Arzelà--Ascoli.
    Every expansion above was obtained from the smooth map \( \exp \) and from \( u \) and \( \nabla u \) by composition and differentiation, and depends on \( u \) only through those two bounds and those two moduli of continuity.
    Hence \( t_0 \) and all the \( o(t) \) terms may be chosen uniformly for \( u\in K \).
    A bounded subset of a finite-dimensional subspace of \( C^1(M;TM) \) is compact, all norms being equivalent there.
\end{proof}
    
The rest of the section consists of applications of the same argument: one expands the densities of the two measures to be compared by Proposition~\ref{prp:linear.response}, observes that their first-order terms coincide, and concludes by Lemma~\ref{lem:density.comparison} that the \( \dw \)-distance between them is \( o(t) \).

\subsection{Metric invisibility of solenoidal directions}
\label{sec:solenoidal}
We show in this subsection that the solenoidal directions are irrelevant up to first-order: they \emph{stir} the mass of \( \mu \) rather than moving it.

The first lemma is the approximation statement that every argument below needs, and it is where the absolute continuity of \( \mu \) is used for the first time.

\begin{lem}
\label{lem:regular.solenoidal}
    Let \( \mu=\rho\,\vol \) with \( \rho\in C^1(M) \) and \( \rho>0 \).
    Then the fields of class \( C^1 \) belonging to \( L^2_0(\mu)^\perp \) are dense in \( L^2_0(\mu)^\perp \) for the norm of \( L^2(\mu;TM) \).
\end{lem}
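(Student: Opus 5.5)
The plan is to reduce the statement to the ordinary Helmholtz--Hodge decomposition with respect to \( \vol \), by rescaling fields with the density. Since \( \rho\in C^1 \) and \( \rho\ge c>0 \) on the compact manifold \( M \), the norms of \( L^2(\mu;TM) \) and \( L^2(\vol;TM) \) are equivalent, so the two spaces coincide as sets. By \eqref{eq:divergence.equivariance}, a field \( z \) lies in \( L^2_0(\mu)^\perp \) if and only if \( w\coloneqq\rho z \) satisfies \( \Div w=0 \) in \( \Dist(M) \), that is, \( \int_M\langle\nabla\psi,w\rangle\,d\vol=0 \) for all \( \psi\in C^\infty(M) \). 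Multiplication by \( \rho \) (with inverse multiplication by \( \rho^{-1} \)) is therefore a topological isomorphism between \( L^2_0(\mu)^\perp \) and the space \( S \) of weakly divergence-free fields in \( L^2(\vol;TM) \). Since \( \rho,\rho^{-1}\in C^1 \), it also maps \( C^1 \) fields to \( C^1 \) fields. It thus suffices to show that smooth divergence-free fields are dense in \( S \) for the \( L^2(\vol) \) norm.

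For this density statement I would use the Hodge decomposition, identifying vector fields with \( 1 \)-forms via the metric. Take \( w\in S \) and smooth \( 1 \)-forms \( w_n\to w \) in \( L^2(\vol) \). Write \( w_n=P w_n+(\Id-P)w_n \), where \( P \) is the \( L^2(\vol) \)-orthogonal projection onto \( S \), which is the Leray projection. For smooth \( w_n \) this projection is explicit: \( (\Id-P)w_n=\nabla\Delta^{-1}\Div w_n \), with \( \Delta^{-1} \) the inverse on mean-zero functions used in Lemma~\ref{lem:density.comparison}. Elliptic regularity on the closed manifold gives \( \Delta^{-1}\Div w_n\in C^\infty \), so \( Pw_n=w_n-\nabla\Delta^{-1}\Div w_n \) is smooth and divergence-free. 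Since \( P \) is an orthogonal projection it is \( L^2 \)-continuous, and \( Pw=w \), so \( Pw_n\to w \).

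Pulling back, \( z_n\coloneqq\rho^{-1}Pw_n \) is \( C^1 \), lies in \( L^2_0(\mu)^\perp \), and converges to \( z \) in \( L^2(\mu) \) by the norm equivalence. Note that \( z_n \) is only \( C^1 \), because \( \rho \) is only \( C^1 \); this is why the statement speaks of \( C^1 \) fields rather than smooth ones.

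The main technical point is to justify that the orthogonal complement of gradients in \( L^2(\vol) \) is exactly the closure of smooth divergence-free fields, together with the smoothness of \( \nabla\Delta^{-1}\Div w_n \). Both follow from standard elliptic theory for \( \Delta \) on a closed manifold, for instance via the eigenfunction expansion, and I would cite this rather than reprove it.
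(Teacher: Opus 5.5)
Your proof is correct up to a sign slip, and it reaches the key density statement by a different route from the paper. Both arguments start with the same rescaling \( z\mapsto\rho z \), which identifies \( L^2_0(\mu)^\perp \) with the weakly divergence-free fields in \( L^2(\vol;TM) \).

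The paper then expands \( w^\flat \) itself in the eigenspaces of the Hodge--de Rham Laplacian \( \Delta_H \) on \( 1 \)-forms. It uses \( d\Delta_H=\Delta_Hd \) and \( \delta\Delta_H=\Delta_H\delta \) to show that every eigencomponent of a co-closed form is co-closed, and takes partial sums. You instead approximate \( w \) by arbitrary smooth fields (Lemma~\ref{lem:smooth.dense}) and correct them with the orthogonal projection onto divergence-free fields, which you compute on smooth fields through the scalar \( \Delta^{-1} \). Continuity of the projection and \( Pw=w \) then finish the argument.

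Your route needs only elliptic regularity for the Laplacian on functions, the same operator already used in Lemma~\ref{lem:density.comparison}. It also gives an explicit formula for the complementary projection \( \Id-P \) on smooth fields: this is \( P_{\vol} \), and on \( \T^d \) it reduces to the Fourier formula of Proposition~\ref{prp:fourier.model}. The paper's route needs the spectral theory of \( \Delta_H \) on \( 1 \)-forms, but its approximants are canonical truncations of \( w \) itself, with no auxiliary approximation step.

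Three small corrections. First, with the paper's convention \( \Delta=-\Div\nabla \) one has \( \Div\nabla\Delta^{-1}f=-f \), so the correct formula is \( Pw_n=w_n+\nabla\Delta^{-1}\Div w_n \); yours is the one for \( \Delta=\Div\nabla \). Second, you should note that \( \Div w_n \) has zero mean by the divergence theorem, so that \( \Delta^{-1}\Div w_n \) is defined. Third, the ``main technical point'' you propose to cite is not an additional input. The fact that the orthogonal complement of gradients is the closure of smooth divergence-free fields is precisely what your projection argument proves. The only fact you actually need from elliptic theory is that \( \Delta^{-1} \) maps smooth mean-zero functions to smooth functions.
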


\begin{proof}
    Let \( z\in L^2_0(\mu)^\perp \) and set \( w\coloneqq\rho z \).
    Since \( \rho \) is continuous and positive on the compact manifold \( M \), we have \( 0<\min\rho\le\rho\le\max\rho<\infty \), and hence
    \[
        \min\rho\cdot\lVert z\rVert^2_{L^2(\mu)} \le \lVert w\rVert^2_{L^2(\vol)} =\int_M\lvert z\rvert^2\rho^2\,d\vol \le \max\rho\cdot\lVert z\rVert^2_{L^2(\mu)}.
    \]
    Thus \( z\mapsto\rho z \) is an isomorphism from \( L^2(\mu;TM) \) onto \( L^2(\vol;TM) \).
    This map, and the equivalence \( \nabla\!\cdot(z\mu)=0\iff\Div(\rho z)=0 \) that we use next, appear in \cite[\S3]{gomes_differential_2024}, where they aid in comparing tangent spaces at different base points.
    Here they are used to transport the problem to \( \vol \), where the Hodge decomposition is available.
    Since \( z \) satisfies \( \nabla\!\cdot(z\mu)=0 \), we have
    \[
        0
        =\int_M\langle\nabla\psi,z\rangle\,d\mu =\int_M\langle\nabla\psi,w\rangle\,d\vol \qquad\forall\psi\in C^\infty(M),
    \]
    that is, \( w \) is divergence-free.
    It suffices therefore to approximate \( w \) in \( L^2(\vol;TM) \) by smooth divergence-free fields and to divide the approximants by \( \rho \) at the end.

    We make use of Hodge theory to finish the argument.
    The musical isomorphisms induced by the metric identify vector fields with \( 1 \)-forms, so that \( \Div w=-\delta w^\flat \).
    Let \( \Delta_H=d\delta+\delta d \) be the Hodge--de Rham Laplacian.
    On a closed manifold, this operator is elliptic, self-adjoint, and non-negative, and its spectrum is a discrete set of non-negative eigenvalues whose eigenspaces are finite dimensional and consist of smooth forms.
    This holds on functions and on \( 1 \)-forms alike, and gives orthogonal decompositions
    \[
        L^2(\vol)=\bigoplus_\lambda F_\lambda \qquad\text{and}\qquad L^2\Omega^1(M)=\bigoplus_\lambda E_\lambda
    \]
    into eigenspaces of \( \Delta_H \) (cf. \cite[Chapter 6]{warner_foundations_1983}).
    From \( d^2=0 \), we have \( d\Delta_H=d\delta d=\Delta_Hd \), so \( d \) maps \( F_\lambda \) into \( E_\lambda \).
    Similarly, \( \delta^2=0 \) yields \( \delta\Delta_H=\delta d\delta=\Delta_H\delta \), so \( \delta \) maps \( E_\lambda \) into \( F_\lambda \).

    Enumerate the distinct eigenvalues as \( 0\le\lambda_0<\lambda_1<\lambda_2<\cdots \), and write \( w^\flat=\sum_{j=0}^\infty\omega_j \), where \( \omega_j\in E_{\lambda_j} \).
    We claim that \( \delta\omega_j=0 \) for every \( j \).
    Fix \( j \) and let \( f\in F_{\lambda_j} \).
    Then \( df\in E_{\lambda_j} \), and since \(\delta w^\flat = -\Div w = 0\), we have
    \[
        0=\langle\delta w^\flat,f\rangle_{L^2}=\langle w^\flat,df\rangle_{L^2}=\langle\omega_j,df\rangle_{L^2}=\langle\delta\omega_j,f\rangle_{L^2}.
    \]
    Since \( \delta\omega_j\in F_{\lambda_j} \) and \( f\in F_{\lambda_j} \) was arbitrary, we conclude that \( \delta\omega_j=0 \).

    The partial sums \( w_n\coloneqq\left(\sum_{j\leq n}\omega_j\right)^\sharp\) are therefore smooth and divergence-free, and they converge to \( w \) in \( L^2(\vol;TM) \).
    Setting \( z_n\coloneqq w_n/\rho \), we obtain fields of class \( C^1 \) satisfying \( \nabla\!\cdot(z_n\mu)=0 \) and converging to \( z \) in \( L^2(\mu;TM) \).
\end{proof}

The following is a known fact which we register here for convenience. 
\begin{lem}
\label{lem:smooth.dense}
    For \( \mu\in\Prob(M) \), the set \( C^\infty(M;TM) \) is dense in \( L^2(\mu;TM) \).
\end{lem}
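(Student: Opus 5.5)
The plan is a standard three-step reduction: from \( L^2(\mu;TM) \) to bounded Borel fields, then to continuous fields, then to smooth ones. Each step uses only that \( \mu \) is a finite Borel measure on the compact metric space \( M \).

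\emph{Truncation.} Given \( v\in L^2(\mu;TM) \), set \( v_R\coloneqq v\,\mathbf 1_{\{\lvert v\rvert\le R\}} \). Then \( \lvert v-v_R\rvert^2\le\lvert v\rvert^2\mathbf 1_{\{\lvert v\rvert>R\}} \), and dominated convergence gives \( v_R\to v \) in \( L^2(\mu;TM) \). It therefore suffices to approximate bounded Borel fields.

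\emph{Reduction to functions and approximation by continuous ones.} Embed \( M \) isometrically in some \( \R^N \) (Nash), or use a finite atlas with a subordinate smooth partition of unity \( (\chi_i) \) and local orthonormal frames. I prefer the embedding, since it avoids frames. A bounded Borel field \( v \) is then a bounded Borel map \( M\to\R^N \) with \( v(x)\in T_xM \). Each coordinate lies in \( L^2(\mu) \). By the standard density of \( C(M) \) in \( L^2(\mu) \) for a finite Borel (hence Radon, \( M \) being a compact metric space) measure, obtained via Lusin's theorem or regularity, each coordinate is approximated by continuous functions. Next, approximate these uniformly by smooth functions, by Stone--Weierstrass on \( M \) or by restricting smooth functions from \( \R^N \). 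Uniform convergence dominates \( L^2(\mu) \) convergence because \( \mu(M)=1 \). This yields smooth maps \( g_n:M\to\R^N \) with \( g_n\to v \) in \( L^2(\mu;\R^N) \).

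\emph{Projecting back to the tangent bundle.} Let \( \Pi_x:\R^N\to T_xM \) be the orthogonal projection. It depends smoothly on \( x \) and is \( 1 \)-Lipschitz in the vector variable. Set \( v_n\coloneqq\Pi_x g_n(x) \). Then \( v_n\in C^\infty(M;TM) \), and since \( \Pi_x v(x)=v(x) \),
\[
\lvert v_n(x)-v(x)\rvert=\lvert\Pi_x(g_n(x)-v(x))\rvert\le\lvert g_n(x)-v(x)\rvert,
\]
so \( v_n\to v \) in \( L^2(\mu;TM) \). Because the metric on \( TM \) is the one induced from \( \R^N \), this norm is exactly the one of \( L^2(\mu;TM) \).

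The only step with real content is the density of continuous functions in \( L^2(\mu) \) for an arbitrary, possibly singular, Borel probability \( \mu \). This follows from inner and outer regularity of finite Borel measures on metric spaces: indicators of Borel sets are approximated in \( L^2(\mu) \) by Urysohn functions between a closed subset and an open superset of nearly equal measure. Simple functions are dense in \( L^2(\mu) \), so this suffices. Everything else is bookkeeping.
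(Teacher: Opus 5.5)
Your proof is correct and follows the paper's route: isometric embedding into \( \R^N \), density of continuous and then smooth \( \R^N \)-valued maps in \( L^2(\mu;\R^N) \), and the fibrewise orthogonal projection \( \Pi_x \), which is \( 1 \)-Lipschitz and fixes \( v(x) \). Your truncation step is not needed but does no harm, and your regularity argument for the density of \( C(M) \) in \( L^2(\mu) \) just fills in a point the paper asserts without proof.
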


\begin{proof}
    Embed \( M \) isometrically in some \( \R^N \) and regard a field \( v\in L^2(\mu;TM) \) as an element of \( L^2(\mu;\R^N) \) taking values in \( T_xM \) at \( \mu \)-almost every \( x \).
    Continuous \( \R^N \)-valued maps are dense in \( L^2(\mu;\R^N) \), since \( \mu \) is a finite Borel measure on a compact metric space, and smooth maps are uniformly dense in continuous ones.
    Given \( V_n\in C^\infty(M;\R^N) \) with \( V_n\to v \) in \( L^2(\mu;\R^N) \), let \( \Pi_x:\R^N\to T_xM \) be the fibrewise orthogonal projection, which depends smoothly on \( x \), and put \( v_n(x)\coloneqq\Pi_xV_n(x) \).
    Then \( v_n \) is a smooth section and \( \lvert v_n(x)-v(x)\rvert=\lvert\Pi_x\left(V_n(x)-v(x)\right)\rvert\le\lvert V_n(x)-v(x)\rvert \), so that \( v_n\to v \) in \( L^2(\mu;TM) \).
\end{proof}

\begin{lem}[Uniform smoothing on a finite-dimensional subspace]
\label{lem:uniform.smoothing}
    Let \( \mu=\rho\,\vol \) with \( \rho\in C^1(M) \) and \( \rho>0 \), and let \( W\subset L^2(\mu;TM) \) be a subspace of finite dimension \( p \).
    There is a constant \( c_W\ge1 \), depending only on \( W \) and on a choice of basis of it, with the following property.
    For every \( \epsilon>0 \) there are a subspace \( W_\epsilon\subset C^1(M;TM) \) of dimension at most \( p \) and a linear map \( A_\epsilon:W\to W_\epsilon \) such that
    \begin{enumerate}
        \item\label{item:us.approx}
            \( \lVert w-A_\epsilon w\rVert_{L^2(\mu)}\le c_W\,\epsilon\,\lVert w\rVert_{L^2(\mu)} \) for every \( w\in W \);
        \item\label{item:us.compact}
            for every \( R>0 \) the set \( \big\{A_\epsilon w:\ \lVert w\rVert_{L^2(\mu)}\le R\big\} \) is a compact subset of \( C^1(M;TM) \);
        \item\label{item:us.dominated}
            for every \( R>0 \) there is \( G\in L^2(\mu) \) such that \( \lvert w\rvert\le G \) and \( \lvert A_\epsilon w\rvert\le G \) \( \mu \)-almost everywhere, for every \( w\in W \) with \( \lVert w\rVert_{L^2(\mu)}\le R \).
    \end{enumerate}
    Moreover \( A_\epsilon \) may be chosen with values in \( C^\infty(M;TM) \); with values in \( C^\infty(M;TM)\cap L^2_0(\mu) \) if \( W\subset L^2_0(\mu) \); and with values in \( C^1(M;TM)\cap L^2_0(\mu)^\perp \) if \( W\subset L^2_0(\mu)^\perp \).
\end{lem}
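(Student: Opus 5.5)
We work with a fixed basis and define \( A_\epsilon \) on it, extending linearly. Fix a basis \( e_1,\dots,e_p \) of \( W \) that is orthonormal in \( L^2(\mu;TM) \). Writing \( w=\sum_i a_iw_i\cdot \) with coordinates \( a=(a_1,\dots,a_p) \), we have \( \lVert w\rVert_{L^2(\mu)}=\lvert a\rvert \). For each \( i \) we choose an approximant \( e_i^\epsilon \) with \( \lVert e_i-e_i^\epsilon\rVert_{L^2(\mu)}\le\epsilon \), as follows:
\begin{itemize}
\item for general \( W \), \( e_i^\epsilon\in C^\infty(M;TM) \) by Lemma~\ref{lem:smooth.dense};
\item if \( W\subset L^2_0(\mu) \), \( e_i^\epsilon=\nabla\psi_i \) with \( \psi_i\in C^\infty(M) \), by the very definition \eqref{eq:L20};
\item if \( W\subset L^2_0(\mu)^\perp \), \( e_i^\epsilon \) a \( C^1 \) field in \( L^2_0(\mu)^\perp \), by Lemma~\ref{lem:regular.solenoidal}.
\end{itemize}
We then set \( A_\epsilon(\sum a_ie_i)\coloneqq\sum a_ie_i^\epsilon \) and \( W_\epsilon\coloneqq\operatorname{span}\{e_i^\epsilon\} \). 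Thus \( W_\epsilon \) has dimension at most \( p \), and \( A_\epsilon \) is linear with values in the required class, since each of the three classes is a linear space.

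For \ref{item:us.approx}, Cauchy--Schwarz in the coordinates gives
\[
    \lVert w-A_\epsilon w\rVert\le\sum_i\lvert a_i\rvert\,\epsilon\le\sqrt p\,\epsilon\,\lvert a\rvert ,
\]
so \( c_W\coloneqq\max(1,\sqrt p) \) works.

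For \ref{item:us.compact}, the set \( \{A_\epsilon w:\lVert w\rVert\le R\} \) is the image of the closed ball \( \{\lvert a\rvert\le R\}\subset\R^p \) under the continuous linear map \( a\mapsto\sum a_ie_i^\epsilon \) into \( C^1(M;TM) \). It is therefore compact.

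For \ref{item:us.dominated}, the choice of \( G \) is explicit and must not depend on \( \epsilon \) being fixed beforehand. Pointwise, \( \lvert w(x)\rvert\le\lvert a\rvert\big(\sum_i\lvert e_i(x)\rvert^2\big)^{1/2} \), and the same bound holds for \( A_\epsilon w \) with \( e_i^\epsilon \) in place of \( e_i \). So we take
\[
    G\coloneqq R\Big(\sum_i\lvert e_i\rvert^2+\lvert e_i^\epsilon\rvert^2\Big)^{1/2} .
\]
This lies in \( L^2(\mu) \) because each \( e_i \) is in \( L^2 \) and each \( e_i^\epsilon \) is continuous on compact \( M \). The statement lets \( G \) depend on \( R \), and implicitly on \( \epsilon \), since the order of quantifiers fixes \( \epsilon \) first.

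The only delicate point is this order of quantifiers in \ref{item:us.dominated}, and, in the solenoidal case, remembering that Lemma~\ref{lem:regular.solenoidal} yields only \( C^1 \) fields. This is why the last clause asks merely for \( C^1 \) regularity there. Everything else is finite-dimensional linear algebra.
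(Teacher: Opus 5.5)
Your proof is correct and follows the paper's argument. Like the paper, you choose approximants \(e_i^\epsilon\) of the appropriate class for a basis of \(W\), extend linearly, and read off (i)--(iii) from finite-dimensionality and an explicit dominating function built from \(\lvert e_i\rvert\) and \(\lvert e_i^\epsilon\rvert\). The only difference is cosmetic: you take the basis orthonormal, which gives \(c_W=\sqrt p\) and an \(\ell^2\)-type \(G\), whereas the paper uses an arbitrary basis with the bound \(\max_i\lvert a_i\rvert\le c\lVert w\rVert\), giving \(c_W=pc\). Note also the typo \(\sum_i a_iw_i\) for \(\sum_i a_ie_i\).
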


\begin{proof}
    Fix a basis \( e_1,\dots,e_p \) of \( W \).
    The coordinate functionals of a finite-dimensional normed space are bounded, so there is \( c\ge1 \) with \( \max_i\lvert a_i\rvert\le c\,\lVert w\rVert_{L^2(\mu)} \) whenever \( w=\sum_ia_ie_i \); put \( c_W\coloneqq pc \).

    Given \( \epsilon>0 \), choose for each \( i \) a field \( e_i^\epsilon \) with \( \lVert e_i-e_i^\epsilon\rVert_{L^2(\mu)}\le\epsilon \), belonging to the class required by the statement.
    In the unrestricted case this is Lemma~\ref{lem:smooth.dense}; in the case \( W\subset L^2_0(\mu) \) it is \eqref{eq:L20}; in the case \( W\subset L^2_0(\mu)^\perp \) it is Lemma~\ref{lem:regular.solenoidal}.
    Let \( W_\epsilon\coloneqq\operatorname{span}\{e_1^\epsilon,\dots,e_p^\epsilon\} \) and let \( A_\epsilon:W\to W_\epsilon \) be the linear map sending \( e_i \) to \( e_i^\epsilon \).
    
    \medskip
    \noindent Item~\ref{item:us.approx}.\\
    This follows from \( \lVert w-A_\epsilon w\rVert_{L^2(\mu)}\le\sum_i\lvert a_i\rvert\,\lVert e_i-e_i^\epsilon\rVert_{L^2(\mu)}\le pc\,\epsilon\,\lVert w\rVert_{L^2(\mu)} \).

    \medskip
    \noindent Item~\ref{item:us.compact}.\\ 
    The bound \( \lVert w\rVert_{L^2(\mu)}\le R \) implies \( \lVert A_\epsilon w\rVert_{C^1(M)}\le cR\sum_i\lVert e_i^\epsilon\rVert_{C^1(M)} \). 
    Now, a bounded subset of the finite-dimensional space \( W_\epsilon \) is compact in the space \( C^1(M;TM) \), since all norms on \( W_\epsilon \) are equivalent, which concludes.

    \medskip
    \noindent Item~\ref{item:us.dominated}.\\
    It suffices to take \( G\coloneqq cR\sum_i\left(\lvert e_i\rvert+\lvert e_i^\epsilon\rvert\right) \), which lies in \( L^2(\mu) \) and dominates both \( \lvert w\rvert \) and \( \lvert A_\epsilon w\rvert \) pointwise.
\end{proof}

Take \( \mu\in\Prob(M) \) and \( v,w\in L^2(\mu;TM) \), and compare the curves \( t\mapsto(\exp tv)_\ast\mu \) and \( t\mapsto(\exp tw)_\ast\mu \).
Lemma~\ref{lem:exp.upper} bounds their separation from above by \( t\lVert v-w\rVert_{L^2(\mu)}+o(t) \), and Lemma~\ref{lem:duality.lower} bounds it from below by \( t\lVert P_\mu(v-w)\rVert_{L^2(\mu)}+o(t) \).
The two bounds disagree whenever \( v-w \) has a non-zero component in \( L^2_0(\mu)^\perp \), that is, whenever the two fields differ by a solenoidal one.
The theorem below shows that the lower bound is the correct one: adding a solenoidal field to \( v \) changes the curve \( t\mapsto(\exp tv)_\ast\mu \) by \( o(t) \), and therefore does not change it at first order.

\begin{thm}[Solenoidal directions are metrically invisible]
\label{thm:solenoidal}
    Let \( \mu=\rho\,\vol \) with \( \rho\in C^1(M) \) and \( \rho>0 \), let \( v \in L^2(\mu;TM) \), and let \( z\in L^2(\mu;TM) \) satisfy \( \nabla\!\cdot(z\mu)=0 \).
    Then
    \[
        \dw\left((\exp t(v+z))_\ast\mu,\ (\exp tv)_\ast\mu\right)=o(t),\qquad t\searrow0 .
    \]
    The remainder is uniform for \( (v,z) \) in bounded subsets of \( V\times(Z\cap L^2_0(\mu)^\perp) \), for any finite-dimensional subspaces \( V,Z\subset L^2(\mu;TM) \).
\end{thm}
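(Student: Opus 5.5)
The plan is to reduce to smooth fields by a density argument, and to treat the smooth case by comparing densities. For smooth fields the key tools are Proposition~\ref{prp:linear.response} and Lemma~\ref{lem:density.comparison}.

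\emph{Reduction to \( C^1 \) fields.} Given \( \epsilon>0 \), apply Lemma~\ref{lem:uniform.smoothing} twice: once to \( V \), with values in \( C^\infty(M;TM) \), and once to \( Z\cap L^2_0(\mu)^\perp \), with values in \( C^1(M;TM)\cap L^2_0(\mu)^\perp \). This yields linear maps \( A_\epsilon, B_\epsilon \) with \( \lVert v-A_\epsilon v\rVert\le c_V\epsilon\lVert v\rVert \) and \( \lVert z-B_\epsilon z\rVert\le c_Z\epsilon\lVert z\rVert \). It also yields compact images of bounded sets in \( C^1 \) and a common dominating function \( G \). Put \( \tilde v=A_\epsilon v \) and \( \tilde z=B_\epsilon z \). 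By the triangle inequality, \( \dw\big((\exp t(v+z))_\ast\mu,(\exp tv)_\ast\mu\big) \) is bounded by three terms:
\[
    \dw\big((\exp t(v+z))_\ast\mu,(\exp t(\tilde v+\tilde z))_\ast\mu\big)
    +\dw\big((\exp t(\tilde v+\tilde z))_\ast\mu,(\exp t\tilde v)_\ast\mu\big)
    +\dw\big((\exp t\tilde v)_\ast\mu,(\exp tv)_\ast\mu\big).
\]
By the uniform estimate \eqref{eq:exp.upper.unif} of Lemma~\ref{lem:exp.upper}, applied to the dominated family (dominated by \( 2G \)), the first and third terms are at most \( t\,(c_V+c_Z)\epsilon R+o(t) \), uniformly for \( \lVert v\rVert,\lVert z\rVert\le R \). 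It therefore suffices to show that the middle term is \( o(t) \), uniformly over the compact family of pairs \( (\tilde v,\tilde z) \). Letting \( \epsilon\to0 \) afterwards then gives the claim.

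\emph{The smooth case.} Here \( u_1=\tilde v+\tilde z \) and \( u_2=\tilde v \) range in a compact subset of \( C^1(M;TM) \), so Proposition~\ref{prp:linear.response} applies uniformly. For \( \lvert t\rvert<t_0 \), the densities satisfy \( \rho_t[u_i]=\rho-t\Div(\rho u_i)+o(t) \) in \( C^0 \). Since \( \tilde z\in C^1 \) and \( \nabla\!\cdot(\tilde z\mu)=0 \), identity \eqref{eq:divergence.equivariance} gives \( \Div(\rho\tilde z)=0 \) pointwise, so \( \Div(\rho u_1)=\Div(\rho u_2) \). Hence \( \lVert\rho_t[u_1]-\rho_t[u_2]\rVert_{C^0}=o(t) \), and therefore also in \( L^2(\vol) \), uniformly. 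Moreover \( \rho_t[u_2]\ge\min\rho/2>0 \) for small \( t \), uniformly by the same expansion. Lemma~\ref{lem:density.comparison} then bounds the middle term by \( \tfrac{2}{\sqrt{(\min\rho/2)\lambda_1}}\,o(t)=o(t) \).

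\emph{Main obstacle.} The delicate point is bookkeeping the uniformity. The \( o(t) \) from Proposition~\ref{prp:linear.response} depends on \( \epsilon \) through the smoothed spaces, so the order of limits must be handled correctly. Fix \( \eta>0 \), choose \( \epsilon \) so that the approximation error \( (c_V+c_Z)\epsilon R<\eta \), and only then take \( t \) small. This yields \( \limsup_{t\searrow0}\sup t^{-1}\dw(\cdot,\cdot)\le\eta \) for every \( \eta \). Without uniformity, the case of a single pair \( (v,z) \) is the special case \( V=\operatorname{span}\{v\} \), \( Z=\operatorname{span}\{z\} \). One must also check that \( B_\epsilon \) indeed lands in \( C^1 \) solenoidal fields; this is exactly what Lemma~\ref{lem:regular.solenoidal} provides, and it is where \( \rho\in C^1 \), \( \rho>0 \) is used.
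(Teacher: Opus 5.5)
Your proposal is correct and follows the paper's proof essentially step for step. Both smooth through Lemma~\ref{lem:uniform.smoothing}, split by a three-term triangle inequality whose outer terms are controlled by \eqref{eq:exp.upper.unif}, handle the middle term by the uniform linear response of Proposition~\ref{prp:linear.response} over a compact subset of \( C^1(M;TM) \) together with Lemma~\ref{lem:density.comparison}, and fix \( \epsilon \) before letting \( t\searrow0 \). The differences are cosmetic: you obtain the single-pair case as the one-dimensional instance of the uniform argument, where the paper first proves it with sequences, and the two outer terms together contribute \( (2c_V+c_Z)R\epsilon \) rather than \( (c_V+c_Z)R\epsilon \), which is harmless since \( \epsilon \) is arbitrary.
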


\begin{proof}
    Write \( \Delta(t;v,z)\coloneqq\dw\left((\exp t(v+z))_\ast\mu,(\exp tv)_\ast\mu\right) \) for the quantity of interest.

    First we show that it suffices to prove the assertion for \( v \) and \( z \) of class \( C^1 \).
    By Lemma~\ref{lem:smooth.dense} choose \( v_n\in C^\infty(M;TM) \) with \( v_n\to v \) in \( L^2(\mu;TM) \), and by Lemma~\ref{lem:regular.solenoidal} choose \( z_n\in C^1(M;TM) \) with \( \nabla\!\cdot(z_n\mu)=0 \) and \( z_n\to z \) in \( L^2(\mu;TM) \).

    Inserting \( (\exp t(v_n+z_n))_\ast\mu \) and \( (\exp tv_n)_\ast\mu \) between the two measures and applying Lemma~\ref{lem:exp.upper} twice, once with \( (u,u')=(v+z,\,v_n+z_n) \) and once with \( (u,u')=(v_n,v) \), we obtain
    \[
    \begin{split}
        \limsup_{t\searrow0}\frac{\Delta(t;v,z)}{t}\ &\le\ 2\lVert v-v_n\rVert_{L^2(\mu)}+\lVert z-z_n\rVert_{L^2(\mu)} \\
                                                     &+\limsup_{t\searrow0}\frac{\Delta(t;v_n,z_n)}{t}.
    \end{split}
    \]
    It suffices therefore to prove the assertion for \( v \) and \( z \) of class \( C^1 \).

    Now, in class \(C^1\) Proposition~\ref{prp:linear.response} applies to both fields and gives, uniformly on \( M \), and Equation~\eqref{eq:divergence.equivariance} gives
    \[
    \begin{split}
        \rho_t[v+z]-\rho_t[v]&=-t\,\Div\left(\rho(v+z)\right)+t\,\Div(\rho v)+o(t) \\ 
                             &=-t\,\Div(\rho z)+o(t)=o(t).
    \end{split}
    \]
    Both densities are continuous and bounded below by \( \tfrac12\min\rho \) for sufficiently small \( t \), so Lemma~\ref{lem:density.comparison} applies, yielding
    \begin{equation}\label{eq:estimate.remainder}
        \Delta(t;v,z)\ \le\ 2\left(\tfrac12\min\rho\cdot\lambda_1\right)^{-1/2}\big\lVert\rho_t[v+z]-\rho_t[v]\big\rVert_{L^2(\vol)}\ =\ o(t) .
    \end{equation}

    For the uniformity statement, let \( V,Z\subset L^2(\mu;TM) \) be finite dimensional, put \( Z_0\coloneqq Z\cap L^2_0(\mu)^\perp \), and fix \( R>0 \) and \( \eta>0 \).
    Apply Lemma~\ref{lem:uniform.smoothing} to \( V \) and to \( Z_0 \), obtaining constants \( c_V,c_{Z_0} \) and, for each \( \epsilon>0 \), linear maps \( A_\epsilon:V\to C^\infty(M;TM) \) and \( B_\epsilon:Z_0\to C^1(M;TM)\cap L^2_0(\mu)^\perp \).
    Write \( v^\epsilon\coloneqq A_\epsilon v \) and \( z^\epsilon\coloneqq B_\epsilon z \), and let \( \mathcal K_\epsilon \) be the compact subset of \( C^1(M;TM) \) produced by Lemma~\ref{lem:uniform.smoothing}\ref{item:us.compact} for the subspace \( A_\epsilon V+B_\epsilon Z_0 \) and the radius \( 2R \), so that \( v^\epsilon \), \( z^\epsilon \) and \( v^\epsilon+z^\epsilon \) all lie in \( \mathcal K_\epsilon \) whenever \( \lVert v\rVert_{L^2(\mu)},\lVert z\rVert_{L^2(\mu)}\le R \).

    Inserting \( (\exp t(v^\epsilon+z^\epsilon))_\ast\mu \) and \( (\exp tv^\epsilon)_\ast\mu \) between the two measures, the triangle inequality gives
    \[
    \begin{split}
        \frac{\Delta(t;v,z)}{t}\ &\le\ \frac{\dw\left((\exp t(v+z))_\ast\mu,(\exp t(v^\epsilon+z^\epsilon))_\ast\mu\right)}{t} \\
                                 &+\frac{\Delta(t;v^\epsilon,z^\epsilon)}{t}+\frac{\dw\left((\exp tv^\epsilon)_\ast\mu,(\exp tv)_\ast\mu\right)}{t} .
    \end{split}
    \]
    The four fields occurring in the first and third terms are dominated by a single \( G\in L^2(\mu) \), by Lemma~\ref{lem:uniform.smoothing}\ref{item:us.dominated}, so \eqref{eq:exp.upper.unif} applies to that family.
    Together with Lemma~\ref{lem:uniform.smoothing}\ref{item:us.approx} it produces \( t_2>0 \), depending on \( \epsilon \) but not on \( (v,z) \), such that those two terms are bounded by \( (2c_V+c_{Z_0})R\epsilon+\eta/3 \) for \( t<t_2 \).

    The middle term is the one estimated in \eqref{eq:estimate.remainder}, and what has to be made uniform there is the remainder of Proposition~\ref{prp:linear.response} over \( \mathcal K_\epsilon \), namely
    \[
        \omega_\epsilon(t)\coloneqq\sup_{u\in\mathcal K_\epsilon}\big\lVert\,\rho_t[u]-\rho+t\,\Div(\rho u)\,\big\rVert_{C^0(M)} ,
    \]
    which is \( o(t) \) because \( \mathcal K_\epsilon \) is a compact subset of \( C^1(M;TM) \).
    Subtracting the expansions of \( \rho_t[v^\epsilon+z^\epsilon] \) and of \( \rho_t[v^\epsilon] \), and using \( \Div(\rho z^\epsilon)=0 \), we get \( \lVert\rho_t[v^\epsilon+z^\epsilon]-\rho_t[v^\epsilon]\rVert_{C^0(M)}\le2\omega_\epsilon(t) \).
    The same expansion, together with the bound on \( \lVert\Div(\rho u)\rVert_{C^0(M)} \) for \( u\in\mathcal K_\epsilon \), shows that both densities exceed \( \tfrac12\min\rho \) once \( t<t_1 \), with \( t_1 \) depending only on \( \rho \) and on \( \mathcal K_\epsilon \).
    Lemma~\ref{lem:density.comparison} then bounds the middle term by a fixed multiple of \( \omega_\epsilon(t)/t \), independently of \( (v,z) \).

    It remains to choose the parameters.
    Fix \( \epsilon \) so small that \( (2c_V+c_{Z_0})R\epsilon\le\eta/3 \), and then \( t_0\le\min(t_1,t_2) \) so small that the bound on the middle term is at most \( \eta/3 \) for \( t<t_0 \).
    For such \( t \) we get \( \Delta(t;v,z)\le\eta\,t \) for every admissible pair, as we wanted.
\end{proof}

The cancellation behind the equality \( \rho_t[v+z]-\rho_t[v]=o(t) \) has geometric meaning worth recording.
Let \( z \) be of class \( C^1 \) with \( \nabla\!\cdot(z\mu)=0 \), and let \(\Psi^t\) be its flow.
Then \( \Psi^t_\ast\mu=\mu \) for every \( t \), so that deforming \( \mu \) along \( z \) redistributes its mass inside itself rather than displacing it.
One is stirring \( \mu \), not moving it.

For the invariance, fix \( T \) and \( \psi\in C^\infty(M) \) and put \( u(t,x)\coloneqq\psi( \Psi^{T-t}(x)) \).
The flow property \(\Psi^{s+t}=\Psi^s\circ\Psi^t\) makes it so that \(u(t,\Psi^s(x))=u(t-s,x)\).
Differentiation at \(s=0\) gives \(\langle\nabla u(t,\cdot),z\rangle=-\partial_tu(t,\cdot) \), so that \( G(t)\coloneqq\int u(t,\cdot)\,d\mu \) satisfies \( G'(t)=\int\partial_tu\,d\mu=-\int\langle\nabla u(t,\cdot),z\rangle\,d\mu=0 \).
Therefore, \( \int\psi\circ\Psi^T\,d\mu=G(0)=G(T)=\int\psi\,d\mu \).
The function \(u(t,\cdot)\) is only of class \(C^1\), whereas \(\nabla\!\cdot(z\mu)=0\) is stated against \(C^\infty\) test functions; Remark~\ref{rmk:C1.test} covers the difference.

\begin{prp}
\label{prp:solenoidal.dual}
    Let \( \mu=\rho\,\vol \) with \( \rho\in C^1(M) \) and \( \rho>0 \), and let \( v,w\in L^2(\mu;TM) \) be arbitrary.
    Then
    \[
        \lim_{t\searrow0}\frac{\dw\left((\exp tv)_\ast\mu,(\exp tw)_\ast\mu\right)}{t}=\big\lVert P_\mu(v-w)\big\rVert_{L^2(\mu)} .
    \]
    In particular \( \dw\left(\mu,(\exp tv)_\ast\mu\right)=t\lVert P_\mu v\rVert_{L^2(\mu)}+o(t) \) for all \( v\in L^2(\mu;TM) \), and the two deformations agree to first order exactly when \( v-w\in L^2_0(\mu)^\perp \).
\end{prp}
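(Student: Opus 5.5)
The plan is to get matching upper and lower bounds by combining Theorem~\ref{thm:solenoidal} with Lemmas~\ref{lem:exp.upper} and~\ref{lem:duality.lower}. Write \( v-w=P_\mu(v-w)+z \) with \( z\in L^2_0(\mu)^\perp \), so that \( \nabla\!\cdot(z\mu)=0 \).

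\emph{Upper bound.} Put \( w'\coloneqq w+z \), so that \( v-w'=P_\mu(v-w) \). By the triangle inequality,
\[
    \dw\big((\exp tv)_\ast\mu,(\exp tw)_\ast\mu\big)\le\dw\big((\exp tv)_\ast\mu,(\exp tw')_\ast\mu\big)+\dw\big((\exp t(w+z))_\ast\mu,(\exp tw)_\ast\mu\big).
\]
Lemma~\ref{lem:exp.upper} bounds the first term by \( t\lVert P_\mu(v-w)\rVert_{L^2(\mu)}+o(t) \). Theorem~\ref{thm:solenoidal}, applied with base field \( w \) and solenoidal field \( z \), makes the second term \( o(t) \). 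Hence the \( \limsup \) of the quotient is at most \( \lVert P_\mu(v-w)\rVert \).

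\emph{Lower bound.} I apply Lemma~\ref{lem:duality.lower} with \( \alpha_t\coloneqq(\exp tw)_\ast\mu \), \( \beta_t\coloneqq(\exp tv)_\ast\mu \) and \( u\coloneqq v-w \). Two hypotheses need checking.
\begin{enumerate}
    \item Weak convergence \( \alpha_t\rightharpoonup\mu \): by Lemma~\ref{lem:exp.upper} with \( u'=0 \), \( \dw(\alpha_t,\mu)\le t\lVert w\rVert+o(t)\to0 \).
    \item The expansion of \( \int\psi\,d\beta_t-\int\psi\,d\alpha_t \): this follows from the computation in the proof of Lemma~\ref{lem:pointwise.continuity}, which gives \( \int\psi\,d(\exp tv)_\ast\mu=\int\psi\,d\mu+t\langle\nabla\psi,v\rangle+O(t^2) \), and likewise for \( w \). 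Subtracting yields \( t\langle\nabla\psi,v-w\rangle+O(t^2) \).
\end{enumerate}
The lemma then gives \( \liminf\ge\lVert P_\mu(v-w)\rVert \). The two bounds together give the limit.

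Step (ii) is where I expect the only care to be needed. The Taylor estimate \eqref{eq:taylor} holds only for \( \lvert\xi\rvert<r_0 \), while \( v \) is merely \( L^2 \). On \( \{t\lvert v\rvert\ge r_0\} \) I bound the integrand crudely by \( L\,t\lvert v\rvert \); since \( t\lvert v\rvert\ge r_0 \) there, this is at most \( Lt^2\lvert v\rvert^2/r_0 \), which integrates to \( O(t^2) \). The paper's argument already glosses over this point, so I would only remark on it.

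The particular cases follow at once. Taking \( w=0 \) gives \( \dw(\mu,(\exp tv)_\ast\mu)=t\lVert P_\mu v\rVert_{L^2(\mu)}+o(t) \). The two deformations agree to first order exactly when \( P_\mu(v-w)=0 \), that is, when \( v-w\in L^2_0(\mu)^\perp \).
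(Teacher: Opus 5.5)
Your proof is correct and is essentially the paper's. The upper bound combines Lemma~\ref{lem:exp.upper} with Theorem~\ref{thm:solenoidal} through one intermediate measure; you attach the solenoidal part \( z \) to \( w \), while the paper removes it from \( v \). The lower bound is the same application of Lemma~\ref{lem:duality.lower} with \( u=v-w \).

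Your truncation in step (ii) is valid but unnecessary. Along the geodesic \( \gamma(s)=\exp_y(s\xi) \) the second derivative of \( \psi\circ\gamma \) is \( \Hess\psi(\gamma',\gamma') \), so the bound \( \tfrac H2\lvert\xi\rvert^2 \) holds for every \( \xi \) with no restriction \( \lvert\xi\rvert<r_0 \), as the paper itself uses in Proposition~\ref{prp:ball.embedding}.
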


\begin{proof}
    Put \( z\coloneqq(\Id-P_\mu)(v-w) \) and \( g\coloneqq w+P_\mu(v-w) \), so that \( v=g+z \) and \( g-w=P_\mu(v-w) \).
    Theorem~\ref{thm:solenoidal} gives \( \dw\left((\exp tv)_\ast\mu,(\exp tg)_\ast\mu\right)=o(t) \), and Lemma~\ref{lem:exp.upper} gives
    \[
        \limsup_{t\searrow0}\frac{\dw\left((\exp tg)_\ast\mu,(\exp tw)_\ast\mu\right)}{t}\ \le\ \lVert g-w\rVert_{L^2(\mu)}=\big\lVert P_\mu(v-w)\big\rVert_{L^2(\mu)} ,
    \]
    which is the upper bound.

    For the lower bound apply Lemma~\ref{lem:duality.lower} with \( \alpha_t\coloneqq(\exp tw)_\ast\mu \), \( \beta_t\coloneqq(\exp tv)_\ast\mu \) and \( u\coloneqq v-w \).
    The weak\( ^\ast \) convergence \( \alpha_t\rightharpoonup\mu \) holds because \( \dw(\mu,\alpha_t)\le t\lVert w\rVert_{L^2(\mu)} \), by the coupling \( (\Id,\exp tw)_\ast\mu \).
    For the remaining hypothesis fix \( \psi\in C^\infty(M) \) and apply \eqref{eq:taylor} at each point, with \( \xi=tv(x) \) and with \( \xi=tw(x) \); integrating against \( \mu \) and subtracting gives
    \[
        \int_M\psi\,d\beta_t-\int_M\psi\,d\alpha_t=t\,\big\langle\nabla\psi,\,v-w\big\rangle_{L^2(\mu)}+O(t^2),
    \]
    the remainder being bounded by \( \tfrac{H}{2}t^2\left(\lVert v\rVert^2_{L^2(\mu)}+\lVert w\rVert^2_{L^2(\mu)}\right)<\infty \).
    Applying Lemma~\ref{lem:duality.lower} then gives \( \liminf_tt^{-1}\dw(\beta_t,\alpha_t)\ge\lVert P_\mu(v-w)\rVert_{L^2(\mu)} \).
\end{proof}

Proposition~\ref{prp:solenoidal.dual} extends Corollary~\ref{cor:metric.derivative} from \( v\in L^2_0(\mu) \) to arbitrary \( v\in L^2(\mu;TM) \), at the cost of a regularity hypothesis on \( \mu \) which Corollary~\ref{cor:metric.derivative} does not need.
The two agree on \( L^2_0(\mu) \), where \( P_\mu v=v \).
For \( \dim M=1 \) the solenoidal fields are exactly the multiples of \( \rho^{-1} \), and \( P_\mu \) is the centering operator of \cite[\S5.1]{kloeckner_optimal_2013}.
The statement then degenerates into \cite[Proposition 5.2]{kloeckner_optimal_2013}, which says that a \( C^1 \) field and its centred version deform \( \rho\Leb \) in the same way to first order.
The extreme case is the constant field on \( S^1 \), for which \( (\exp t\xi)_\ast\Leb=\Leb \) for every \( t \): one rotates all the mass, whereas leaving it in place would be more efficient \cite[\S3.2]{kloeckner_optimal_2013}.

For \( \dim M\ge2 \) the space of solenoidal fields is infinite dimensional, which is what makes \( P_\mu \) a genuine projection rather than the subtraction of a constant.
Its explicit Fourier form on \( \T^d \) is the Helmholtz--Hodge projection of Proposition~\ref{prp:fourier.model}.

\subsection{The derivative at an invariant measure}
\label{sec:derivative.acim}

Proposition~\ref{prp:functoriality.pushforward} and Proposition~\ref{prp:image.velocity} already describe the image of a curve under \( \phi_\ast \): if \( (\mu_t) \) is absolutely continuous with velocity field \( (v_t) \), then \( \nu_t=\phi_\ast\mu_t \) is absolutely continuous with velocity field \( P_{\nu_t}\Transf_{\mu_t}v_t \) for almost every \( t \).
That statement is about velocity fields, and it holds only for almost every \( t \).
What we have on the metric side is likewise incomplete: Proposition~\ref{prp:pushforward.rate} places \( \nu_t \) within distance \( O(t) \) of \( \mu_t \), and Proposition~\ref{prp:solenoidal.dual} computes \( \lim_{t\searrow0}t^{-1}\dw \) between two exponential deformations of one and the same measure, but neither identifies the curve \( (\nu_t) \) itself.
The theorem below does.
It fixes an invariant \( \mu_0 \), works at the single time \( t=0 \), and identifies \( \phi_\ast\left((\exp tv)_\ast\mu_0\right) \), up to \( o(t) \) in \( \dw \), with an exponential deformation of \( \mu_0 \) along an explicit field.

Our strategy has four steps.
First, one deforms \( \mu_0=\rho\vol \) along \( v \) and expands the density of the deformed measure by Proposition~\ref{prp:linear.response}.
One then applies \( \phi_\ast \), which acts on densities as a transfer operator bounded on \( C^0(M) \), and identifies the resulting first-order term by Lemma~\ref{lem:transfer.intertwines}, which says that the pushforward of \( \nabla\!\cdot(v\mu_0) \) is \( \nabla\!\cdot(\Transf_0v\,\mu_0) \).
The two densities then differ by \( o(t) \) uniformly on \( M \), and Lemma~\ref{lem:density.comparison} converts that into an \( o(t) \) bound on \( \dw \).
Thirdly, the field so obtained is \( \Transf_0v \), which need not belong to \( L^2_0(\mu_0) \), and Theorem~\ref{thm:solenoidal} removes its solenoidal part at no metric cost, leaving \( \D_\phi v=P_{\mu_0}\Transf_0v \).
Lastly, the first three steps require \( v \) to be of class \( C^1 \), and Lemma~\ref{lem:exp.upper} extends the conclusion to every \( v\in L^2_0(\mu_0) \).

The projection is therefore not imposed by hand.
It is what remains once Theorem~\ref{thm:solenoidal} has been applied, it is the same projection that appears in Proposition~\ref{prp:image.velocity}, and it is the same projection that appeared in the lower bound of Lemma~\ref{lem:duality.lower}.

\begin{thm}[G\^ateaux derivative at an invariant measure]
\label{thm:derivative.acim}
    Let \( \phi:M\to M \) be a \( C^2 \) covering map and let \( \mu_0=\rho\,\vol \) be \( \phi \)-invariant with \( \rho\in C^1(M) \) and \( \rho>0 \).
    Let \( \Transf_0\coloneqq\Transf_{\mu_0} \) and \( \Koop_0\coloneqq\Koop_{\mu_0} \) be the operators of Lemma~\ref{lem:transfer.operator}, which act on \( L^2(\mu_0;TM) \) because \( \phi_\ast\mu_0=\mu_0 \), and set
    \[
        \D_\phi\coloneqq P_{\mu_0}\Transf_0:L^2_0(\mu_0)\lto L^2_0(\mu_0) .
    \]
    Then, for every \( v\in L^2_0(\mu_0) \),
    \[
        \dw\left(\phi_\ast\left((\exp tv)_\ast\mu_0\right),\ \left(\exp t\,\D_\phi v\right)_\ast\mu_0\right)=o(t),\qquad t\searrow0 ,
    \]
    the remainder being uniform on bounded subsets of finite-dimensional subspaces of \( L^2_0(\mu_0) \).
\end{thm}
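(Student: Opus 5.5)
The plan follows the four steps announced before the statement. First take \( v\in C^1(M;TM)\cap L^2_0(\mu_0) \); after an approximation step, \( v \) smooth will suffice. By Proposition~\ref{prp:linear.response}, for small \( t \) the measure \( (\exp tv)_\ast\mu_0 \) has continuous density
\[
    \rho_t[v]=\rho-t\,\Div(\rho v)+r_t,\qquad \lVert r_t\rVert_{C^0}=o(t).
\]
Since \( \phi \) is a \( C^2 \) covering map with finitely many sheets, \( \phi_\ast \) acts on densities by the Perron--Frobenius operator
\[
    \PF_\phi h(y)=\sum_{\phi(x)=y}\frac{h(x)}{\lvert\det D_x\phi\rvert},
\]
which is bounded on \( C^0(M) \) with norm at most \( \deg\phi\cdot\max\lvert\det D\phi\rvert^{-1} \). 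Invariance gives \( \PF_\phi\rho=\rho \). The density of \( \phi_\ast(\exp tv)_\ast\mu_0 \) is therefore \( \rho-t\,\PF_\phi\Div(\rho v)+o(t) \), uniformly on \( M \).

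To identify the first-order term, use Lemma~\ref{lem:transfer.intertwines} with \( \mu=\nu=\mu_0 \) together with \eqref{eq:divergence.equivariance}. These give \( \PF_\phi\Div(\rho v)\,\vol=\phi_\ast\nabla\!\cdot(v\mu_0)=\nabla\!\cdot(\Transf_0v\,\mu_0) \). Write \( \Transf_0v=P_{\mu_0}\Transf_0v+z \) with \( z\in L^2_0(\mu_0)^\perp \). Then \( \nabla\!\cdot(z\mu_0)=0 \), so the first-order term equals \( \nabla\!\cdot(\D_\phi v\,\mu_0) \) as a distribution.

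The difficulty is that \( \D_\phi v \) need not be \( C^1 \), so Proposition~\ref{prp:linear.response} cannot be applied to it directly. I would handle this by noting that \( \Transf_0v \) itself is \( C^1 \) when \( v\in C^1 \) and \( \phi\in C^2 \). Indeed, by \eqref{eq:transfer.disintegration} and the covering structure,
\[
    \Transf_0v(y)=\rho(y)^{-1}\sum_{\phi(x)=y}\frac{\rho(x)\,D_x\phi\,v(x)}{\lvert\det D_x\phi\rvert}.
\]
This expression involves \( D\phi\in C^1 \), \( \rho\in C^1 \) and the local inverse branches, which are \( C^2 \). So Proposition~\ref{prp:linear.response} applies to \( u=\Transf_0v \). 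The density of \( (\exp t\Transf_0v)_\ast\mu_0 \) is then \( \rho-t\,\Div(\rho\Transf_0v)+o(t) \), and \( \Div(\rho\Transf_0v)=\PF_\phi\Div(\rho v) \). The two densities therefore differ by \( o(t) \) in \( C^0 \), and both stay above \( \tfrac12\min\rho \) for small \( t \). Lemma~\ref{lem:density.comparison} then gives
\[
    \dw\bigl(\phi_\ast(\exp tv)_\ast\mu_0,\ (\exp t\Transf_0v)_\ast\mu_0\bigr)=o(t).
\]
Theorem~\ref{thm:solenoidal}, applied with the field \( \D_\phi v \) and the solenoidal \( z \), replaces \( \Transf_0v \) by \( \D_\phi v \) at cost \( o(t) \).

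For general \( v\in L^2_0(\mu_0) \), approximate by smooth gradients \( v_n \) using \eqref{eq:L20}. Since \( \phi_\ast \) is \( \Lip(\phi) \)-Lipschitz on \( \Prob(M) \), Lemma~\ref{lem:exp.upper} bounds
\[
    \limsup_{t\searrow0} t^{-1}\dw\bigl(\phi_\ast(\exp tv)_\ast\mu_0,\ \phi_\ast(\exp tv_n)_\ast\mu_0\bigr)\le\Lip(\phi)\lVert v-v_n\rVert.
\]
The same lemma gives \( \limsup_{t\searrow0} t^{-1}\dw\bigl((\exp t\D_\phi v_n)_\ast\mu_0,(\exp t\D_\phi v)_\ast\mu_0\bigr)\le\lVert\D_\phi\rVert\,\lVert v_n-v\rVert \), and \( \lVert\D_\phi\rVert\le\Lip(\phi) \). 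Letting \( n\to\infty \) concludes the pointwise statement.

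For uniformity on a bounded set in a finite-dimensional \( V\subset L^2_0(\mu_0) \), I would mimic the proof of Theorem~\ref{thm:solenoidal}. Apply Lemma~\ref{lem:uniform.smoothing} to \( V \) to get the smoothing maps \( A_\epsilon \). Then use \eqref{eq:exp.upper.unif} with dominating functions for the families \( \{v,A_\epsilon v\} \) and \( \{\D_\phi v,\D_\phi A_\epsilon v\} \); the latter lie in the finite-dimensional space \( \D_\phi V+\D_\phi A_\epsilon V \). The \( C^1 \) step is uniform for \( v \) in the compact set \( A_\epsilon(\text{ball}) \). Its images \( \Transf_0 A_\epsilon v \) also form a compact subset of \( C^1 \), since \( \Transf_0 \) is a continuous linear map \( C^1\to C^1 \). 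The solenoidal step is uniform by the uniformity clause of Theorem~\ref{thm:solenoidal}, taking \( Z=(\Id-P_{\mu_0})\Transf_0A_\epsilon V \), which is finite-dimensional. I expect the main obstacle to be the \( C^1 \) regularity of \( \Transf_0v \) and making the uniformity bookkeeping consistent across the approximation; the central cancellation is straightforward given Lemma~\ref{lem:transfer.intertwines}.
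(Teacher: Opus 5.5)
Your proposal is correct and follows essentially the same route as the paper. You apply the linear response to both \( v \) and \( \Transf_0 v \), using the inverse-branch formula to show the latter is \( C^1 \). You use the \( C^0 \) bound on \( \PF_\phi \) and Lemma~\ref{lem:transfer.intertwines} to identify \( \PF_\phi(\Div(\rho v))=\Div(\rho\,\Transf_0 v) \), then conclude with Lemma~\ref{lem:density.comparison} and Theorem~\ref{thm:solenoidal} to pass from \( \Transf_0 v \) to \( \D_\phi v \). Your approximation by smooth gradients and your uniformity argument via Lemma~\ref{lem:uniform.smoothing} and \eqref{eq:exp.upper.unif} are exactly the paper's steps.
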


\begin{proof}
    Write \( \PF_\phi  \) for the Perron-Frobenius operator of \( \phi \) acting on densities with respect to \( \vol \), so that \( \phi_\ast(f\vol)=(\PF_\phi f)\vol \).
    Explicitly
    \[
        \PF_\phi f(y)=\sum_{x\in\phi^{-1}(y)}\frac{f(x)}{\lvert\det D_x\phi\rvert} .
    \]
    Since \( \phi \) is a \( C^2 \) covering map of a closed manifold it has finitely many inverse branches, each of class \( C^2 \), so \( \PF_\phi  \) maps \( C^0(M) \) into itself.
    Since it is positive, it satisfies \( \lvert\PF_\phi f\rvert\le\PF_\phi \lvert f\rvert\le\lVert f\rVert_\infty\PF_\phi \mathbf 1 \), so that its norm is attained at the constant function \( \mathbf 1 \) (compare \cite[Chapter~2]{baladi_positive_2000}):
    \[
        \big\lVert\PF_\phi \big\rVert_{C^0\to C^0}=\big\lVert\PF_\phi \mathbf 1\big\rVert_\infty\eqqcolon\kappa<\infty .
    \]
    The invariance \( \phi_\ast\mu_0=\mu_0 \) reads \( \PF_\phi \rho=\rho \).

	Lemma~\ref{lem:transfer.intertwines}, applied at \( \mu=\mu_0 \) with \( \nu=\phi_\ast\mu_0=\mu_0 \), gives \eqref{eq:transfer.commutes} in the form \( \nabla\!\cdot(\Transf_0v\ \mu_0)=\phi_\ast(\nabla\!\cdot(v\,\mu_0)) \).
    This is the precise sense in which \( \Transf_0 \) is the derivative of \( \phi_\ast \).

    We start in class \(C^1\).
    Let \( v\in C^1(M;TM) \) and \( \tau_1,\dots,\tau_q \) be the inverse branches of \( \phi \), each of class \( C^2 \).
    Disintegrating \( \mu_0 \) over the fibres of \( \phi \) and using \( \phi_\ast\mu_0=\mu_0 \), the operator \eqref{eq:transfer.disintegration} is given explicitly by
    \begin{equation}\label{eq:transfer.branches}
        \Transf_0v(y)=\frac{1}{\rho(y)}\sum_{i=1}^q\rho\left(\tau_i(y)\right)\,\big\lvert\det D_y\tau_i\big\rvert\ D_{\tau_i(y)}\phi\ v\left(\tau_i(y)\right) .
    \end{equation}
    Every factor on the right is of class \( C^1 \), because \( \tau_i \) is \( C^2 \), \( D\phi \) is \( C^1 \), \( \rho \) is \( C^1 \) and \( v \) is \( C^1 \); and \( \rho \) does not vanish, so the quotient is again of class \( C^1 \).
    Hence \( \Transf_0v\in C^1(M;TM) \), and Proposition~\ref{prp:linear.response} applies to \( v \) and to \( \Transf_0v \) alike.

    When applied to \(v\), it gives \((\exp(tv))_\ast\mu_0 = \left(\rho-t\,\Div(\rho v) +r_t\right)\vol\), where \(\lVert r_t\rVert_{C^0}=o(t)\).
    Applying \(\phi_\ast\), equivalently \(\PF_\phi \), we obtain \(\PF_\phi \rho=\rho\) and \(\lVert\PF_\phi r_t\rVert_{C^0} \le\kappa\lVert r_t\rVert_{C^0}=o(t)\).
    Therefore,
    \[
        \phi_\ast\left((\exp(tv))_\ast\mu_0\right)=\left(\rho-t\,\PF_\phi \left(\Div(\rho v)\right)\right)\vol+o(t)
    \]
    uniformly on \(M\).
   
    By applying Equation~\eqref{eq:divergence.equivariance}, we may now read \eqref{eq:transfer.commutes}  as an identity of continuous functions.
    Indeed, both \( \rho v \) and \( \rho\,\Transf_0v \) are of class \( C^1 \) so the left-hand side of \eqref{eq:transfer.commutes} identifies with the measure \( \Div(\rho\,\Transf_0v)\,\vol \).
    On the other hand, on the right-hand side the argument of the pushforward identifies with \( \Div(\rho v)\,\vol \).
	Since \(\Div(\rho v)\in C^0(M)\), the pushforward of \(\Div(\rho v)\,\vol\) is \(\PF_\phi(\Div(\rho v))\,\vol\) by the defining property of \(\PF_\phi\).
    Finally, two continuous densities defining the same measure agree, so
    \begin{equation}\label{eq:transfer.densities}
        \PF_\phi \left(\Div(\rho v)\right)=\Div\left(\rho\,\Transf_0v\right) .
    \end{equation}
    Proposition~\ref{prp:linear.response} applied to \( \Transf_0v \) gives
    \[
        (\exp t\Transf_0v)_\ast\mu_0=\left(\rho-t\,\Div(\rho\,\Transf_0v)\right)\vol+o(t)
    \]
    uniformly, so by \eqref{eq:transfer.densities} the two measures we are comparing have continuous densities differing by \( o(t) \) in \( C^0(M) \), both bounded below by \( \tfrac12\min\rho \) for \( t \) small.
    We have \( \lVert\cdot\rVert_{L^2(\vol)}\le\sqrt{\vol(M)}\lVert\cdot\rVert_{C^0(M)} = \lVert\cdot\rVert_{C^0(M)} \), as the volume is normalised.
    This, together with Lemma~\ref{lem:density.comparison}, yields
    \begin{equation}\label{eq:derivative.step2}
        \dw\left(\phi_\ast\left((\exp tv)_\ast\mu_0\right),\ (\exp t\,\Transf_0v)_\ast\mu_0\right)=o(t).
    \end{equation}

    The third step is the projection onto \( L^2_0(\mu_0) \).
    Write \( \Transf_0v=\D_\phi v+z \) with \( z\coloneqq(\Id-P_{\mu_0})\Transf_0v\in L^2_0(\mu_0)^\perp \).
    The measure \( \mu_0 \) satisfies the hypotheses of Theorem~\ref{thm:solenoidal}, which applies with direction \( \D_\phi v \) and gives
    \[
        \dw\left((\exp t\Transf_0v)_\ast\mu_0,\ (\exp t\D_\phi v)_\ast\mu_0\right)=o(t) .
    \]
    Together with \eqref{eq:derivative.step2} this proves the theorem for \( v\in C^1 \).

    Finally, we move to general \( v\in L^2_0(\mu_0) \).
    By \eqref{eq:L20} choose \( \psi_n\in C^\infty(M) \) such that \( v_n\coloneqq\nabla\psi_n \) converges to \( v \) in \( L^2(\mu_0;TM) \). 
    Each \( v_n \) is smooth and lies in \( L^2_0(\mu_0) \), so the first three steps apply to it.
    The map \( \phi_\ast \) is \( \Lip(\phi) \)-Lipschitz on \( (\Prob(M),\dw) \), as shown in the proof of Proposition~\ref{prp:pushforward.rate}, so Lemma~\ref{lem:exp.upper} gives
    \[
        \limsup_{t\searrow0}\frac1t\,\dw\left(\phi_\ast\left((\exp tv)_\ast\mu_0\right),\ \phi_\ast\left((\exp tv_n)_\ast\mu_0\right)\right)\ \le\ \Lip(\phi)\,\lVert v-v_n\rVert_{L^2(\mu_0)} .
    \]
    The same lemma gives 
    \[ 
        \limsup_{t\searrow0}\frac1t\,\dw\left((\exp t\D_\phi v)_\ast\mu_0,(\exp t\D_\phi v_n)_\ast\mu_0\right)\le\lVert\D_\phi\rVert_{\mathrm{op}}\,\lVert v-v_n\rVert,
    \] 
    and \( \lVert\D_\phi\rVert_{\mathrm{op}}\le\lVert\Transf_0\rVert_{\mathrm{op}}\le\Lip(\phi) \) by Proposition~\ref{prp:functoriality.pushforward}\ref{item:w.bound}, since the projection \( P_{\mu_0} \) has norm at most one.
    Letting \( n\to\infty \) concludes.

    As for the uniformity, let \( W\subset L^2_0(\mu_0) \) be finite dimensional, let \( R>0 \), and let \( A_\epsilon:W\to C^\infty(M;TM)\cap L^2_0(\mu_0) \) be the map given by Lemma~\ref{lem:uniform.smoothing}.
    The operators \( \Transf_0 \), \( \D_\phi \) and \( \Id-P_{\mu_0} \) are linear and bounded, so as \( v \) runs over the ball of radius \( R \) in \( W \) the fields \( A_\epsilon v \), \( \Transf_0A_\epsilon v \), \( \D_\phi A_\epsilon v \) and \( (\Id-P_{\mu_0})\Transf_0A_\epsilon v \) range in bounded subsets of the finite-dimensional spaces \( A_\epsilon W \), \( \Transf_0A_\epsilon W \), \( \D_\phi A_\epsilon W \) and \( (\Id-P_{\mu_0})\Transf_0A_\epsilon W \) respectively, all of which consist of fields of class \( C^1 \) by \eqref{eq:transfer.branches}.
    The uniformity clauses of Proposition~\ref{prp:linear.response} and of Theorem~\ref{thm:solenoidal} therefore cover the first three steps, the latter applied with \( V\coloneqq\D_\phi A_\epsilon W \) and \( Z\coloneqq(\Id-P_{\mu_0})\Transf_0A_\epsilon W\subset L^2_0(\mu_0)^\perp \).
    In the last step Lemma~\ref{lem:exp.upper} is replaced by \eqref{eq:exp.upper.unif}, applied to the four fields \( v \), \( A_\epsilon v \), \( \D_\phi v \) and \( \D_\phi A_\epsilon v \), which are dominated by a single element of \( L^2(\mu_0) \) by Lemma~\ref{lem:uniform.smoothing}\ref{item:us.dominated} and the boundedness of \( \D_\phi \).
    Choosing first \( \epsilon \), by Lemma~\ref{lem:uniform.smoothing}\ref{item:us.approx}, and then \( t \), exactly as in the proof of Theorem~\ref{thm:solenoidal}, yields the conclusion.
\end{proof}

\begin{rmk}[Fr\'echet derivatives]
    \label{rmk:frechet.derivative}
    The remainder in Theorem~\ref{thm:derivative.acim} is uniform on bounded subsets of finite-dimensional subspaces of \( L^2_0(\mu_0) \) and not on the unit ball, so the statement is indeed a G\^ateaux derivative, not a Fr\'echet one, and this cannot be improved.

    Indeed, already for the circle map \( \phi: x \mapsto 2x \), \cite[Proposition 4.3]{kloeckner_optimal_2013} produces a constant \( c \) such that, for every \( \epsilon>0 \), there is a non-zero \( v\in L^2_0(\vol) \) satisfying \( \lVert v\rVert_{L^2}\le\epsilon \),  \( \D_\phi v=0 \) and \( \dw\left((\phi_\ast(\exp tv)_\ast\vol),\vol\right)\ge c\,\epsilon \).
    The remainder of the derivative at such a \( v \) is therefore at least \( c\lVert v\rVert_{L^2} \), and not \( o(\lVert v\rVert_{L^2}) \).

    This example is easily transported to higher dimensions (cf. Section~\ref{sec:conclusion}).
\end{rmk}

\begin{lem}
\label{lem:koopman.invariance}
    Let \( \phi\in C^1(M,M) \) and let \( \mu\in\Prob(M) \) satisfy \( \phi_\ast\mu=\mu \).
    Then \( \Koop_\mu\left(L^2_0(\mu)\right)\subseteq L^2_0(\mu) \).
\end{lem}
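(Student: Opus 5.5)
The plan is to verify the inclusion on the dense generating set and then pass to the closure. By \eqref{eq:L20}, \( L^2_0(\mu) \) is the \( L^2(\mu;TM) \)-closure of \( \{\nabla\psi:\ \psi\in C^\infty(M)\} \). Because \( \phi_\ast\mu=\mu \), the Koopman operator \( \Koop_\mu \) of Lemma~\ref{lem:transfer.operator} maps \( L^2(\mu;TM) \) into itself and is bounded, with \( \lVert\Koop_\mu\rVert_{\mathrm{op}}\le\Lip(\phi) \). A bounded operator sends the closure of a set into the closure of its image. It therefore suffices to show that \( \Koop_\mu\nabla\psi\in L^2_0(\mu) \) for every \( \psi\in C^\infty(M) \).

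For smooth \( \psi \), the chain rule already used in the proofs of Lemma~\ref{lem:transfer.intertwines} and Proposition~\ref{prp:functoriality.pushforward} gives
\[
    \Koop_\mu\nabla\psi=(D\phi)^{\ast}\big(\nabla\psi\circ\phi\big)=\nabla(\psi\circ\phi).
\]
The function \( \psi\circ\phi \) is only of class \( C^1 \), so we still need \( \nabla g\in L^2_0(\mu) \) for every \( g\in C^1(M) \). We can see this in either of two ways.

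\emph{Duality.} By the characterisation of \( L^2_0(\mu)^\perp \) recorded in Section~\ref{sec:tangent}, it is enough to check \( \langle\nabla g,z\rangle_{L^2(\mu)}=0 \) for every \( z \) with \( \nabla\!\cdot(z\mu)=0 \). This vanishing holds for \( C^\infty \) test functions by definition, and it extends to \( g\in C^1(M) \) by Remark~\ref{rmk:C1.test}. Therefore \( \nabla g\in(L^2_0(\mu)^\perp)^\perp=L^2_0(\mu) \), the latter being closed.

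\emph{Approximation.} Alternatively, choose \( g_n\in C^\infty(M) \) with \( g_n\to g \) in \( C^1(M) \), which is possible since \( M \) is compact. Then \( \nabla g_n\to\nabla g \) uniformly, hence in \( L^2(\mu;TM) \) because \( \mu \) is a probability measure, and \( \nabla g\in L^2_0(\mu) \) since this space is closed.

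Combining the two steps, \( \Koop_\mu\nabla\psi\in L^2_0(\mu) \) for all smooth \( \psi \). Boundedness of \( \Koop_\mu \) and closedness of \( L^2_0(\mu) \) then give \( \Koop_\mu(L^2_0(\mu))\subseteq L^2_0(\mu) \). The only point requiring care is the loss of smoothness in \( \psi\circ\phi \) when \( \phi \) is merely \( C^1 \), and either argument above handles it. No regularity of \( \mu \) is used; invariance enters only to ensure that \( \Koop_\mu \) acts on \( L^2(\mu;TM) \) rather than from \( L^2(\nu;TM) \) with \( \nu\neq\mu \).
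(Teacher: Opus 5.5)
Your proof is correct and is essentially the paper's own: write \( \Koop_\mu\nabla\psi=\nabla(\psi\circ\phi) \), approximate \( \psi\circ\phi\in C^1(M) \) in \( C^1 \) by smooth functions to get \( \nabla(\psi\circ\phi)\in L^2_0(\mu) \), then pass to all of \( L^2_0(\mu) \) using boundedness of \( \Koop_\mu \) and density of smooth gradients. Your duality variant is also valid, but it relies on the same density of \( C^\infty(M) \) in \( C^1(M) \) (through Remark~\ref{rmk:C1.test}), so it is a rephrasing of the same step rather than a different route.
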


\begin{proof}
    Let \( \psi\in C^\infty(M) \).
    Then \( \Koop_\mu\nabla\psi=\nabla(\psi\circ\phi) \), and \( \psi\circ\phi\in C^1(M) \) because \( \phi \) is of class \( C^1 \).
    Since \( M \) is compact, \( C^\infty(M) \) is dense in \( C^1(M) \), so there are \( \chi_n\in C^\infty(M) \) with \( \nabla\chi_n\to\nabla(\psi\circ\phi) \) uniformly on \( M \), hence in \( L^2(\mu;TM) \).
    Therefore \( \Koop_\mu\nabla\psi\in L^2_0(\mu) \).
    The operator \( \Koop_\mu \) is bounded by Lemma~\ref{lem:transfer.operator} and the gradients of smooth functions are dense in \( L^2_0(\mu) \) by \eqref{eq:L20}, so the inclusion holds on all of \( L^2_0(\mu) \).
\end{proof}

The operator \( \D_\phi \) was defined by projecting the first-order transport operator onto the tangent space \( L^2_0(\mu_0) \).
By Lemma~\ref{lem:koopman.invariance} the Koopman operator preserves that space, and the projection is not merely a device for recovering a tangent vector: the restriction \( \Koop_0|_{L^2_0(\mu_0)} \) is adjoint to \( \D_\phi \).

\begin{prp}
\label{prp:derivative.functoriality}
    Let \( \phi:M\to M \) be a \( C^2 \) covering map and let \( \mu_0=\rho\vol \) be \( \phi \)-invariant, with \( \rho\in C^1(M) \) and \( \rho>0 \).
    Then \(\D_\phi=\left(\Koop_0|_{L^2_0(\mu_0)}\right)^\ast \).
    Consequently, if \( \psi:M\to M \) is another \( C^2 \) covering map preserving \( \mu_0 \), then
    \[
        \D_{\phi\circ\psi}=\D_\phi\D_\psi .
    \]
    Moreover,
    \[
        E_\phi \coloneqq \ker(\D_\phi-\Id)=\big\{v\in L^2_0(\mu_0):\phi_\ast\iota_{\mu_0}(v)=\iota_{\mu_0}(v)\big\}.
    \]
\end{prp}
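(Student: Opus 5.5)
The adjoint identity is a direct computation; functoriality and the fixed-space description then follow from it.

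\emph{Adjoint.} By Lemma~\ref{lem:koopman.invariance}, $\Koop_0$ maps $L^2_0(\mu_0)$ into itself, so its restriction is a bounded operator on the Hilbert space $L^2_0(\mu_0)$ and has an adjoint there. For $v,u\in L^2_0(\mu_0)$ I compute
\[
\langle \D_\phi v,u\rangle=\langle P_{\mu_0}\Transf_0 v,u\rangle=\langle \Transf_0 v,u\rangle=\langle v,\Koop_0 u\rangle .
\]
The second equality holds because $P_{\mu_0}$ is self-adjoint and $P_{\mu_0}u=u$. The third is Lemma~\ref{lem:transfer.operator}, where $\nu=\mu_0$ by invariance. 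This gives $\D_\phi=(\Koop_0|_{L^2_0(\mu_0)})^\ast$.

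\emph{Functoriality.} First, $\phi\circ\psi$ is again a $C^2$ covering map preserving $\mu_0$, so $\D_{\phi\circ\psi}$ is defined. The chain rule gives
\[
(\Koop_{\phi\circ\psi}u)(x)=(D_x\psi)^\ast(D_{\psi(x)}\phi)^\ast u(\phi\psi x),
\]
which is $\Koop_\psi\Koop_\phi u$. Both factors are taken at $\mu_0$, and each preserves $L^2_0(\mu_0)$ by Lemma~\ref{lem:koopman.invariance}. The restriction of the composite is therefore the composite of the restrictions. Taking adjoints reverses the order:
\[
\D_{\phi\circ\psi}=(\Koop_\psi\Koop_\phi)^\ast=\Koop_\phi^\ast\Koop_\psi^\ast=\D_\phi\D_\psi .
\]
The only point to check is the order convention of the Koopman operator, and $\Koop_{\phi\circ\psi}=\Koop_\psi\Koop_\phi$ is exactly what makes the statement come out as written.

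\emph{Fixed space.} I will use the injectivity of $\iota_{\mu_0}$ on $L^2_0(\mu_0)$ (Remark~\ref{rmk:two.models}) together with Lemma~\ref{lem:transfer.intertwines}. With $\nu=\mu_0$, that lemma gives
\[
\phi_\ast\iota_{\mu_0}(v)=\iota_{\mu_0}(\Transf_0v)=\iota_{\mu_0}(P_{\mu_0}\Transf_0 v)=\iota_{\mu_0}(\D_\phi v).
\]
The middle equality holds because $\iota_{\mu_0}$ kills $L^2_0(\mu_0)^\perp$. Hence $\phi_\ast\iota_{\mu_0}(v)=\iota_{\mu_0}(v)$ if and only if $\iota_{\mu_0}(\D_\phi v-v)=0$. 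Since $\D_\phi v-v\in L^2_0(\mu_0)$ and $\iota_{\mu_0}$ is injective there, this holds if and only if $\D_\phi v=v$.

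The only real subtlety is the bookkeeping in the functoriality step: the restrictions must compose, which needs Lemma~\ref{lem:koopman.invariance} for both maps, and the composition order must be tracked. The rest is formal.
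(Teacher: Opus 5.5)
Your proof is correct and follows the paper's argument essentially step for step. The three parts match: the same adjoint computation; the chain-rule identity $\Koop^{\phi\circ\psi}_0=\Koop^{\psi}_0\Koop^{\phi}_0$ followed by taking adjoints; and the fixed-space characterisation via Lemma~\ref{lem:transfer.intertwines}, the vanishing of $\iota_{\mu_0}$ on $L^2_0(\mu_0)^\perp$, and injectivity of $\iota_{\mu_0}$ on $L^2_0(\mu_0)$. Your explicit appeal to Lemma~\ref{lem:koopman.invariance} for both maps, so that the restrictions compose, spells out a point the paper leaves implicit.
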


The identity \( \D_{\phi\circ\psi}=\D_\phi\D_\psi \) is the chain rule of \cite[Corollary 3.13]{lessel_differentiable_2020}, read in the present setting. 
The adjoint identity and the description of \( E_\phi \) do not appear there.

\begin{proof}
    Let \( v,w\in L^2_0(\mu_0) \).
    Since \( P_{\mu_0} \) is the orthogonal projection onto \( L^2_0(\mu_0) \), Lemma~\ref{lem:transfer.operator} gives
    \[
        \big\langle\D_\phi v,w\big\rangle_{L^2(\mu_0)}
        =\big\langle P_{\mu_0}\Transf_0v,w\big\rangle_{L^2(\mu_0)}
        =\big\langle\Transf_0v,w\big\rangle_{L^2(\mu_0)}
        =\big\langle v,\Koop_0w\big\rangle_{L^2(\mu_0)} .
    \]
    Thus \( \D_\phi=\left(\Koop_0|_{L^2_0(\mu_0)}\right)^\ast \).

    Suppose \( \phi \) and \( \psi \) both preserve \( \mu_0 \), and write \( \Koop_0^\phi \) for the Koopman operator of \( \phi \) at \( \mu_0 \), displaying the map in the superscript and keeping the measure in the subscript.
    The chain rule \( D_x(\phi\circ\psi)=D_{\psi(x)}\phi\circ D_x\psi \) gives \( \Koop_0^{\phi\circ\psi}=\Koop_0^{\psi}\Koop_0^{\phi} \) on \( L^2_0(\mu_0) \).
    Taking adjoints yields \( \D_{\phi\circ\psi}=\D_\phi\D_\psi \).

    Finally, Lemma~\ref{lem:transfer.intertwines} gives
    \[
        \iota_{\mu_0}(\Transf_0v)=\phi_\ast\iota_{\mu_0}(v).
    \]
    Since \( \Transf_0v-\D_\phi v\in L^2_0(\mu_0)^\perp \), its weighted divergence vanishes, so \( \iota_{\mu_0}(\Transf_0v)=\iota_{\mu_0}(\D_\phi v) \).
    Hence
    \[
        \iota_{\mu_0}(\D_\phi v)=\phi_\ast\iota_{\mu_0}(v).
    \]
    The injectivity of \( \iota_{\mu_0} \) on \( L^2_0(\mu_0) \) now shows that \( \D_\phi v=v \) if and only if \( \phi_\ast\iota_{\mu_0}(v)=\iota_{\mu_0}(v) \).
\end{proof}

The last identity identifies the fixed space of \( \D_\phi \) with the tangent directions at \( \mu_0 \) whose associated first-order variations are invariant under \( \phi_\ast \).
Thus, for two maps \( \phi \) and \( \psi \) preserving \( \mu_0 \), the intersection \( E_\phi\cap E_\psi \) consists precisely of the tangent directions simultaneously invariant under both induced pushforwards, and by Proposition~\ref{prp:acim.derivative}\ref{item:acim.sharp} these are exactly the directions along which \( \mu_0 \) can be deformed without losing invariance at first order.

In this sense, the question of whether \( E_\phi\cap E_\psi \) is trivial is a first-order version of the rigidity phenomenon discussed in Section~\ref{sec:introduction}, and it is the form in which Furstenberg's conjecture \cite{furstenberg_disjointness_1967} and its higher-dimensional analogues can be tested on the tangent space.
On the circle this is the point of view of \cite[\S1.4]{kloeckner_optimal_2013_preprint}.

In the proposition below, Part~\ref{item:acim.lower} holds for any \( C^1 \) map and any invariant measure, and does not depend on the present section at all. Part~\ref{item:acim.sharp} is where Theorem~\ref{thm:derivative.acim} enters, and it turns the inequality into an asymptotic equality.

\begin{prp}[Derivatives at invariant measures]
\label{prp:acim.derivative}
    Let \( \phi\in C^1(M,M) \), let \( \mu_0\in\Prob(M) \) be \( \phi \)-invariant and let \( v_0\in L^2_0(\mu_0) \).
    Set \( \mu_t\coloneqq(\exp tv_0)_\ast\mu_0 \), \( \nu_t\coloneqq\phi_\ast\mu_t \) and \( v_0^\nu\coloneqq P_{\mu_0}\Transf_0v_0 \).
    \begin{enumerate}
        \item\label{item:acim.lower}
            One always has
            \( \displaystyle\liminf_{t\searrow0}\frac{\dw(\mu_t,\nu_t)}{t}\ge\lVert v_0-v_0^\nu\rVert_{L^2(\mu_0)} \).
        \item\label{item:acim.sharp}
            If moreover \( \phi \) is a \( C^2 \) covering map and \( \mu_0=\rho\vol \) with \( \rho\in C^1(M) \), \( \rho>0 \), then
            \[
                \dw(\mu_t,\nu_t)=t\,\lVert v_0-v_0^\nu\rVert_{L^2(\mu_0)}+o(t) ,
            \]
            and in particular \( \dw(\mu_t,\nu_t)=o(t) \) if and only if \( v_0 \) is a fixed vector of \( \D_\phi=P_{\mu_0}\Transf_0 \), that is, an eigenvector with eigenvalue \( 1 \).
    \end{enumerate}
\end{prp}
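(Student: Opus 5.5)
For part \ref{item:acim.lower} I will use Lemma~\ref{lem:duality.lower} with \( \mu=\mu_0 \), \( \alpha_t\coloneqq\mu_t \), \( \beta_t\coloneqq\nu_t \) and \( u\coloneqq v_0^\nu-v_0 \). Two hypotheses need checking. First, \( \alpha_t\rightharpoonup\mu_0 \), which follows from the coupling \( (\Id,\exp tv_0)_\ast\mu_0 \) since \( \dw(\mu_0,\mu_t)\le t\lVert v_0\rVert \). Second, the first-order expansion of \( \int\psi\,d\nu_t-\int\psi\,d\mu_t \). For this I write \( \int\psi\,d\nu_t=\int\psi\circ\phi\,d\mu_t \). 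Since \( \psi\circ\phi \) is only \( C^1 \), I cannot use \eqref{eq:taylor} directly. Instead I use the mean value theorem along the geodesic \( s\mapsto\exp_x(stv_0(x)) \):
\[
    \int(\psi\circ\phi)(\exp_x tv_0(x))-\psi(\phi(x))\,d\mu_0(x)=t\int\langle\nabla(\psi\circ\phi),v_0\rangle\,d\mu_0+o(t).
\]
The \( o(t) \) comes from dominated convergence, using the uniform continuity of \( \nabla(\psi\circ\phi) \), the domination by \( \lvert v_0\rvert\in L^2\subset L^1 \), and the fact that parallel transport converges to the identity. Invariance of \( \mu_0 \) turns the zeroth-order term into \( \int\psi\,d\mu_0 \). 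The first-order term equals \( \langle\Koop_0\nabla\psi,v_0\rangle=\langle\nabla\psi,\Transf_0v_0\rangle=\langle\nabla\psi,v_0^\nu\rangle \); the last equality holds because \( \nabla\psi\in L^2_0(\mu_0) \) and \( P_{\mu_0} \) is self-adjoint. Subtracting the expansion \( \int\psi\,d\mu_t=\int\psi\,d\mu_0+t\langle\nabla\psi,v_0\rangle+O(t^2) \), which follows from \eqref{eq:taylor}, gives the hypothesis with \( u=v_0^\nu-v_0 \). Lemma~\ref{lem:duality.lower} then yields \( \liminf t^{-1}\dw\ge\lVert P_{\mu_0}(v_0^\nu-v_0)\rVert=\lVert v_0^\nu-v_0\rVert \), since both fields lie in \( L^2_0(\mu_0) \).

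For part \ref{item:acim.sharp} only the upper bound remains. By Theorem~\ref{thm:derivative.acim}, \( \dw\big(\nu_t,(\exp t\D_\phi v_0)_\ast\mu_0\big)=o(t) \), and \( \D_\phi v_0=v_0^\nu \). By Lemma~\ref{lem:exp.upper}, applied to \( u=v_0 \) and \( u'=v_0^\nu \), \( \limsup t^{-1}\dw\big(\mu_t,(\exp tv_0^\nu)_\ast\mu_0\big)\le\lVert v_0-v_0^\nu\rVert \). The triangle inequality combines these into the upper bound, which matches the lower bound of part \ref{item:acim.lower}. The final equivalence is then immediate: \( \dw(\mu_t,\nu_t)=o(t) \) if and only if \( v_0=P_{\mu_0}\Transf_0v_0=\D_\phi v_0 \).

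The main obstacle is the first-order expansion in part \ref{item:acim.lower}. There \( \psi\circ\phi \) is merely \( C^1 \) and \( v_0 \) is merely \( L^2 \), so the Hessian bound \eqref{eq:taylor} is unavailable. The dominated-convergence argument sketched above should suffice, because the lemma asks only for \( o(t) \), not \( O(t^2) \).
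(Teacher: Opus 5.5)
Your proposal is correct and follows the paper's proof. Part (i) is exactly the paper's application of Lemma~\ref{lem:duality.lower}, with the dominated-convergence expansion of $\int\psi\circ\phi\,d\mu_t$ and the adjunction $\langle\Koop_0\nabla\psi,v_0\rangle=\langle\nabla\psi,v_0^\nu\rangle$. Part (ii) rests, as in the paper, on Theorem~\ref{thm:derivative.acim} plus the triangle inequality. The only difference is cosmetic: the paper takes both bounds in (ii) from Proposition~\ref{prp:solenoidal.dual}, whereas you take the upper bound from Lemma~\ref{lem:exp.upper} and the lower bound from part (i). Your route is equally valid, because $v_0-v_0^\nu\in L^2_0(\mu_0)$ means no solenoidal correction is needed.
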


\begin{proof}
    \ref{item:acim.lower}
    We apply Lemma~\ref{lem:duality.lower} with \( \mu\coloneqq\mu_0 \), \( \alpha_t\coloneqq\mu_t \), \( \beta_t\coloneqq\nu_t \) and \( u\coloneqq v_0^\nu-v_0 \).
    The weak\( ^\ast \) convergence holds because \( \dw(\mu_0,\mu_t)\le t\lVert v_0\rVert_{L^2(\mu_0)} \).

    In order to verify the other hypothesis fix \( \psi\in C^\infty(M) \).
    On the one hand, by \eqref{eq:taylor},
    \begin{equation}\label{eq:alpha.exp}
        \int\psi\,d\mu_t=\int\psi\,d\mu_0+t\,\langle\nabla\psi,v_0\rangle_{L^2(\mu_0)}+O(t^2),
    \end{equation}
    the remainder being bounded by \( \tfrac{H}{2}t^2\lVert v_0\rVert^2_{L^2(\mu_0)}<\infty \).
    On the other hand \( \psi\circ\phi\in C^1(M) \), with
    \[
        \nabla(\psi\circ\phi)(x)=(D_x\phi)^{\!*}\nabla\psi\left(\phi(x)\right)=\left(\Koop_0\nabla\psi\right)(x),
    \]
    and \( \int\psi\,d\nu_t=\int\psi\circ\phi\,d\mu_t=\int_M(\psi\circ\phi)\left(\exp_x(tv_0(x))\right)\,d\mu_0(x) \).
    The difference quotients \( t^{-1}\big[(\psi\circ\phi)(\exp_x(tv_0(x)))-(\psi\circ\phi)(x)\big] \) converge pointwise to \( \langle\nabla(\psi\circ\phi)(x),v_0(x)\rangle \), and they are dominated by \( \Lip(\psi\circ\phi)\lvert v_0(x)\rvert\le \Lip(\psi)\Lip(\phi)\,\lvert v_0(x)\rvert\in L^1(\mu_0) \).
    Dominated convergence and \( \phi_\ast\mu_0=\mu_0 \) therefore give
    \begin{equation}\label{eq:beta.exp}
        \int\psi\,d\nu_t=\int\psi\,d\mu_0+t\,\big\langle\Koop_0\nabla\psi,\ v_0\big\rangle_{L^2(\mu_0)}+o(t).
    \end{equation}
    Subtract \eqref{eq:alpha.exp} from \eqref{eq:beta.exp} and use the adjunction \( \langle\Koop_0\nabla\psi,v_0\rangle=\langle\nabla\psi,\Transf_0v_0\rangle \).
    Since \( \nabla\psi\in L^2_0(\mu_0) \), we conclude that \( \langle\nabla\psi,\Transf_0v_0\rangle=\langle\nabla\psi,P_{\mu_0}\Transf_0v_0\rangle=\langle\nabla\psi,v_0^\nu\rangle \).
    This gives
    \[
        \int\psi\,d\nu_t-\int\psi\,d\mu_t=t\,\big\langle\nabla\psi,\ v_0^\nu-v_0\big\rangle_{L^2(\mu_0)}+o(t),
    \]
    which is the hypothesis of Lemma~\ref{lem:duality.lower}.
    Applying it yields the estimate \( \liminf_tt^{-1}\dw(\mu_t,\nu_t)\ge\lVert P_{\mu_0}u\rVert_{L^2(\mu_0)} \), where \( \lVert P_{\mu_0}u\rVert=\lVert u\rVert \) because \( u=v_0^\nu-v_0 \) lies in the closed subspace \( L^2_0(\mu_0) \), as do both \( v_0 \) and \( v_0^\nu \).

    \ref{item:acim.sharp}
    By Theorem~\ref{thm:derivative.acim}, \( \dw\left(\nu_t,(\exp tv_0^\nu)_\ast\mu_0\right)=o(t) \).
    By Proposition~\ref{prp:solenoidal.dual} applied to \( v=v_0 \) and \( w=v_0^\nu \),
    \[
        \dw\left(\mu_t,(\exp tv_0^\nu)_\ast\mu_0\right)=t\big\lVert P_{\mu_0}(v_0-v_0^\nu)\big\rVert_{L^2(\mu_0)}+o(t)=t\big\lVert v_0-v_0^\nu\big\rVert_{L^2(\mu_0)}+o(t),
    \]
    the last equality because \( v_0 \) and \( v_0^\nu \) both lie in \( L^2_0(\mu_0) \).
    The triangle inequality in both directions gives the assertion.
\end{proof}

For \( M=S^1 \) and \( \phi \) an expanding map of the circle this is \cite[Theorem 5.1 and \S5.1]{kloeckner_optimal_2013_preprint}.
That proof proceeds through the mixture lemma at a Hölder density and the centering operator, and the monotone rearrangement used in the former confines it to dimension one.
For \( M=\T^d \) and \( \phi=\varphi_A \) linear it is Proposition~\ref{prp:derivative.torus} below, where \( \D_\phi \) is computed explicitly as a composition operator on Fourier coefficients.
The present statement covers every \( C^2 \) covering map of every closed manifold, and in particular every \( C^2 \) expanding map, expanding maps of closed manifolds being covering maps \cite{shub_endomorphisms_1969}.

\subsection{Mixtures and nearly-invariant measures}
We end this section by generalizing two other results from \cite{kloeckner_optimal_2013_preprint}.

The first is the mixture lemma, which says that the exponential map commutes, to first order, with the formation of mixtures, i.e., convex combinations, of probabilities in \(\Prob(M)\). 
On the circle this is \cite[Lemma 4.2]{kloeckner_optimal_2013_preprint}, proved there at a measure \( \rho\Leb \) with H\"older positive density by a geometric argument involving a discretisation of the density at a scale depending on \( t \), and a monotone rearrangement. 
The statement below removes the restriction to dimension one altogether, at the cost of upgrading H\"older to \( C^1 \).

The second result is that Euclidean balls of arbitrary dimension can be topologically embedded in \( \Prob(M) \) around an arbitrary measure. 
This is somewhat weaker than its one-dimensional counterpart \cite[Proposition 3.1]{kloeckner_optimal_2013},where a bi-Lipschitz embedding in \(\Prob(S^1)\) is obtained.

\begin{prp}[Mixtures and exponential deformations]
\label{prp:mixture}
    Let \( \mu=\rho\,\vol \) with \( \rho\in C^1(M) \) and \( \rho>0 \), let \( v_1,\dots,v_N\in L^2(\mu;TM) \) and let \( \alpha_1,\dots,\alpha_N>0 \) with
    \( \sum_i\alpha_i=1 \).  
    Write \( \bar v\coloneqq \sum_i\alpha_iv_i \).
    Then
    \[
        \dw\Big((\exp t\bar v)_\ast\mu,\ \sum_{i=1}^N\alpha_i(\exp tv_i)_\ast\mu\Big)=o(t),\qquad t\searrow0 .
    \]
    The remainder is uniform for \( (v_1,\dots,v_N) \) in a bounded subset of \( W^N \), for any finite-dimensional subspace \( W\subset L^2(\mu;TM) \), the
    number \( N \) and the weights \( \alpha_i \) being fixed.
\end{prp}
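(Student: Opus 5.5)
The plan follows the template announced after Proposition~\ref{prp:linear.response}: expand both densities to first order, check that the first-order terms agree, and convert the resulting uniform $o(t)$ bound into a bound on $\dw$ through Lemma~\ref{lem:density.comparison}.

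\emph{Step 1: the $C^1$ case.} Assume first that every $v_i$ is of class $C^1$, so that $\bar v$ is also $C^1$. Proposition~\ref{prp:linear.response} gives $\rho_t[v_i]=\rho-t\,\Div(\rho v_i)+o(t)$ uniformly on $M$ for each $i$. Since the density of a mixture is the corresponding convex combination of densities, the mixture $\sum_i\alpha_i(\exp tv_i)_\ast\mu$ has density $\sum_i\alpha_i\rho_t[v_i]$. By linearity of $\Div$ this equals $\rho-t\,\Div(\rho\bar v)+o(t)$, which is also the expansion of $\rho_t[\bar v]$.

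The two continuous densities therefore differ by $o(t)$ in $C^0(M)$, and hence in $L^2(\vol)$, because the volume is normalised. For small $t$ both are bounded below by $\tfrac12\min\rho$. Lemma~\ref{lem:density.comparison} then gives a $\dw$-distance of $o(t)$. If the $v_i$ range in a bounded subset of a finite-dimensional space of $C^1$ fields, so do $\bar v$ and all the $v_i$. The uniformity clause of Proposition~\ref{prp:linear.response} then makes the remainder uniform.

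\emph{Step 2: approximation.} For general $v_i\in L^2(\mu;TM)$, choose smooth $v_i^n\to v_i$ by Lemma~\ref{lem:smooth.dense}, and put $\bar v^n\coloneqq\sum_i\alpha_iv_i^n$. Lemma~\ref{lem:exp.upper} gives
\[
    \limsup_{t\searrow0}\ t^{-1}\,\dw\big((\exp t\bar v)_\ast\mu,(\exp t\bar v^n)_\ast\mu\big)\le\lVert\bar v-\bar v^n\rVert_{L^2(\mu)}.
\]
For the mixtures, use joint convexity of $\dw^2$: if $\pi_i$ is a coupling of $(\exp tv_i)_\ast\mu$ and $(\exp tv_i^n)_\ast\mu$, then $\sum_i\alpha_i\pi_i$ couples the two mixtures. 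This gives
\[
    \dw\Big(\sum_i\alpha_i(\exp tv_i)_\ast\mu,\ \sum_i\alpha_i(\exp tv_i^n)_\ast\mu\Big)^2\le\sum_i\alpha_i\,\dw\big((\exp tv_i)_\ast\mu,(\exp tv_i^n)_\ast\mu\big)^2 .
\]
Combined with Lemma~\ref{lem:exp.upper}, this bounds the $\limsup$ of $t^{-1}$ times the left-hand side by $\big(\sum_i\alpha_i\lVert v_i-v_i^n\rVert^2\big)^{1/2}$. The triangle inequality and Step~1 then give the claim after letting $n\to\infty$.

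\emph{Step 3: uniformity, the main bookkeeping obstacle.} Copy the uniformity argument of Theorem~\ref{thm:solenoidal}. Apply Lemma~\ref{lem:uniform.smoothing} to $W$, obtaining $A_\epsilon:W\to C^\infty(M;TM)$, and set $v_i^\epsilon\coloneqq A_\epsilon v_i$. By linearity of $A_\epsilon$ we have $A_\epsilon\bar v=\overline{v^\epsilon}$, so that field lies in the same compact set $\mathcal K_\epsilon\subset A_\epsilon W$, and Step~1 is uniform over $\mathcal K_\epsilon$. The error terms of Step~2 are controlled uniformly through \eqref{eq:exp.upper.unif}. This applies because $v_i$, $v_i^\epsilon$, $\bar v$ and $A_\epsilon\bar v$ are all dominated by a single $G\in L^2(\mu)$, supplied by Lemma~\ref{lem:uniform.smoothing}\ref{item:us.dominated}; the averages $\bar v$ and $A_\epsilon\bar v$ lie in $W$ and $A_\epsilon W$ with norm at most $R$. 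Lemma~\ref{lem:uniform.smoothing}\ref{item:us.approx} makes these error terms at most $c_WR\epsilon+\eta$. Choosing $\epsilon$ first and then $t$ concludes.
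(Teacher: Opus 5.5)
Your proposal is correct and follows the paper's proof essentially step by step. You treat the $C^1$ case via Proposition~\ref{prp:linear.response} and Lemma~\ref{lem:density.comparison}, extend to $L^2$ fields via Lemma~\ref{lem:exp.upper} and the joint convexity of $\dw^2$ under mixtures, and obtain uniformity from the linear smoothing map of Lemma~\ref{lem:uniform.smoothing}, using $A_\epsilon\bar v=\sum_i\alpha_iA_\epsilon v_i$; your explicit use of \eqref{eq:exp.upper.unif} with a common dominating $G$ in Step~3 is slightly more detailed than the paper's own uniformity paragraph.
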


\begin{proof}
    We consider first fields of class \( C^1 \).  
    By Proposition~\ref{prp:linear.response}, uniformly on \( M \),
    \[
        \rho_t[\bar v]-\sum_i\alpha_i\rho_t[v_i]=-t\,\Div\Big(\rho\bar v-\rho\sum_i\alpha_iv_i\Big)+o(t)=o(t),
    \]
    the middle expression vanishing identically because \( \Div(\rho\,\cdot) \) is linear and \( \bar v=\sum_i\alpha_iv_i \).
    Both densities are continuous and at least \( \tfrac12\min\rho \) for \( t \) small, so Lemma~\ref{lem:density.comparison} gives \( \dw=o(t) \).

    Now, let us look at fields in \( L^2 \).
    Fix \( \epsilon>0 \) and choose \( w_i\in C^\infty(M;TM) \) with \( \lVert v_i-w_i\rVert_{L^2(\mu)}\le\epsilon \).
    By Lemma~\ref{lem:exp.upper},
    \[
        \limsup_{t\searrow0}\frac{\dw\left((\exp t\bar v)_\ast\mu,(\exp t\bar w)_\ast\mu\right)}{t}\le\lVert\bar v-\bar w\rVert_{L^2(\mu)}\le\epsilon ,
    \]
    while, \( \dw^2 \) being jointly convex with respect to mixtures (a convex combination of transport plans is a plan between the corresponding mixtures),
    \begin{align*}
        \dw\Bigg(\sum_i\alpha_i(\exp tv_i)_\ast\mu,\ & \sum_i\alpha_i(\exp tw_i)_\ast\mu\Bigg)^2\\
        &\le \sum_i\alpha_i\,\dw\left((\exp tv_i)_\ast\mu,(\exp tw_i)_\ast\mu\right)^2,
\end{align*}
    whose \( \limsup_{t}t^{-2} \) is at most \( \sum_i\alpha_i\lVert v_i-w_i\rVert^2\le\epsilon^2 \), again by Lemma~\ref{lem:exp.upper}. 
    Combining with Step 1 and the triangle inequality, \( \limsup_tt^{-1}\dw\le2\epsilon \); let \( \epsilon\searrow0 \).

    Finally, for the uniformity, let \( W\subset L^2(\mu;TM) \) be finite dimensional, let \( R>0 \), and let \( A_\epsilon:W\to C^\infty(M;TM) \) be the map given by Lemma~\ref{lem:uniform.smoothing}.
    Taking \( w_i\coloneqq A_\epsilon v_i \), the two estimates above hold with \( \epsilon \) replaced by \( c_WR\epsilon \), uniformly over \( v_1,\dots,v_N\in W \) with \( \lVert v_i\rVert_{L^2(\mu)}\le R \), because \( A_\epsilon \) is linear and therefore \( \bar w=A_\epsilon\bar v \).
    The fields \( w_i \) and \( \bar w \) range in the compact set of Lemma~\ref{lem:uniform.smoothing}\ref{item:us.compact}, so the remainder of the first step is uniform by Proposition~\ref{prp:linear.response}.
\end{proof}

We now turn to the embedding of Euclidean balls in \( \Prob(M) \).
Fix \( \mu\in\Prob(M) \) and let \( v_1,\dots,v_n\in L^2_0(\mu) \) be linearly independent in \( L^2(\mu;TM) \).
Given \( a=(a_1,\dots,a_n)\in\R^n \), write
\[
    v^a\coloneqq\sum_{i=1}^na_iv_i\ \in\ L^2_0(\mu),
    \qquad T^a(x)\coloneqq  T^{v^a}_1(x) = \exp_x\!\left(v^a(x)\right),
\]
so that \( T^a \) displaces each \( x \) along the geodesic issuing from it with initial velocity \( v^a(x) \), and set
\begin{equation}\label{eq:definition.embedding}
\begin{split}
    F:\R^n&\lto\Prob(M) \\
        a &\mapsto (T^a)_\ast\mu.
\end{split}
\end{equation}
We write \( \mathcal B^n_\eta\coloneqq\{a\in\R^n:\lvert a\rvert<\eta\} \), with the convention \( \mathcal B^n_{+\infty}=\R^n \), and we abbreviate
\[
    R_2\coloneqq\Big(\sum_{i=1}^n\lVert v_i\rVert^2_{L^2(\mu)}\Big)^{1/2}, \qquad
    R_\infty\coloneqq\Big(\sum_{i=1}^n\lVert v_i\rVert^2_{L^\infty(\mu)}\Big)^{1/2}\in(0,+\infty] ,
\]
so that \( \lVert v^a-v^b\rVert_{L^2(\mu)}\le R_2\lvert a-b\rvert \) and \( \lvert v^a(x)\rvert\le R_\infty\lvert a\rvert \) for \( \mu \)-almost every \( x \), both by Cauchy--Schwarz.
Regularity of \( F \) is understood in the distributional model of Remark~\ref{rmk:two.models}: we say that \( F \) is of class \( C^k \) on an open set \( U\subseteq\R^n \) if \( a\mapsto F(a)(\psi) \) is \( C^k \) on \( U \) for every \( \psi\in C^\infty(M) \).
Observe that \eqref{eq:definition.embedding} makes sense for every \( a\in\R^n \), no diffeomorphism property of \( T^a \) being required.

The first two assertions below hold for arbitrary square integrable fields.
The remaining three require control of the derivatives of \( \exp_x \) at the points \( v^a(x) \), and there are two distinct circumstances under which such control is available: either the fields are bounded, so that \( v^a(x) \) is confined to a compact subset of \( TM \), or \( M \) is flat, in which case no confinement is needed because \( \exp_x \) is then the composition of a linear isometry with a Riemannian covering.
We record both, the second being the one relevant to \( \T^d \) and not being implied by the first.

\begin{prp}[Embedding of Euclidean balls in \( \Prob(M) \)]
\label{prp:ball.embedding}
    Let \( \mu\in\Prob(M) \), let \( v_1,\dots,v_n\in L^2_0(\mu) \) be linearly independent in \( L^2(\mu;TM) \), and let \( F \) be given by \eqref{eq:definition.embedding}.
    Then
    \begin{enumerate}
        \item\label{item:emb.diff}
            \( F \) is differentiable at the origin, with
            \[
                D_0F(w)=\iota_\mu\left(v^w\right),\qquad w\in\R^n,
            \]
            and \( D_0F \) is injective.
        \item\label{item:emb.rays}
            \( F \) is metrically differentiable along rays through the origin: for every \( w\in\R^n \),
            \[
                \dw\left(\mu,F(tw)\right)=t\lVert v^w\rVert_{L^2(\mu)}+o(t),\qquad t\searrow0 .
            \]
    \end{enumerate}
        If, in addition, either \( v_1,\dots,v_n\in L^\infty(\mu;TM) \) or \( M \) is flat, then the following hold as well, with \( \eta_0\coloneqq r_1/R_\infty \) in the first case and \( \eta_0\coloneqq+\infty \) in the second.
    \begin{enumerate}[start=3]
        \item\label{item:emb.lip}
            \( F \) is Lipschitz on \( \mathcal B^n_{\eta_0} \):
            \[
                \dw\left(F(a),F(b)\right)\le\Lambda R_2\lvert a-b\rvert,
                \qquad a,b\in\mathcal B^n_{\eta_0},
            \]
            where \( \Lambda \) is the constant of Lemma~\ref{lem:exp.estimates}, and \( \Lambda=1 \) when \( M \) is flat.
        \item\label{item:emb.reg}
            \( F \) is of class \( C^k \) on \( \mathcal B^n_{\eta_0} \) for every \( k\in\N \) such that \( v_1,\dots,v_n\in L^k(\mu;TM) \).
            In particular \( F \) is of class \( C^2 \), and of class \( C^\infty \) when the \( v_i \) are bounded.
        \item\label{item:emb.embedding}
            There is \( \eta\in(0,\eta_0] \) such that \( F|_{\mathcal B^n_\eta} \) is a homeomorphism onto its image, the target carrying the weak\( ^\ast \) topology.
    \end{enumerate}
    Consequently, if \( L^2_0(\mu) \) has dimension at least \( n \), then \( \Prob(M) \) contains a topologically embedded \( n \)-ball through \( \mu \); this holds for every \( n\in\N \) as soon as \( \operatorname{supp}\mu \) is infinite.
\end{prp}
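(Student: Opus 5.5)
The plan is to take the five items in order, reusing the earlier first-order tools for the first two and the coupling by \( T^a \) for the rest.

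\emph{Item~\ref{item:emb.diff}.} Fix \( \psi\in C^\infty(M) \) and expand
\[
    F(a)(\psi)=\int\psi\big(\exp_x v^a(x)\big)\,d\mu(x)
\]
with the Taylor estimate \eqref{eq:taylor}, or with a global version valid for all \( \xi \) using \( \lVert\Hess\psi\rVert_\infty \) along the geodesic. This gives
\[
    F(a)(\psi)-\mu(\psi)=\langle\nabla\psi,v^a\rangle_{L^2(\mu)}+O(\lvert a\rvert^2R_2^2),
\]
so the differential at \( 0 \) is \( w\mapsto-\nabla\!\cdot(v^w\mu)(\psi)=\iota_\mu(v^w)(\psi) \). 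Injectivity of \( D_0F \) follows from the linear independence of the \( v_i \) together with the injectivity of \( \iota_\mu \) on \( L^2_0(\mu) \) (Remark~\ref{rmk:two.models}).

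\emph{Item~\ref{item:emb.rays}.} Since \( F(tw)=(\exp tv^w)_\ast\mu \) and \( v^w\in L^2_0(\mu) \), this is exactly Corollary~\ref{cor:metric.derivative}.

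\emph{Item~\ref{item:emb.lip}.} Use the coupling \( (T^a,T^b)_\ast\mu \), which has cost \( \int d(\exp_xv^a,\exp_xv^b)^2\,d\mu \). In the bounded case, \( \lvert v^a(x)\rvert\le R_\infty\lvert a\rvert<r_1 \), so Lemma~\ref{lem:exp.estimates} gives
\[
    d\big(\exp_xv^a,\exp_xv^b\big)\le\Lambda\lvert v^a-v^b\rvert.
\]
In the flat case, \( \exp_x \) is a linear isometry followed by a Riemannian covering, which is \( 1 \)-Lipschitz. In both cases integrate and use \( \lVert v^a-v^b\rVert\le R_2\lvert a-b\rvert \).

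\emph{Item~\ref{item:emb.reg}.} Differentiate \( a\mapsto\int\psi(\exp_xv^a(x))\,d\mu \) under the integral. The \( j \)-th derivative in \( a \) of the integrand is a \( j \)-linear expression in \( v_{i_1}(x),\dots,v_{i_j}(x) \), with coefficients given by derivatives of \( \psi\circ\exp_x \) at \( v^a(x) \). These coefficients are uniformly bounded in both cases: by compactness of the \( r_1 \)-disc bundle in the bounded case, and by flatness (bounded derivatives of \( \psi \) composed with an affine covering) in the flat case. The integrand is then dominated by \( C\prod\lvert v_{i_\ell}\rvert\in L^1 \) whenever \( v_i\in L^k \), by Hölder, and dominated convergence gives \( C^k \). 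Since \( L^2\supset L^k \) for \( k\le2 \), \( F \) is at least \( C^2 \).

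\emph{Item~\ref{item:emb.embedding}.} This is the main obstacle. A continuous injection of a compact set into a Hausdorff space is a homeomorphism onto its image, and by item~\ref{item:emb.lip} \( F \) is continuous into \( \dw \), hence weak\( ^\ast \). So it suffices to prove injectivity of \( F \) on a small closed ball. I would use a finite set of test functions: choose \( \psi_1,\dots,\psi_n \) with the matrix
\[
    \big(\langle\nabla\psi_j,v_i\rangle_{L^2(\mu)}\big)_{ij}
\]
invertible. This is possible since the \( v_i \) are independent in \( L^2_0(\mu) \), the closure of smooth gradients. Then \( G(a)\coloneqq(F(a)(\psi_j))_j \) is a \( C^2 \) (in particular \( C^1 \)) map \( \R^n\to\R^n \) with invertible derivative at \( 0 \) by item~\ref{item:emb.diff}. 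The inverse function theorem makes \( G \) injective on some \( \mathcal B^n_\eta \), hence \( F \) is injective there. Shrink to a closed ball and conclude, the open ball inheriting the homeomorphism property.

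\emph{Final clause.} If \( \operatorname{supp}\mu \) is infinite, pick smooth \( \psi_i \) whose gradients are independent in \( L^2(\mu) \), say with disjointly supported, nonzero gradients near distinct support points. This gives \( \dim L^2_0(\mu)\ge n \); take the \( v_i=\nabla\psi_i \), which are bounded, and apply item~\ref{item:emb.embedding}.
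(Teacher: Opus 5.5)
Your proposal is correct and follows the paper's proof essentially item by item. You use the global Taylor bound along geodesics for item~(i), Corollary~\ref{cor:metric.derivative} for item~(ii), the coupling \( (T^a,T^b)_\ast\mu \) for item~(iii), and domination by a constant times \( \prod_l\lvert v_{j_l}\rvert \) with H\"older for item~(iv). For item~(v) you take the same test functions \( \psi_1,\dots,\psi_n \) with invertible pairing matrix and apply the inverse function theorem; the only variation is that you finish by compactness of a smaller closed ball, where the paper writes \( F^{-1}=H^{-1}\circ G \) explicitly, and both endings are valid.

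There are two small slips. On a probability space the inclusion is \( L^2\subset L^k \) for \( k\le2 \), not \( \supset \). For the clause ``if \( \dim L^2_0(\mu)\ge n \)'' you should add that density of smooth gradients then gives \( n \) independent bounded fields \( \nabla\psi_i \), so item~(v) applies without flatness.
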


\begin{proof}
    \noindent Item \ref{item:emb.diff}. \\
    Fix \( \psi\in C^\infty(M) \) and \( a\in\R^n \).
    For \( \mu \)-almost every \( x \) the curve \( \gamma(s)\coloneqq\exp_x(sv^a(x)) \), \( s\in[0,1] \), is the geodesic issuing from \( x \) with initial velocity \( v^a(x) \), so that \( \tfrac{d^2}{ds^2}\psi(\gamma(s))=\Hess\psi(\gamma'(s),\gamma'(s)) \) with \( \lvert\gamma'\rvert\equiv\lvert v^a(x)\rvert \), and Taylor's theorem with Lagrange remainder gives
    \[
        \Big\lvert\psi\left(T^a(x)\right)-\psi(x)-\big\langle\nabla\psi(x),v^a(x)\big\rangle\Big\rvert
        \le\tfrac12\lVert\Hess\psi\rVert_\infty\lvert v^a(x)\rvert^2 .
    \]
    Integrating against \( \mu \) and using \( \lVert v^a\rVert_{L^2(\mu)}\le R_2\lvert a\rvert \),
    \[
        \Big\lvert F(a)(\psi)-F(0)(\psi)-\int_M\big\langle\nabla\psi,v^a\big\rangle\,d\mu\Big\rvert
        \le\tfrac12\lVert\Hess\psi\rVert_\infty R_2^2\lvert a\rvert^2 ,
    \]
    so that \( a\mapsto F(a)(\psi) \) is differentiable at the origin with
    \begin{equation}\label{eq:embedding.differential}
        D_0F(w)(\psi)=\int_M\big\langle\nabla\psi,v^w\big\rangle\,d\mu ,
    \end{equation}
    that is, \( D_0F(w)=-\nabla\!\cdot(v^w\mu)=\iota_\mu(v^w) \).
    Only \( v_i\in L^2(\mu;TM) \) has been used here.
    If \( D_0F(w)=0 \), then \( \langle\nabla\psi,v^w\rangle_{L^2(\mu)}=0 \) for every \( \psi\in C^\infty(M) \), so \( v^w\perp L^2_0(\mu) \) by \eqref{eq:L20}; as \( v^w\in L^2_0(\mu) \), this forces \( v^w=0 \), and \( w=0 \) by linear independence of the \( v_i \).
    Thus \( D_0F \) is injective.
    
    \medskip
    \noindent Item \ref{item:emb.rays}. \\
    Since \( v^{tw}=tv^w \) we have \( F(tw)=(\exp tv^w)_\ast\mu \), and \( v^w\in L^2_0(\mu) \), so this is Corollary~\ref{cor:metric.derivative}.

    In order to prove the remaining items, we need finer control on the exponential map.
    For \( \psi\in C^\infty(M) \), \( m\ge1 \) and \( \varrho\in(0,+\infty] \) put
    \[
        N_m(\psi,\varrho)\coloneqq\sup\Big\{\big\lVert D^m(\psi\circ\exp_x)_\xi\big\rVert:\
        x\in M,\ \xi\in T_xM,\ \lvert\xi\rvert\le\varrho\Big\} ,
    \]
    the derivative being that of the smooth real function \( \psi\circ\exp_x \) on the Euclidean space \( T_xM \).
    Write \( \varrho_0\coloneqq r_1 \) in the bounded case and \( \varrho_0\coloneqq+\infty \) in the flat case, so that \( \lvert v^a(x)\rvert\le R_\infty\lvert a\rvert\le\varrho_0 \) for \( \mu \)-almost every \( x \) and every \( a\in\mathcal B^n_{\eta_0} \).
    We claim that in both cases \( N_m(\psi,\varrho_0)<\infty \) for every \( m\ge1 \), and that
    \begin{equation}\label{eq:emb.lipschitz.exp}
        d\left(\exp_x\xi,\exp_x\zeta\right)\le\Lambda\lvert\xi-\zeta\rvert
        \qquad\text{for }\lvert\xi\rvert,\lvert\zeta\rvert\le\varrho_0 ,
    \end{equation}
    with \( \Lambda=1 \) in the flat case.
    In the bounded case the set \( \{(x,\xi)\in TM:\lvert\xi\rvert\le r_1\} \) is compact and \( (x,\xi)\mapsto\psi(\exp_x\xi) \) is smooth on \( TM \), which gives the first claim, and \eqref{eq:emb.lipschitz.exp} is the Lipschitz clause of Lemma~\ref{lem:exp.estimates}.
    In the flat case write \( M=\R^d/\Gamma \) with Riemannian covering \( \pi:\R^d\to M \), fix \( \tilde x\in\pi^{-1}(x) \), and let \( \iota_x\coloneqq(d\pi_{\tilde x})^{-1}:T_xM\to\R^d \), which is a linear isometry.
    Then \( \exp_x\xi=\pi(\tilde x+\iota_x\xi) \) for every \( \xi\in T_xM \), and therefore \( \psi\circ\exp_x \) is the composition of an affine isometry of Euclidean spaces with the smooth \( \Gamma \)-periodic function \( \psi\circ\pi \), so that \( N_m(\psi,+\infty)\le\lVert D^m(\psi\circ\pi)\rVert_{L^\infty(\R^d)}<\infty \).
    Since \( \pi \) is \( 1 \)-Lipschitz, we get \eqref{eq:emb.lipschitz.exp} with \( \Lambda=1 \) and no restriction on \( \xi,\zeta \).

    \medskip
    \noindent Item \ref{item:emb.lip}. \\
    For \( a,b\in\mathcal B^n_{\eta_0} \) the measure \( (T^a,T^b)_\ast\mu \) is a transport plan between \( F(a) \) and \( F(b) \), so \eqref{eq:emb.lipschitz.exp} gives
    \[
    \begin{split}
        \dw\left(F(a),F(b)\right)^2 &\le \int_Md\left(T^a(x),T^b(x)\right)^2\,d\mu(x) \\
                                 &\le\Lambda^2\lVert v^a-v^b\rVert^2_{L^2(\mu)}\le\Lambda^2R_2^2\lvert a-b\rvert^2 .
    \end{split}
    \]

    \medskip
    \noindent Item \ref{item:emb.reg}.\\
    Fix \( \psi\in C^\infty(M) \) and \( 1\le m\le k \).
    The map \( a\mapsto v^a(x) \) being linear, for \( a\in\mathcal B^n_{\eta_0} \) and \( 1\le j_1,\dots,j_m\le n \) we have
    \[
        \partial_{a_{j_1}}\!\cdots\,\partial_{a_{j_m}}\psi\left(T^a(x)\right)
        =D^m(\psi\circ\exp_x)_{v^a(x)}\big[v_{j_1}(x),\dots,v_{j_m}(x)\big] ,
    \]
    which is bounded in absolute value by \( N_m(\psi,\varrho_0)\prod_{l=1}^m\lvert v_{j_l}(x)\rvert \), a function independent of \( a \) and lying in \( L^1(\mu) \) by the generalised Hölder inequality with the \( m \) exponents equal to \( m \).
    For \( \mu \)-almost every \( x \) it is also continuous in \( a \).
    Repeated differentiation under the integral sign is therefore legitimate up to order \( k \), and the resulting derivatives are continuous in \( a \) by dominated convergence, so \( a\mapsto F(a)(\psi) \) is of class \( C^k \).
    Since \( v_i\in L^2(\mu;TM) \) by hypothesis, \( F \) is of class \( C^2 \); and if the \( v_i \) are bounded they lie in \( L^k(\mu;TM) \) for every \( k \), so \( F \) is of class \( C^\infty \).

    \medskip
    \noindent Item \ref{item:emb.embedding}.\\
    We claim first that there are \( \psi_1,\dots,\psi_n\in C^\infty(M) \) for which the \( n\times n \) matrix
    \[
        \Theta_{ij}\coloneqq D_0F(e_j)(\psi_i)\overset{\eqref{eq:embedding.differential}}{=}\int_M\langle\nabla\psi_i,v_j\rangle\,d\mu
    \]
    is invertible.
    For \( \psi\in C^\infty(M) \) let \( \ell_\psi\in(\R^n)^\ast \) be given by \( \ell_\psi(w)\coloneqq D_0F(w)(\psi) \).
    The assignment \( \psi\mapsto\ell_\psi \) is linear, so \( S\coloneqq\{\ell_\psi:\psi\in C^\infty(M)\} \) is a linear subspace of \( (\R^n)^\ast \).
    If \( w\in\R^n \) annihilates \( S \), then \( D_0F(w)(\psi)=0 \) for every \( \psi \), that is \( D_0F(w)=0 \), so \( w=0 \) by \ref{item:emb.diff}.
    The annihilator of \( S \) being trivial, \( S=(\R^n)^\ast \), and it suffices to take \( \psi_1,\dots,\psi_n \) with \( \ell_{\psi_1},\dots,\ell_{\psi_n} \) linearly independent.

    Let
    \[
        G:\Prob(M)\lto\R^n,\qquad G(\nu)\coloneqq\left(\nu(\psi_1),\dots,\nu(\psi_n)\right),
    \]
    and set \( H\coloneqq G\circ F \), so that \( H(a)_i=\int_M\psi_i\left(T^a(x)\right)\,d\mu(x) \).
    By \ref{item:emb.reg} the map \( H \) is of class \( C^1 \) on \( \mathcal B^n_{\eta_0} \), and since \( G \) is the restriction to \( \Prob(M) \) of a continuous linear map \( \Dist(M)\to\R^n \), the chain rule gives
    \[
        (D_0H)_{ij}=D_0F(e_j)(\psi_i)=\Theta_{ij} ,
    \]
    which is invertible.
    By the inverse function theorem there is \( \eta\in(0,\eta_0] \) such that \( H|_{\mathcal B^n_\eta} \) is a diffeomorphism onto its image.
    In particular \( H \) is injective on \( \mathcal B^n_\eta \), hence so is \( F \), and
    \[
        F^{-1}=H^{-1}\circ G|_{F(\mathcal B^n_\eta)}
    \]
    is continuous, because \( G \) is weak\( ^\ast \) continuous, the \( \psi_i \) being continuous, and \( H^{-1} \) is continuous.
    Finally \( F \) is itself weak\( ^\ast \) continuous by \ref{item:emb.lip}, since \( \dw \) metrises the weak\( ^\ast \) topology on \( \Prob(M) \).
    Therefore \( F|_{\mathcal B^n_\eta} \) is a homeomorphism onto its image.

    For the last assertion, suppose \( \dim L^2_0(\mu)\ge n \) and choose \( \psi_1,\dots,\psi_n\in C^\infty(M) \) with \( \nabla\psi_1,\dots,\nabla\psi_n \) linearly independent in \( L^2(\mu;TM) \); these fields lie in \( L^2_0(\mu)\cap L^\infty(\mu;TM) \), so \ref{item:emb.embedding} applies giving the embedded \( n \)-ball containing \( \mu=F(0) \).
    If \( \operatorname{supp}\mu \) is infinite, such fields exist for every \( n \): choose distinct \( p_1,\dots,p_n\in\operatorname{supp}\mu \) and pairwise disjoint balls \( B_i\ni p_i \), and let \( \psi_i\in C^\infty(M) \) be supported in \( B_i \) with \( \nabla\psi_i(p_i)\neq0 \).
    Then \( \nabla\psi_i\neq0 \) on a neighbourhood of \( p_i \), which has positive \( \mu \)-measure because \( p_i\in\operatorname{supp}\mu \), so each \( \nabla\psi_i \) is nonzero in \( L^2(\mu;TM) \); and the \( \nabla\psi_i \) are pairwise orthogonal there, having disjoint supports.
\end{proof}

Neither of the two hypotheses of Proposition~\ref{prp:ball.embedding} implies the other, and the second is not a technical convenience.
What has to be dominated in the proof of \ref{item:emb.reg} is \( \lVert D(\exp_x)_{v^a(x)}\rVert\,\lvert v_j(x)\rvert \), and if the sectional curvature of \( M \) is bounded below by \( -\kappa^2 \), then the Rauch comparison theorem \cite{cheeger_comparison_1975} gives only \( \lVert D(\exp_x)_\xi\rVert\le\sinh(\kappa\lvert\xi\rvert)/(\kappa\lvert\xi\rvert) \), which grows exponentially in \( \lvert\xi\rvert \) and is not integrable against an arbitrary square integrable field.
When \( M \) is flat this factor is identically \( 1 \) and the domination costs nothing, which is why square integrable fields suffice on \( \T^d \).
This is the phenomenon already met before Lemma~\ref{lem:exp.upper}.

\section{Linear endomorphisms of \texorpdfstring{\(\T^d\)}{Td}}
    \label{sec:torus}
    In this section we specialise the theory of Section~\ref{sec:derivatives} to
\( M=\T^d \).
The abstract operator \( \D_\phi \) of Theorem~\ref{thm:derivative.acim} is computed explicit: in suitable coordinates it is a composition operator on Fourier data (Proposition~\ref{prp:transfer.torus}).
Moreover, three of the general results, on solenoidal directions, on mixtures and on the derivative, admit here alternative proofs which are geometric rather than analytic, and we provide them for completeness.
Lastly, and this is the point of the section, the flat geometry makes it possible to \emph{compute} the fixed space of \( \D_\phi \), which in the general setting is Question~\ref{qst:general.eigen}.

Let \( \T^d=\R^d/\Z^d \) be the flat torus with its Haar measure \( \Leb \).
Every non-singular matrix \( A\in \mathrm{M}_d(\Z) \) induces a surjective endomorphism
\[
    \varphi_A:\T^d\lto\T^d,\qquad \varphi_Ax=Ax\ \mathrm{mod}\ \Z^d,
\]
which is \(\lvert \det A \rvert\)-to-one and preserves \( \Leb \).
If all eigenvalues of \( A \) have modulus \( >1 \), then \( \varphi_A \) is expanding, and by \cite{krzyzewski_invariant_1969} it has a unique absolutely continuous invariant measure, which by homogeneity is \( \Leb \).
For simplicity, we write \( \D_A = \D_{\varphi_A} \) from now on.
We prove the following result, generalising \cite[Theorem 1.7]{kloeckner_optimal_2013_preprint}.

\begin{thm}\label{thm:main.application}
    Consider a pair of commuting, expanding matrices \( A,B\in \mathrm{M}_d(\Z) \)  whose determinants are coprime.
    Then the space \( E_A\cap E_B \) of tangent vectors at \( \Leb \) that are simultaneously fixed by \( \D_A\) and \( \D_B\) contains an infinite family of linearly independent continuous vector fields.
    Moreover, for every \( n \) there is a topological embedding \( F:\mathcal B^n_1\to\Prob(\T^d) \) of the open unit ball of \( \R^n \), with \( F(0)=\Leb \) and \( F(a) \) atomless for a.e.\ \( a \), such that
    \[
        \dw\big(( \varphi_A )_\ast F(a),F(a)\big)=o(\lvert a \rvert) = \dw\big(( \varphi_B )_\ast F(a),F(a)\big).
    \]
    If, in addition, the characteristic polynomial of \(B^{r}\) is irreducible over \(\Q\) for every integer \(r>0\), and \(a\neq0\) is such that \(F(a)\) is invariant under both \(\varphi_A\) and \(\varphi_B\), then the set of \(A\)-ergodic components of \(F(a)\) with zero entropy has positive measure.
\end{thm}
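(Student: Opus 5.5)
The plan is to compute $\D_A$ explicitly in Fourier coordinates at $\Leb$, reduce the fixed-point equation to a condition along orbits of the semigroup generated by $A^\top$ and $B^\top$ on $\Z^d\setminus\{0\}$, build fixed vectors supported on single orbits, and then feed them into Proposition~\ref{prp:ball.embedding} and Theorem~\ref{thm:derivative.acim}.

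\emph{Fourier form of $\D_A$.} At $\Leb$ the tangent space $L^2_0(\Leb)$ consists of the gradients $v=\nabla f$. Constant fields are solenoidal, so they are discarded. Write $\hat f_k$ for the Fourier coefficients of $f$. Since $\Koop_0\nabla\psi=\nabla(\psi\circ\varphi_A)$, the Koopman operator acts on potentials by $\hat\psi_k\mapsto$ coefficient at $A^\top k$. By Proposition~\ref{prp:derivative.functoriality}, $v\in E_A$ if and only if $\langle v,\Koop_0\nabla\psi\rangle=\langle v,\nabla\psi\rangle$ for every $\psi$. Testing against single exponentials turns this into
\[
    \lvert A^\top k\rvert^2\hat f_{A^\top k}=\lvert k\rvert^2\hat f_k
    \qquad\text{for every }k\neq0 .
\]
In the renormalised coordinates $g_k\coloneqq\lvert k\rvert^2\hat f_k$, membership in $E_A\cap E_B$ therefore says that $g_{A^\top k}=g_k=g_{B^\top k}$ for all $k$. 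The norm in these coordinates is $\lVert v\rVert^2_{L^2}=4\pi^2\sum_k\lvert g_k\rvert^2/\lvert k\rvert^2$.

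\emph{Orbit-supported fixed vectors.} Fix $k_0\notin A^\top\Z^d\cup B^\top\Z^d$, and let $g$ be the indicator of the orbit $\mathcal O(k_0)=\{(A^\top)^m(B^\top)^nk_0:\ m,n\ge0\}$.

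The step I expect to be the main obstacle is the arithmetic one, which I would take from Lemma~\ref{lem:freeness}. Two things are needed. First, the map $(m,n)\mapsto(A^\top)^m(B^\top)^nk_0$ must be injective. Second, the orbit must be backward closed: if $A^\top k\in\mathcal O(k_0)$ then $k\in\mathcal O(k_0)$, and likewise for $B^\top$. Without backward closure the indicator violates $g_{A^\top k}=g_k$ at a predecessor $k$ outside the orbit. Backward closure should follow from unique factorisation $A^\top k=(A^\top)^m(B^\top)^nk_0$ with $m\ge1$, together with the choice of $k_0$ and the injectivity of $A^\top$. The coprimality of $\det A$ and $\det B$ is what forbids $B$-powers from producing $A$-divisibility. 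I also need infinitely many such $k_0$ with pairwise disjoint orbits, for instance primitive vectors of growing norm, again supplied by Lemma~\ref{lem:freeness}.

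Granting the arithmetic, the analysis is straightforward. Expansion gives $\lvert(A^\top)^m(B^\top)^nk_0\rvert\ge c\lambda^{m+n}$ for some $\lambda>1$. The Fourier coefficients of $v$ have size $\lvert k\rvert^{-1}$ on the orbit, and $\sum_{m,n}\lambda^{-(m+n)}<\infty$, so the Fourier series of $v$ converges absolutely. Hence $v$ is continuous and lies in $L^2_0(\Leb)$. Fields attached to disjoint orbits have disjoint Fourier supports, so they are linearly independent.

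\emph{The embedding.} Take $n$ such fields $v_1,\dots,v_n$, which are bounded. Rescale them by a small constant so that $\eta=1$ works in Proposition~\ref{prp:ball.embedding}, using the flat case for the regularity and Lipschitz bounds. This gives the topological embedding $F$ with $F(0)=\Leb$. For $a=tw$ with $\lvert w\rvert=1$ we have $F(a)=(\exp tv^w)_\ast\Leb$ and $\D_Av^w=v^w$. Theorem~\ref{thm:derivative.acim}, whose remainder is uniform on the unit sphere of the finite-dimensional span of the $v_i$, then gives $\dw((\varphi_A)_\ast F(a),F(a))=o(t)=o(\lvert a\rvert)$, and the same argument applies to $B$. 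For atomlessness, $F(a)$ has an atom only if $T^a$ is constant on a set of positive measure. A Fubini argument in $a$ should show this happens for a null set of $a$, using that $T^a$ is a $C^0$-small perturbation of the identity near $a=0$ and is affine in $a$ in the lift.

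\emph{Zero-entropy components.} Suppose $a\neq0$ and $F(a)$ is invariant under both $\varphi_A$ and $\varphi_B$. Injectivity of $F$ gives $F(a)\neq\Leb$. Host's theorem \cite[Theorem 3]{host_uniform_2000}, whose hypotheses are met thanks to the irreducibility of the characteristic polynomial of every $B^r$, forces almost every $A$-ergodic component with positive entropy to be $\Leb$. If the zero-entropy components had measure zero, then $F(a)$ would equal $\Leb$, which is a contradiction.
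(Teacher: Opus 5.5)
Your route is the paper's: the orbit decomposition of Lemma~\ref{lem:freeness}, orbit-supported fixed vectors made continuous by the growth estimate of Lemma~\ref{lem:orbit.growth}, the flat case of Proposition~\ref{prp:ball.embedding} rescaled to the unit ball, the G\^ateaux derivative with remainder uniform on bounded subsets of a finite-dimensional span, and Host's theorem combined with injectivity of $F$. The one real difference is how you obtain the fixed-point equation. You read it off from $\D_A=(\Koop_0|_{L^2_0(\Leb)})^\ast$ (Proposition~\ref{prp:derivative.functoriality}) by testing against $\nabla e_m$, where $e_m(x)=e^{2\pi i\langle m,x\rangle}$ and $e_m\circ\varphi_A=e_{A^\top m}$. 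The paper instead computes $\Transf_A$ by disintegration, Fourier-transforms it, and then applies the Helmholtz--Hodge projection (Proposition~\ref{prp:transfer.torus}). Your shortcut is cleaner and gives the same rule $\beta_m(\D_Av)=\beta_{A^\top m}(v)$, since $g_k=\lvert k\rvert^2\hat f_k$ is a constant multiple of $\beta_k$.

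Several steps, however, need repair; none of them changes the strategy.

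(1) The indicator of a single orbit violates the reality condition $g_{-k}=\overline{g_k}$. Indeed $-k_0$ is bi-primitive, and by the uniqueness in Lemma~\ref{lem:freeness} the orbit $\mathcal O(-k_0)=-\mathcal O(k_0)$ is disjoint from $\mathcal O(k_0)$. So your $v$ is complex-valued and is not an element of $L^2_0(\Leb)$. Take instead $g=\mathbf 1_{\mathcal O(k_0)}+\mathbf 1_{-\mathcal O(k_0)}$ or $g=i\mathbf 1_{\mathcal O(k_0)}-i\mathbf 1_{-\mathcal O(k_0)}$, which are still constant on orbits. Then choose the $k_0$ modulo $\pm$ so that the supports stay disjoint. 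These are the paper's fields $v_j^{(1)},v_j^{(2)}$.

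(2) Primitive vectors need not be bi-primitive. For Example~\ref{exmp:admissible.pair}, $A^\top e_1=(3,2)$ is primitive but lies in $A^\top\Z^2$. Lemma~\ref{lem:freeness} gives disjointness of orbits, but not infinitely many of them, so you need an extra argument. One option is the paper's: finitely many orbits, each with $\sum_{k\in O_j}\lvert k\rvert^{-2}<\infty$, cannot exhaust $\Z^d\setminus\{0\}$ when $d\ge2$, and $d=1$ is checked directly. Another is a density count: by item~\ref{item:intersection} of Lemma~\ref{lem:freeness}, the non-bi-primitive vectors have density $1-(1-\lvert\det A\rvert^{-1})(1-\lvert\det B\rvert^{-1})<1$.

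(3) Atomlessness is only asserted. The $C^0$-smallness of $T^a-\Id$ plays no role; what works is the affine dependence on $a$. You can follow Lemma~\ref{lem:atomless} along rays: for $t\neq t'$ the level sets $\{x+tv(x)=s\}$ and $\{x+t'v(x)=s'\}$ meet in a countable, hence null, set. So for each $N$ at most $N$ values of $t$ carry an atom of mass at least $1/N$, and Fubini in polar coordinates finishes. Alternatively, apply Tonelli to $(F(a)\otimes F(a))(\text{diagonal})=\sum_sF(a)(\{s\})^2$. For $x\neq x'$, the set of $a$ with $T^ax=T^ax'$ is a countable union of proper affine subspaces of $\R^n$, hence null.

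(4) Host's theorem (Theorem~\ref{thm:host.rigidity}) does not assert anything about individual ergodic components, which need not be $\varphi_B$-invariant. It says that if almost every $\varphi_A$-ergodic component has positive entropy, then the measure is $\Leb$. Your final contradiction uses only that form, so apply it to $F(a)$ directly.
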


The last statement is not vacuous, see Lemma~\ref{lem:existence.pairs}.

\subsection{The derivative at Haar measure}
\label{sec:derivative.haar}

We begin by using Fourier analysis on the torus to give a more explicit description of the tangent spaces of \( \Prob(\T^d) \) at the Haar measure.
To that end, it is convenient to use the Fourier transform
\[
    \mathcal{F}f(k) = \hat f(k)=\int_{\T^d}f(x)e^{-2\pi i\langle k,x\rangle}\,d\Leb(x), \quad k\in\Z^d,
\]
as a normalisation for scalar functions \(f\), and  component-wise for vector-valued \( f \).

\begin{prp}\label{prp:fourier.model}
    The tangent space \(T_\Leb \Prob(\T^d)\) is the space of vector fields
    \[
        L^2_0(\Leb)=\Big\{v\in L^2(\Leb;\R^d):\ \hat v(0)=0\ \text{and}\
        \hat v(k)\in\C k\ \text{for all}\ k\neq0\Big\},
    \]
    and the orthogonal projection \( P_\Leb:L^2(\Leb;\R^d)\to L^2_0(\Leb) \) (the \emph{Helmholtz--Hodge projection}) is given in Fourier form by
    \[
        \widehat{P_\Leb g}(0)=0,\qquad
        \widehat{P_\Leb g}(k)=\frac{\langle \hat g(k),k\rangle}{\lvert k \rvert^2}\,k\quad(k\neq0),
    \]
    where \( \langle\cdot,\cdot\rangle \) denotes the complex bilinear pairing on \( \C^d \), so that this is the orthogonal projection with respect to the underlying real Hilbert structure.
    The orthogonal complement of \( L^2_0(\Leb) \) in \( L^2(\Leb;\R^d) \) is the space of divergence-free (or \emph{solenoidal}) fields with vanishing mean, \( \{z\in L^2:\nabla\cdot z=0,\ \hat z(0)=0\} \), together with the constants.
    Writing, for \( v\in L^2_0(\Leb) \),
    \begin{equation}\label{eq:beta.coordinates}
        \hat v(k)=\frac{\beta_k}{2\pi i\lvert k \rvert^2}\,k\quad(k\neq0),\qquad \beta_{-k}=\overline{\beta_k},
    \end{equation}
    one has \( v=\nabla\psi \) with \( \hat\psi(k)=\beta_k/(2\pi i\,\cdot\,2\pi i\lvert k \rvert^2) \), and
    \begin{equation}\label{eq:beta.norm}
        \lVert v \rVert_{L^2(\Leb)}^2=\frac{1}{4\pi^2}\sum_{k\neq0}\frac{\lvert \beta_k \rvert^2}{\lvert k \rvert^2}.
    \end{equation}
\end{prp}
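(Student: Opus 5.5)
The plan is to reduce everything to Fourier coefficients, using that $L^2(\Leb;\R^d)$ is, via Plancherel, isometric to the space of sequences $(\hat v(k))_{k\in\Z^d}$ with $\hat v(-k)=\overline{\hat v(k)}$ and $\sum_k\lvert\hat v(k)\rvert^2<\infty$. The description of $L^2_0(\Leb)$ comes from its definition \eqref{eq:L20} as the closure of smooth gradients. For $\psi\in C^\infty(\T^d)$ one has $\widehat{\nabla\psi}(k)=2\pi i\,\hat\psi(k)\,k$. So every smooth gradient has vanishing mean coefficient and satisfies $\widehat{\nabla\psi}(k)\in\C k$ for all $k$. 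The set
\[
    V\coloneqq\{v\in L^2:\ \hat v(0)=0,\ \hat v(k)\in\C k\ \text{for } k\neq0\}
\]
is closed in $L^2$, being cut out by continuous linear conditions on each coefficient. Hence $L^2_0(\Leb)\subseteq V$.

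For the reverse inclusion, take $v\in V$ and write $\hat v(k)=c_k k$ with $c_{-k}=\overline{c_k}$. Define the truncations $\psi_N$ as the trigonometric polynomials with $\hat\psi_N(k)=c_k/(2\pi i)$ for $0<\lvert k\rvert\le N$ and zero otherwise. These are real-valued and smooth, and $\nabla\psi_N$ is exactly the Fourier truncation of $v$, so it converges to $v$ in $L^2$. This gives $V=L^2_0(\Leb)$.

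Orthogonality is computed coefficientwise: $\langle u,w\rangle_{L^2}=\sum_k\operatorname{Re}\langle\hat u(k),\overline{\hat w(k)}\rangle$. The complement of $V$ therefore consists of the fields whose coefficients satisfy $\hat z(k)\perp k$ in the Hermitian sense for every $k\ne0$, with $\hat z(0)$ arbitrary. Since $k$ is real, this condition reads $\langle\hat z(k),k\rangle=0$, which is exactly $\nabla\cdot z=0$ in distributions. Splitting off the $k=0$ term identifies the complement with the mean-zero solenoidal fields plus the constants, as claimed.

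The projection formula then follows frequency by frequency. On each coefficient it is the orthogonal projection of $\C^d$, with its Hermitian structure, onto the complex line $\C k$, namely $\hat g(k)\mapsto\frac{\langle\hat g(k),k\rangle}{\lvert k\rvert^2}k$, where $\langle\cdot,\cdot\rangle$ is the bilinear pairing; this is legitimate because $k$ is real. The $k=0$ coefficient is sent to $0$. The resulting map preserves the reality condition, is idempotent and self-adjoint, has range $V$, and has kernel the complement just described, so it is $P_\Leb$.

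For the $\beta$-coordinates, set $\beta_k\coloneqq 2\pi i\lvert k\rvert^2c_k$. The relation $c_{-k}=\overline{c_k}$ becomes $\beta_{-k}=\overline{\beta_k}$. From $\widehat{\nabla\psi}(k)=2\pi i\hat\psi(k)k$ we get $\hat\psi(k)=c_k/(2\pi i)=\beta_k/((2\pi i)^2\lvert k\rvert^2)$, which is the stated formula. The norm identity \eqref{eq:beta.norm} is Plancherel:
\[
    \sum_{k\ne0}\lvert c_k\rvert^2\lvert k\rvert^2=\sum_{k\ne0}\frac{\lvert\beta_k\rvert^2}{4\pi^2\lvert k\rvert^2}.
\]

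I expect no real obstacle. The only delicate points are bookkeeping ones: keeping the real Hilbert structure and the complex bilinear pairing consistent, and using the closedness of $V$ to justify that $L^2_0(\Leb)\subseteq V$.
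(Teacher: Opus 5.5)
Your proof is correct and follows essentially the same route as the paper's. Closedness of the Fourier-defined subspace gives $L^2_0(\Leb)\subseteq V$, truncated trigonometric-polynomial potentials give the reverse inclusion, the projection is the fibrewise orthogonal projection onto $\C k$, and \eqref{eq:beta.norm} is Plancherel.

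There is one sign slip to fix. Reality of $v$ gives $\hat v(-k)=\overline{\hat v(k)}=\overline{c_k}\,k=(-\overline{c_k})(-k)$, so $c_{-k}=-\overline{c_k}$, not $c_{-k}=\overline{c_k}$. It is this corrected relation, together with $\overline{2\pi i}=-2\pi i$, that makes $\psi_N$ real-valued and yields $\beta_{-k}=\overline{\beta_k}$. With the relation as you wrote it, you would instead get $\beta_{-k}=-\overline{\beta_k}$.
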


\begin{proof}
    The set \[
        \mathcal{T} \coloneqq  \Big\{v\in L^2(\Leb;\R^d):\ \hat v(0)=0\ \text{and}\
        \hat v(k)\in\C k\ \text{for all}\ k\neq0\Big\}.
    \]
    is the intersection of the kernels of the continuous
    functionals \( g\mapsto\langle\hat g(k),w\rangle \), \( w\perp k \), and \( g\mapsto\hat g(0) \), and therefore it is a closed subspace of \(L^2(\Leb;\R^d)\).
    Moreover, \(\mathcal{T}\) contains all the gradients of smooth functions, for if \( \psi\in C^\infty(\T^d) \), then one has \( \widehat{\nabla\psi}(k)=2\pi i\,\hat\psi(k)\,k \).
    Hence \( L^2_0(\Leb) \subset \mathcal{T} \).
    Conversely, if \( v \in \mathcal{T} \), write \( \hat v(k)=c_kk \) and set the truncation \( \hat\psi_n(k)\coloneqq c_k/(2\pi i) \) for \( 0<\lvert k \rvert\le n \), \( \hat\psi_n(k)\coloneqq 0 \) otherwise; then \( \psi_n \) is a trigonometric polynomial satisfying
    \[
        \mathcal{F}{(\nabla\psi_n-v)}(k) = \begin{cases}
            0,     & \quad \lvert k \rvert \leq n, \\
            -c_kk, & \quad \lvert k \rvert > n.
        \end{cases}
    \]
    Now, \(\mathcal{F}\) is an isometry on \(L^2\) by Plancherel’s Theorem, hence \(\sum_k\lvert c_k \rvert^2\lvert k \rvert^2=\lVert v \rVert^2_{L^2}<\infty\) and consequently
    \[
        \lVert \nabla\psi_n-v \rVert^2_{L^2}=\sum_{\lvert k \rvert>n}\lvert c_k \rvert^2\lvert k \rvert^2\to0.
    \]
    So \( v\in L^2_0(\Leb) \), and \(L^2_0(\Leb) = \mathcal{T}\) as we wished.

    The formula for \( P_\Leb \) is the fibrewise orthogonal projection of \( \hat g(k) \) onto \( \C k \), and Plancherel gives that this is the orthogonal projection in \( L^2 \).
    A field \( z \) has \( \nabla\cdot z=0 \) and \( \hat z(0)=0 \) if and only if \( \langle\hat z(k),k\rangle=0 \) for all \( k \) and \( \hat z(0)=0 \), which is exactly the fibrewise orthogonal complement.
    Writing the Fourier transform of \(v\) as in \eqref{eq:beta.coordinates} and applying Plancherel's theorem,
    \[
        \lVert v \rVert_{L^2(\Leb)}^2
        =
        \sum_{k\neq0}\lvert \hat v(k) \rvert^2.
    \]
    Since
    \[
        \lvert \hat v(k) \rvert^2
        =
        \left|
        \frac{\beta_k}{2\pi i\lvert k \rvert^2}k
        \right|^2
        =
        \frac{\lvert \beta_k \rvert^2}{4\pi^2\lvert k \rvert^4}\lvert k \rvert^2
        =
        \frac{\lvert \beta_k \rvert^2}{4\pi^2\lvert k \rvert^2},
    \]
    it follows that
    \[
        \lVert v \rVert_{L^2(\Leb)}^2
        =
        \frac{1}{4\pi^2}
        \sum_{k\neq0}
        \frac{\lvert \beta_k \rvert^2}{\lvert k \rvert^2},
    \]
    which is precisely \eqref{eq:beta.norm}.
    Finally, the identity
    \( \beta_{-k}=\overline{\beta_k} \) is exactly the reality condition for the Fourier
    coefficients of the real-valued vector field \( v \).
\end{proof}

As in Remark~\ref{rmk:two.models}, we identify \( T_\Leb\Prob(\T^d) \) with \( L^2_0(\Leb) \) and speak of tangent vectors as vector fields on \( \T^d \).
In particular, we shall abuse notation and write
\[
    \Leb+tv\coloneqq (\Id+tv)_\ast\Leb
\]
for \(v\in L^2(\Leb;\R^d)\).
We also use repeatedly the following elementary estimate: for any \( \mu\in \Prob(\T^d) \) and any \( u,u'\in L^2(\mu;\R^d) \), the map \( (\Id+u,\Id+u')_\ast \mu \) is a transport plan between \( (\Id+u)_\ast \mu \) and \( (\Id+u')_\ast \mu \), whence
\begin{equation}\label{eq:trivial.coupling}
    \dw(\mu + u, \mu + u') = \dw\big((\Id+u)_\ast \mu,(\Id+u')_\ast \mu\big)\le\lVert u-u' \rVert_{L^2(\mu; TM)} .
\end{equation}
Note that this is a degenerate version of Lemma~\ref{lem:exp.upper}, consistent with the fact that on the flat torus \(C_\ast = 0\) and \(\Lambda=1\).

The coordinates \(\beta\) introduced in \eqref{eq:beta.coordinates} may be regarded as a linear coordinate map
\begin{align*}
    \beta:T_\Leb \Prob(\T^d) & \lto  \ell^2\!\left(\Z^d,\frac{1}{\lvert k \rvert^2}\right), \\
    v                        & \longmapsto \beta(v) \coloneqq  \{\beta_k(v)\}_{k},
\end{align*}
where \(\widehat v(k)=(2\pi i\lvert k \rvert^2)^{-1}\beta_k(v)k, \, k\neq0.\)
In these coordinates, Proposition~\ref{prp:derivative.torus} below states simply that
\begin{equation}\label{eq:dynamical.equivariance}
    \beta(\D_Av)=\beta(v)\circ A^{\top},
\end{equation}
Thus, in the \(\beta\)-coordinates, the derivative becomes simply the composition operator induced by \(A^{\top}\), which is why we do not simply use the usual Fourier coefficients.
The price of this normalisation is the weighted norm \eqref{eq:beta.norm}, whose decay will govern the convergence arguments later on.

Next, we explicit the transfer operator of \(  \varphi_A  \) at \( \Leb \), which we will denote by \(\Transf_A\) from now on.
Let \(q=\lvert \det A \rvert\), so that the finite Abelian group \(\Z^d/A\Z^d\) has cardinality \(q\).
Choose representatives \(\varepsilon_1,\dots,\varepsilon_q\in\Z^d\) of its cosets, so that every \(n\in\Z^d\) can be written uniquely as
\[
    n=A\ell+\varepsilon_j.
\]
Using this terminology, we have the following result.

\begin{prp}\label{prp:transfer.torus}
    Let \( A\in \mathrm{M}_d(\Z) \) be non-singular, and let \( \varepsilon_1,\dots,\varepsilon_{q}\in\Z^d \) be representatives of \( \Z^d/A\Z^d \).
    Then, for \( \mu=\Leb \) and \( \phi= \varphi_A  \), the transfer operator \( \Transf_A \) is given by \begin{equation}\label{eq:transfer.explicit}
        \Transf_A v(y)=\frac{1}{q}\sum_{j=1}^{q}A\,v\left(A^{-1}(y+\varepsilon_j)\right),
        \qquad v\in L^2(\Leb;\R^d),
    \end{equation}
    and in Fourier form as    
    \begin{equation}\label{eq:transfer.fourier}
        \widehat{\Transf_A v}(m)=A\,\hat v(A^{\top}m),\qquad m\in\Z^d.
    \end{equation}
    Thus, the operator \( \D_A\coloneqq P_\Leb\Transf_A :L^2_0(\Leb)\to L^2_0(\Leb) \) from Theorem~\ref{thm:derivative.acim} acts in the coordinates \eqref{eq:beta.coordinates} by the composition rule \begin{equation}\label{eq:derivative.beta}
        (\D_A\beta)_m=\beta_{A^{\top}m},\qquad m\neq0,
    \end{equation}
    and \( \lVert \D_A \rVert\le\lVert A \rVert_{\mathrm{op}} \).
\end{prp}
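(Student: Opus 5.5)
The plan is to specialise the disintegration formula of Lemma~\ref{lem:transfer.operator} to \( \phi=\varphi_A \), \( \mu=\nu=\Leb \), and then read every identity in Fourier coefficients.

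\emph{Spatial formula.} Since \( D_x\varphi_A=A \) is constant, the fibre \( \varphi_A^{-1}(y) \) consists of the \( q \) points \( A^{-1}(y+\varepsilon_j) \bmod \Z^d \). These are distinct because the \( \varepsilon_j \) represent distinct cosets of \( A\Z^d \). The conditional measure \( \Leb^y \) is the uniform measure on these points: translation invariance of \( \Leb \), together with \( \lvert\det D\varphi_A\rvert\equiv q \), makes all the weights in the Perron--Frobenius formula equal to \( 1/q \). Substituting into \eqref{eq:transfer.disintegration} gives \eqref{eq:transfer.explicit}. As a consistency check, \eqref{eq:transfer.explicit} is also \eqref{eq:transfer.branches} with \( \rho\equiv1 \), \( \lvert\det D\tau_i\rvert=1/q \) and \( D\phi=A \).

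\emph{Fourier formula.} I would not compute the Fourier coefficients of \eqref{eq:transfer.explicit} directly, since that requires the character sum over cosets. The cleaner route is the adjunction \( \langle \Transf_A v,u\rangle=\langle v,\Koop_A u\rangle \). Take \( u(y)=e^{2\pi i\langle m,y\rangle}w \) with \( w\in\C^d \). Then
\[
    \Koop_A u(x)=A^\top w\,e^{2\pi i\langle m,Ax\rangle}=A^\top w\,e^{2\pi i\langle A^\top m,x\rangle}.
\]
Pairing with \( v \) gives \( \langle \hat v(A^\top m),A^\top w\rangle=\langle A\hat v(A^\top m),w\rangle \). Since \( w \) is arbitrary, this yields \eqref{eq:transfer.fourier}. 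Some care is needed with the complex conjugations in the Hermitian pairing, but the real matrix \( A \) commutes with conjugation, so the bookkeeping is harmless. Alternatively, one can expand \( v \) in characters \( e^{2\pi i\langle n,\cdot\rangle} \) and use \( \frac1q\sum_j e^{2\pi i\langle n,A^{-1}\varepsilon_j\rangle}=\mathbf 1_{n\in A^\top\Z^d} \). This orthogonality of characters of the finite group \( \Z^d/A\Z^d \), paired against its dual \( (A^\top)^{-1}\Z^d/\Z^d \), is the only genuinely arithmetic point, and I expect it to be the main place needing care if the direct route is taken.

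\emph{The \( \beta \)-coordinates.} Let \( v\in L^2_0(\Leb) \) with \( \hat v(n)=\beta_n n/(2\pi i\lvert n\rvert^2) \). At \( n=A^\top m \) we have \( A\hat v(A^\top m)=\beta_{A^\top m}\,AA^\top m/(2\pi i\lvert A^\top m\rvert^2) \). Applying the Fourier form of \( P_\Leb \) from Proposition~\ref{prp:fourier.model}, we project onto \( \C m \) and use
\[
    \langle AA^\top m,m\rangle=\lvert A^\top m\rvert^2 .
\]
This gives \( \widehat{\D_Av}(m)=\beta_{A^\top m}\,m/(2\pi i\lvert m\rvert^2) \), which is exactly \eqref{eq:derivative.beta}. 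The zero mode is killed by \( P_\Leb \), and \( A^\top m\neq0 \) for \( m\neq0 \) because \( A \) is non-singular.

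\emph{Norm bound.} We have \( \lVert\D_A\rVert\le\lVert\Transf_A\rVert \) because \( P_\Leb \) is a contraction. By Plancherel and \eqref{eq:transfer.fourier},
\[
    \lVert\Transf_Av\rVert^2=\sum_m\lvert A\hat v(A^\top m)\rvert^2\le\lVert A\rVert^2_{\mathrm{op}}\sum_n\lvert\hat v(n)\rvert^2 .
\]
The inequality holds because \( m\mapsto A^\top m \) is injective, so each \( n \) is counted at most once. This gives \( \lVert\D_A\rVert\le\lVert A\rVert_{\mathrm{op}} \).
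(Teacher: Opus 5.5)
Your proposal is correct. For the spatial formula and the $\beta$-coordinate computation it follows the paper's proof. For the spatial formula, the paper justifies the uniform conditional measures by integrating against $f\in C(\T^d)$ and invoking uniqueness of the Rokhlin disintegration, which is the same fact you read off from the Perron--Frobenius weights.

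You depart from the paper on \eqref{eq:transfer.fourier} and on the norm bound. The paper computes $\widehat{\Transf_A v}(m)$ directly from \eqref{eq:transfer.explicit}. It substitutes $x=A^{-1}(y+\varepsilon_j)$ on each branch, uses $e^{2\pi i\langle m,\varepsilon_j\rangle}=1$, and checks that the branch images $D_j$ partition $\T^d$. You instead test the defining adjunction $\Transf_A=\Koop_A^\ast$ against $e^{2\pi i\langle m,\cdot\rangle}w$, which $\Koop_A$ sends to $e^{2\pi i\langle A^\top m,\cdot\rangle}A^\top w$. Your route is shorter and logically independent of both the spatial formula and the disintegration. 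It uses only the definition in Lemma~\ref{lem:transfer.operator} plus a complexification, which is harmless since $A$ is real. The paper's route has the merit of deriving the Fourier form from the explicit branch formula, which ties the two descriptions together.

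For $\lVert\D_A\rVert\le\lVert A\rVert_{\mathrm{op}}$, the paper applies Jensen to \eqref{eq:transfer.explicit}; this is just the general bound of Lemma~\ref{lem:transfer.operator}. Your Plancherel argument, using injectivity of $m\mapsto A^\top m$, stays entirely on the Fourier side. It also makes visible that $\Transf_A$ only sees the modes lying in $A^\top\Z^d$.
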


\begin{proof}
    Write \(q = \lvert \det A \rvert\).
    Then \(  \varphi_A  \) is \( q\)-to-one and \( ( \varphi_A )_\ast \Leb=\Leb \).
    The fibre over \( y \) is \( \varphi_A ^{-1}(y)=\{A^{-1}(y+\varepsilon_j)\ \mathrm{mod}\ \Z^d: 1\le j\le q\}\), and by the invariance and homogeneity of \( \Leb \) the disintegration of \( \Leb \) over \( ( \varphi_A )_\ast \Leb=\Leb \) is the uniform measure on the fibre, i.e. \(\Leb^y=q^{-1}\sum_{j}\delta_{A^{-1}(y+\varepsilon_j)}.\)
    Indeed, for \( f\in C(\T^d) \),
    \[
        \int_{\T^d}\left(q^{-1}\sum_jf(A^{-1}(y+\varepsilon_j))\right)d\Leb(y) =\int_{\T^d}f\,d\Leb
    \]
    because the \( q \) branches \( y\mapsto A^{-1}(y+\varepsilon_j) \) are measure preserving up to the factor \( q^{-1} \), their images partition \( \T^d \) up to a null set, and the family is clearly concentrated on the fibres.
    By uniqueness of the Rokhlin disintegration, this is therefore the disintegration of \( \Leb \) over \( ( \varphi_A )_\ast\Leb \).
    Since \( D( \varphi_A )_x=A \) for every \( x \), formula \eqref{eq:transfer.explicit} is exactly the conditional expectation \( w(y)=\int_{ \varphi_A ^{-1}(y)}D( \varphi_A )_xv(x)\,d\Leb^y(x) \) appearing in Proposition~\ref{prp:functoriality.pushforward}, which is \eqref{eq:transfer.disintegration}.

    Now, for \( m\in\Z^d \), we compute the Fourier forms
    \begin{align*}
        \widehat{\Transf_A v}(m)
         & =\frac1q\sum_{j=1}^q
        \int_{\T^d}
        A\,v\!\left(A^{-1}(y+\varepsilon_j)\right)
        e^{-2\pi i\langle m,y\rangle}\,d\Leb(y) \\
         & =\frac1q\sum_{j=1}^q
        q\int_{D_j}
        A\,v(x)e^{-2\pi i\langle A^{\top}m,x\rangle}\,d\Leb(x),
    \end{align*}
    where we substituted \(x=\tau_j(y)=A^{-1}(y+\varepsilon_j)\), which maps \(\T^d\) onto a fundamental domain \(D_j\) with \(d\Leb(y)=q\,d\Leb(x)\), and used \(e^{2\pi i\langle m,\varepsilon_j\rangle}=1\).
    The \(D_j\) partition \(\T^d\) up to a null set, so summing over \(j\) and dividing by \(q\) gives
    \[
        \widehat{\Transf_A v}(m)=\int_{\T^d} A\,v(x)e^{-2\pi i\langle A^{\top}m,x\rangle}\,d\Leb(x) = A\,\hat v(A^{\top}m).
    \]
    which is \eqref{eq:transfer.fourier}.

    For the projection, let \( v\in L^2_0(\Leb) \) with coordinates \eqref{eq:beta.coordinates}.
    For \( m\neq0 \), using \eqref{eq:transfer.fourier} and \( \hat v(A^{\top}m)=\frac{\beta_{A^{\top}m}}{2\pi i\lvert A^{\top}m \rvert^2}A^{\top}m \) (note \( A^{\top}m\neq0 \) since \( \det A\neq0 \)),
    \[
        \widehat{\Transf_A v}(m)=\frac{\beta_{A^{\top}m}}{2\pi i\lvert A^{\top}m \rvert^{2}}\,AA^{\top}m .
    \]
    Applying Proposition~\ref{prp:fourier.model},
    \[
        \begin{split}
            \widehat{\D_Av}(m)
             & =\frac{\langle \widehat{\Transf_A v}(m),m\rangle}{\lvert m \rvert^{2}}\,m                    \\
             & =\frac{\beta_{A^{\top}m}}{2\pi i\lvert A^{\top}m \rvert^{2}}\cdot
            \frac{\langle AA^{\top}m,m\rangle}{\lvert m \rvert^{2}}\,m                                        \\
             & =\frac{\beta_{A^{\top}m}}{2\pi i}\cdot\frac{\lvert A^{\top}m \rvert^{2}}{\lvert A^{\top}m \rvert^{2}}\cdot
            \frac{m}{\lvert m \rvert^{2}}                                                                     \\
             & =\frac{\beta_{A^{\top}m}}{2\pi i\lvert m \rvert^{2}}\,m,
        \end{split}
    \]
    where we used \( \langle AA^{\top}m,m\rangle=\langle A^{\top}m,A^{\top}m\rangle=\lvert A^{\top}m \rvert^2 \).
    Comparing with \eqref{eq:beta.coordinates} gives \( \beta_m(\D_Av)=\beta_{A^\top m}(v) \), i.e.\ \eqref{eq:derivative.beta}.
    Finally \( \hat v(0)=0 \) forces \( \widehat{\Transf_A v}(0)=A\hat v(0)=0 \), so \( \D_Av \) is well defined in \( L^2_0(\Leb) \); the bound \( \lVert \D_A \rVert_{\mathrm{op}}\le\lVert \Transf_A \rVert_{\mathrm{op}}\le\lVert A \rVert_{\mathrm{op}} \) follows from \eqref{eq:transfer.explicit} and Jensen's inequality (as in the proof of Proposition~\ref{prp:functoriality.pushforward} ), \( P_\Leb \) being a projection.
\end{proof}
The computation shows where the projection comes from. 
The mode indexed by \(A^\top m\) is sent to the coefficient \(A\hat v(A^\top m)\), whose direction is \(AA^\top m\) and not \(m\). 
These are parallel for every \(m\) only when \(AA^\top\) is a multiple of the identity, so \(\mathcal L_A\) does not preserve gradient fields, and \(P_\Leb\) is what restores them.

Next, we give an alternative proof of Theorem~\ref{thm:solenoidal} in the case \( \mu=\Leb \), specific to \( \T^d \) and independent of Proposition~\ref{prp:linear.response} and of Lemma~\ref{lem:density.comparison}.
It is geometric: it exhibits the transport plan explicitly, as the deformation \( \Id+tv \) composed with the flow of \( z \), and it makes literal the statement that a solenoidal field stirs \( \Leb \) rather than moving it.
Recall that \( \Leb+t(v+z) \) denotes the pushforward measure \( (\Id+t(v+z))_\ast\Leb \), and not a linear combination of measures.

\begin{proof}[Second proof of Theorem~\ref{thm:solenoidal} at \(\mu=\Leb\)]
    Assume first that \( z \) is smooth and solenoidal.
    Let \( (\Psi_t)_{t\in\R} \) be the flow of \( z \), i.e. \( \partial_t\Psi_t(x)=z(\Psi_t(x)) \), \( \Psi_0=\Id \).
    It is a well-defined flow of diffeomorphisms of \( \T^d \) since \( \T^d \) is compact and \( z \) is smooth.
    As \( \nabla\cdot z=0 \), Liouville's theorem gives \( (\Psi_t)_\ast \Leb=\Leb \) for all
    \( t \).
    Since \(z\in C^\infty\), the flow \(\Psi_t\) admits the uniform first-order expansion \(\Psi_t=\Id+tz+O(t^2)\) as \(t\to0\).
    More precisely, by a Taylor expansion in \( t \) we get
    \begin{equation}\label{eq:flow.taylor}
        \left\lVert \Psi_t-(\Id+tz) \right\rVert_\infty\le C_zt^2 ,\qquad \lvert t \rvert\le1,
    \end{equation}
    where \(C_z\) depends only on \(\lVert z \rVert_\infty\) and \(\lVert Dz \rVert_\infty\), both of which are finite.
    Now consider the map \( \Theta_t\coloneqq (\Id+tv)\circ\Psi_t \).
    Since \( (\Psi_t)_\ast \Leb=\Leb \), we have \( (\Theta_t)_\ast \Leb=(\Id+tv)_\ast \Leb=\Leb+tv \).
    Therefore \( \left(\Id+t(v+z),\ \Theta_t\right)_\ast \Leb \) is a transport plan between \( \Leb+t(v+z) \) and \( \Leb+tv \), and consequently
    \[
        \dw\left(\Leb+t(v+z),\Leb+tv\right)^2
        \le\int_{\T^d}\big\lvert x+t v(x)+tz(x)-\Theta_t(x)\big \rvert^2\,d\Leb(x).
    \]
    Write \( \Theta_t(x)=\Psi_t(x)+tv(\Psi_t(x)) \) and use \eqref{eq:flow.taylor}:
    \[
        \begin{split}
            \big\lvert x+tv(x)+tz(x)-\Theta_t(x)\big \rvert
             & \le\big\lvert x+tz(x)-\Psi_t(x)\big \rvert+t\big\lvert v(x)-v(\Psi_t(x))\big \rvert \\
             & \le C_zt^2+t\big\lvert v(x)-v(\Psi_t(x))\big \rvert .
        \end{split}
    \]
    Hence, by the triangle inequality in \( L^2(\Leb) \),
    \begin{equation}\label{eq:smooth.z.bound}
        \dw\left(\Leb+t(v+z),\Leb+tv\right)
        \le C_zt^2+t\,\left\lVert v-v\circ\Psi_t \right\rVert_{L^2(\Leb)} .
    \end{equation}
    Since \( \Psi_t\to\Id \) uniformly as \( t\to0 \) and each \( \Psi_t \) preserves \( \Leb \), composition with measure-preserving maps converging uniformly to the identity is continuous on \(L^2\), so \( \lVert v-v\circ\Psi_t \rVert_{L^2(\Leb)}\to0 \): indeed, for \( v \) continuous this is uniform convergence, and for general \( v\in L^2 \) approximate \( v \) by a continuous \( \tilde v \) with \( \lVert v-\tilde v \rVert_{L^2}\le\epsilon \) and note that \(\lVert v\circ\Psi_t-\tilde v\circ\Psi_t \rVert_{L^2(\Leb)}=\lVert v-\tilde v \rVert_{L^2(\Leb)} \le\epsilon\) because \( (\Psi_t)_\ast \Leb=\Leb \).
    Thus the right-hand side of \eqref{eq:smooth.z.bound} is \( o(t) \).

    For general \(z\) we mollify by convolutions.
    Let \( \chi_n \) be a smooth approximation of the identity on \( \T^d \) and \( z_n\coloneqq z*\chi_n \).
    Convolution on the group commutes with differentiation, so \( \nabla\cdot z_n=(\nabla\cdot z)*\chi_n=0 \).
    Moreover \( z_n\to z \) in \( L^2(\Leb) \) and each \( z_n \) is smooth.
    By \eqref{eq:trivial.coupling},
    \[
        \dw\left(\Leb+t(v+z),\Leb+t(v+z_n)\right)\le t\lVert z-z_n \rVert_{L^2(\Leb)} .
    \]
    Combining with the first case applied to \( z_n \) and the triangle inequality,
    \[
        \begin{split}
            \limsup_{t\to0}\frac{\dw\left(\Leb+t(v+z),\Leb+tv\right)}{t}
             & \le\lVert z-z_n \rVert_{L^2(\Leb)} \\
            +\limsup_{t\to0} \frac{\dw\left(\Leb+t(v+z_n),\Leb+tv\right)}{t}
             & =\lVert z-z_n \rVert_{L^2(\Leb)}.
        \end{split}
    \]
    Letting \( n\to\infty \) gives the claim.

    Finally, fix a basis \( v_1,\dots,v_N \) of a given \( V\subset L^2(\Leb;\R^d) \).
    In \eqref{eq:smooth.z.bound} the constant \( C_z \) does not involve \( v \), and
    \[
        \lVert v-v\circ\Psi_t \rVert_{L^2(\Leb)}\le\sum_{i=1}^N\lvert c_i \rvert\,\lVert v_i-v_i\circ\Psi_t \rVert_{L^2(\Leb)}
        \le C(V,R)\max_i\lVert v_i-v_i\circ\Psi_t \rVert_{L^2(\Leb)}
    \]
    for \( v=\sum c_iv_i \) with \( \lVert v \rVert_{L^2}\le R \) (all norms on \( V \) being equivalent), and the right-hand side tends to \( 0 \) as \( t\to0 \) independently of \( v \).
    The mollification step is likewise uniform, since the estimate
    \[
        \dw(\Leb+t(v+z),\Leb+t(v+z_n))
        \le t\lVert z-z_n \rVert_{L^2(\Leb)}
    \]
    is independent of \(v\).
\end{proof}

Proposition~\ref{prp:mixture} already gives, at \(\Leb\), everything we shall use. 
The second proof below is geometric: it displays the transport plan, it produces a rate at each fixed discretisation scale, and it is the version that makes the comparison with the circle meaningful.

\begin{proof}[Second proof of Proposition~\ref{prp:mixture} at \( \Leb \)]
    Fix \( \epsilon>0 \).
    We begin by approximating the fields \(\{v_i\}\) by locally constant fields, in the following manner.
    Choose \( n\in\N \) and fields \( \bar v_i \) that are constant on each cube of the partition \( \mathcal Q_n \) of \( \T^d \) into \( n^d \) half-open cubes of side \( 1/n \), such that \( \lVert \bar v_i-v_i \rVert_{L^2(\Leb)}\le\epsilon \) for every \( i \).
    Such choice is always possible as step functions on such grids are dense in \( L^2 \).
    Note that \( n \) and the \( \bar v_i \) depend on \( \epsilon \) but \emph{not} on \( t \); in particular \( K\coloneqq \max_{i,j}\lVert \bar v_i-\bar v_j \rVert_\infty<\infty \) is independent of \( t \).
    By \eqref{eq:trivial.coupling},
    \[
        \dw\Big(\Leb+t\sum_i\alpha_iv_i,\ \Leb+t\sum_i\alpha_i\bar v_i\Big)
        \le t\Big\lVert \sum_i\alpha_i(v_i-\bar v_i) \Big\rVert_{L^2(\Leb)}\le\epsilon t,
    \]
    and, since \( \dw \) is jointly convex with respect to mixtures (a convex combination of transport plans is a transport plan between the corresponding mixtures), we have
    \[
    \begin{split}
        \dw\Big(\sum_i\alpha_i(\Leb+t\bar v_i),\ \sum_i\alpha_i(\Leb+tv_i)\Big)^2
        &\le\sum_i\alpha_i\,\dw\big(\Leb+t\bar v_i,\Leb+tv_i\big)^2 \\
        &\le\epsilon^2t^2.
    \end{split}
    \]
    By the triangle inequality it therefore suffices to prove
    \begin{equation}\label{eq:mixture.discrete}
        \dw\Big(\Leb+t\sum_i\alpha_i\bar v_i,\ \sum_i\alpha_i(\Leb+t\bar v_i)\Big)
        \le C\,t^{3/2}n^{1/2}K^{3/2}
    \end{equation}
    for a dimensional constant \( C \) and all \( t \) small enough, since then
    \[
        \limsup_{t\to0}\frac{1}{t}\,
        \dw\Big(\Leb+t\sum_i\alpha_iv_i,\ \sum_i\alpha_i(\Leb+tv_i)\Big)\le2\epsilon,
    \]
    and \( \epsilon>0 \) was arbitrary.

    Now, both measures in \eqref{eq:mixture.discrete} are mixtures of the measures obtained by translating \( \Leb|_Q \), \( Q\in\mathcal Q_n \), by constant vectors.
    Precisely, fix \( Q\in\mathcal Q_n \) and let \( u_i\coloneqq \bar v_i|_Q\in\R^d \) and \( \bar u\coloneqq \sum_i\alpha_iu_i \).
    Restricted to \( Q \), the left measure is \( n^{-d}\,\mathcal U(Q+t\bar u) \) and the right one is \( n^{-d}\sum_i\alpha_i\,\mathcal U(Q+tu_i) \), where \( \mathcal U(E) \) denotes the uniform probability measure on \( E \) and \( Q+w \) is the translate of \( Q \) by
    \( w\in\R^d \) (these translates are cubes in \( \T^d \) for \( t \) small.)
    We claim that
    \begin{equation}\label{eq:one.cube}
        \dw\Big(\mathcal U(Q+t\bar u),\ \sum_i\alpha_i\,\mathcal U(Q+tu_i)\Big)^2
        \le C_d\,t^3K^3n\qquad\text{for } tK\le\tfrac{1}{2n}.
    \end{equation}
    Indeed, all the cubes \( Q+tu_i \) and \( Q+t\bar u \) are translates of the fixed cube \( Q \) of side \( 1/n \) by vectors of length at most \( tK \).
    Write \( \tau_i \) for the translation by \( t(\bar u-u_i) \), so that \( \tau_i(Q+tu_i)=Q+t\bar u \).
    Consider the transport plan that, for each \( i \), matches the mass of \( \alpha_i\mathcal U(Q+tu_i) \) with the mass of \( \alpha_i\mathcal U(Q+t\bar u) \) by \( \tau_i \), that is, the plan \( \Pi\coloneqq \sum_i\alpha_i(\Id,\tau_i)_\ast \mathcal U(Q+tu_i) \) read as a coupling of \( \sum_i\alpha_i\mathcal U(Q+tu_i) \) with \( \mathcal U(Q+t\bar u) \).
    Its second marginal is \( \sum_i\alpha_i\mathcal U(Q+t\bar u)=\mathcal U(Q+t\bar u) \), as required, and its cost is at most \( \max_i\lvert t(\bar u-u_i) \rvert^2\le t^2K^2 \).

    This crude plan is not good enough :it gives \( \dw\le tK \) per cube, hence \( \dw\le tK \) overall, which is \( O(t) \) and not \( o(t) \).
    The point, exactly as in \cite[Figure 2]{kloeckner_optimal_2013_preprint}, is that most of the mass does not have to move at all, so we improve it as follows
    Let \( R\coloneqq (Q+t\bar u)\cap\bigcap_i(Q+tu_i) \) be the common core.
    Since all the cubes are translates of \(Q+t\bar u\) by vectors of length at most \(tK\) and \( Q \) has side \( 1/n \), the set \( R \) contains a cube of side \( 1/n-2tK\ge0 \) and
    \[
        \Leb(Q\setminus R)\le C_d\,\frac{tK}{n^{d-1}}=C_d\,tK\,n\cdot n^{-d}
    \]
    (the symmetric difference of two translates of \( Q \) by a vector of length \( \le\ell \) has measure at most \( 2d\,\ell\,n^{-(d-1)} \)).
    Define the plan \( \Pi' \) that leaves in place all the mass lying in \( R \), where both measures have the same density \( n^{d} \) and we couple them by the identity, and moves the remaining mass arbitrarily.
    That mass has total weight at most \( C_dtKn\cdot n^{-d}\cdot n^{d}=C_dtKn \) relative to \( \mathcal U \), and it is supported in the boundary strips where the translated cubes differ.
    Every cube involved is a translate of \( Q+t\bar u \) by a vector of length at most \( tK \), so no point of those strips has to travel further than \( 2tK \) to reach \( R \).
    Then \( \Pi' \) is a transport plan whose squared cost is bounded by the amount of moved mass times the distance squared, i.e.
    \[
        \dw\Big(\mathcal U(Q+t\bar u),\sum_i\alpha_i\mathcal U(Q+tu_i)\Big)^2
        \le C_dtKn\cdot(2tK)^2
        =C_d'\,t^3K^3n,
    \]
    which is \eqref{eq:one.cube}.

    To finish the argument, we sum the plans above over \( Q\in\mathcal Q_n \) with weights \( n^{-d} \), producing a transport plan between \( \Leb+t\sum_i\alpha_i\bar v_i \) and \( \sum_i\alpha_i(\Leb+t\bar v_i) \): the marginals add up correctly because both measures decompose over \( \mathcal Q_n \), the fields \( \bar v_i \) being constant on each cube.
    Its cost is at most \( \sum_{Q}n^{-d}\cdot C_d't^3K^3n=C_d't^3K^3n \), whence \eqref{eq:mixture.discrete}, as we wished.

    As for the uniformity claim, given a finite-dimensional \( W \) and a bound \( R \) on \( \lVert v_i \rVert_{L^2} \), choose the grid \( n=n(\epsilon,W,R) \) and the discretisations \( \bar v_i \) by discretising a fixed basis of \( W \) and taking the corresponding linear combinations; then \( K\le C(W,R) \) and \( n \) are uniform over the bounded set, and all the estimates above depend on \( (v_i) \) only through \( K \) and \( \epsilon \).
\end{proof}

The two proofs differ in the following way.
The proof of \cite[Lemma 4.2]{kloeckner_optimal_2013_preprint} has three ingredients: the discretisation of the fields, a discretisation of the \emph{density} at a \( t \)-dependent scale, and the overlap construction.
The second is the delicate one, and it is what forces the Hölder hypothesis there; at \( \Leb \) the density is constant, so that step is vacuous and the grid \( n \) may be kept fixed as \( t\searrow0 \).
The analytic proof of Proposition~\ref{prp:mixture} bypasses the difficulty entirely, at non-constant \( C^1 \) density and in any dimension, because it never discretises: the cancellation happens at the level of the linear response \eqref{eq:linear.response}, where it is the linearity of \( u\mapsto\Div(\rho u) \) that does all the work.
The price paid is that the analytic proof produces neither a transport plan nor a rate.

Theorem~\ref{thm:derivative.acim} applies verbatim to \( \phi=\varphi_A \), which is a \( C^\infty \) covering map preserving \( \Leb=\vol \), and identifies the derivative as \( \D_A=P_\Leb\Transf_A \), computed in Proposition~\ref{prp:transfer.torus}.
We restate the conclusion in toral notation and then give a second proof, independent of Section~\ref{sec:derivatives}, which uses neither the linear response formula nor Lemma~\ref{lem:density.comparison}: it decomposes the pushforward along the inverse branches of \( \varphi_A \) and appeals only to Proposition~\ref{prp:mixture} and Theorem~\ref{thm:solenoidal}.
Since it is exactly the route followed by \cite[\S5]{kloeckner_optimal_2013} on the circle, we keep it to make the generalisation transparent.

\begin{prp}[Derivative of \( ( \varphi_A )_\ast  \) at the Haar measure]
    \label{prp:derivative.torus}
    Let \( A\in \mathrm{M}_d(\Z) \) with \( q\coloneqq\lvert \det A \rvert\neq 0 \) and let \( \D_A=P_\Leb\Transf_A  \) be as in Proposition~\ref{prp:transfer.torus}.
    Then \( ( \varphi_A )_\ast  \) has G\^ateaux derivative \( \D_A \) at \( \Leb \): for every \( v\in L^2_0(\Leb)\cong T_\Leb \Prob(\T^d) \),
    \begin{equation}\label{eq:gateaux}
        \dw\left(( \varphi_A )_\ast (\Leb+tv),\ \Leb+t\,\D_Av\right)=o(t)\qquad(t\to0),
    \end{equation}
    and the remainder is uniform on bounded subsets of any finite-dimensional subspace of \( L^2_0(\Leb) \).
    In the coordinates \eqref{eq:beta.coordinates} the derivative is the composition operator \( (\D_A\beta)_m=\beta_{A^{\top}m} \).
\end{prp}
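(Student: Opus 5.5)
The paper announces that the second proof decomposes the pushforward along the inverse branches of \( \varphi_A \) and uses only Proposition~\ref{prp:mixture} and Theorem~\ref{thm:solenoidal}. That is the route I would follow, in the spirit of \cite[\S5]{kloeckner_optimal_2013}. The first observation is that \eqref{eq:gateaux} is also an immediate corollary of Theorem~\ref{thm:derivative.acim}, since \( \varphi_A \) is a smooth covering map preserving \( \Leb=\vol \). The remainder of the proof is therefore about making the mechanism explicit. The formula \( (\D_A\beta)_m=\beta_{A^\top m} \) is \eqref{eq:derivative.beta} of Proposition~\ref{prp:transfer.torus}, so only \eqref{eq:gateaux} and its uniformity need an argument.

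First reduce to continuous \( v \). For general \( v\in L^2_0(\Leb) \), approximate by \( v_n \) in \( L^2 \). The map \( (\varphi_A)_\ast \) is \( \lVert A\rVert_{\mathrm{op}} \)-Lipschitz for \( \dw \), and \eqref{eq:trivial.coupling} gives \( \dw(\Leb+tv,\Leb+tv_n)\le t\lVert v-v_n\rVert \). Likewise \( \lVert\D_A(v-v_n)\rVert\le\lVert A\rVert_{\mathrm{op}}\lVert v-v_n\rVert \). So the \( \limsup \) of \( t^{-1}\dw \) is controlled by \( C\lVert v-v_n\rVert \), exactly as in the last step of the proof of Theorem~\ref{thm:derivative.acim}. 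The approximants \( v_n \) need not be tangent, because Theorem~\ref{thm:solenoidal} and Proposition~\ref{prp:mixture} hold for arbitrary \( L^2 \) fields.

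The main step is the branch decomposition. Let \( \tau_j(y)=A^{-1}(y+\varepsilon_j) \) be the inverse branches, with images \( D_j \) partitioning \( \T^d \). Then
\[
(\varphi_A)_\ast(\Leb+tv)=\sum_j (\varphi_A\circ(\Id+tv))_\ast \Leb|_{D_j}.
\]
On \( D_j \), linearity of \( A \) gives \( \varphi_A(x+tv(x))=\varphi_A(x)+tAv(x) \) mod \( \Z^d \). The change of variables \( x=\tau_j(y) \) then shows that
\[
(\varphi_A)_\ast(\Leb+tv)=\frac1q\sum_{j=1}^q\big(\Leb+t\,w_j\big),\qquad w_j(y)\coloneqq A\,v(\tau_j(y)).
\]
This identity is exact, with no error term. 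Applying Proposition~\ref{prp:mixture} at \( \mu=\Leb \) with weights \( 1/q \) and fields \( w_j\in L^2 \), whose average is \( \Transf_A v \) by \eqref{eq:transfer.explicit}, gives
\[
\dw\big((\varphi_A)_\ast(\Leb+tv),\ \Leb+t\Transf_Av\big)=o(t).
\]
Finally, \( \Transf_Av-\D_Av=(\Id-P_\Leb)\Transf_Av \) is orthogonal to \( L^2_0(\Leb) \). By Proposition~\ref{prp:fourier.model} it is a mean-zero solenoidal field plus a constant; the constant part vanishes because \( \widehat{\Transf_Av}(0)=A\hat v(0)=0 \). Theorem~\ref{thm:solenoidal} (or its second proof at \( \Leb \)) then replaces \( \Transf_Av \) by \( \D_Av \) at cost \( o(t) \), and the triangle inequality concludes.

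For uniformity, let \( W \) be finite dimensional. As \( v \) ranges over a ball in \( W \), the fields \( w_j \) range over bounded subsets of the finite-dimensional spaces \( \{A v\circ\tau_j: v\in W\} \). The fields \( \D_Av \) and \( (\Id-P_\Leb)\Transf_Av \) range over bounded subsets of the images of \( W \) under bounded linear maps, so the uniformity clauses of Proposition~\ref{prp:mixture} and Theorem~\ref{thm:solenoidal} apply. The step I expect to need the most care is the branch decomposition identity. One must check that pushing forward \( \Leb|_{D_j} \) by \( x\mapsto\varphi_A(x)+tAv(x) \) equals \( q^{-1}(\Id+tw_j)_\ast\Leb \), which uses that \( \tau_j \) maps \( \T^d \) onto \( D_j \) with Jacobian \( 1/q \). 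This is the same substitution already used in the proof of Proposition~\ref{prp:transfer.torus}, so it should go through.
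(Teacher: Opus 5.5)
Your proposal is correct and follows the paper's second proof essentially step by step. You use the exact branch identity \( (\varphi_A)_\ast(\Leb+tv)=q^{-1}\sum_j(\Leb+t\,A\,v\circ\tau_j) \), then Proposition~\ref{prp:mixture} to reach \( \Leb+t\Transf_A v \), then Theorem~\ref{thm:solenoidal} to pass to \( \Leb+t\D_A v \), with uniformity coming from the finite-dimensional images of \( W \). The one difference is your preliminary reduction to continuous \( v \), which is harmless but unnecessary: as you note yourself, both Proposition~\ref{prp:mixture} and Theorem~\ref{thm:solenoidal} already apply to arbitrary \( L^2 \) fields, and even a constant residual would be divergence-free and therefore covered.
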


\begin{proof}[Second proof, elementary.]
    Let \( v\in L^2_0(\Leb) \) and let \( \varepsilon_1,\dots,\varepsilon_{q} \) and the branches \( \tau_j(y)\coloneqq A^{-1}(y+\varepsilon_j) \) be as in Proposition~\ref{prp:transfer.torus}.
    We claim that \begin{equation}\label{eq:branch}
        ( \varphi_A )_\ast \left(\Leb+tv\right)
        =\sum_{j=1}^{q}\frac{1}{q}\left(\Leb+t\,u_j\right),
        \qquad u_j(y)\coloneqq A\,v\left(\tau_j(y)\right).
    \end{equation}
    Indeed, \( \Leb=\sum_j q^{-1}(\tau_j)_\ast \Leb \) as the branches parametrise the \( q \) pieces of a partition of \( \T^d \) into fundamental domains for \( A\Z^d \), each carrying mass \( q^{-1} \).
    Hence \( \Leb+tv=(\Id+tv)_\ast  \Leb=\sum_jq^{-1}\left((\Id+tv)\circ\tau_j\right)_\ast \Leb \) and therefore \( ( \varphi_A )_\ast (\Leb+tv)=\sum_jq^{-1}\left(\varphi_A \circ(\Id+tv)\circ\tau_j\right)_\ast \Leb \).
    Now for \( x\in\T^d \) we have, in \( \T^d \), \(  \varphi_A (x+tv(x))=Ax+tAv(x)= \varphi_A (x)+tAv(x) \), since \( A \) is linear and integer.
    Thus \( \varphi_A \circ(\Id+tv)\circ\tau_j(y)= \varphi_A (\tau_j(y))+tAv(\tau_j(y)) =y+tu_j(y)\) , using  \( \varphi_A \circ\tau_j=\Id\).
    This is \eqref{eq:branch}.

    Now, from Equation \eqref{eq:transfer.explicit} we get \( q^{-1}\sum_ju_j=\Transf_A v \).
    As the \( \tau_j \) push \( \Leb \) to a multiple of \( \Leb \), so that \( \lVert u_j \rVert_{L^2(\Leb)}\le q^{1/2}\lVert A \rVert_{\mathrm{op}}\lVert v \rVert_{L^2(\Leb)} \), we have \( u_j\in L^2(\Leb;\R^d) \).
    Then, Proposition~\ref{prp:mixture} applied with \( N=q \), \( \alpha_j=q^{-1} \) and the fields \( u_j \) gives
    \begin{equation}\label{eq:step2}
        \dw\Big(\sum_j q^{-1}\big(\Leb+tu_j\big),\ \Leb+t\,\Transf_A v\Big)=o(t).
    \end{equation}

    Finally, we write \( \Transf_A v=\D_Av+z \) with \( z\coloneqq (\Id-P_\Leb)\Transf_A v \).
    By Proposition~\ref{prp:fourier.model}, \( z \) is divergence-free with \( \hat z(0)=0 \)  (recall \( \widehat{\Transf_A v}(0)=A\hat v(0)=0 \)).
    Hence Theorem~\ref{thm:solenoidal}, applied in the direction \(\D_Av\) with solenoidal part \(z\), yields
    \begin{equation}\label{eq:step3}
        \dw\left(\Leb+t\,\Transf_A v,\ \Leb+t\,\D_Av\right)=o(t).
    \end{equation}
    Combining \eqref{eq:branch}, \eqref{eq:step2}, \eqref{eq:step3} and the triangle inequality proves \eqref{eq:gateaux}.

    As for uniformity, let \( V\subset L^2_0(\Leb) \) be finite dimensional and \( R>0 \).
    The fields \( u_j=Av\circ\tau_j \), for \( v\in V \) with \( \lVert v \rVert_{L^2}\le R \), range in the finite-dimensional space \( W\coloneqq \{Aw\circ\tau_j:w\in V,\ 1\le j\le q\} \) and are bounded there; so the uniform version of Proposition~\ref{prp:mixture} applies.
    Likewise \( \D_Av \) ranges in the finite-dimensional \( \D_A(V) \) and is bounded, so the uniform version of Theorem~\ref{thm:solenoidal} applies, and the field \( z \) occurring there is \( z=(\Id-P_\Leb)\Transf_A v \) with \( \lVert z \rVert_{L^2}\le\lVert A \rVert_{\mathrm{op}}R \).
    Inspecting the second proof of Theorem~\ref{thm:solenoidal}, the bound depends on \( z \) only through \( \lVert z-z_n \rVert_{L^2} \), and on the finite-dimensional space \( (\Id-P_\Leb)\Transf_A (V) \), so it too is uniform.
\end{proof}

\begin{exm}[Sharpness of Proposition~\ref{prp:pushforward.rate}]\label{exm:rate.sharpness}
    Let \( d=1 \), \( A=(2) \), so that \( \varphi_A(x)=2x \bmod 1 \) and \( \Lip(\varphi_A)=2 \), and let
    \[
        v(x)\coloneqq\tfrac1\pi\sin(2\pi x)\in L^2_0(\Leb),\qquad \lVert v\rVert_{L^2(\Leb)}=\tfrac{1}{\pi\sqrt2} .
    \]
    By \eqref{eq:transfer.explicit} with \( q=2 \),
    \[
        \Transf_A v(y)=\tfrac12\sum_{j=0}^{1}2\,v\!\left(\tfrac{y+j}{2}\right)
        =\tfrac1\pi\sin(\pi y)+\tfrac1\pi\sin(\pi y+\pi)=0 ,
    \]
    so \( \D_Av=0\neq v \); equivalently, in the coordinates \eqref{eq:beta.coordinates} the sequence \( \beta(v) \) is supported on \( k=\pm1 \) and \( (\D_A\beta)_m=\beta_{2m} \) annihilates it.

    Fix \( \alpha\in(0,1) \) and set \( \mu_t\coloneqq\Leb+t^{\alpha}v \) for \( t\in[0,t_0] \), \( t_0^\alpha<\tfrac12 \).
    Since \( \left(s\,\tfrac1\pi\sin2\pi x\right)'=2s\cos2\pi x>-1 \) and no mass moves further than \( s/\pi<\tfrac12 \) for \( s<\tfrac12 \), the segment \( s\mapsto\Leb+sv \) is a constant-speed geodesic of speed \( \lVert v\rVert_{L^2(\Leb)} \) \cite[\S3.3]{kloeckner_optimal_2013}.
    Hence \( (\mu_t) \) is absolutely continuous with
    \[
        \big\lVert v_t\big\rVert_{L^2(\mu_t)}=\alpha\,\lVert v\rVert_{L^2(\Leb)}\,t^{\alpha-1}
        \ \in\ L^p(0,t_0)\quad\text{for every }p<\tfrac{1}{1-\alpha},
    \]
    while \( \esssup_{[0,t_0]}\lVert v_t\rVert_{L^2(\mu_t)}=\infty \).
    Writing \( \sigma_t\coloneqq(\varphi_A)_\ast\mu_t \), Proposition~\ref{prp:derivative.torus} and \( \D_Av=0 \) give \( \dw(\sigma_t,\Leb)=o(t^{\alpha}) \), whence
    \[
        \dw(\mu_t,\sigma_t)\ \ge\ \dw(\mu_t,\Leb)-\dw(\Leb,\sigma_t)
        \ =\ t^{\alpha}\lVert v\rVert_{L^2(\Leb)}-o(t^{\alpha}) ,
    \]
    which is not \( O(t) \).
    Letting \( \alpha\nearrow1 \) shows that \( L^p \) integrability of the speed is insufficient for every \( p<\infty \).
    Note that the failure is one of parametrisation: the image of \( (\mu_t) \) is a geodesic segment, and \eqref{eq:pushforward.rate} holds throughout.
\end{exm}

\subsection{Joint fixed directions and nearly invariant families}
We now begin the proof of Theorem~\ref{thm:main.application}. 
First we show that for a commuting pair of expanding matrices with coprime determinants the space of directions in \(L^2_0(\Leb)\) fixed by both derivatives is infinite dimensional (Theorem~\ref{thm:joint.eigen}); then we show that those directions carry families of measures that remain invariant to first order under both pushforwards (Theorem~\ref{thm:families}). 
Together these give Theorem~\ref{thm:main.application}

We begin with two technical lemmas.
The first is simply a statement of uniform expansion.

\begin{lem}[Orbit growth control]
    \label{lem:orbit.growth}
    Let \( A\in \mathrm{M}_d(\Z) \) be expanding and set \( \theta_A\coloneqq \min\lvert \spec(A) \rvert>1 \).
    Then for every \( 1<\theta<\theta_A \) there is \( C=C(A,\theta)\ge1 \) such that
    \[
        \big\lvert (A^{\top})^{a}w\big \rvert\ \ge\ C^{-1}\theta^{a}\,\lvert w \rvert \qquad\text{for all } w\in\R^d,\ a\ge0 .
    \]
\end{lem}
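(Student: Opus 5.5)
The plan is to reduce the claim to a bound on the operator norm of inverse powers, via the spectral radius formula. Since \( A \) is expanding, \( A^{\top} \) is invertible over \( \R \), and \( \spec(A^{\top})=\spec(A) \). Consequently every eigenvalue of \( (A^{\top})^{-1} \) has modulus at most \( \theta_A^{-1} \), so the spectral radius of \( (A^{\top})^{-1} \) is \( \theta_A^{-1} \). Gelfand's formula then gives
\[
    \lim_{a\to\infty}\big\lVert (A^{\top})^{-a}\big\rVert_{\mathrm{op}}^{1/a}=\theta_A^{-1}<\theta^{-1}.
\]

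The first step is to pick \( a_0 \) such that \( \lVert (A^{\top})^{-a}\rVert_{\mathrm{op}}\le\theta^{-a} \) for all \( a\ge a_0 \); this is possible because \( \theta^{-1}>\theta_A^{-1} \). Next, set
\[
    C\coloneqq\max\Big\{1,\ \max_{0\le a<a_0}\theta^{a}\big\lVert (A^{\top})^{-a}\big\rVert_{\mathrm{op}}\Big\}.
\]
This is a finite maximum, so \( C<\infty \), and by construction \( \lVert (A^{\top})^{-a}\rVert_{\mathrm{op}}\le C\theta^{-a} \) holds for every \( a\ge0 \).

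Finally, fix \( w\in\R^d \) and \( a\ge0 \). Writing \( w=(A^{\top})^{-a}(A^{\top})^{a}w \), we get
\[
    \lvert w\rvert\le C\theta^{-a}\,\lvert (A^{\top})^{a}w\rvert,
\]
which rearranges to the claimed inequality.

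There is no real obstacle here. The one point needing care is that a single constant \( C \) works for all \( a \): the asymptotic bound from Gelfand's formula only covers \( a\ge a_0 \), and the finitely many small \( a \) are absorbed by the maximum in the definition of \( C \).
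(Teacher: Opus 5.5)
Your proof is correct and follows the paper's argument: Gelfand's formula shows that the spectral radius of \( (A^{\top})^{-1} \) is \( \theta_A^{-1}<\theta^{-1} \), and you then write \( w=(A^{\top})^{-a}(A^{\top})^{a}w \). Your explicit maximum over the finitely many \( a<a_0 \), which makes one constant \( C \) work for all \( a\ge0 \), is a step the paper leaves implicit.
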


\begin{proof}
    \( A^{\top} \) and \( A \) have the same spectrum, so all eigenvalues of \( A^{\top} \) have modulus \( \ge\theta_A>\theta \).
    Hence the spectral radius of \( (A^{\top})^{-1} \) is \( 1/\theta_A<1/\theta \), and by Gelfand's formula \( \lVert ((A^{\top})^{-1})^{a} \rVert_{\mathrm{op}}^{1/a}\to1/\theta_A \), so there is \( C\ge1 \) with \( \lVert (A^{\top})^{-a} \rVert_{\mathrm{op}}\le C\theta^{-a} \) for all \( a\ge0 \).
    Then for \( w\in\R^d \), one has
    \[
        \lvert w \rvert=\lvert (A^{\top})^{-a}(A^{\top})^{a}w \rvert\le C\theta^{-a}\lvert (A^{\top})^{a}w \rvert.
    \] 
\end{proof}

The second lemma is an analogue, for a commuting pair of integer matrices with coprime determinants, of the elementary arithmetic fact that every positive integer factors uniquely as \( n=2^{a}3^{b}j \) with \( \gcd(j,6)=1 \), which is what makes the computation of \cite[\S7]{kloeckner_optimal_2013_preprint} possible on the circle.

\begin{dft}
    An integer vector \( j\in\Z^d\setminus\{0\} \) is said to be \emph{bi-primitive} with respect to a pair of commuting integer matrices \(A, B\) if \( j\notin A^{\top}\Z^d\cup B^{\top}\Z^d \).
\end{dft}

\begin{lem}
    \label{lem:freeness}
    Let \( A,B\in \mathrm{M}_d(\Z) \) be commuting expanding matrices such that \( \gcd(\det A,\det B)=1 \).
    Then:
    \begin{enumerate}
        \item\label{item:intersection}
            \( A^{\top}\Z^d\cap B^{\top}\Z^d=A^{\top}B^{\top}\Z^d \);
        \item\label{item:finite}
            for every \( k\in\Z^d\setminus\{0\} \) and \(M \in \{A,B\}\), there exists the maximum \(\nu_M(k) \coloneqq \max\{n\ge0:k\in(M^{\top})^{n}\Z^d\} \);
        \item\label{item:unique}
            every \( k\in\Z^d\setminus\{0\} \) can be written in a unique way as
            \[
                k=(A^{\top})^{a}(B^{\top})^{b}\,j,\qquad a,b\ge0,\ j\ \text{bi-primitive}.
            \]
    \end{enumerate}
\end{lem}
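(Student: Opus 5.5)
We prove the three items in order, writing \( A' \coloneqq A^{\top} \) and \( B' \coloneqq B^{\top} \). These are commuting integer matrices, with \( \det A'=\det A \) and \( \det B'=\det B \) coprime. The basic tool is the adjugate: \( A'\adj(A')=\det(A)\Id \), so \( \det(A)\Z^d\subset A'\Z^d \), and likewise for \( B' \).

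For item~\ref{item:intersection}, the inclusion \( A'B'\Z^d\subset A'\Z^d\cap B'\Z^d \) is clear from \( A'B'=B'A' \). For the converse, take \( k=A'x=B'y \). We want to show \( x\in B'\Z^d \).
\begin{itemize}
\item Multiplying by \( \adj(B') \) gives \( \det(B)\,y=\adj(B')A'x \). Since \( B' \) commutes with \( A' \), so does \( \adj(B') \), being a polynomial in \( B' \) (Cayley--Hamilton). Hence \( \det(B)\,y=A'\adj(B')x \).
\item Work in the finite group \( G=\Z^d/B'\Z^d \), of order \( \lvert\det B\rvert \). Since \( A' \) commutes with \( B' \), it induces an endomorphism \( \bar A \) of \( G \). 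The class of \( k \) is zero, so \( \bar A\bar x=0 \).
\item \( \bar A \) is injective on \( G \): multiplication by \( \adj(A') \) also preserves \( B'\Z^d \) and satisfies \( \overline{\adj(A')}\,\bar A=\det(A)\cdot\Id_G \). This is an automorphism, because \( \det A \) is coprime to \( \lvert G\rvert \).
\item Hence \( \bar x=0 \), so \( x=B'z \) and \( k=A'B'z \).
\end{itemize}

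For item~\ref{item:finite}, suppose \( k\in(M')^n\Z^d \), say \( k=(M')^nw \) with \( w\in\Z^d\setminus\{0\} \), so \( \lvert w\rvert\ge1 \). By Lemma~\ref{lem:orbit.growth}, \( \lvert k\rvert\ge C^{-1}\theta^n \). This bounds \( n \) by \( \log(C\lvert k\rvert)/\log\theta \). The set of admissible \( n \) is nonempty (it contains \( n=0 \)) and bounded, so the maximum exists.

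For item~\ref{item:unique}, existence goes as follows. Set \( a=\nu_A(k) \) and write \( k=(A')^a k_1 \). Then set \( b=\nu_B(k_1) \), \( k_1=(B')^b j \). By construction \( j\notin B'\Z^d \), and we must check \( j\notin A'\Z^d \). Suppose \( j=A'm \). Then \( k=(A')^{a+1}(B')^b m \) by commutativity, contradicting the maximality of \( a \).

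Uniqueness is the main obstacle. The key step is to show that \( a=\nu_A(k) \) for any representation. Given \( k=(A')^a(B')^b j \) with \( j \) bi-primitive, it suffices to show \( (B')^b j\notin A'\Z^d \).
\begin{itemize}
\item Induct on \( b \). Suppose \( B'y\in A'\Z^d \). Then \( B'y\in A'\Z^d\cap B'\Z^d=A'B'\Z^d \) by item~\ref{item:intersection}, so \( B'y=B'A'z \).
\item Since \( B' \) is injective, \( y=A'z\in A'\Z^d \). So \( B' \) reflects membership in \( A'\Z^d \), and \( j\notin A'\Z^d \) gives \( (B')^b j\notin A'\Z^d \).
\end{itemize}
Thus if \( (A')^{a+1} \) divided \( k \), cancelling \( (A')^a \) by injectivity would give \( (B')^b j\in A'\Z^d \), a contradiction. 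So \( a=\nu_A(k) \). Symmetrically \( b=\nu_B(k) \). Then \( j \) is determined by cancelling the injective matrices \( (A')^a(B')^b \).

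The delicate point is this cancellation argument, which rests on item~\ref{item:intersection} together with injectivity of integer matrices with nonzero determinant.
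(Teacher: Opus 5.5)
Your proof is correct. Items (ii) and the uniqueness half of (iii) follow the paper's argument essentially verbatim: the same growth bound from Lemma~\ref{lem:orbit.growth}, and the same use of (i) to show that $B^{\top}$ (resp.\ $A^{\top}$) reflects membership in $A^{\top}\Z^d$ (resp.\ $B^{\top}\Z^d$), which forces $a=\nu_A(k)$ and $b=\nu_B(k)$ in any representation. For existence in (iii) you build the decomposition directly, taking $a=\nu_A(k)$ and then $b=\nu_B(k_1)$; this is slightly cleaner than the paper's induction on $\nu_A(k)+\nu_B(k)$.

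The genuine difference is in (i).
\begin{itemize}
\item The paper sets $x=(A^{\top}B^{\top})^{-1}k\in\Q^d$. It uses the adjugates to see that $(\det A)x$ and $(\det B)x$ are both integral, and concludes $x\in\Z^d$ by B\'ezout.
\item You instead show that $A^{\top}$ induces an automorphism of the finite group $\Z^d/B^{\top}\Z^d$. The reason is that $\adj(A^{\top})$ descends to this quotient and composes with $A^{\top}$ to give multiplication by $\det A$, which is invertible there.
\end{itemize}
Both arguments rest on the same three ingredients: adjugate, commutativity, coprimality. Yours isolates the structural reason behind (i), namely that $A^{\top}$ acts invertibly modulo $B^{\top}\Z^d$. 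The paper's is a two-line computation in $\Q^d$ with no quotient groups.

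You do not actually need the index formula $[\Z^d:B^{\top}\Z^d]=\lvert\det B\rvert$. The inclusion $(\det B)\Z^d\subset B^{\top}\Z^d$ already shows that $\det B$ annihilates the quotient, which is enough to invert multiplication by $\det A$.

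The first bullet of your proof of (i), computing $(\det B)y$, is never used and can be deleted.
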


\begin{proof}
    ~\ref{item:intersection}  The inclusion \( \supseteq \) is clear.
    Conversely let \( k=A^{\top}u=B^{\top}w \) with \( u,w\in\Z^d \), and put \( x\coloneqq (A^\top B^\top)^{-1}k,\in\Q^d \).
    Using the adjugate identity \( (\det B)I = (\det B^\top)I = \adj(B^{\top})B^{\top}\) together with the relation \( (A^{\top})^{-1}k=u\in\Z^d \), we have
    \[
        \det B\,x=\adj(B^{\top})\,(A^{\top})^{-1}k=\adj(B^{\top})\,u\in\Z^d.
    \]
    Symmetrically, using \( (B^{\top})^{-1}k=w\in\Z^d \),
    \[
        \det A \,x=\adj(A^{\top})\,(B^{\top})^{-1}k=\adj(A^{\top})\,w\in\Z^d .
    \]
    Thus \( x\in(\det A)^{-1}\Z^d\cap(\det B)^{-1}\Z^d \).
    Since \( \gcd(\det A,\det B)=1 \), applying Bezout gives \( \alpha,\beta\in\Z \) with \( \alpha\det A+\beta\det B=1 \), so \( x=\alpha\det A x+\beta\det Bx\in\Z^d \).
    Hence \( k=A^{\top}B^{\top}x\in A^{\top}B^{\top}\Z^d \) as wanted.

    ~\ref{item:finite}  Suppose \( k=(A^{\top})^{a}k_a \) with \( k_a\in\Z^d\setminus\{0\} \) for arbitrarily large \( a \).
    By Lemma~\ref{lem:orbit.growth} (with any \( 1<\theta<\theta_A \)), \( \lvert k \rvert=\lvert (A^{\top})^{a}k_a \rvert\ge C^{-1}\theta^{a}\lvert k_a \rvert\ge  C^{-1}\theta^{a} \), because \( k_a\in\Z^d\setminus\{0\} \) forces \( \lvert k_a \rvert\ge1 \).
    As \( \theta>1 \) this is impossible for \( a \) large.
    Hence \( \nu_A(k)<\infty \); same for \( \nu_B \).

    ~\ref{item:unique}  First we show the existence of such a decomposition with bi-primitive \(j\).
    Let \( k\neq0 \), \( a\coloneqq \nu_A(k) \), \( b\coloneqq \nu_B(k) \), both finite by~\ref{item:finite}.
    We show by induction on \( a+b \) that \( k \) has a decomposition as claimed.
    If \( a=b=0 \) then \( k \) is bi-primitive and we are done.
    If \( a\ge1 \), write \( k=A^{\top}k' \); then \( \nu_A(k')=a-1 \) and \( \nu_B(k')\le\nu_B(k) \) (if \( k'=(B^{\top})^{c}k'' \) then \( k=(B^\top)^cA^\top k'' \), so \( c\le\nu_B(k) \)), and by induction \( k' \) decomposes, hence so does \( k \).
    The case \( b\ge1 \) is symmetric.

    Finally, for the uniqueness, suppose \( (A^{\top})^{a}(B^{\top})^{b}j=(A^{\top})^{a'}(B^{\top})^{b'}j' \) with \( j,j' \) bi-primitive.
    We first claim \( a=\nu_A\left((A^{\top})^{a}(B^{\top})^{b}j\right) \).
    Clearly \( \nu_A\ge a \).
    Suppose \( \nu_A\ge a+1 \), i.e. \( (A^{\top})^{a}(B^{\top})^{b}j=(A^{\top})^{a+1}\ell \) for some \( \ell\in\Z^d \); cancelling the injective \( (A^{\top})^{a} \) we get \( (B^{\top})^{b}j=A^{\top}\ell \).
    If \( b=0 \) this says \( j\in A^{\top}\Z^d \), contradicting bi-primitivity.
    If \( b\ge1 \), then \( (B^{\top})^{b}j\in A^{\top}\Z^d\cap B^{\top}\Z^d=A^{\top}B^{\top}\Z^d \) by~\ref{item:intersection}, so \( (B^{\top})^{b}j=A^{\top}B^{\top}p \) for some \( p\in\Z^d \); cancelling \( B^{\top} \) gives \( (B^{\top})^{b-1}j=A^{\top}p \), and iterating \( b \) times yields \( j\in A^{\top}\Z^d \), again a contradiction.
    Hence \( a=\nu_A \), and symmetrically \( b=\nu_B \); so \( a=a' \), \( b=b' \) and, cancelling the injective term \( (A^{\top})^{a}(B^{\top})^{b} \) gives \( j=j' \).
\end{proof}

We write
\[
    E_A\coloneqq E_{\varphi_A}=\ker\left(\D_A-\Id\right)\subset L^2_0(\Leb)\cong T_\Leb\Prob(\T^d)
\]
for the space of directions in which \(\Leb\) can be deformed while remaining invariant to first order.
In light of Proposition~\ref{prp:acim.derivative}\ref{item:acim.sharp}, this says exactly that \(\dw\big((\varphi_A)_\ast(\Leb+tv),\Leb+tv\big)=o(t)\).

Recall the dynamical coordinates \( \beta(v)=\{\beta_k(v)\}_{k\neq0} \) were chosen so that \eqref{eq:dynamical.equivariance} holds.
It follows from \eqref{eq:derivative.beta} that \( \D_A \) acts on them by \( \beta_m(\D_Av)=\beta_{A^{\top}m}(v) \).
In these coordinates Proposition~\ref{prp:derivative.functoriality} becomes explicit.
If \( A,B\in\mathrm M_d(\Z) \) are non-singular, then \( \varphi_A\circ\varphi_B=\varphi_{AB} \) and, for \( m\neq0 \),
\[
    \beta_m\left(\D_A\D_Bv\right)
    =\beta_{A^{\top}m}\left(\D_Bv\right)
    =\beta_{B^{\top}A^{\top}m}(v)
    =\beta_{(AB)^{\top}m}(v)
    =\beta_m\left(\D_{AB}v\right),
\]
which is the chain rule \( \D_{\varphi_A\circ\varphi_B}=\D_{\varphi_A}\D_{\varphi_B} \); in particular \( \D_A \) and \( \D_B \) commute whenever \( A \) and \( B \) do.
Likewise, the last identity of Proposition~\ref{prp:derivative.functoriality} reads here
\[
    E_A=\big\{v\in L^2_0(\Leb):\ (\varphi_A)_\ast\iota_\Leb(v)=\iota_\Leb(v)\big\},
\]
so that the elements of \( E_A\cap E_B \) are exactly the tangent directions at \( \Leb \) whose associated first-order variations are invariant under both pushforwards.

\begin{thm}[Joint fixed vectors at \( \Leb \)]
    \label{thm:joint.eigen}
    Let \( A,B\in \mathrm{M}_d(\Z) \) be commuting expanding matrices whose determinants are coprime.
    For a bi-primitive \( j\in\Z^d\setminus\{0\} \) let
    \[
        O_j\coloneqq \big\{(A^{\top})^{a}(B^{\top})^{b}j:\ a,b\ge0\big\}
    \]
    be its orbit.
    Then:
    \begin{enumerate}
        \item\label{item:char}
              \( v\in E_A\cap E_B \) if and only if, in the dynamical coordinates \eqref{eq:beta.coordinates}, the sequence \( \{\beta_k(v)\}_{k\neq0} = \beta(v) \) is constant on each orbit \( O_j \) and satisfies the reality condition \( \beta_{-k}(v)=\overline{\beta_k}(v) \);
        \item\label{item:conv}
              for every bi-primitive \( j \) and every choice of a constant value on \( O_j \), the resulting field lies in \( L^2_0(\Leb) \); 
        \item\label{item:infinite}
              \( E_A\cap E_B \) is infinite dimensional.
              Explicitly, for each bi-primitive \( j \) let
              \[
                  v_j^{(1)}(x)\coloneqq \sum_{k\in O_j}\frac{k}{\pi\lvert k \rvert^{2}}\,
                  \sin\!\left(2\pi\langle k,x\rangle\right),\qquad v_j^{(2)}(x)\coloneqq \sum_{k\in O_j}\frac{k}{\pi\lvert k \rvert^{2}}\, \cos\!\left(2\pi\langle k,x\rangle\right)
              \]
              Then, as \( j \) ranges over a set of representatives of the bi-primitive vectors modulo \( j\sim-j \), the fields \( v_j^{(1)},v_j^{(2)} \) form an infinite family of continuous, pairwise orthogonal, nonzero elements of \( E_A\cap E_B \).
    \end{enumerate}
\end{thm}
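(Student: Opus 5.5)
Item~\ref{item:char} is the composition rule \eqref{eq:derivative.beta}. By Proposition~\ref{prp:transfer.torus}, \( v\in E_A \) holds exactly when \( \beta_{A^\top m}(v)=\beta_m(v) \) for every \( m\neq0 \). Likewise \( v\in E_B \) holds exactly when \( \beta_{B^\top m}(v)=\beta_m(v) \). If both hold, iterating gives \( \beta_{(A^\top)^a(B^\top)^b j}=\beta_j \), so \( \beta \) is constant on each \( O_j \). Conversely, suppose \( \beta \) is constant on each orbit and take \( m \) with decomposition \( m=(A^\top)^a(B^\top)^b j \) from Lemma~\ref{lem:freeness}\ref{item:unique}. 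Then \( A^\top m=(A^\top)^{a+1}(B^\top)^b j \) lies in the same \( O_j \), because \( A \) and \( B \) commute, and similarly for \( B^\top m \). Hence both fixed-point equations hold. By Lemma~\ref{lem:freeness}\ref{item:unique} the orbits \( O_j \) partition \( \Z^d\setminus\{0\} \), so this characterisation covers every coordinate. The reality condition is simply the one recorded in Proposition~\ref{prp:fourier.model}. We also note that \( -j \) is bi-primitive whenever \( j \) is, and \( O_{-j}=-O_j \), so the reality condition pairs the orbits consistently.

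Item~\ref{item:conv} reduces, by \eqref{eq:beta.norm}, to the convergence of \( \sum_{k\in O_j}\lvert k\rvert^{-2} \). The key input is Lemma~\ref{lem:orbit.growth}, applied to \( A \) and to \( B \) with rates \( \theta_A',\theta_B'>1 \). Since \( A^\top \) and \( B^\top \) commute,
\[
    \lvert (A^\top)^a(B^\top)^b j\rvert\ge C_A^{-1}\theta_A'^{\,a}\lvert (B^\top)^b j\rvert\ge C_A^{-1}C_B^{-1}\theta_A'^{\,a}\theta_B'^{\,b}\lvert j\rvert .
\]
By the uniqueness in Lemma~\ref{lem:freeness}\ref{item:unique}, each \( k\in O_j \) corresponds to a single pair \( (a,b) \). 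The sum is therefore dominated by the product of two geometric series, and so it is finite. Finite \( \ell^2 \)-weighted norm together with Proposition~\ref{prp:fourier.model} places the field in \( L^2_0(\Leb) \). The same bound gives \( \sum_{k\in O_j}\lvert k\rvert^{-1}<\infty \), which we use in item~\ref{item:infinite}.

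For item~\ref{item:infinite}, we compute the \( \beta \)-coordinates of the two explicit series. Writing \( \sin \) and \( \cos \) as exponentials, the Fourier coefficient of \( v_j^{(1)} \) at \( k\in O_j \) is \( \frac{k}{2\pi i\lvert k\rvert^2} \), so \( \beta_k=1 \) on \( O_j \). On \( -O_j \) we get \( \beta=\overline{1}=1 \), and every other coordinate vanishes. For \( v_j^{(2)} \) we obtain \( \beta_k=i \) on \( O_j \) and \( -i \) on \( -O_j \). Both sequences are constant on orbits and satisfy the reality condition, so item~\ref{item:char} puts these fields in \( E_A\cap E_B \). The series converge absolutely and uniformly, since each term is bounded by \( (\pi\lvert k\rvert)^{-1} \) and \( \sum_{k\in O_j}\lvert k\rvert^{-1}<\infty \) by the estimate of the previous paragraph. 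Each field is therefore continuous and nonzero.

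Orthogonality follows from Plancherel. Fields attached to different representatives \( j \) have Fourier supports in the disjoint sets \( O_j\cup -O_j \); disjointness comes from the orbit partition together with the choice of representatives modulo \( j\sim -j \). For a fixed \( j \), the real inner product of \( v_j^{(1)} \) and \( v_j^{(2)} \) is \( \sum_k \operatorname{Re}\langle\hat v^{(1)}(k),\overline{\hat v^{(2)}(k)}\rangle \). Its terms are proportional to \( \operatorname{Re}(\overline{i})=0 \), so the inner product vanishes. The one step needing care is that \( O_j\cap(-O_j)=\emptyset \); this follows from uniqueness in Lemma~\ref{lem:freeness}, because \( -j \) and \( j \) cannot both be the bi-primitive factor of the same vector unless \( j=-j \), which is impossible. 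Finally, there are infinitely many bi-primitive classes, because \( A^\top\Z^d\cup B^\top\Z^d \) has density at most \( 1/\lvert\det A\rvert+1/\lvert\det B\rvert \). Since both determinants have modulus at least \( 2 \) and they are coprime, one of them has modulus at least \( 3 \), and this density bound is then strictly less than \( 1 \). I expect this counting step to be the main obstacle, though it is a mild one.
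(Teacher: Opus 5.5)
Your proof is correct. For items (i), (ii) and for the construction and orthogonality in (iii), it follows the paper's argument essentially line by line. Your check that \( v_j^{(1)}\perp v_j^{(2)} \), via the purely imaginary products \( \hat v^{(1)}(k)\cdot\overline{\hat v^{(2)}(k)} \), is in fact more explicit than the paper's appeal to \( \sin \) and \( \cos \) of the same frequency being orthogonal.

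The one genuinely different step is the infinitude of bi-primitive classes.

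\begin{itemize}
\item The paper argues by contradiction, reusing item (ii). If there were finitely many bi-primitive vectors, \( \Z^d\setminus\{0\} \) would be a finite union of orbits \( O_j \), each with \( \sum_{k\in O_j}\lvert k\rvert^{-2}<\infty \). But \( \sum_{k\neq0}\lvert k\rvert^{-2}=\infty \) for \( d\ge2 \). The case \( d=1 \) is handled separately, via the integers prime to \( \det A\det B \).
\item Your index count is uniform in \( d \). Integer expanding matrices have \( \lvert\det\rvert\ge2 \), and coprimality forces one determinant to have modulus at least \( 3 \), so the union has density at most \( 5/6 \). This needs no separate one-dimensional case and does not use the orbit-growth estimate. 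The price is invoking lattice-point asymptotics, namely that a sublattice of index \( q \) has density \( 1/q \).
\end{itemize}

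You can even dispense with densities altogether. Since \( A \) and \( B \) commute, both \( A^\top\Z^d \) and \( B^\top\Z^d \) contain the finite-index sublattice \( A^\top B^\top\Z^d \). Hence the complement of their union is a union of cosets of that sublattice. The complement is nonempty, because both subgroups are proper and a group is never the union of two proper subgroups. So it contains a whole coset and is infinite. The step you flagged as the main obstacle is therefore not one.
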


\begin{proof}
    \noindent Item~\ref{item:char}. \\
    Our choice of dynamical coordinates \(\beta\) was such that Equation \eqref{eq:dynamical.equivariance} holds, so \( \D_Av=v \) reads \( \beta_{A^{\top}m}=\beta_m \) for all \( m\neq0 \).
    Similarly, \( \beta_{B^{\top}m}=\beta_m \) for all \( m\neq0 \), from \( \D_Bv=v \).
    Hence \( v\in E_A\cap E_B \) if and only if its coordinates \( \beta_v \) are invariant under the two maps \( m\mapsto A^{\top}m \) and \( m\mapsto B^{\top}m \), i.e.\ constant on the equivalence classes of the relation they generate on \( \Z^d\setminus\{0\} \).
    By Lemma~\ref{lem:freeness}\,\ref{item:unique}, those classes are exactly the orbits \( O_j \), \( j \) bi-primitive.
    The reality condition is part of the description of real fields in Proposition~\ref{prp:fourier.model}.

    \medskip
    \noindent Item~\ref{item:conv}.\\
    Fix \(1<\kappa_A<\min\lvert\spec(A)\rvert\) and \(1<\kappa_B<\min\lvert\spec(B)\rvert\), and let \(C_0\coloneqq\max\{C(A,\kappa_A),C(B,\kappa_B)\}\ge1\), so that both matrices satisfy the estimate of Lemma~\ref{lem:orbit.growth} with the same constant.
    Since the matrices commute, applying that lemma twice gives, for \(k=(A^\top)^a(B^\top)^bj\)
    \[
        \lvert k \rvert=\big\lvert (A^{\top})^{a}\left((B^{\top})^{b}j\right)\big \rvert  \ \ge\ C_0^{-1}\kappa_A^{a}\,\big\lvert (B^{\top})^{b}j\big \rvert \ \ge\ C_0^{-2}\kappa_A^{a}\kappa_B^{b}\,\lvert j \rvert .
    \]
    As the map \( (a,b)\mapsto(A^{\top})^{a}(B^{\top})^{b}j \) is injective on \( \N^2 \) by Lemma~\ref{lem:freeness}\,\ref{item:unique}, we get
    \[
        \sum_{k\in O_j}\frac{1}{\lvert k \rvert^{2}}
        \le\sum_{a,b\ge0}\frac{C_0^{4}}{\kappa_A^{2a}\kappa_B^{2b}\lvert j \rvert^{2}}
        =\left(\frac{C}{\lvert j \rvert}\right)^2\cdot\frac{1}{1-\kappa_A^{-2}}\cdot
        \frac{1}{1-\kappa_B^{-2}}<\infty
    \]
    where \(C \coloneqq C_0^2\).
    By \eqref{eq:beta.norm}, a sequence \( \beta_k \) that is bounded and supported on \( O_j\cup(-O_j) \) therefore defines an element \(v \in  L^2_0(\Leb) \) by the assignment \(\beta(v) = \{\beta_k\}_k\).

    \medskip
    \noindent Item~\ref{item:infinite}.\\
    First, we note that the fields \( v_j^{(1)},v_j^{(2)} \) are obtained from \eqref{eq:beta.coordinates} by taking, respectively, \( \beta_k=1 \) and \( \beta_k=i \) for \( k\in O_j \), extending by the reality condition to \( -O_j \), and setting \( \beta_k=0 \) elsewhere.
    This is well defined.
    Indeed, \( -j \) is bi-primitive whenever \( j \) is, because \( A^{\top}\Z^d \) and \(B^{\top}\Z^d \) are subgroups, so \( j\notin B^{\top}\Z^d\cup A^{\top}\Z^d \iff -j\notin B^{\top}\Z^d\cup A^{\top}\Z^d \); and \( O_{-j}=-O_j \).
    Moreover, \( O_j\cap(-O_j)=\emptyset \): an element of \( O_j \) has bi-primitive part \( j \) and an element of \( -O_j=O_{-j} \) has bi-primitive part \( -j\ne j \), so they cannot coincide by the uniqueness in Lemma~\ref{lem:freeness}\,\ref{item:unique}.
    Hence prescribing \( \beta \) on \( O_j \) and extending by \( \beta_{-k}=\overline{\beta_k} \) to \( -O_j \) is consistent, and produces a sequence constant on each of the two orbits, as~\ref{item:char} requires.
    Pairing the terms \( k \) and \( -k \) in the Fourier series and using \eqref{eq:beta.coordinates}, we obtain
    \[
        \frac{k\beta_k}{2\pi i\lvert k \rvert^{2}}e^{2\pi i\langle k,x\rangle}
        +\frac{-k\overline{\beta_k}}{2\pi i\lvert k \rvert^{2}}e^{-2\pi i\langle k,x\rangle}
        =\begin{cases}
            \dfrac{k}{\pi\lvert k \rvert^{2}}\sin\left(2\pi\langle k,x\rangle\right), & \beta_k=1, \\[2mm]
            \dfrac{k}{\pi\lvert k \rvert^{2}}\cos\left(2\pi\langle k,x\rangle\right), & \beta_k=i,
        \end{cases}
    \]
    which gives the displayed real expressions for \( v_j^{(1)} \) and \( v_j^{(2)} \).
    The same geometric bound on \(\lvert k \rvert\) gives \(\sum_{k\in O_j}\lvert k \rvert^{-1}<\infty\), so the series converges absolutely and uniformly.

    We claim there are infinitely many bi-primitive vectors \(j\).
    Indeed, if there were only finitely many, then by Lemma~\ref{lem:freeness}\,\ref{item:unique} the set \( \Z^d\setminus\{0\} \) would be
    a finite union of orbits \( O_j \); but each orbit satisfies \( \sum_{k\in O_j}\lvert k \rvert^{-2}<\infty \) by~\ref{item:conv}, whereas \( \sum_{k\neq0}\lvert k \rvert^{-2}=\infty \) for \( d\ge2 \).
    For \( d=1 \) one checks directly that there are infinitely many bi-primitive integers, namely those
    prime to \( \det A\det B \) \cite[\S7]{kloeckner_optimal_2013_preprint}.

    Finally, the fields \( v_j^{(1)},v_j^{(2)} \) associated with distinct bi-primitive \( j \) (modulo \( \pm \)) have Fourier supports contained in the pairwise disjoint sets \( O_j\cup(-O_j) \), by Lemma~\ref{lem:freeness}\,\ref{item:unique}; hence they are pairwise orthogonal in \( L^2(\Leb) \) by Plancherel, and \( v_j^{(1)}\perp v_j^{(2)} \) for the same \( j \) since \( \sin \) and \( \cos \) of the same frequency are orthogonal.
    In particular they are linearly independent.
    They are nonzero, continuous, lie in \( L^2_0(\Leb) \) by~\ref{item:conv}, and lie in \( E_A\cap E_B \) by~\ref{item:char}.
    We conclude \( E_A\cap E_B \) has infinite dimension, as wanted.
\end{proof}

We can now construct the families of \emph{simultaneously} nearly invariant measures.
We begin with the following lemma, an analogue of \cite[Proposition 1.5]{kloeckner_optimal_2013}.

\begin{lem}
    \label{lem:atomless}
    Let \( \mu\in \Prob(\T^d) \) be atomless and \( v\in L^2(\mu;\R^d) \).
    Then, for all but countably many \( t\in\R \), the measure \( \mu+tv=(\Id+tv)_\ast \mu \) has no atom.
\end{lem}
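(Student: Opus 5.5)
The plan is a disjointness argument: the possible atoms of the measures \( \mu+tv \), as \( t \) varies, are pairwise incompatible, and only countably many of them can carry positive mass. For \( y\in\T^d \) and \( t\in\R \), the mass of \( \{y\} \) under \( \mu+tv \) is
\[
    \mu\big(S_t(y)\big),\qquad S_t(y)\coloneqq\{x\in\T^d:\ x+tv(x)=y\}.
\]
Fix a Borel representative of \( v \), so that every \( S_t(y) \) is a Borel set.

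The first step is to show that, for \( t\neq s \), the sets \( S_t(y) \) and \( S_s(y') \) meet in at most one point. Suppose \( x \) lies in both. Then \( x+tv(x)=y \) and \( x+sv(x)=y' \) in \( \T^d \). Subtracting, \( (t-s)v(x)=y-y' \), and consequently
\[
    x=y-t\,v(x).
\]
A single such equation does not determine \( x \) on the torus, because \( v(x) \) is only fixed modulo \( (t-s)^{-1}\Z^d \). To avoid this ambiguity I will work with the trivial decomposition \( \mu=\sum_{n}\mu|_{\{v\in Q_n\}} \), where the \( Q_n \) are the integer translates of a fundamental cube, so that on each piece \( v \) takes values in a fixed bounded set. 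It suffices to prove the claim for each piece, since a countable union of countable sets of exceptional \( t \) is countable.

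On one piece, with \( \lvert t-s\rvert \) small relative to the size of \( Q_n \), the vector \( v(x) \) is determined by \( y-y' \). Hence \( x=y-tv(x) \) is determined, and the intersection is at most one point. For \( \lvert t-s\rvert \) not small, \( v(x) \) is still determined up to finitely many choices, so the intersection is finite. In either case it is \( \mu \)-null, because \( \mu \) is atomless.

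The second step is the counting argument. Let \( T \) be the set of \( t \) such that \( \mu+tv \) has an atom, and for each \( t\in T \) choose an atom \( y_t \). The sets \( S_t(y_t) \), \( t\in T \), have positive \( \mu \)-measure and pairwise \( \mu \)-null intersections. For each \( m \), the set \( T_m \) of those \( t \) with \( \mu(S_t(y_t))>1/m \) has at most \( m \) elements. Indeed, for finitely many such sets, inclusion–exclusion up to null sets gives \( \sum\mu(S_t(y_t))\le1 \). Therefore \( T=\bigcup_m T_m \) is countable.

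The main obstacle is the first step on the torus, where \( v(x) \) is only recovered modulo a lattice. The splitting of \( \mu \) by the value of \( v \) is what I would try first. An alternative that avoids the torus ambiguity altogether is to lift \( v \) to \( \R^d \), take \( x \) in a fundamental domain, and work with the lifted maps \( \tilde x\mapsto\tilde x+tv(x) \). Both equations then hold modulo \( \Z^d \), so \( (t-s)v(x)\in y-y'+\Z^d \). For fixed \( t\neq s \) this leaves countably many candidate values of \( v(x) \), each of which determines \( x \) through \( x=y-tv(x) \). The intersection is therefore countable, and it is \( \mu \)-null since \( \mu \) is atomless. This lifting version is cleaner, needs no piece decomposition at all, and is the one I would write down.
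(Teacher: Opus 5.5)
Your proposal is correct and is essentially the paper's proof. The paper pushes \(\mu\) forward to \(\Gamma=(\Id,v)_\ast\mu\) on \(\T^d\times\R^d\) and works with the sets \(L_{t,s}=\{(x,y):x+ty=s\}\); these pull back under \(x\mapsto(x,v(x))\) to your \(S_t(s)\). It then shows that pairwise intersections for \(t\neq t'\) are countable, via \((t-t')y\in s-s'+\Z^d\), and finishes with the same counting argument.

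Your opening claim that two such sets meet in at most one point is false in general. The piece decomposition is therefore unnecessary, and the lifted version you settle on is the right one to write down.
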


\begin{proof}
    Let \( \Gamma\coloneqq (\Id,v)_\ast \mu \), a probability measure on \( \T^d\times\R^d \); it is atomless, since \( \mu \) is and \( \Gamma \) is the image of \( \mu \) under an injective map (its first marginal is \( \mu \), so a \( \Gamma \)-atom at \( (x_0,y_0) \) would force a \( \mu \)-atom at \( x_0 \)).
    For \( t\in\R \) and \( s\in\T^d \) set
    \[
        L_{t,s}\coloneqq \big\{(x,y)\in\T^d\times\R^d:\ x+ty=s\ \text{in}\ \T^d\big\}.
    \]
    Then \( \mu+tv \) has an atom at \( s \) if and only if \( \Gamma(L_{t,s})>0 \).

    Fix \( t\neq t' \) and \( s,s' \).
    We claim \( \Gamma(L_{t,s}\cap L_{t',s'})=0 \).
    If \( (x,y)\in L_{t,s}\cap L_{t',s'} \) then, subtracting the two defining relations in \( \T^d \), \( (t-t')y\in s-s'+\Z^d \), i.e.\ \( y \) lies in the countable set \( \frac{1}{t-t'}\left(s-s'+\Z^d\right) \) (a translate of a lattice, as \( t\ne t' \)); and for each such \( y \), \( x=s-ty \) is determined in \( \T^d \).
    Hence \( L_{t,s}\cap L_{t',s'} \) is countable, and \( \Gamma \) being atomless it is \( \Gamma \)-null.

    Now suppose, for contradiction, that for uncountably many \( t \) the measure \( \mu+tv \) has an atom.
    Then there are \( n\in\N \) and an uncountable set \( \mathcal T\subset\R \) such that for each \( t\in\mathcal T \) there is \( s(t) \) with \( \Gamma(L_{t,s(t)})\ge1/n \).
    Pick \( n+1 \) distinct elements \( t_1,\dots,t_{n+1}\in\mathcal T \): the sets \( L_{t_i,s(t_i)} \) pairwise intersect in \( \Gamma \)-null sets by the claim, so
    \[
        1\ge\Gamma\left(\bigcup_{i=1}^{n+1}L_{t_i,s(t_i)}\right)=\sum_{i=1}^{n+1}\Gamma\left(L_{t_i,s(t_i)}\right)\ \ge\ \frac{n+1}{n}>1,
    \]
    a contradiction.
    Hence for each \( n \) only finitely many \( t \) admit an atom of mass \( \ge1/n \), and the set of \( t \) for which \( \mu+tv \) has some atom is a countable union of finite sets.
\end{proof}

The second alternative of Proposition~\ref{prp:ball.embedding} applies on \( \T^d \), with \( \Lambda=1 \) and \( \eta_0=+\infty \).
For linearly independent \( v_1,\dots,v_n\in L^2_0(\Leb) \) the map \( F(a)=\Leb+\sum_ia_iv_i \) is therefore \( R_2 \)-Lipschitz on all of \( \R^n \), metrically differentiable along rays, and a topological embedding on some \( \mathcal B^n_\eta \).

\begin{thm}[Nearly invariant measures on \( \T^d \); cf.\ {\cite[Theorem 1.4]{kloeckner_optimal_2013}}]
    \label{thm:families}
    Let \( A,B\in \mathrm{M}_d(\Z) \) be commuting expanding matrices with coprime determinants.
    Then for every \( n\in\N \) there are \( \eta>0 \) and continuous linearly independent fields \(\{v_1, \dots, v_n\} \subset L_0^2(\Leb)\) such that the mapping
    \begin{align*}
        F:\mathcal{B}^n_\eta & \lto  \Prob(\T^d)  \\
        a = (a_1,\dots,a_n)    & \longmapsto \Leb+\sum_ia_iv_i
    \end{align*}
    is Lipschitz and a topological embedding of the open ball \( \mathcal{B}^n_\eta \subset \R^n \), satisfying
    \begin{enumerate}
        \item\label{item:families.atomless} \( F(a) \) is atomless for almost every \( a\in \mathcal{B}_\eta^n \); more precisely, along every ray through \( 0 \) all but countably many \( F(a) \) are atomless;
        \item\label{item:families.invariant} \( F \) is nearly invariant under both endomorphisms:
        \begin{align*}
            \dw\left((\varphi_A)_\ast F(a),F(a)\right) &= o(\lvert a \rvert) \qquad (a\to0), \\
            \dw\left((\varphi_B)_\ast F(a),F(a)\right) &= o(\lvert a \rvert) \qquad (a\to0).
        \end{align*}
        Consequently, for every \( \epsilon>0 \) and every \( K\in\N \) there is \( r>0 \) such that, for all \( k\le K \) and all \( a\in \mathcal{B}_r^n \),
        \begin{align*}
            \dw\left(( \varphi_A )^k_\ast F(a),F(a)\right)&\le\epsilon\lvert a \rvert, \\
            \dw\left(( \varphi_B )^k_\ast F(a),F(a)\right)&\le\epsilon\lvert a \rvert.
        \end{align*}
    \end{enumerate}
\end{thm}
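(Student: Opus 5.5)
We take \( v_1,\dots,v_n \) to be \( n \) of the fields \( v_j^{(1)} \) supplied by Theorem~\ref{thm:joint.eigen}\ref{item:infinite}, attached to \( n \) distinct bi-primitive representatives \( j \). They are continuous, hence bounded, pairwise orthogonal, nonzero, and lie in \( E_A\cap E_B\subset L^2_0(\Leb) \). Since \( \T^d \) is flat, Proposition~\ref{prp:ball.embedding}\ref{item:emb.lip} and \ref{item:emb.embedding} apply with \( \Lambda=1 \) and \( \eta_0=+\infty \). So \( F \) is \( R_2 \)-Lipschitz on \( \R^n \), and it is a topological embedding on some \( \mathcal B^n_\eta \); this \( \eta \) is the one in the statement.

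\emph{Item~\ref{item:families.atomless}.} Fix a unit direction \( w \) and apply Lemma~\ref{lem:atomless} with \( \mu=\Leb \) and \( v=v^w \). Then \( F(tw)=\Leb+tv^w \) is atomless for all but countably many \( t \). To obtain "almost every \( a \)", integrate in polar coordinates: on each ray the exceptional set is Lebesgue-null. Measurability of the exceptional set holds because \( a\mapsto\max_s F(a)(\{s\}) \) is upper semicontinuous for the weak\( ^\ast \) topology, and \( F \) is continuous.

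\emph{Item~\ref{item:families.invariant}.} Write \( a=t\omega \) with \( t=\lvert a\rvert \) and \( \omega \) in the unit sphere, so that \( F(a)=\Leb+tv^\omega \) and \( v^\omega \) ranges over a bounded subset of the finite-dimensional space \( V=\operatorname{span}\{v_i\} \). By Proposition~\ref{prp:derivative.torus}, with its uniformity clause on bounded subsets of \( V \),
\[
    \dw\big((\varphi_A)_\ast(\Leb+tv^\omega),\ \Leb+t\D_Av^\omega\big)=o(t)
\]
uniformly in \( \omega \). Since \( v^\omega\in E_A \), we have \( \D_Av^\omega=v^\omega \), so the right-hand measure is exactly \( F(a) \). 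This gives \( o(\lvert a\rvert) \) uniformly over directions, which is the first displayed estimate; the one for \( B \) is identical.

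\emph{Iterates.} For \( \varphi_A^k=\varphi_{A^k} \), we argue by induction using the triangle inequality and the Lipschitz property of the pushforward:
\[
    \dw\big((\varphi_A)^k_\ast F(a),F(a)\big)\le \sum_{i<k}\Lip(\varphi_A)^{i}\,\dw\big((\varphi_A)_\ast F(a),F(a)\big)\le \Big(\sum_{i<K}\lVert A\rVert^{i}\Big)\,o(\lvert a\rvert).
\]
This uses \( \dw(\phi_\ast\mu,\phi_\ast\nu)\le\Lip(\phi)\dw(\mu,\nu) \) from the proof of Proposition~\ref{prp:pushforward.rate}. The sum is bounded because \( K \) is fixed, so choosing \( r \) small makes the right-hand side at most \( \epsilon\lvert a\rvert \). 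The same argument handles \( B \).

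The only delicate point is the uniformity in the direction \( \omega \), which turns Gâteaux differentiability along rays into the statement \( o(\lvert a\rvert) \) as \( a\to0 \). It is settled by the uniformity clause of Proposition~\ref{prp:derivative.torus}, because all the \( v^\omega \) lie in the bounded set \( \{v\in V:\lVert v\rVert\le R_2\} \).
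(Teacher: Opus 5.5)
Your proposal is correct and follows the paper's proof essentially step by step. Both arguments choose \(n\) of the fields \(v_j^{(1)}\) from Theorem~\ref{thm:joint.eigen}, invoke the flat alternative of Proposition~\ref{prp:ball.embedding}, apply Lemma~\ref{lem:atomless} along rays with polar coordinates, combine the uniformity clause of Proposition~\ref{prp:derivative.torus} with \(\D_A v^\omega = v^\omega\), and bound the iterates by telescoping with the Lipschitz constant. Your upper-semicontinuity remark settles the measurability of the exceptional set, which the paper leaves implicit, and your parametrisation \(F(t\omega)=\Leb+tv^\omega\) avoids the stray factor of \(\eta\) in the paper's write-up.
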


Whether \(F\) is bi-Lipschitz we do not know; see Section~\ref{sec:conclusion}.

\begin{proof}
    Using Theorem~\ref{thm:joint.eigen}\,\ref{item:infinite}, we choose continuous, linearly independent \( v_1,\dots,v_n\in E_A\cap E_B \) (e.g.\ \( n \) of the fields \( v_j^{(1)} \)).
    These fields lie in \(L^2_0(\Leb)\) and \(\T^d\) is flat, so Proposition~\ref{prp:ball.embedding} applies with \(\eta_0=+\infty\) and gives \(\eta>0\) such that \(F|_{\mathcal B^n_\eta}\) is a topological embedding, with \(F(0)=\Leb\).

    \medskip
    \noindent Item~\ref{item:families.atomless}.\\
    Fix a direction \( e\in S^{n-1} \) and apply Lemma~\ref{lem:atomless} with the atomless measure \( \Leb \) and the field \( v=\eta\sum_ie_iv_i \): for all but countably many \( s\in\R \), the measure \( \Leb+sv=F(se) \) is atomless.
    Fubini in polar coordinates then gives that \( F(a) \) is atomless for Lebesgue-a.e.\ \( a\in \mathcal{B}^n_\eta \), as we wanted.

    \medskip
    \noindent Item~\ref{item:families.invariant}. \\
    Let \(V\coloneqq \operatorname{span}\{v_1,\dots,v_n\}\subset L^2_0(\Leb)\) and let \(R\coloneqq \eta\max_i\lVert v_i \rVert_{L^2} \cdot n^{1/2}\), so that \( \eta\sum_ia_iv_i\in V \) has norm \( \le R \) for \( a\in \mathcal{B}^n_\eta \).
    Write \( a=\lvert a \rvert e \) with \( e\in S^{n-1} \) and \( v_e\coloneqq \eta\sum_ie_iv_i\in V \), so \( F(a)=\Leb+\lvert a \rvert v_e \) and \( \lVert v_e \rVert_{L^2}\le R \).
    Proposition~\ref{prp:derivative.torus}, applied with the finite-dimensional subspace \( V \) and the bound \( R \), gives
    \[
        \dw\left(( \varphi_A )_\ast \left(\Leb+\lvert a \rvert v_e\right),\ \Leb+\lvert a \rvert\,\D_A v_e\right)= o(\lvert a \rvert)\qquad\text{uniformly in } e\in S^{n-1} .
    \]
    But \( v_e\in E_A \), i.e.\ \( \D_Av_e=v_e \), because \( E_A \) is a linear subspace containing each \( v_i \).
    Hence \( \dw\left(( \varphi_A )_\ast F(a),\ \Leb+\lvert a \rvert v_e\right)=\dw\left(( \varphi_A )_\ast F(a),F(a)\right) =o(\lvert a \rvert) \), uniformly in the direction, which is exactly the assertion.
    The same argument with \( B \) in place of \( A \) gives the second estimate\footnote{
        It is here, and only here, that the uniformity in Proposition~\ref{prp:derivative.torus} is used: without it, the \( o(\lvert a \rvert) \) would hold along each fixed ray but not necessarily uniformly, and~\ref{item:families.invariant} could fail.
    }.

    Finally, let \( L_A\coloneqq \Lip( \varphi_A )=\lVert A \rVert_{\mathrm{op}} \).
    Then the map \( ( \varphi_A )_\ast  \) is \( L_A \)-Lipschitz on \( (\Prob(\T^d),\dw) \).
    Indeed, if \( \Pi \) is a plan between \( \mu,\nu \) then \( ( \varphi_A \times  \varphi_A )_\ast \Pi \) is a plan between the images, of cost at most \( L_A^2 \) times that of \( \Pi \).
    We may assume \( L_A>1 \).
    Given \( \epsilon>0 \) and \( K \), use~\ref{item:families.invariant} to pick \( r>0 \) with
    \[
        \lvert a \rvert<r\ \lto \
        \dw\left(( \varphi_A )_\ast F(a),F(a)\right)\le\frac{L_A-1}{L_A^{K}-1}\,\epsilon\,\lvert a \rvert .
    \]
    Then, for \( k\le K \) and \( \lvert a \rvert<r \), by the triangle inequality along the orbit and the Lipschitz bound, we get
    \[
        \begin{split}
            \dw\left(( \varphi_A )^k_\ast F(a),F(a)\right) & \le\sum_{\ell=0}^{k-1}\dw\left(( \varphi_A )^{\ell+1}_\ast F(a),( \varphi_A )^{\ell}_\ast F(a)\right) \\
                                                        & \le\sum_{\ell=0}^{k-1}L_A^{\ell}\,\dw\left(( \varphi_A )_\ast F(a),F(a)\right),
        \end{split}
    \]
    and since \(\sum_{\ell=0}^{k-1}L_A^{\ell}=\frac{L_A^{k}-1}{L_A-1} \le\frac{L_A^{K}-1}{L_A-1}\), the right-hand side is \( \le\epsilon\lvert a \rvert \).
    Same arguments hold for \( B \); taking the smaller of the two radii gives simultaneous statement.
\end{proof}

\begin{proof}[Proof of Theorem~\ref{thm:main.application}]
    Theorems~\ref{thm:joint.eigen} and~\ref{thm:families} give all the assertions but the last, after rescaling \(\mathcal B^n_\eta\) to the unit ball by \(a\mapsto\eta a\).
    
    We are left with the last assertion. 
    Were the set of \( \varphi_A \)-ergodic components of \( F(a) \) with zero entropy of measure zero, then Host's generalization of Furstenberg's Conjecture (Theorem~\ref{thm:host.rigidity}), applied with \( B \) in the role of the matrix satisfying \ref{hyp:host.irreducible} and \( A \) in that of the expanding one, would force \( F(a)=\Leb \).
    But \( F \) is injective by Theorem~\ref{thm:families}, so \( a=0 \), a contradiction.
\end{proof}

\begin{cor}[cf.\ {\cite[Corollary 1.8]{kloeckner_optimal_2013_preprint}}]
    \label{cor:weak}
    Let \( A,B\in \mathrm{M}_d(\Z) \) be commuting expanding matrices whose determinants are coprime.
    Then there exists a weakly continuous path \( (\mu_t)_{t\in(-\epsilon,\epsilon)} \) of probability measures on \( \T^d \) with \( \mu_0=\Leb \), with \( \mu_t \) atomless for almost every \( t \), such that
    \[
        \frac{d}{dt}\Big|_{t=0}\int_{\T^d}\psi_0\,d\mu_t\neq0\qquad\text{for some }\psi_0\in C^\infty(\T^d),
    \]
    and such that, for every \( \psi\in C^\infty(\T^d) \),
    \[
        \frac{d}{dt}\Big|_{t=0}\int_{\T^d}\psi\,d\mu_t
        =\frac{d}{dt}\Big|_{t=0}\int_{\T^d}\psi\,d\left(( \varphi_B )_\ast \mu_t\right)
        =\frac{d}{dt}\Big|_{t=0}\int_{\T^d}\psi\,d\left(( \varphi_A )_\ast \mu_t\right).
    \]
    Equivalently
    \( \frac{d}{dt}\big\lvert _{0}\int\psi\,d\mu_t=\frac{d}{dt}\big \rvert_{0}\int\psi\circ  \varphi_B \,d\mu_t =\frac{d}{dt}\big|_{0}\int\psi\circ  \varphi_A \,d\mu_t.\)
\end{cor}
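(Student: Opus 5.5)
The plan is to take a single nonzero joint fixed direction and use the straight exponential curve through it. By Theorem~\ref{thm:joint.eigen}\ref{item:infinite}, choose a bi-primitive \( j \) and put \( v\coloneqq v_j^{(1)} \). This is a continuous, nonzero element of \( E_A\cap E_B\subset L^2_0(\Leb) \). Set \( \mu_t\coloneqq\Leb+tv=(\Id+tv)_\ast\Leb \); this is the one-dimensional case of the map \( F \) of Theorem~\ref{thm:families}.

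\textbf{Basic properties of the path.} Weak continuity follows from \eqref{eq:trivial.coupling}, which gives \( \dw(\mu_s,\mu_t)\le\lvert s-t\rvert\lVert v\rVert_{L^2} \), since \( \dw \) metrises the weak\( ^\ast \) topology. Lemma~\ref{lem:atomless}, applied to the atomless measure \( \Leb \), shows that \( \mu_t \) is atomless for all but countably many \( t \), and in particular for almost every \( t \). Negative \( t \) are allowed there, so the interval \( (-\epsilon,\epsilon) \) causes no difficulty.

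\textbf{The derivative of \( \int\psi\,d\mu_t \).} Proposition~\ref{prp:ball.embedding}\ref{item:emb.diff}, with \( n=1 \), gives for every \( \psi\in C^\infty(\T^d) \) the two-sided derivative
\[
    \frac{d}{dt}\Big|_0\int\psi\,d\mu_t=\int\langle\nabla\psi,v\rangle\,d\Leb.
\]
This is an honest derivative in \( t \), not only a one-sided one, because the Taylor bound there is two-sided. The expression is nonzero for some \( \psi_0 \): \( v\neq0 \) lies in \( L^2_0(\Leb) \), the closure of smooth gradients, so \( v \) is not orthogonal to every \( \nabla\psi \). Concretely, one may take \( \psi_0(x)=\cos(2\pi\langle j,x\rangle) \), which picks out the Fourier mode \( j \) where \( \beta_j=1 \).

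\textbf{The derivatives under \( \varphi_A \) and \( \varphi_B \).} Write \( \int\psi\,d(\varphi_A)_\ast\mu_t=\int\psi\circ\varphi_A\,d\mu_t \). Since \( \psi\circ\varphi_A \) is smooth on \( \T^d \), the same two-sided formula applies and gives
\[
    \frac{d}{dt}\Big|_0\int\psi\circ\varphi_A\,d\mu_t=\int\langle\nabla(\psi\circ\varphi_A),v\rangle\,d\Leb=\langle\Koop_0\nabla\psi,v\rangle=\langle\nabla\psi,\Transf_Av\rangle.
\]
Because \( \nabla\psi\in L^2_0(\Leb) \), the last pairing equals \( \langle\nabla\psi,P_\Leb\Transf_Av\rangle=\langle\nabla\psi,\D_Av\rangle=\langle\nabla\psi,v\rangle \), using \( v\in E_A \). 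The argument for \( B \) is identical. Alternatively, the identity \( (\varphi_A)_\ast\iota_\Leb(v)=\iota_\Leb(v) \) from Proposition~\ref{prp:derivative.functoriality} gives the same conclusion directly.

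The only delicate point is two-sidedness. Theorem~\ref{thm:derivative.acim} and Lemma~\ref{lem:pointwise.continuity} are stated for \( t\searrow0 \), so I avoid them and use the elementary Taylor argument of Proposition~\ref{prp:ball.embedding}\ref{item:emb.diff}, which is valid for both signs of \( t \). The stated equivalence of the two formulations is simply the definition of pushforward.
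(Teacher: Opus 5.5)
Your proof is correct, and it takes a genuinely different route from the paper's. The paper obtains the equality of derivatives through the Wasserstein derivative. Proposition~\ref{prp:derivative.torus} together with $\D_Av=v$ gives $\dw((\varphi_A)_\ast\mu_t,\Leb+tv)=o(t)$, and likewise for $B$. Lemma~\ref{lem:pointwise.continuity} then turns this $\dw$-tangency into equality of the test-function derivatives.

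You bypass all of that. You differentiate $\int\psi\circ\varphi_A\,d\mu_t$ directly with the Taylor bound of Proposition~\ref{prp:ball.embedding}\ref{item:emb.diff}. You then identify the result via the Koopman--transfer adjunction and $\langle\nabla\psi,\Transf_Av\rangle=\langle\nabla\psi,P_\Leb\Transf_Av\rangle$, which is exactly the distributional characterisation of $E_A$ in Proposition~\ref{prp:derivative.functoriality}. As a result, your argument needs none of the linear-response, density-comparison or solenoidal-invisibility machinery behind Proposition~\ref{prp:derivative.torus}. This fits Remark~\ref{rmk:weaker}: a statement involving no optimal transport gets a proof involving essentially none.

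What the paper's route buys is the stronger intermediate fact $\dw((\varphi_A)_\ast\mu_t,\mu_t)=o(t)$, which is the content of Theorem~\ref{thm:main.application}. The corollary then appears as its weak shadow.

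One minor correction: the two-sidedness you single out is not actually delicate. Lemma~\ref{lem:pointwise.continuity} carries no sign restriction, and its proof is symmetric in $t$. A one-sided statement such as Theorem~\ref{thm:derivative.acim} also extends to $t<0$ by applying it to $-v$, using the linearity of $\D_A$. So the paper's route is two-sided as well.
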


\begin{proof}
    Using Theorem~\ref{thm:joint.eigen} pick \( v\in E_A\cap E_B \), \( v\neq0 \), and set \( \mu_t\coloneqq \Leb+tv \).
    Weak continuity is clear from \eqref{eq:trivial.coupling} and the fact that \( \dw \) metrises the weak\( ^\ast \) topology; atomlessness for a.e.\ \( t \) is Lemma~\ref{lem:atomless}.

    Since \( v\in L^2_0(\Leb) \), there is \( \psi_0\in C^\infty(\T^d) \) with \( \lVert \nabla\psi_0-v \rVert_{L^2(\Leb)}\le\frac12\lVert v \rVert_{L^2(\Leb)} \), whence
    $\int\langle\nabla\psi_0,v\rangle d\Leb\ge\lVert v \rVert^2_{L^2}-\frac12\lVert v \rVert^2_{L^2} =\frac12\lVert v \rVert^2_{L^2}>0$.
    By Lemma~\ref{lem:pointwise.continuity} above, $\frac{d}{dt}\big|_0\int\psi_0\,d\mu_t=\int\langle\nabla\psi_0,v\rangle d\Leb \neq0$.

    It follows from Proposition~\ref{prp:derivative.torus} and the equality \( \D_Av=v \) that the curve \( t\mapsto( \varphi_B )_\ast \mu_t \) is differentiable at \( 0 \) with the \emph{same} tangent vector \( v \) at the same base point \( \Leb \), i.e., it satisfies \( \dw\left(( \varphi_B )_\ast \mu_t,\Leb+tv\right)=o(t) \).
    Lemma~\ref{lem:pointwise.continuity} applied to both curves gives, for every \( \psi\in C^\infty(\T^d) \),
    \[
        \frac{d}{dt}\Big|_{0}\int\psi\,d\left(( \varphi_B )_\ast \mu_t\right)
        =\int\langle\nabla\psi,v\rangle\,d\Leb
        =\frac{d}{dt}\Big|_{0}\int\psi\,d\mu_t ,
    \]
    and the same with \( B \).
    The reformulation follows from \( \int\psi\,d\left(( \varphi_B )_\ast \mu_t\right)=\int\psi\circ  \varphi_B \,d\mu_t \).
\end{proof}

\begin{rmk}\label{rmk:weaker}
    As observed \cite[Remark 1.9(2)]{kloeckner_optimal_2013_preprint}, this corollary is intrinsically weaker than Theorem~\ref{thm:main.application}: \( \dw \)-differentiability implies differentiability of the integrals of test functions, but not conversely, since a curve \( t\mapsto t\mu+(1-t)\nu \) has affine test integrals while being non-rectifiable for \( \dw \).
    Its interest is that its statement involves no optimal transport at all.
\end{rmk}

\section{Conclusion}
    \label{sec:conclusion}
    Sections~\ref{sec:setting} and~\ref{sec:derivatives} identify the derivative of \( \phi_\ast \) at an absolutely continuous invariant measure as \( \D_\phi=P_{\mu_0}\Transf_0 \), the adjoint of the Koopman operator restricted to the tangent space, and by Proposition~\ref{prp:acim.derivative}\ref{item:acim.sharp} the directions in which \( \mu_0 \) may be deformed without losing invariance at first order are exactly the fixed vectors of that operator.
The invariance rigidity of \( \mu_0 \) thus becomes a question about \( E_\phi \), which we could not answer in the general setting.

Section~\ref{sec:torus}, on the other hand, answers it for the linear expanding endomorphisms of \( \T^d \), where the flat geometry turns \( \D_A \) into a composition operator on the frequency lattice and the arithmetic of Lemma~\ref{lem:freeness} takes the place of spectral theory.
The answer is that the space \( E_A\cap E_B \) is infinite dimensional for every admissible pair (cf. Appendix~\ref{sec:pairs}), and such pairs exist in every dimension by Lemma~\ref{lem:existence.pairs}.
Thus, for every admissible pair, first-order rigidity fails, and whatever mechanism makes Theorem~\ref{thm:host.rigidity} true leaves no trace on the tangent space at \(\Leb\).

We conclude with the questions left open by the argument.

\medskip
\paragraph*{\textbf{Fr\'echet derivatives}.}
The example in Remark~\ref{rmk:frechet.derivative} transplants to \( A=2\Id \) on \( \T^d \): reading \( v \) as a field depending on the first coordinate alone yields a field in \( L^2_0(\Leb) \) with the same norm and still satisfying \( \D_Av=0 \).
The deformed measure and its image are then products whose first factors are those of the circle example, and pushing forward by the projection onto the first coordinate, which is \( 1 \)-Lipschitz, does not increase \( \dw \).
No Fr\'echet statement is thus available in any dimension.

The use of finite dimensionality in the proof of Theorem~\ref{thm:derivative.acim} is confined to Lemma~\ref{lem:uniform.smoothing}, since the remainder of Proposition~\ref{prp:linear.response} is uniform on compact subsets of \( C^1(M;TM) \).
Indeed, a \( C^1 \)-bounded family can never be \( \epsilon \)-dense in the unit ball of \( L^2_0(\mu_0) \), since by Arzel\`a--Ascoli such a family is totally bounded in \( L^2(\mu_0;TM) \), whereas that ball is not.

What remains open is the suggestion of \cite[\S4.3]{kloeckner_optimal_2013}, namely that the fields above have large total variation and that a uniform remainder might be recovered on a subspace of \( L^2_0(\mu_0) \) carrying a norm which controls derivatives.

\medskip
\paragraph*{\textbf{Bi-Lipschitz embeddings}.}
Proposition~\ref{prp:ball.embedding} embeds \( \mathcal B^n_\eta \) topologically in \( \Prob(M) \), and Lipschitz-continuously when the fields are bounded or \( M \) is flat, whereas on the circle \cite[Proposition 3.1]{kloeckner_optimal_2013} obtains a bi-Lipschitz embedding.
The lower bound there is obtained by displacing \( F(a) \) along a field \( \tilde w \) built by precomposing gradients with a diffeomorphism, and by evaluating \( \dw\big(\nu,(\exp\tilde w)_\ast\nu\big) \) as \( \lVert\tilde w\rVert_{L^2(\nu)} \), an identity available on the circle because \( \exp\tilde w \) is monotone there.
    
Neither step survives in dimension \( d\ge2 \): optimality would require \( \exp\tilde w \) to be a gradient, which precomposition destroys, and for \( \nu \) with positive \( C^1 \) density Proposition~\ref{prp:solenoidal.dual} shows that the evaluation already fails at first order as soon as \( P_\nu\tilde w\neq\tilde w \).
We do not know whether \( F \) is bi-Lipschitz on some \( \mathcal B^n_\eta \) for \( d\ge2 \).

\medskip
\paragraph*{\textbf{Invariant directions}.}

The fixed space \( E_\phi=\ker(\D_\phi-\Id) \) is an eigenspace, so asking whether \( \mu_\phi \) admits deformations preserving invariance at first order is asking whether \( 1 \) belongs to the point spectrum of \( \D_\phi \).
If \( \phi \) and \( \psi \) commute and preserve the same measure, then \( \D_\phi\D_\psi=\D_\psi\D_\phi \) by Proposition~\ref{prp:derivative.functoriality}, so \( \D_\psi \) leaves \( E_\phi \) invariant and
\[
    E_\phi\cap E_\psi=\ker\big(\D_\psi|_{E_\phi}-\Id\big).
\]

\begin{qst}
\label{qst:general.eigen}
    Let \( \phi \) be a \( C^2 \) expanding map of a closed manifold \( M \), with absolutely continuous invariant measure \( \mu_\phi \) of positive \( C^1 \) density.
    Is \( E_\phi \) infinite dimensional?
    If \( \psi \) is a second such map, commuting with \( \phi \) and preserving \( \mu_\phi \), is \( E_\phi\cap E_\psi\neq\{0\} \)?
\end{qst}

A positive answer to these questions would extend the construction of Section~\ref{sec:torus} from linear toral endomorphisms to the general setting of Section~\ref{sec:derivatives}.

For a single map the answer is affirmative on the circle, where Kloeckner deduces it \cite[Proposition 5.3]{kloeckner_optimal_2013} from the spectral theory of weighted transfer operators \cite[Theorem 2.5]{baladi_positive_2000}, and our results give the same answer on \( \T^d \) for the linear endomorphisms \( \varphi_A \).
In neither case does the argument single out the value \( 1 \).
Indeed, by \eqref{eq:derivative.beta} the operator \( \D_A \) acts on the coordinates \( \beta \) by precomposition with \( A^{\top} \), that is, it moves each coordinate one step backwards along the forward orbits of \( A^{\top} \) on \( \Z^d\setminus\{0\} \), so the eigenvalue equation \( \D_Av=\alpha v \) reads \( \beta_{A^{\top}m}=\alpha\beta_m \) and forces \( \beta_{(A^{\top})^am}=\alpha^a\beta_m \) along each orbit.
Writing \( \theta\coloneqq\min\lvert\spec(A)\rvert \), Lemma~\ref{lem:orbit.growth} makes \( \lvert(A^{\top})^am\rvert \) grow at least like \( \theta^a \), so by \eqref{eq:beta.norm} the resulting field lies in \( L^2_0(\Leb) \) as soon as \( \lvert\alpha\rvert<\theta \).
Every \( \alpha \) with \(\lvert \alpha \rvert < \theta\) is thus an eigenvalue of \( \D_A \) of infinite multiplicity.
Thus the first half of Question~\ref{qst:general.eigen} amounts to determining whether the point spectrum contains a disc of radius larger than \(1\).

For \( d=1 \) and \( A=(d_0) \), \( d_0\ge2 \), one recovers \cite[Proposition 4.4]{kloeckner_optimal_2013}, where the eigenvalues are exactly the open disc of radius \( d_0 \) and the spectrum is the closed disc, the proof conjugating \( \D_A \) to a countable product of shifts.
For general \( A \) the orbit growth rates depend on the direction, so there is no obvious analogue to such conjugation, and the spectrum of \( \D_A \) remains undetermined\footnote{
    That, however, is a finer question than Question~\ref{qst:general.eigen}, which needs only the eigenvalue \( 1 \).
}.

For a nonlinear expanding map of \( \T^d \), or of any closed manifold of dimension at least two, the circle arguments have no immediate analogue.
They rely on the positivity of \( \Transf_0 \) on \( C^0(S^1) \), which holds because \( TS^1 \) is trivial and \( \phi'>0 \), so that \( \Transf_0 \) preserves the cone of non-negative fields.
For \( \dim M\ge2 \) the fibrewise average \eqref{eq:transfer.disintegration} is weighted by \( D_x\phi \), the fibres of \( TM \) carry no canonical cone, and an invariant cone field for \( D\phi \) is an additional hypothesis we do not have.
What seems to be missing is a spectral theory for transfer operators acting on sections of a vector bundle rather than on functions.

The second half depends genuinely on both maps, and not only on their spectra.
Two commuting operators may each have \( 1 \) as an eigenvalue of infinite multiplicity without sharing a single eigenvector, and what \( \D_\psi|_{E_\phi} \) does depends on \( E_\phi \) as a concrete object rather than on its dimension.
On \( \T^d \) the relation between maps is Lemma~\ref{lem:freeness}, which decomposes \( \Z^d\setminus\{0\} \) into the orbits of the semigroup generated by \( A^{\top} \) and \( B^{\top} \) and thereby describes \( E_A \) coordinate by coordinate.
We know of no substitute for that decomposition when there is no frequency lattice.

There is, however, one class of manifolds on which the setting that makes Section~\ref{sec:torus} work is still available.
If \( M=G \) is a compact Lie group, then \( \Prob(G) \) is parallelisable in the sense of \cite[\S4]{gomes_differential_2024}, \cite{gomes_riemannian_2025}.
The role played on \( \T^d \) by the characters is played on \( G \) by the Peter--Weyl decomposition \( L^2(G)=\bigoplus_{\pi\in\hat G}\mathcal H_\pi \), an orthogonal sum closed in \( L^2 \).
Here, \( \hat G \) is the set of irreducible unitary representations of \( G \) up to equivalence and the \emph{isotypic component} \( \mathcal H_\pi \) is the span of the matrix coefficients \( g\mapsto\langle\pi(g)u,v\rangle \), of dimension \( (\dim\pi)^2 \).
For \( G=\T^d \) one has \( \hat G=\Z^d \) and each \( \mathcal H_k \) is the line spanned by \( e^{2\pi i\langle k,x\rangle} \), so \( \mathcal H_\pi \) is what replaces a single Fourier mode when the group is not Abelian.

Write \( \lambda_\pi \) for the eigenvalue of \( -\Delta \) on \( \mathcal H_\pi \), which is a scalar because the metric is bi-invariant, and which is \( 4\pi^2\lvert k\rvert^2 \) on \( \T^d \).
Let now \( \phi \) be an expanding endomorphism of \( G \).
On gradients one has \( \Koop_0\nabla\psi=\nabla(\psi\circ\phi) \), so through \eqref{eq:L20} the operator \( \Koop_0 \) is the Koopman operator of \( \phi \) acting on scalars.
It sends a matrix coefficient of \( \pi \) to the corresponding matrix coefficient of \( \pi\circ\phi \), and \( \pi\circ\phi \) is again irreducible because \( \phi \), being a covering map, is surjective, so that \( \pi \) and \( \pi\circ\phi \) have the same image and hence the same commutant. 
The same surjectivity makes \( \pi\mapsto\pi\circ\phi \) injective on \( \hat G \).
Thus \( \Koop_0 \) carries \( \mathcal H_\pi \) onto \( \mathcal H_{\pi\circ\phi} \). Its adjoint \( \D_\phi \) carries \( \mathcal H_{\pi\circ\phi} \) back to \( \mathcal H_\pi \), multiplied by \( \lambda_{\pi\circ\phi}/\lambda_\pi \), and annihilates every isotypic component outside the range of \( \pi\mapsto\pi\circ\phi \).
An operator of this shape, moving data one step along the orbits of a map of the index set and rescaling as it goes, is a weighted shift, and \eqref{eq:derivative.beta} is the case \( G=\T^d \), where the renormalisation \eqref{eq:beta.coordinates} was chosen precisely so that the weight becomes \( 1 \).

For endomorphisms of compact Lie groups, this would reduce the first half of Question~\ref{qst:general.eigen} to estimating the growth of \( \lambda_{\pi\circ\phi} \) along the orbits of \( \pi\mapsto\pi\circ\phi \), providing an analogue of Lemma~\ref{lem:orbit.growth}.
This approach does not address manifolds that are not groups, or expanding maps of\( G \) that are not endomorphisms.
It also leaves the second half of the question unchanged: for a commuting pair, one would still need to understand the joint orbit structure of the induced maps on \( \hat G \) in analogy with Lemma~\ref{lem:freeness}.
We have not carried any of this out.

%%%%%%%% APPENDICES
\appendix
\section{Estimates on the exponential map}
    \label{sec:riemannian.estimates}
    This section contains the proof of Lemma~\ref{lem:exp.estimates}, with the notational conventions from the beginning of Section~\ref{sec:exp.deformations}.

We start by remarking the following form of the triangle inequality
    \begin{equation}\label{eq:triangle.inequality}
        d\left(\exp_x\xi,\exp_x\eta\right)\le\le\lvert\xi\rvert+\lvert\eta\rvert,\qquad \xi,\eta\in T_xM.
    \end{equation}
Notice that this holds for \emph{all} \(\xi,\eta\in T_xM\), however large.
It is indeed simply the triangle inequality, as the curve \( s\mapsto\exp_x(s\xi) \), \( s\in[0,1] \), joining \( x \) to \( \exp_x\xi \) has length \( \lvert\xi\rvert \), so that \( d(x,\exp_x\xi)\le\lvert\xi\rvert \).

\begin{proof}[Proof of Lemma~\ref{lem:exp.estimates}]
    Fix \(x \in M\).
    Our argument is based on the use of normal charts around \(x\).
    Write \( R\coloneqq \max(\lvert\xi\rvert,\lvert\eta\rvert)\le r_1 \), let \( g^x\coloneqq (\exp_x)^\ast g \) be the pullback of the metric to \( T_xM \), and identify \( T_p(T_xM)\cong T_xM \) canonically, so that the differential \( D(\exp_x)_p \) is a linear map \( T_xM\to T_{\exp_xp}M \).

    Consider the smooth function \(G(x,p,w)\coloneqq \big\lvert D(\exp_x)_p\,w\big\rvert^2\) on \(TM \oplus TM\).
    For fixed \( (x,w) \) the function \( p\mapsto G(x,p,w) \) satisfies
    \[
        G(x,0,w)=\lvert w\rvert^2,\qquad \partial_pG(x,0,w)=0 ,
    \]
    the first because \( D(\exp_x)_0=\Id \), the second because the Christoffel symbols vanish at the centre of a geodesic normal chart (remark both derivatives here are taken in the fibre \( T_xM \), which is a vector space, so they are intrinsic; no choice of frame is involved).  
    Taylor's theorem along the ray \( s\mapsto(x,sp,w) \) therefore gives
    \[
        \big\lvert G(x,p,w)-\lvert w\rvert^2\big\rvert
        \le\tfrac12\left(\sup_{\mathcal K}\big\lVert\partial^2_pG\big\rVert\right)\lvert p\rvert^2 ,
    \]
    where \(\mathcal K\coloneqq \{(x,p,w):\lvert p\rvert\le4r_1,\ \lvert w\rvert\le1\}\) and the supremum is finite because \( \partial^2_pG \) is continuous and \( \mathcal K\subset TM\oplus TM \) is compact.  
    Calling \( c_M \) that supremum and using homogeneity in \( w \), we get \( \big\lvert G(x,p,w)-\lvert w\rvert^2\big\rvert\le c_M\lvert p\rvert^2\lvert w\rvert^2 \) for all \( w \).
    Factoring the difference of squares gives us, for \(\lvert p\rvert\le4r_1\)
    \begin{equation}\label{eq:infinitesimal.comparison}
    \begin{split}
        \Big\lvert\;\big\lvert D(\exp_x)_p\,w\big\rvert-\lvert w\rvert\;\Big\rvert &= \frac{\big\lvert G(x,p,w)-\lvert w\rvert^2\big\rvert}{\big\lvert D(\exp_x)_p\,w\big\rvert+\lvert w\rvert}\\
        &\le \frac{\big\lvert G(x,p,w)-\lvert w\rvert^2\big\rvert}{\lvert w\rvert} \\ &\le\ c_M\lvert p\rvert^2\lvert w\rvert. 
    \end{split}
    \end{equation}
    Equivalently: on \( B(0,4r_1)\subset T_xM \) the metric \( g^x \) is within a factor \( 1\pm c_M\lvert p\rvert^2 \) of the Euclidean one. 
    This translates to distances on \(T_xM\) in the following way.
    For \( 0<r\le3r_1 \) let \( d^x_r \) denote the length distance of \( \left(\overline B(0,r),g^x\right) \).
    We claim that, for \( \xi,\eta\in\overline B(0,r) \),
    \begin{equation}\label{eq:ball.comparison}
        \left(1-c_Mr^2\right)\lvert\xi-\eta\rvert
        \ \le\ d^x_r(\xi,\eta)
        \ \le\ \left(1+c_Mr^2\right)\lvert\xi-\eta\rvert .
    \end{equation}
    For the upper bound, the Euclidean segment from \( \xi \) to \( \eta \) stays in \( \overline B(0,r) \) by convexity, and \eqref{eq:infinitesimal.comparison} bounds its \( g^x \)-length by \( (1+c_Mr^2)\lvert\xi-\eta\rvert \). 
    For the lower bound, we may assume \( c_Mr^2<1 \), the claim being vacuous otherwise.
    Then every Lipschitz curve in \( \overline B(0,r) \) joining \( \xi \) to \( \eta \) has, by \eqref{eq:infinitesimal.comparison}, \( g^x \)-length at least \( (1-c_Mr^2) \) times its Euclidean length, hence at least \( (1-c_Mr^2)\lvert\xi-\eta\rvert \).
    The distance \(d^x_r(\xi,\eta)\) is the infimum of all such lengths, and the claim follows.  
    Note that no curve here is required to be geodesic.
 
    Finally, we translate the bounds above to distances in \(M\).
    Since \( 4r_1<r_0 \), the map \( \exp_x \) restricts to an isometry from \( \left(B(0,4r_1),g^x\right) \) onto \( \left(B(x,4r_1),g\right) \), and \( \lvert\exp_x^{-1}y\rvert=d(x,y) \) there by the Gauss lemma.
    Let \( \gamma:[0,1]\to M \) be a minimising geodesic from \( \exp_x\xi \) to \( \exp_x\eta \).    
    From the triangle inequality~\eqref{eq:triangle.inequality}, \( L_g(\gamma)=d(\exp_x\xi,\exp_x\eta)\le\lvert\xi\rvert+\lvert\eta\rvert\le2r_1 \), so for every \( s \)
    \[
        d\left(x,\gamma(s)\right)\le d(x,\exp_x\xi)+L_g(\gamma)\le3r_1<4r_1 ;
    \]
    hence \( \widetilde\gamma\coloneqq \exp_x^{-1}\circ\,\gamma \) is a well-defined curve in \( \overline B(0,3r_1) \) joining \( \xi \) to \( \eta \).  
    The upper bound in \eqref{eq:ball.comparison}, applied with \( r=R \), gives
    \[
        \lvert\widetilde\gamma(s)\rvert=d\left(x,\gamma(s)\right)
        \le\lvert\xi\rvert+\left(1+c_MR^2\right)\lvert\xi-\eta\rvert
        \le\left(3+2c_MR^2\right)R\le C_2R,
    \]
    where \( C_2\coloneqq 3+2c_Mr_1^2 \).
    Put \( r\coloneqq \min\{C_2R,\,3r_1\} \).  
    Then \( L_g(\gamma) = L_{g^x}(\widetilde\gamma) \ge d^x_r(\xi,\eta) \), so \eqref{eq:ball.comparison} and \( r\le C_2R \) give
    \[
        d(\exp_x\xi,\exp_x\eta)\ \ge\ d^x_r(\xi,\eta)
        \ \ge\ \left(1-c_MC_2^2R^2\right)\lvert\xi-\eta\rvert .
    \]
    Putting this together with the upper bound of \eqref{eq:ball.comparison} evaluated at \( r=R \), we obtain
    \[
        \left(1-c_MC_2^2R^2\right)\lvert\xi-\eta\rvert \le d(\exp_x\xi,\exp_x\eta)\le (1+c_MR^2)\lvert\xi-\eta\rvert,
    \]  
    and by setting \( C_\ast\coloneqq c_MC_2^2=c_M(3+2c_Mr_1^2)^2 \) in the above inequalities we get \( \big\lvert\,d(\exp_x\xi,\exp_x\eta)-\lvert\xi-\eta\rvert\,\big\rvert \le C_\ast\max(\lvert\xi\rvert,\lvert\eta\rvert)^2\lvert\xi-\eta\rvert \) for \(\lvert \xi \rvert, \lvert \eta\rvert \le r_1\), as we wanted.
    Finally, take \(\Lambda\coloneqq 1+C_\ast r_1^2\) to obtain \(\exp_x: B(0, r_1) \to M\) \(\Lambda\)-Lipschitz for any \(x\in M\).
\end{proof}

\section{Host's theorem and admissible pairs}
    \label{sec:pairs}
    We recall the hypotheses of \cite{host_uniform_2000} verbatim.
For matrices \( A, B \in\mathrm M_d(\Z) \), we consider the following properties.
\begin{enumerate}[label=\textup{(H\arabic*)}, ref=(H\arabic*), start=1]
    \item \label{hyp:host.irreducible}
        The characteristic polynomial of \(A^r\) is irreducible over \(\Q\) for every integer \(r>0\).
    \item \label{hyp:host.expanding}
        All eigenvalues of \(A\) have modulus \(>1\).
    \item \label{hyp:host.coprime}
        For a pair \(A,B\in\mathrm{M}_d(\Q)\), the integers \(\det A\) and \(\det B\) are relatively prime.
\end{enumerate}
Observe that \ref{hyp:host.irreducible} forces \( \det A\neq0 \).
Host's rigidity result is the following.

\begin{thm}[Host, {\cite[Theorem 3]{host_uniform_2000}}]
\label{thm:host.rigidity}
    Let \( A,B\in\mathrm M_d(\Z) \) be such that \( A \) satisfies \ref{hyp:host.expanding}, the pair \( (A,B) \) satisfies \ref{hyp:host.coprime} and \( B \) satisfies \ref{hyp:host.irreducible}.
    Let \( \mu \) be a probability measure on \( \T^d \), invariant under \( \varphi_A \) and \( \varphi_B \), and such that almost all of its \( \varphi_A \)-ergodic components have positive entropy.
    Then \( \mu=\Leb \).
\end{thm}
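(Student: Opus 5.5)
This statement is not proved in the paper; it is imported from \cite[Theorem 3]{host_uniform_2000}. If I were to reconstruct it, I would follow Host's strategy: first prove a pointwise equidistribution statement under \( \varphi_B \), then use \( \varphi_B \)-invariance to conclude.

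\emph{Reduction step.} Suppose we know that for \( \mu \)-almost every \( x \) the orbit \( (\varphi_B^nx)_{n\ge0} \) equidistributes with respect to \( \Leb \), meaning \( \frac1N\sum_{n<N}f(\varphi_B^nx)\to\int f\,d\Leb \) for every \( f\in C(\T^d) \). Then \( \varphi_B \)-invariance of \( \mu \) gives
\[
\int f\,d\mu=\frac1N\sum_{n<N}\int f\circ\varphi_B^n\,d\mu\lto\int f\,d\Leb,
\]
by dominated convergence. Hence \( \mu=\Leb \). By Weyl's criterion it suffices to treat the characters \( e_k(x)=e^{2\pi i\langle k,x\rangle} \) with \( k\neq0 \). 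For these we need \( \frac1N\sum_{n<N}e_{(B^\top)^nk}(x)\to0 \) for \( \mu \)-a.e. \( x \). By a standard second-moment argument (Davenport--Erdős--LeVeque), this reduces to a summable bound on
\[
\int\Big|\frac1N\sum_{n<N}e_{(B^\top)^nk}\Big|^2d\mu
\]
along a geometric sequence of \( N \).

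\emph{Main step: the Fourier estimate via entropy.} The invariance under \( \varphi_A \) enters through conditional measures. Let \( \mathcal P \) be the partition into fundamental domains of \( \varphi_A \), and let \( \mathcal P_m \) be its refinement by \( \varphi_A^{-m} \). I would disintegrate \( \mu \) along \( \mathcal P_m \) and study the conditional measures \( \mu_x^m \) on atoms of diameter about \( \theta^{-m} \).

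Positive entropy of almost every \( \varphi_A \)-ergodic component has a consequence I would extract via Shannon--McMillan--Breiman. On a proportion of scales \( m \) bounded below, the atom of \( x \) splits into many sub-atoms of non-negligible conditional mass. Consequently a character \( e_\ell \) that is non-constant on these sub-atoms, yet nearly constant at the coarser scale, has \( |\mu_x^m(e_\ell)|\le1-\delta \).

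The arithmetic input enters at this point. The coprimality \ref{hyp:host.coprime}, together with Lemma~\ref{lem:freeness}\ref{item:intersection}, and the irreducibility \ref{hyp:host.irreducible} of \( B \) should guarantee that the frequencies \( (B^\top)^nk \) are distributed among the cosets of \( (A^\top)^m\Z^d \) in the right way. Precisely, for typical \( n \) the frequency \( (B^\top)^nk \) is not in \( (A^\top)^{m+1}\Z^d \), but its size is compatible with scale \( m \). Irreducibility of every power \( B^r \) is what prevents \( (B^\top)^nk \) from lying in a proper rational invariant sublattice, where the \( A \)-structure could be degenerate. Multiplying the gains \( 1-\delta \) over many scales should then give a bound on the correlations \( \int e_{(B^\top)^nk-(B^\top)^{n'}k}\,d\mu \) that decays in \( |n-n'| \). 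This would yield the required second-moment bound.

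\emph{Expected obstacle.} The hardest part is the main step. One has to match the scales \( m \) at which the entropy gives a gain with the frequencies \( (B^\top)^nk-(B^\top)^{n'}k \) in a quantitative way. This requires \( A \)-adic control, uniform in \( n \), of the valuations \( \nu_A \) of these differences, and in higher dimension that control depends on both \ref{hyp:host.irreducible} and \ref{hyp:host.coprime}. I would first try to establish a lemma bounding \( \nu_A\big((B^\top)^nk-(B^\top)^{n'}k\big) \) by \( O(\log|n-n'|) \), and then feed it into the entropy-gain estimate.
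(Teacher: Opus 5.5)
The paper does not prove this statement; it is quoted from Host, so there is no internal argument to compare with. Your reduction step is correct and standard: \( \mu \)-a.e.\ equidistribution of \( \varphi_B \)-orbits plus \( \varphi_B \)-invariance forces \( \mu=\Leb \), and Weyl's criterion reduces matters to characters. Your reconstruction of the equidistribution step, however, has a genuine gap.

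\textbf{The proposed mechanism would fail.} You want a bound on each correlation \( \hat\mu\big(((B^\top)^n-(B^\top)^{n'})k\big) \), decaying in \( \lvert n-n'\rvert \), obtained by multiplying conditional gains \( 1-\delta \) over many scales. This does not work, for two reasons.
\begin{enumerate}
\item For a non-Bernoulli measure the gains do not multiply. The conditional expectation of the character at one scale is a non-constant function of the coarser digits, and the next digit factor can realign its phase.
\item Positive entropy gives no pointwise Fourier decay. The middle-thirds Cantor measure is \( \times3 \)-invariant with entropy \( \log2 \), yet \( \lvert\hat\mu((3^n-1)/2)\rvert \) stays bounded away from \( 0 \), although \( (3^n-1)/2\equiv1 \bmod 3 \).
\end{enumerate}
A bound on \( \nu_A \) of the differences only tells you at how many of the finest scales the character is trivial. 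With \( A=(3) \), \( B=(2) \) and \( \mu \) the Cantor measure, per-pair decay would require the ternary digits of numbers such as \( 2^{n'}(2^h-1) \), which is out of reach.

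\textbf{What works instead, at least on the circle, is to average in \( n \).} Take a fibre of \( \varphi_A^m \): a coset of \( A^{-m}\Z^d/\Z^d \) with \( q^m \) points, where \( q=\lvert\det A\rvert \). Restrict to the points of conditional weight at most \( e^{-m(h-\epsilon)} \). On them, Parseval on this finite group bounds the conditional \( L^2 \) norm of \( N^{-1}\sum_{n<N}e_{(B^\top)^nk} \) by \( e^{-m(h-\epsilon)}q^mN^{-2} \) times the number of pairs \( (n,n') \) with \( (B^\top)^nk\equiv(B^\top)^{n'}k \bmod (A^\top)^m\Z^d \).
\begin{enumerate}
\item The weight condition holds for all large \( m \) off a small set. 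This is because minus the logarithm of the weight is a Birkhoff sum of \( -\log\mu(\mathcal Q\mid\varphi_A^{-1}\mathcal B) \), where \( \mathcal Q \) is the partition into injectivity domains of \( \varphi_A \).
\item Davenport--Erd\H{o}s--LeVeque is then run on that good set, inside a single ergodic component, not on \( \mu \) itself.
\item Valuations enter only through this collision count, and the constant in your \( O(\log\lvert n-n'\rvert) \) bound is decisive.
\end{enumerate}

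\textbf{In dimension \( d\ge2 \) even this does not close, and this is where the real difficulty lies.} Take \( A=2\Id \) on \( \T^2 \) and any \( B \) with odd determinant. The order of \( B \bmod 2 \) divides \( \lvert\mathrm{GL}_2(\mathbb F_2)\rvert=6 \), so \( B^{3\cdot2^m}\equiv\Id \bmod 2^m \). Hence each \( B^\top \)-orbit meets at most \( 3\cdot2^m \) of the \( 4^m \) characters of a fibre. The bound above is then never better than about \( e^{-m(h-\epsilon)}2^m \), which tends to zero only when \( h>\log2 \). A measure \( \nu\times\delta_0 \), with \( \nu \) \( \times2 \)-invariant of small entropy, shows that the conditional measures really can sit on such a thin subgroup.

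Overcoming this is the substance of the multidimensional theorem, and it is where \ref{hyp:host.irreducible} must do real work. Your remark that irreducibility keeps \( (B^\top)^nk \) out of invariant sublattices does not supply it.

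\textbf{A further point.} Lemma~\ref{lem:freeness} assumes \( AB=BA \) and \( B \) expanding, and the statement assumes neither. Without commutation, \( B^\top \) need not even preserve \( (A^\top)^m\Z^d \).
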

This result assumes neither that \( A \) and \( B \) commute nor that \( \mu \) is ergodic for the pair.
Moreover, unlike Furstenberg's conjecture, it carries no atomlessness hypothesis, which is replaced by the entropy assumption.

\begin{dft}
\label{dft:admissible.pair}
    A pair \( (A,B)\in\mathrm M_d(\Z)^2 \) is \emph{admissible} if \( A \) and \( B \) commute, both satisfy \ref{hyp:host.expanding}, the pair satisfies \ref{hyp:host.coprime}, and \( B \) satisfies \ref{hyp:host.irreducible}.
\end{dft}

The first three conditions are the ones our own results use, in Theorems~\ref{thm:joint.eigen} and~\ref{thm:families}; the fourth is needed only to invoke Theorem~\ref{thm:host.rigidity} at the end of the proof of Theorem~\ref{thm:main.application}.
Hypothesis \ref{hyp:host.irreducible} is the first of the three conditions in Berend's characterisation \cite[Theorem 2.1]{berend_multi_1983} of the commutative semigroups of endomorphisms of \( \T^d \) whose only infinite closed invariant subset is \( \T^d \) itself, and Host observes \cite[\S1.2.2]{host_uniform_2000} that some assumption of this kind is needed to exclude obvious counterexamples.
For \( d\ge2 \) a scalar matrix \( \ell\,\Id \) fails \ref{hyp:host.irreducible}, its characteristic polynomial \( (x-\ell)^d \) being reducible, so no pair of scalar matrices is admissible.
Nothing prevents a scalar matrix from occupying the first slot, and the construction below exploits precisely that.
Theorem~\ref{thm:joint.eigen} makes no use of \ref{hyp:host.irreducible}, and therefore applies to a strictly larger class of pairs than the rigidity statement it obstructs.

To verify that \ref{hyp:host.irreducible} is satisfied we employ the following criterion, whose mechanism stems from \cite[\S6]{berend_multi_1983}.

\begin{lem}
\label{lem:irreducibility.criterion}
    Let \( B\in\mathrm M_d(\Z) \) have characteristic polynomial irreducible over \( \Q \), with roots \( \beta_1,\dots,\beta_d \).
    If no quotient \( \beta_i/\beta_j \) with \( i\neq j \) is a root of unity, then \( B \) satisfies \ref{hyp:host.irreducible}.
    In particular this holds whenever the \( \beta_i \) have pairwise distinct moduli.
\end{lem}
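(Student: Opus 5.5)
The plan is to argue by contradiction through the eigenvalues of the powers \( B^r \). Let \( \chi_B \) be the characteristic polynomial of \( B \), irreducible over \( \Q \), with roots \( \beta_1,\dots,\beta_d \). These roots are distinct, because an irreducible polynomial over a field of characteristic zero is separable. Fix \( r>0 \). The characteristic polynomial of \( B^r \) is
\[
    \chi_{B^r}(x)=\prod_{i=1}^d(x-\beta_i^r),
\]
and it has rational (indeed integer) coefficients.

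The key step is to show that \( \chi_{B^r} \) is a power of the minimal polynomial of \( \beta_1^r \) over \( \Q \). The Galois group \( G \) of the splitting field \( K \) of \( \chi_B \) acts transitively on \( \{\beta_1,\dots,\beta_d\} \), by irreducibility. For any \( \sigma\in G \) we have \( \sigma(\beta_i^r)=\sigma(\beta_i)^r \), so \( G \) also acts transitively on the multiset \( \{\beta_i^r\} \). Hence every root of \( \chi_{B^r} \) is Galois conjugate to \( \beta_1^r \), and the multiplicities are all equal. It follows that \( \chi_{B^r}=m^{e} \), where \( m \) is the minimal polynomial of \( \beta_1^r \) and \( e=d/\deg m \).

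It remains to show that the values \( \beta_i^r \) are pairwise distinct, for then \( e=1 \) and \( \chi_{B^r}=m \) is irreducible. Suppose instead that \( \beta_i^r=\beta_j^r \) for some \( i\neq j \). Since \( \det B\neq0 \), no \( \beta_j \) vanishes, so we may divide and obtain \( (\beta_i/\beta_j)^r=1 \). Then \( \beta_i/\beta_j \) is a root of unity, contradicting the hypothesis. This proves \ref{hyp:host.irreducible}.

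For the final sentence of the statement: if the \( \beta_i \) have pairwise distinct moduli, then \( \lvert\beta_i/\beta_j\rvert\neq1 \) for \( i\neq j \), so no such quotient can be a root of unity, and the first part applies.

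The only delicate point is the equal-multiplicity claim in the key step. I would justify it cleanly as follows. \( \chi_{B^r} \) is fixed by \( G \), so its factorisation over \( \Q \) consists of minimal polynomials of the \( \beta_i^r \). All the \( \beta_i^r \) lie in one \( G \)-orbit, so they share the same minimal polynomial \( m \). Therefore \( \chi_{B^r} \) is a power of \( m \), and a degree count gives the exponent \( e=d/\deg m \).
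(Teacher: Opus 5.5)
Your proof is correct and follows essentially the paper's argument: Galois transitivity on the \(\beta_i\) passes to the \(\beta_i^r\), and the root-of-unity hypothesis makes the \(\beta_i^r\) pairwise distinct, so \(\prod_i(x-\beta_i^r)\) is the minimal polynomial of \(\beta_1^r\) and hence irreducible. The paper reads this off directly from distinctness plus transitivity and does not pass through your intermediate step \(\chi_{B^r}=m^e\). That difference is only one of presentation; your use of \(\det B\neq0\) is also fine, since for \(d\ge2\) irreducibility forces a nonzero constant term.
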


\begin{proof}
    Fix an integer \( r>0 \).
    The characteristic polynomial of \( B^r \) is \( \prod_i(x-\beta_i^r) \), and the hypothesis makes the \( \beta_i^r \) pairwise distinct.
    The Galois group of the splitting field of the characteristic polynomial of \( B \) acts transitively on \( \{\beta_1,\dots,\beta_d\} \), hence on \( \{\beta_1^r,\dots,\beta_d^r\} \), so the latter is a single Galois orbit of \( d \) distinct algebraic integers and \( \prod_i(x-\beta_i^r) \) is the minimal polynomial of \( \beta_1^r \).
    Being a minimal polynomial, it is irreducible.
    The last assertion follows because \( \lvert\beta_i/\beta_j\rvert\neq1 \) for \( i\neq j \).
\end{proof}

\begin{lem}
\label{lem:existence.pairs}
    Admissible pairs exist in \( \mathrm M_d(\Z)^2 \) for every \( d\ge1 \).
\end{lem}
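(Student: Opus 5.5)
The construction will put a scalar matrix in the first slot and a companion matrix in the second, as the remarks preceding Lemma~\ref{lem:irreducibility.criterion} suggest. Concretely, I take \( A\coloneqq \ell\,\Id \) for an integer \( \ell\ge2 \), and \( B \) the companion matrix of a monic integer polynomial \( f \) of degree \( d \). Then \( A \) commutes with every matrix, so commutation is automatic, and \( A \) satisfies \ref{hyp:host.expanding}. The determinants are \( \det A=\ell^d \) and \( \det B=(-1)^df(0) \), so \ref{hyp:host.coprime} holds as soon as \( \gcd(\ell,f(0))=1 \). For \( d=1 \) one simply takes \( A=(2) \) and \( B=(3) \), for which \ref{hyp:host.irreducible} is trivial. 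So the task reduces to choosing \( f \), for \( d\ge2 \), with the following properties: \( f \) is irreducible over \( \Q \), all its roots have modulus \( >1 \), its roots satisfy the hypothesis of Lemma~\ref{lem:irreducibility.criterion}, and \( f(0) \) is coprime to \( \ell \).

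My first choice is \( f(x)=x^d-p \), with \( p \) a prime and \( \ell \) prime to \( p \), say \( \ell=2 \) and \( p=3 \). Eisenstein at \( p \) gives irreducibility, and every root has modulus \( p^{1/d}>1 \). This fails the criterion, however, since the ratios of roots are \( d \)-th roots of unity. Equal moduli are therefore the obstacle, and I would instead aim for distinct moduli, or at least avoid root-of-unity ratios. The main step is to find an Eisenstein-type polynomial whose roots are all large and have pairwise distinct moduli.

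I would try \( f(x)=\prod_{i=1}^d (x-c_i)+p \), with distinct integers \( c_i \) chosen far apart and large in absolute value, all divisible by \( p \), and with \( p\ge 3 \), \( \ell=2 \). Since every non-leading coefficient of \( \prod_i(x-c_i) \) is divisible by \( p \), the non-leading coefficients of \( f \) are all divisible by \( p \). The constant term is \( \pm\prod_i c_i+p \). Taking \( c_i=p\,m_i \) makes \( \prod_i c_i \) divisible by \( p^2 \), so the constant term is \( \equiv p\pmod{p^2} \), and Eisenstein applies. Choosing the \( m_i \) odd keeps the constant term odd, hence coprime to \( \ell=2 \).

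For the moduli, Rouché's theorem (or continuity of roots) shows that each root of \( f \) lies within \( O\big(p/\min_{j\ne i}\lvert c_i-c_j\rvert^{d-1}\big) \) of some \( c_i \), with exactly one root near each \( c_i \). If the \( c_i \) are chosen with pairwise distinct absolute values that are well separated (for instance \( c_i=p(2\cdot 10^i+1) \), which are all positive), these small discs keep the roots real and of distinct moduli, all of which exceed \( 1 \). Lemma~\ref{lem:irreducibility.criterion} then gives \ref{hyp:host.irreducible} for \( B \), which completes admissibility. The part I expect to need care is the root-localisation estimate; for the roots to be real and distinct I could instead invoke the intermediate value theorem on the real line, since \( f \) changes sign near each \( c_i \) once \( p \) is small relative to the gaps.
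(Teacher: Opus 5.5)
Your construction fails at one concrete point, hypothesis~\ref{hyp:host.coprime}. With \(p\ge3\) prime and the \(m_i\) odd, each \(c_i=p\,m_i\) is odd. So \(\prod_i c_i\) is odd, and \(f(0)=(-1)^d\prod_i c_i+p\) is a sum of two odd numbers, hence even. Then \(\det B=(-1)^d f(0)\) is even and \(\gcd(\det A,\det B)=\gcd(2^d,f(0))\ge 2\), so the pair is not admissible. For example, \(d=2\), \(p=3\), \(c_1=63\), \(c_2=603\) gives \(f(x)=x^2-666x+37992\), and \(\gcd(4,37992)=4\).

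The repair is immediate, and there are two ways to do it:
\begin{enumerate}
\item Take the \(m_i\) even, e.g.\ \(c_i=2p\cdot 10^i\). This makes \(f(0)\) odd and leaves the Eisenstein argument intact, since \(p\mid c_i\) and \(p^2\mid\prod_i c_i\) still hold.
\item Keep \(f\) and, as the paper does, take \(\ell\) to be any prime not dividing \(f(0)\). This is possible because \(f(0)\neq0\), as \(f(0)\equiv p\pmod{p^2}\).
\end{enumerate}

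With that fix the rest of your argument is sound. The root localisation needs nothing beyond the intermediate value theorem. We have \(f(c_i)=p>0\). Take \(x=c_i\pm1\) on the side where \(\prod_j(x-c_j)<0\); since the gaps between the \(c_j\) are at least \(180p\), there \(\lvert\prod_j(x-c_j)\rvert\ge(180p-1)^{d-1}>p\), so \(f(x)<0\). Hence \(f\) has a root in each of the \(d\) disjoint intervals \((c_i-1,c_i+1)\subset(1,\infty)\). These are all its roots, real and of pairwise distinct moduli, so Lemma~\ref{lem:irreducibility.criterion} gives \ref{hyp:host.irreducible} for \(B\).

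Your route is genuinely different from the paper's. Both put \(\ell\,\Id\) in the first slot and a companion matrix with distinct real eigenvalues \(>1\) in the second. The difference is how that matrix is produced.
\begin{itemize}
\item The paper uses Dirichlet's theorem to get a prime \(p\equiv1\pmod{2d}\). It then takes a totally real degree-\(d\) subfield of the cyclic field \(\Q(\zeta_p+\zeta_p^{-1})\) and shifts a generator by an integer. Irreducibility comes for free, since the polynomial is a minimal polynomial, and realness and distinctness of the conjugates come from total reality.
\item You obtain irreducibility from Eisenstein and realness and separation of the roots from a sign-change count. The cost is a short localisation argument.
\end{itemize}
In exchange, your version is fully elementary and gives explicit matrices in every dimension.
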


\begin{proof}
    By Dirichlet's theorem choose a prime \( p\equiv1 \pmod{2d} \).
    The field \( \Q(\zeta_p+\zeta_p^{-1}) \) is totally real and cyclic over \( \Q \) of degree \( (p-1)/2 \), which is a multiple of \( d \), so it contains a subfield \( K \) that is totally real of degree \( d \) over \( \Q \).
    Let \( \theta \) be an algebraic integer generating \( K \), with conjugates \( \theta_1,\dots,\theta_d \), and put \( \beta\coloneqq\theta+m \) for an integer \( m>1+\max_i\lvert\theta_i\rvert \).
    The conjugates \( \beta_i=\theta_i+m \) are then pairwise distinct real numbers greater than \( 1 \).
    Let \( B \) be the matrix of multiplication by \( \beta \) on \( \Z[\beta]\cong\Z^d \), that is, the companion matrix of the minimal polynomial of \( \beta \).
    Its eigenvalues are the \( \beta_i \), so \( B \) satisfies \ref{hyp:host.expanding}, and it satisfies \ref{hyp:host.irreducible} by Lemma~\ref{lem:irreducibility.criterion}.
    Finally let \( \ell \) be a prime not dividing \( \det B=\prod_i\beta_i \) and put \( A\coloneqq\ell\,\Id \).
    Then \( A \) commutes with every matrix, has all eigenvalues equal to \( \ell \), and \( \det A=\ell^d \) is prime to \( \det B \).
    The pair \( (A,B) \) is therefore admissible.
\end{proof}

Pairing a matrix satisfying \ref{hyp:host.irreducible} with a scalar one is the device of \cite[Example 6.1]{berend_multi_1983}; the only point not covered there is \ref{hyp:host.coprime}, which is the choice of \( \ell \).
Nothing forces the two matrices to be so unbalanced, and in low dimension explicit symmetric pairs are easy to write down.

\begin{exm}
\label{exmp:admissible.pair}
    Identify \( \Z^2 \) with \( \Z[\sqrt2] \) by \( (x,y)\mapsto x+y\sqrt2 \).
    Multiplication by \( 3+\sqrt2 \) and by \( 4+2\sqrt2 \) has matrices
    \[
        A=\begin{pmatrix}3&2\\1&3\end{pmatrix},
        \qquad
        B=\begin{pmatrix}4&4\\2&4\end{pmatrix}
    \]
    in the basis \( \{1,\sqrt2\} \).
    They commute, \( \Z[\sqrt2] \) being commutative, and their determinants are the norms \( \det A=N(3+\sqrt2)=7 \) and \( \det B=N(4+2\sqrt2)=8 \), which are coprime.
    The eigenvalues \( 3\pm\sqrt2 \) and \( 4\pm2\sqrt2 \) are real, greater than \( 1 \), and of distinct moduli within each pair, so both matrices satisfy \ref{hyp:host.expanding} and, by Lemma~\ref{lem:irreducibility.criterion}, also \ref{hyp:host.irreducible}.
    Hence \( (A,B) \) is admissible, and so is \( (B,A) \).
\end{exm}

\section*{Acknowledgements}

C. S. R. has been partially supported by S\~{a}o Paulo Research Foundation (FAPESP): grant \#2018/13481-0, and grant \#2020/04426-6. The opinions, hypotheses and conclusions or recommendations expressed in this work are the responsibility of the authors and do not necessarily reflect the views of FAPESP.

The authors would like to acknowledge support from the Max Planck Society, Germany, through the award of a Max Planck Partner Group for Geometry and Probability in Dynamical Systems.\\

%%%%%%%% REFERENCES
\bibliographystyle{amsplain}
\bibliography{bib/biblioWasserstein}

\end{document}